\documentclass[twoside,11pt]{article}
\usepackage[utf8]{inputenc}
\usepackage[T1]{fontenc}
\usepackage{setspace}
\usepackage{comment}
\usepackage{fancyhdr}
\usepackage{mathtools}

\usepackage[paper=a4paper,left=32mm,right=32mm,top=30mm,bottom=30mm]{geometry}
\pagestyle{fancy}
\fancyhf{}
\fancyhead[LO]{B.Karlhofer}
\fancyhead[RE]{On conjugation invariant norms, asymptotic cones, metric ultraproducts and contractibility}
\fancyfoot[C]{\thepage}
\setlength{\headheight}{15pt}

\usepackage{tikz}
\usepackage{tikz-cd}
\usepackage{amsmath,times}
\usepackage{amsmath}
\usepackage{amssymb}
\usetikzlibrary{matrix,arrows}
\usepackage{float}
\usepackage{amsthm}
\usepackage{enumitem}
\usepackage{authblk}
\usepackage{xcolor}

\usepackage{tikz, tikz-3dplot, pgfplots}
\usepackage{tikz-cd}
\usetikzlibrary{calc}
\usetikzlibrary{positioning, calc, arrows, decorations.markings, decorations.pathreplacing}

\usepackage{hyperref}

\newtheorem{theorem}{Proposition}[section]
\newtheorem{T}[theorem]{Theorem}
\newtheorem{lemma}[theorem]{Lemma}
\newtheorem{cor}[theorem]{Corollary}

\theoremstyle{definition}
\newtheorem*{T*}{Theorem}
\newtheorem*{lemma*}{Lemma}
\newtheorem*{theorem*}{Proposition}
\newtheorem{defi}[theorem]{Definition}

\newtheorem{example}[theorem]{Example}
\newtheorem*{claim}{Claim}

\newtheorem{remark}[theorem]{Remark}

\newcommand{\N}{\mathbb{N}}
\newcommand{\Z}{\mathbb{Z}}
\newcommand{\R}{\mathbb{R}}

\newcommand{\supp}{\rm supp}

\DeclareMathOperator{\im}{im}

\DeclareMathOperator{\SL}{SL}
\DeclareMathOperator{\PSL}{PSL}

\DeclareMathOperator{\Or}{O}
\DeclareMathOperator{\SU}{SU}

\DeclareMathOperator{\Po}{P}

\DeclareMathOperator{\A}{A}
\DeclareMathOperator{\B}{B}

\DeclareMathOperator{\arc}{arc}

\DeclareMathOperator{\Sym}{S}

\DeclareMathOperator{\GL}{GL}

\DeclareMathOperator{\SO}{SO}
\DeclareMathOperator{\Fix}{Fix}

\DeclareMathOperator{\rk}{rk}
\DeclareMathOperator{\Diff}{Diff}
\DeclareMathOperator{\colim}{colim}

\DeclareMathOperator{\diam}{diam}
\DeclareMathOperator{\Cone}{Cone}

\title{ON CONJUGATION INVARIANT NORMS, ASYMPTOTIC CONES, METRIC ULTRAPRODUCTS AND CONTRACTIBILITY}
\date{%
}
\author{BASTIEN KARLHOFER}

\begin{document}
\maketitle

\begin{abstract}
In the present paper we prove lemmata on strong contractibility in asymptotic cones and metric ultraproducts which we apply to both the case of finitely generated word norms and the case of conjugation invariant norms. We recover classically known contractibility results on free products and prove the contractibility of the asymptotic cone of the infinite symmetric group $\Sym_\infty$ equipped with a conjugation invariant norm. Furthermore, we give examples of contractible metric ultraproducts arising from subgroups of general linear groups. Additionally, we discuss algebraic properties of groups arising as asymptotic cones for conjugation invariant norms. For example, we show that the asymptotic cone of the infinite symmetric group is itself an algebraically simple group relating strongly to the universally sophic groups defined by Elek and Szabo.  
\end{abstract}

\tableofcontents

\section{Introduction}

The study of asymptotic cones on a given group $G$ originates from work of Gromov in \cite{Gromov} to prove that groups of polynomial growth are virtually nilpotent. Other notable early work on asymptotic cones includes \cite{Dries}. Since then the study of asymptotic cones and their properties has become a central topic in geometric group theory with asymptotic cones encoding rich information about the underlying structure of the group $G$.

In general, the asymptotic cone depends strongly on the particular choice of ultrafilter. It is shown in \cite{Shelah} that the size of the continuum is the maximal number of pairwise non-isometric asymptotic cones for a finitely generated group if the continuum hypothesis is true. If the continuum hypothesis is not true, then \cite{Shelah} shows that $\SL(n,\R)$ for $n \geq 3$ is an example of a group with $2^{2^{\aleph_0}}$ pairwise non-homeomorphic asymptotic cones. An example of a finitely presented group with two non-homeomorphic asymptotic cones that is independent of the continuum hypothesis is given in \cite[Thm. 1.1]{Sapir}.

It has first been noted in \cite[Prop. 3.3]{Zhuang} that the multiplication of representative sequences turns the asymptotic cone into a group whenever the underlying metric is bi-invariant. In the context of symplectic geometry where the group of Hamiltonian diffeomorphism carries a naturally bi-invariant norm, called the Hofer norm, asymptotic cones have been used by {\'A}lvarez-Gavela et Al. in \cite{Ham} to study embedding questions of the free group.

The growth behaviour of conjugacy classes has been a classical field of study. This growth behaviour is analysed in the theory of finite simple groups under the term covering number (see for example \cite{Liebeck}, \cite{Liebeck2}) and has been studied more recently under the terminology of (strong/uniform) bounded generation in algebraic groups \cite{KLM}.  

The construction of asymptotic cones for a sequence of spaces $X_n$ and metric ultraproducts are very closely related. However, in the latter case it is common practice to consider spaces $X_n$ of finite diameter and normalise the norms such that $\diam(X_n)=1$ for all $n$, which makes every sequence of elements admissible. Background on the study of metric ultraproducts for sequences of classical groups can be found in \cite{Stolz}, \cite{Wilson} and \cite{Wilson2}. In \cite{Wilson} it is shown that metric ultraproducts of finite simple groups are simple, thus extending the results of Elek and Szabo in \cite{Elek}. 

In Section 2 we discuss preliminaries on conjugation invariant norms, asymptotic cones on groups and recall that asymptotic cones form groups themselves if the underlying metrics are bi-invariant. For completeness, we prove that the classical study of finitely generated word norms and word norms generating by finitely many conjugacy classes has quasi-isometrically trivial intersection. Moreover, we give an example of a word norm associated to an infinite generating set on the integers and show that the asymptotic cone with respect to this word norm has non-trivial 2-torsion. 

\begin{theorem*}[\ref{Z-torsion}]
The asymptotic cone $\Cone_\omega(\Z, \|.\|_S)$ contains non-trivial elements of order two. In particular, $\Cone_\omega(Z, \|.\|_S)$ and $\R$ are not isomorphic. 
\end{theorem*}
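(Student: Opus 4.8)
The plan is to use the group structure on the cone established earlier: since $\Z$ is abelian the word norm $\|.\|_S$ is bi-invariant, so $\Cone_\omega(\Z,\|.\|_S)$ is itself an abelian group, with addition induced coordinatewise on admissible representative sequences, neutral element $0$ represented by the constant sequence at the origin, and $d([x_n],[y_n]) = \lim_\omega \|x_n-y_n\|_S/s_n$ for the fixed scaling sequence $(s_n)$. In this language an element $[x_n]$ has order dividing two exactly when $2[x_n]=[2x_n]=0$, i.e. $\lim_\omega \|2x_n\|_S/s_n = 0$, while it is nontrivial precisely when $\lim_\omega \|x_n\|_S/s_n > 0$. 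The whole statement therefore reduces to exhibiting a single admissible integer sequence $(x_n)$ whose doubles are cheap in the norm but which itself stays uniformly far from the origin.

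To produce it I would exploit the defining feature of the infinite generating set $S$ from Section 2, namely that it contains arbitrarily large elements whose halves are \emph{not} cheaply expressible. Concretely, I would select integers $x_n$ so that $2x_n$ either lies in $S$ or differs from an element of $S$ by a bounded amount, forcing $\|2x_n\|_S \le C$ for a constant $C$, while choosing each $x_n$ so that its norm is comparable to the scaling, say $s_n \le \|x_n\|_S \le 2 s_n$ (possible because the norms of such elements run off to infinity as the elements grow). Then $(x_n)$ is admissible, one has $d([2x_n],0) = \lim_\omega \|2x_n\|_S / s_n \le \lim_\omega C/s_n = 0$ since $s_n \to \infty$, and $d([x_n],0) = \lim_\omega \|x_n\|_S/s_n \in [1,2]$, so $[x_n]$ is a genuine element of order two.

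The step that carries the real content is the lower bound $\|x_n\|_S \ge s_n$: one must rule out that $x_n$ is accidentally a short word in $S\cup(-S)$, even though $2x_n$ is a short word. This is exactly the parity/divisibility asymmetry built into $S$, and turning it into a quantitative lower bound on word length is the main obstacle; I would argue it by a congruence or counting estimate controlling which integers (or which residues) can be reached by words of a given length in $S\cup(-S)$, forcing any word representing $x_n$ to have length growing to infinity with $n$. Granting this bound, the element constructed above is a nontrivial $2$-torsion element of $\Cone_\omega(\Z,\|.\|_S)$.

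The concluding clause is then immediate: $(\R,+)$ is torsion-free, so it cannot contain a nonzero element of order two, whence $\Cone_\omega(\Z,\|.\|_S)$ and $\R$ are not isomorphic as groups.
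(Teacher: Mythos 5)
Your reduction is exactly the one the paper uses: work with the abelian group structure on the cone, observe that $[x_n]$ is a nontrivial element of order two precisely when $\lim_\omega \|x_n\|_S/s_n > 0$ while $\lim_\omega \|2x_n\|_S/s_n = 0$, and conclude with the torsion-freeness of $(\R,+)$. All of that is fine. But the proposal stops at exactly the point where the proof lives. You never exhibit a concrete sequence $(x_n)$, and the lower bound $\|x_n\|_S \geq s_n$ --- which you yourself identify as ``the step that carries the real content'' --- is only announced as something you ``would argue by a congruence or counting estimate,'' then assumed (``Granting this bound''). That is a genuine gap, not a proof: the whole phenomenon is specific to the arithmetic of $S=\{\pm 2^m m! \mid m \in \Z_{\geq 0}\}$, which your argument never engages, and a pure residue count in $\Z/N$ cannot by itself give a word-length lower bound, since arbitrarily many small generators can realize any residue class. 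What is needed is a two-scale estimate mixing divisibility of the large generators with a size bound on the small ones.

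Here is how the paper closes precisely this gap. Take $x_n = 2^{n-1} n!$. Then $2x_n = 2^n n! \in S$ gives $\|2x_n\|_S = 1$, and $x_n = n\cdot\bigl(2^{n-1}(n-1)!\bigr)$ gives $\|x_n\|_S \leq n$. For the lower bound, write a minimal expression $x_n = s_1 + \dots + s_\ell$ with $s_i \in S$ and split the generators at the threshold $m=n$: every generator $\pm 2^m m!$ with $m \geq n$ is an integer multiple of $2^n n!$, while every generator with $m \leq n-1$ satisfies $|s_i| \leq 2^{n-1}(n-1)!$. Since $x_n$ is exactly half of $2^n n!$, its distance to any integer multiple of $2^n n!$ is at least $2^{n-1} n!$; hence the small generators alone must sum in absolute value to at least $2^{n-1} n! = n \cdot 2^{n-1}(n-1)!$, forcing at least $n$ of them and so $\|x_n\|_S = n$. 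With the canonical scaling $s_n = n$ this yields $\|[x_n]\| = 1$ and $\|2[x_n]\| = \lim_\omega 1/n = 0$, i.e.\ the desired nontrivial $2$-torsion class. So your plan is the right plan --- the same one the paper follows --- but this divisibility-plus-size argument is the actual content of the proposition, and your write-up defers it rather than supplying it.
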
 

This example visualises that passing to an infinite set of conjugacy classes can create very wild asymptotic behaviour. 

In Section 3 we give conditions in Lemma \ref{SES-Lemma} and Lemma \ref{SES-fcg} when short exact sequence on the level of groups equipped with conjugation invariant norms give rise to short exact sequences on the level of asymptotic cones. 

\begin{lemma*}[\ref{SES-fcg}]
Let $ 1 \rightarrow H \xrightarrow{f} G \xrightarrow{g} G/H \rightarrow 1$ be a short exact sequence of groups and let $\|.\|_G$ and $\|.\|_{G/H}$ be conjugation invariant word norms generated by finitely many conjugacy classes. If $f$ is a quasi-isometric embedding, then there is the following short exact sequence on the level of asymptotic cones
\begin{equation*} 
\centering
\begin{tikzcd}
1  \arrow[-latex]{r} & \Cone_\omega(H,\|.\|_H) \arrow[-latex]{r}{\widehat{f}} 
 & \Cone_\omega(G,\|.\|_G) \arrow[-latex]{r}{\widehat{g}} & \Cone_\omega( G/H , \|.\|_{G/H}) \arrow[-latex]{r} & 1.
\end{tikzcd}
\end{equation*}
\end{lemma*}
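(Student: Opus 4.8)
The plan is to obtain this statement as the concrete instance of the general criterion Lemma \ref{SES-Lemma}, by checking that word norms generated by finitely many conjugacy classes supply exactly the Lipschitz and lifting data required there; I also indicate the direct argument. Throughout I write $(s_n)$ for the scaling sequence, I represent points of a cone by \emph{admissible} sequences $(x_n)$ of group elements (those with $\|x_n\| = O(s_n)$), and I recall that conjugation invariance makes each of the three metrics bi-invariant, so the cones carry a group structure and every homomorphism that is Lipschitz up to an additive constant induces a homomorphism of cones in which the additive constant is washed out in the rescaling limit. Well-definedness of $\widehat f$ is then immediate, since a quasi-isometric embedding is in particular Lipschitz up to an additive constant. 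For $\widehat g$ I would use the finiteness hypothesis: if $\|\cdot\|_G$ is generated by the conjugacy classes of $a_1,\dots,a_m$, then writing $x$ as a product of $\|x\|_G$ conjugates of the $a_i^{\pm1}$ and applying $g$ exhibits $g(x)$ as a product of the same number of conjugates of the $g(a_i)^{\pm1}$, whence $\|g(x)\|_{G/H}\le C\,\|x\|_G$ with $C=\max_i\|g(a_i)\|_{G/H}$. Thus $g$ is $C$-Lipschitz and $\widehat g$ is well-defined; since $g\circ f$ is trivial we get $\widehat g\circ\widehat f=0$, so the remaining content is exactness at the three vertices.

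Surjectivity of $\widehat g$ is the place where finiteness of the generating set is essential. Given an admissible sequence $(\bar x_n)$ in $G/H$, I would write each $\bar x_n$ as a product of $\ell_n\le\|\bar x_n\|_{G/H}=O(s_n)$ conjugates of the $\pm1$-powers of the finitely many generators $c_1,\dots,c_k$ of $\|\cdot\|_{G/H}$. Choosing once and for all preimages $\tilde c_j\in g^{-1}(c_j)$ and lifting each conjugating element arbitrarily, the corresponding product of conjugates in $G$ yields a lift $\tilde x_n$ of $\bar x_n$ whose $G$-norm is bounded, by conjugation invariance, by $\ell_n\cdot\max_j\|\tilde c_j\|_G=O(s_n)$. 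Hence $(\tilde x_n)$ is admissible and $\widehat g[(\tilde x_n)]=[(\bar x_n)]$.

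It remains to treat exactness at $\Cone_\omega(H,\|\cdot\|_H)$ and at $\Cone_\omega(G,\|\cdot\|_G)$, and here the quasi-isometric embedding hypothesis does the work. For injectivity, if $[(h_n)]\in\Ker\widehat f$ then $\|f(h_n)\|_G=o(s_n)$, and the lower quasi-isometry bound $\|h_n\|_H\le A\|f(h_n)\|_G+B$ forces $\|h_n\|_H=o(s_n)$, the additive constant being negligible against $s_n\to\infty$; thus $[(h_n)]=1$. For the middle, take $(x_n)$ representing an element of $\Ker\widehat g$, so $\|g(x_n)\|_{G/H}=o(s_n)$, and apply the lifting construction above to $g(x_n)$ to produce $\tilde y_n\in G$ with $g(\tilde y_n)=g(x_n)$ and $\|\tilde y_n\|_G=o(s_n)$. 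Then $x_n\tilde y_n^{-1}\in\Ker g=\im f$, say $x_n\tilde y_n^{-1}=f(h_n)$; the bound $\|f(h_n)\|_G\le\|x_n\|_G+\|\tilde y_n\|_G=O(s_n)$ together with the quasi-isometry inequality shows $(h_n)$ is admissible, while $\|f(h_n)^{-1}x_n\|_G=\|\tilde y_n\|_G=o(s_n)$ gives $[(x_n)]=\widehat f[(h_n)]$. This proves $\Ker\widehat g\subseteq\im\widehat f$ and completes exactness.

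The main obstacle I anticipate is the genuine interplay of the two hypotheses. Surjectivity and the lifting step in the middle really need \emph{finitely many} conjugacy classes, so that the norm of a lift can be bounded by a fixed constant times the number of factors; injectivity and the admissibility of the recovered sequence $(h_n)$ really need $f$ to be a \emph{quasi-isometric} embedding, so that smallness, respectively boundedness, of $\|f(h_n)\|_G$ transfers back to $\|h_n\|_H$. A secondary technicality is to make the choices of preimages of generators and conjugators uniform enough that the constructed sequences are admissible and their classes are independent of the choices; this is routine once the norm estimates above are in hand.
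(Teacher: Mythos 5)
Your argument is correct, but it is not the paper's proof: although you open by announcing a reduction to Lemma \ref{SES-Lemma}, what you actually carry out is a self-contained direct proof, and the two differ precisely in the key lifting step. The paper first proves Lemma \ref{SES-Lemma} under the assumption that $G/H$ carries the norm induced by the generating set $S$ via $g$; the crucial device there is a choice of coset representatives $a_i$ of \emph{smallest norm} in each $H$-coset (possible because a word norm takes discrete values), which forces $\|a_i\|_G=\|g(a_i)\|_{G/H}$ and hence yields an isometric lift for surjectivity of $\widehat{g}$ and the decomposition $x_n=a_{i(n)}h_n$ for the kernel computation. Lemma \ref{SES-fcg} is then deduced in two lines: any word norm on $G/H$ generated by finitely many conjugacy classes is equivalent to the induced one by Lemma \ref{equiva}, and the identity map induces a bi-Lipschitz isomorphism of the corresponding cones by Proposition \ref{qi-invariant}. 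You instead lift word expressions directly: writing $\bar x_n$ as a product of $\ell_n=\|\bar x_n\|_{G/H}$ conjugates of the finitely many generators and lifting generators and conjugators through the surjection $g$, conjugation invariance of $\|.\|_G$ gives $\|\tilde x_n\|_G\le C\ell_n$, a Lipschitz (rather than isometric) lift that suffices both for surjectivity of $\widehat{g}$ and, applied to $g(x_n)$, for $\ker\widehat{g}\subseteq\im\widehat{f}$; the quasi-isometric-embedding hypothesis enters exactly where you say, for injectivity of $\widehat{f}$ and for admissibility of the recovered sequence $(h_n)$. Your route buys a one-step proof that makes transparent which hypothesis does what and avoids both the minimal-representative trick and the norm-equivalence reduction; the paper's route buys the more general Lemma \ref{SES-Lemma}, which applies to word norms with possibly infinitely many generating conjugacy classes as long as the quotient carries the induced norm. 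Two harmless details: your constants should range over generators \emph{and their inverses}, e.g. $C=\max_j\max\{\|\tilde c_j\|_G,\|\tilde c_j^{-1}\|_G\}$, and the lower quasi-isometry bound yields $\|h_n\|_H\le A\|f(h_n)\|_G+AB$ rather than $+B$ --- neither affects any of the ultralimits.
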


We derive some topological product decompositions using Lemma \ref{Serre} and Lemma \ref{qmshortexseq}. In Proposition \ref{producthomeo} we prove that all topologically meaningful information is contained in the commutator subgroup $[G,G]$ of a group $G$.

\begin{theorem*}[\ref{producthomeo}]
Let $G$ be a group. Let $\|.\|$ and $\|.\|_{G^{ab}}$ be word norms generated by finitely many conjugacy classes on $G$ and its abelianisation $G^{ab}$. Then there is a homeomorphism 
\begin{align*}
\Cone_\omega(G,\|.\|) \cong \Cone_\omega([G,G],\|.\|) \times (\R^k, \ell_1),
\end{align*} 
where $k=\rk(G^{ab})$. 
\end{theorem*}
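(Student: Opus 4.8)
The plan is to feed the short exact sequence
\[
1 \longrightarrow [G,G] \longrightarrow G \longrightarrow G^{ab} \longrightarrow 1
\]
into Lemma \ref{SES-fcg} and then to split the resulting sequence of cones topologically. First I would equip $H \defeq [G,G]$ with the restriction of $\|.\|$ from $G$, so that the inclusion $f\colon [G,G] \hookrightarrow G$ is by definition an isometric embedding, in particular a quasi-isometric embedding. Since $\|.\|$ and $\|.\|_{G^{ab}}$ are by hypothesis conjugation invariant word norms generated by finitely many conjugacy classes, Lemma \ref{SES-fcg} applies verbatim and yields the short exact sequence of asymptotic cones
\[
1 \longrightarrow \Cone_\omega([G,G],\|.\|) \xrightarrow{\;\widehat{f}\;} \Cone_\omega(G,\|.\|) \xrightarrow{\;\widehat{g}\;} \Cone_\omega(G^{ab},\|.\|_{G^{ab}}) \longrightarrow 1 .
\]

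Next I would identify the quotient cone. Because $\|.\|_{G^{ab}}$ is a word norm that is finite everywhere and generated by finitely many elements (conjugacy classes in the abelian group $G^{ab}$ are singletons), the group $G^{ab}$ is finitely generated, hence $G^{ab} \cong \Z^k \oplus T$ with $T$ finite torsion and $k = \rk(G^{ab})$. The torsion part $T$ has bounded norm and therefore collapses in the cone, so $\Cone_\omega(G^{ab},\|.\|_{G^{ab}})$ is a $k$-dimensional normed real vector space; as a topological space it is $(\R^k,\ell_1)$, every norm on $\R^k$ inducing the same topology.

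Finally I would split the sequence. The base $\Cone_\omega(G^{ab}) \cong \R^k$ is contractible, and $\widehat{g}$ is a homomorphism of topological groups that behaves like a fibration over this Euclidean base; this is precisely the content of Lemma \ref{Serre} together with Lemma \ref{qmshortexseq}, which provide a continuous section $s\colon \R^k \to \Cone_\omega(G,\|.\|)$ of $\widehat{g}$. The map
\[
\Phi\colon \Cone_\omega([G,G],\|.\|) \times \R^k \longrightarrow \Cone_\omega(G,\|.\|), \qquad (x,v) \longmapsto \widehat{f}(x)\, s(v),
\]
is then a continuous bijection with continuous inverse $y \mapsto \bigl(\widehat{f}^{-1}(y\, s(\widehat{g}(y))^{-1}),\, \widehat{g}(y)\bigr)$, giving the desired homeomorphism. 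The main obstacle is this last step: group-theoretically the extension need not split, so the whole weight rests on producing a \emph{continuous} section over the contractible base and on checking that $\widehat{f}$ is a homeomorphism onto the closed subgroup $\ker\widehat{g}$ and that $\widehat{g}$ is open, which is exactly what Lemma \ref{Serre} and Lemma \ref{qmshortexseq} are designed to supply.
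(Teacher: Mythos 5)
Your skeleton is the paper's own: feed $1 \to [G,G] \to G \to G^{ab} \to 1$ into Lemma \ref{SES-fcg} (the inclusion carries the restricted norm, so it is isometric, in particular a quasi-isometric embedding), identify $\Cone_\omega(G^{ab},\|.\|_{G^{ab}})$ with $(\R^k,\ell_1)$ using $G^{ab}\cong \Z^k\oplus T$ with $T$ finite collapsing in the cone, and then split the resulting sequence of cones topologically via Lemma \ref{Serre}. Up to the last step this agrees with the paper (which routes the identification of the quotient cone through Lemma \ref{fcg} and Proposition \ref{coarseequiv}, but your direct argument is fine).

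The gap is in the splitting step. Lemma \ref{Serre} is \emph{conditional}: it produces a continuous section and the product homeomorphism only once one exhibits a linearly bounded collection of coset representatives $\{a_i\}$ of $[G,G]$ in $G$ satisfying $\|a_ia_j^{-1}\|_G \leq C\,\|p(a_i)p(a_j)^{-1}\|_{G^{ab}}$ for all $i,j$. You assert that Lemma \ref{Serre} together with Lemma \ref{qmshortexseq} supplies the section, but you never verify this hypothesis, and it is the real mathematical content of the proposition: the paper devotes Lemma \ref{fcg} to exactly this, choosing representatives $a_x=(g_1^\prime)^{l_1}\cdots(g_k^\prime)^{l_k}f_t$ from preimages $g_i^\prime$ of a basis of $\Z^k$ and preimages $f_t$ of the torsion elements, and proving the estimate by a telescoping computation. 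Lemma \ref{qmshortexseq} cannot substitute for this: it splits off a \emph{single} copy of $\R$ from one homogeneous quasimorphism, with a section built from powers of a single group element; to reach $\R^k$ you would have to iterate it inside $\ker\widehat{\psi}$, which is a subgroup of the cone rather than an asymptotic cone of a group equipped with a norm generated by finitely many conjugacy classes, so the lemma does not apply as stated. Likewise, the continuity of your inverse $y \mapsto \bigl(\widehat{f}^{-1}(y\,s(\widehat{g}(y))^{-1}),\,\widehat{g}(y)\bigr)$ is precisely the Lipschitz computation inside Lemma \ref{Serre}, which again runs through the unverified coset estimate. With Lemma \ref{fcg} cited (or its estimate reproved), your argument closes and coincides with the paper's proof.
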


In Section 4 we prove a general contraction lemma for the asymptotic cone in Lemma \ref{GeneralContractionLemma} that we apply to direct sums, the infinite symmetric group and free products of groups. As an application to the infinite symmetric group we obtain the following theorem.

\begin{T*}[\ref{Symcontractible}]
All asymptotic cones of $(\Sym_\infty, \|.\|_{\supp})$ are contractible.
\end{T*}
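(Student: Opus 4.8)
The plan is to verify the hypotheses of the general contraction lemma (Lemma \ref{GeneralContractionLemma}) for the pair $(\Sym_\infty,\|.\|_{\supp})$, so that contractibility of every asymptotic cone follows at once, uniformly in the ultrafilter $\omega$ and the scaling sequence. First I would record that $\|.\|_{\supp}$ is a genuine conjugation invariant norm: since $\supp(\sigma\tau)\subseteq\supp(\sigma)\cup\supp(\tau)$ one gets the triangle inequality $\|\sigma\tau\|_{\supp}\le\|\sigma\|_{\supp}+\|\tau\|_{\supp}$, and because conjugation merely relabels the moved points the norm is bi-invariant. In particular $\Cone_\omega(\Sym_\infty,\|.\|_{\supp})$ is a group, and we aim to contract it to the identity.

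The structural input the contraction lemma needs is a self-similar, support-additive copy of $\Sym_\infty$ inside itself. Fixing a bijection of the underlying countable set with $\N\times\N$ yields, for each $k\in\N$, an embedding $\iota_k\colon\Sym_\infty\hookrightarrow\Sym_\infty$ supported on the $k$-th column; these images commute pairwise and have disjoint supports, so $\|\iota_{k_1}(\sigma_1)\cdots\iota_{k_r}(\sigma_r)\|_{\supp}=\sum_j\|\sigma_j\|_{\supp}$ for distinct $k_j$. This realises the infinite direct sum $\bigoplus_{\N}\Sym_\infty$ as an isometric subgroup and, crucially, furnishes an unlimited supply of fresh disjoint columns into which the support of any element can be slid. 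The contraction is then the continuous Eilenberg-swindle type homotopy supplied by the lemma in the direct-sum case: one interpolates from the identity to a ``shift into a fresh column'' map and from there to the constant map, the freed-up column absorbing the original support.

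Equivalently, for a single element one may contract along fractional supports: decomposing $\sigma$ into disjoint cycles and then, within each cycle, passing through the truncated cycles $(a_1\cdots a_j)$ produces a quasi-geodesic path $\gamma_\sigma$ from $e$ to $\sigma$ with $\|\gamma_\sigma(s)\|_{\supp}\approx s$ and consecutive permutations differing in support by $O(1)$, so that $H(x,t)=[\gamma_{\sigma_n}(t\|\sigma_n\|_{\supp})]$ is a candidate contraction of the cone. I expect the main obstacle to lie precisely here: such paths involve arbitrary choices (an ordering of the cycles and of the points within each cycle), and two permutations that are close in the support norm may have very different cycle types, so the naive path need not depend continuously on the element. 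The role of Lemma \ref{GeneralContractionLemma} is exactly to package the uniform, choice-free estimate, a Lipschitz dependence $\|\gamma_\sigma(s)\gamma_\tau(s)^{-1}\|_{\supp}\lesssim\|\sigma\tau^{-1}\|_{\supp}$ extracted from the disjoint-column self-similarity rather than from explicit paths, which makes $H$ well defined and continuous on the cone and independent of $\omega$. Once these hypotheses are checked, Theorem \ref{Symcontractible} is immediate.
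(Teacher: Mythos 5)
There is a genuine gap, and you have in fact located it yourself before waving it away: the contraction lemma does not supply the ``uniform, choice-free Lipschitz estimate'' — it \emph{consumes} it as a hypothesis. Both Lemma \ref{ContractionLemma} (the single-group version, which is the one actually needed here, since $\Cone_\omega(\Sym_\infty,\|.\|_{\supp})$ is the cone of one unbounded group rather than of a sequence of bounded spaces as in Lemma \ref{GeneralContractionLemma}) and the sequence version require as input an explicit family of maps $p_k\colon G\to G$ satisfying $d(p_k(g),p_k(h))\leq K\,d(g,h)$ uniformly in $k$, $d(p_k(g),p_\ell(g))\leq L|k-\ell|$, and $\|p_k(g)\|\leq\bigl|\|g\|-k\bigr|$. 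Constructing such maps and proving these estimates is the entire content of the paper's proof: it defines cutting maps $c_k$ that delete the top $k$ points of the support and reroute each point to its first return below the cut, and then proves the three estimates in Lemma \ref{cutcontinuous1}, Lemma \ref{cutting} and — the hardest step — Lemma \ref{cutcontinous2}, which establishes $d(c_k(\sigma),c_k(\tau))\leq 2\,d(\sigma,\tau)$ uniformly in $k$ even when $\supp(\sigma)\neq\supp(\tau)$, via an injection of the disagreement set of the truncations into the disagreement set of $\sigma,\tau$ plus a count of the support symmetric difference. Your truncated-cycle paths are close in spirit to these cutting maps, but you correctly observe they fail continuity under naive choices and then assert the lemma repairs this; it does not, and nothing in your proposal replaces the combinatorial argument of Lemma \ref{cutcontinous2}.

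The Eilenberg-swindle route via disjoint columns also cannot be made to satisfy the hypotheses of the direct-sum results. Proposition \ref{directsumcontraction} requires a projection on each factor with $d_i(p_i(g),g)\leq 1$ and strictly decreasing norm; in the support norm every non-identity permutation has norm at least $2$, so $d(p(g),g)\leq 1$ forces $p(g)=g$, which contradicts norm decrease — and with factors $(\Sym_\infty,\|.\|_{\supp})$ you are in any case back to needing the cutting maps on each column. Note also that the single-projection shortcut, Lemma \ref{SingleprojectionContractionLemma}, demands $1$-Lipschitz projections, whereas the cutting maps are only $2$-Lipschitz; this is precisely why the paper invokes the full family version Lemma \ref{ContractionLemma} with $K=2$. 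Finally, an admissible sequence in $\Cone_\omega(\Sym_\infty,\|.\|_{\supp})$ has bounded support \emph{size} but its support may sit anywhere in $\N$, so one cannot identify this cone with a cone of an exhaustion $\Sym_{s_n}$ and apply Lemma \ref{GeneralContractionLemma} without further argument.
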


In Section 5 we turn to algebraic properties of the asymptotic cones of the infinite symmetric group $\Cone_\omega(\Sym_\infty, \|.\|_{\supp})$. In Proposition \ref{symmetricgroupthm} we prove that these cones are uniformly perfect of commutator length one and in Proposition \ref{Sinftysimple} we show that they are simple groups. The related original results on universally sophic groups in \cite{Elek} only cover the case of sequences of finite symmetric groups.  Moreover, we show that the induced continuously valued norm on $\Cone_\omega(\Sym_\infty, \|.\|_{\supp})$ is coarsely equivalent to any word norm generated by finitely many conjugacy classes and can therefore be considered as a natural choice to understand the geometry of $\Cone_\omega(\Sym_\infty, \|.\|_{\supp})$.

In Section 6 we discuss examples of classical metric ultraproducts their contractibility. For example, we obtain in Proposition \ref{SOncontractible} that 

\begin{theorem*}[\ref{SOncontractible}]
The metric ultraproduct $\prod_{n \to \omega} (\SO(n),\|.\|_{\overline{\rk}})$ of the special orthogonal groups with the projectivised rank metric is contractible.
\end{theorem*}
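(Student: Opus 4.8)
The plan is to realise $\prod_{n\to\omega}(\SO(n),\|.\|_{\overline{\rk}})$ as a bi-invariant metric ultraproduct to which the general contraction lemma, Lemma~\ref{GeneralContractionLemma}, applies. Recall first that the projectivised rank norm $\|g\|_{\overline{\rk}}=\tfrac1n\rk(g-I)=1-\tfrac1n\dim\Fix(g)$ is conjugation invariant, since $\rk(UgU^{-1}-I)=\rk(g-I)$, subadditive, since $\rk(gh-I)\le\rk(g-I)+\rk(h-I)$, and of diameter one after normalisation; hence each $(\SO(n),\|.\|_{\overline{\rk}})$ is a group with a bi-invariant metric of diameter one and every sequence is admissible. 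To feed this into Lemma~\ref{GeneralContractionLemma} I would exhibit, for each $n$ and each scaling parameter $s\in[0,1]$, a truncation map $g\mapsto g\star s$ on $\SO(n)$ that retains $g$ on a prescribed fraction of its moving space, so that the homotopy $H([g_n],t)=[g_n\star(1-t)]$ contracts the ultraproduct onto its identity element.

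To construct the truncation I would use the orthogonal decomposition of $\R^n$ induced by $g$: the fixed space $\Fix(g)$ together with the two-dimensional rotation eigenplanes on which $g$ acts nontrivially, recorded canonically by the Cayley transform $A_g=(I-g)(I+g)^{-1}$, an antisymmetric operator whose singular values $\sigma_1\ge\sigma_2\ge\cdots$ list the rotation blocks (the $(-1)$-eigenspace of $g$, where $A_g$ blows up, being adjoined as the block of largest weight). For $s\in[0,1]$ let $V(g,s)$ be the $g$-invariant subspace spanned by the eigenplanes carrying the largest $\sigma_j$ up to a spectral gap nearest to dimension $s\cdot\rk(g-I)$, and set $g\star s$ equal to $g$ on $V(g,s)$ and to the identity on $V(g,s)^\perp$. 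Then $g\star s\in\SO(n)$ with endpoints $g\star 0=I$ and $g\star 1=g$, the rank scales as $\rk(g\star s-I)=s\,\rk(g-I)+O(1)$, so that $\|g\star s\|_{\overline{\rk}}=s\|g\|_{\overline{\rk}}+O(\tfrac1n)$, and the nesting $V(g,s')\subseteq V(g,s)$ for $s'\le s$ yields the Lipschitz bound $\|(g\star s)(g\star s')^{-1}\|_{\overline{\rk}}\le|s-s'|+\tfrac2n$ in the time parameter. These $O(\tfrac1n)$ errors are invisible in the ultraproduct, so on $\prod_{n\to\omega}$ the scaling becomes exactly $\|x\star s\|_{\overline{\rk}}=s\|x\|_{\overline{\rk}}$; in particular continuity of $H$ at $t=1$ is automatic from $\|x\star(1-t)\|_{\overline{\rk}}\le(1-t)$ and the bounded diameter.

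The main obstacle, and the only point requiring genuine work, is continuity of the individual truncation $x\mapsto x\star s$ for fixed $s\in(0,1)$ as a self-map of the ultraproduct. Because the rank metric is coarse, this reduces to a rank-stability statement: if $\rk(g-h)=r$ then $\rk\big((g\star s)-(h\star s)\big)\le C r$ uniformly in $n$. Here the Cayley transform is well suited, since $A_g-A_h=(I-g)(I+g)^{-1}-(I-h)(I+h)^{-1}$ factors through $g-h$ and so has rank at most $2r$; consequently the singular values interlace, $\sigma_{k+2r}(A_g)\le\sigma_k(A_h)\le\sigma_{k-2r}(A_g)$, and a low-rank change of $g$ can move the weights $\sigma_j$ across the cutoff by at most $2r$ positions. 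Combining this with the rank stability of the associated Riesz spectral projections and with the fact that $g$ and $h$ already coincide on $\ker(g-h)$ should deliver the bound above, provided the cutoff is taken at a spectral gap; such a gap, losing only $o(n)$ dimensions, always exists because $A_g$ carries at most $n$ singular values, and the ultrafilter absorbs the lower-order discrepancies. Granting this estimate the hypotheses of Lemma~\ref{GeneralContractionLemma} are satisfied and $H$ contracts $\prod_{n\to\omega}(\SO(n),\|.\|_{\overline{\rk}})$ onto its identity element. I expect the uniform rank-stability of the spectral projections at the gap to be the technically delicate step, mirroring the continuous control of supports required for $(\Sym_\infty,\|.\|_{\supp})$ in Theorem~\ref{Symcontractible}.
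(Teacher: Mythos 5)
Your strategy is genuinely different from the paper's, but it breaks down at exactly the step you flag as delicate, and the gap is fatal as stated: spectral truncation is \emph{not} rank-stable under low-rank perturbations. Writing $g\star s = I + (g-I)P_g$ with $P_g$ the orthogonal projection onto $V(g,s)$, one has $(g\star s)-(h\star s) = (g-h)P_g + (h-I)(P_g-P_h)$, so everything hinges on $\rk(P_g-P_h)\le Cr$. Interlacing does bound the shift of singular values by $O(r)$ \emph{positions}, i.e.\ it controls $|\rk P_g - \rk P_h|$, but it says nothing about the rank of the difference $P_g-P_h$, which can be of order $n$ even for a rank-one perturbation and even when both spectra have a common gap of size $O(1)$ at the cutoff. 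Concretely, for $A=\mathrm{diag}(a_1,\dots,a_n)$ with distinct entries and $B=A+t\,uu^{*}$ with $u$ generic and $t$ small, first-order perturbation theory gives $P_B-P_A = t(X+X^{*})+O(t^2)$, where the off-diagonal block $X_{ij}=u_iu_j/(a_j-a_i)$ for $i>m\ge j$ is a Cauchy-type matrix of full rank $\min(m,n-m)$; thus $\rk(P_B-P_A)=\Theta(n)$ while $\rk(B-A)=1$ and the cutoff sits in a common spectral gap of both operators. Riesz projections are \emph{norm}-stable at a gap and their ranks interlace, but their difference is not low-rank --- which is precisely what your appeal to ``rank stability of the associated Riesz spectral projections'' would need. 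A second, independent problem: the gap you cut at need not exist anywhere near the prescribed dimension. If $g$ rotates by the same angle in all $n/2$ coordinate planes, $A_g$ has a single totally degenerate singular value, the only gaps are at the two ends of the spectrum, $V(g,s)$ involves an arbitrary (hence unstable) choice of invariant subspace, and the nearest gap cutoff lies at distance $\Theta(n)$, not $o(n)$, from $s\cdot\rk(g-I)$ --- destroying both well-definedness and the scaling $\|g\star s\|_{\overline{\rk}} = s\|g\|_{\overline{\rk}}+O(\tfrac1n)$. (There is also the minor formal mismatch that Lemma \ref{GeneralContractionLemma} asks for maps $p_n\colon X_n\to X_{n-1}$ along a nested sequence with uniformly bounded displacement, rather than a continuous family of self-maps of each $\SO(n)$, but that could be repaired if the key estimate held.)

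The paper avoids spectral data entirely and strips off one dimension at a time: fix for each $x\in\mathbb{S}^{n-1}$ an elementary rotation $R_x$ with $R_x(x)=e_n$ and $R_{e_n}=id$, and set $p_n(g)=R_{g(e_n)}g\in\SO(n-1)$. Then $\|p_n(g)g^{-1}\|_{\rk}\le 2$, and the 1-Lipschitz property is Lemma \ref{rotationnorm}: a fixed-space dimension count gives $\|R_\alpha g(R_\beta h)^{-1}\|_{\rk}\le\|gh^{-1}\|_{\rk}+1$, and the parity of the rank norm on $\SO(n)$ (Lemma \ref{identificationranknorm}) improves this to $\le\|gh^{-1}\|_{\rk}$. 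These maps feed directly into Lemma \ref{GeneralContractionLemma}. If you wish to salvage the truncation picture, you need a cut that is rank-Lipschitz in $g$; the paper's compositions $p_{n,k}$, which remove canonical coordinate directions by explicit rotations, can be read as exactly such a choice, made combinatorially rather than spectrally.
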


Finally, we give some examples of non-contractible asymptotic cones in the Appendix.

\section{Preliminaries}

\subsection{Basics on conjugation invariant word norms}

\begin{defi}
Let $G$ be a group. A map $\psi \colon G \rightarrow \R$ is called a \textit{quasimorphism} if there exists a constant $D \geq 0$ such that 
\begin{align*}
|\psi(gh)- \psi(g) - \psi(h)| \leq D \hspace{1mm} \text{for all} \hspace{1mm} g,h \in G.
\end{align*}
The \textit{defect} of $\psi$ is defined to be the smallest number $D(\psi)$ with the above property.
A quasimorphism is \textit{homogeneous} if it satisfies $\psi(g^n)=n \psi(g)$ for all $g \in G$ and all $n \in \Z$.
\end{defi}

By induction we have $| \psi(g_1 \cdots g_n) - \sum_{i=1}^n \psi (g_i) | \leq (n-1)D$ for all $g_1, \dots, g_n \in G$. It follows from the definition that sums and scalar multiples of (homogeneous) quasimorphisms are (homogeneous) quasimorphisms again. It is immediate that homogeneous quasimorphisms vanish on elements of finite order and are unbounded if there exists an element $g \in G$ such that $\psi(g) \neq 0$.    

\begin{defi}
Let $\psi \colon G \to \R$ be a quasimorphism. The \textit{homogenisation} $\bar{\psi} \colon G \to \R$ of $\psi$ is defined to be $\bar{\psi}(g)= \lim_{n \in \N} \frac{\psi(g^n)}{n}$ for all $g \in G$.
\end{defi}

\begin{lemma}[{\cite[p.18]{Calegari}}] \label{HomogenisationQM}
The homogenisation $\bar{\psi}$ of a quasimorphism $\psi \colon G \to \R$ is a homogeneous quasimorphism. Moreover, it satisfies $|\bar{\psi}(g)- \psi(g)| \leq D(\psi)$ for any $g \in G$. 
\end{lemma}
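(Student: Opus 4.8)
The plan is to establish the four assertions in the order (i) existence of the defining limit for each $g\in G$; (ii) the pointwise bound $|\bar{\psi}(g)-\psi(g)|\le D(\psi)$; (iii) the quasimorphism property; and (iv) homogeneity. Throughout I abbreviate $D=D(\psi)$ and, for fixed $g$, write $a_n=\psi(g^n)$. For (i), I would first apply the induction estimate recorded above to the product $(g^n)^m=g^{mn}$ to obtain $|a_{mn}-m\,a_n|\le(m-1)D$, and divide by $mn$ to get $|a_{mn}/(mn)-a_n/n|\le D/n$. Thus for any two indices the quantity $a_{mn}/(mn)$ lies within $D/n$ of $a_n/n$ and within $D/m$ of $a_m/m$, so $|a_n/n-a_m/m|\le D/n+D/m$. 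This shows $(a_n/n)_{n\in\N}$ is Cauchy and hence convergent, its limit being $\bar{\psi}(g)$.

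Assertion (ii) then drops out at once: the induction estimate applied to $g^n$ gives $|a_n-n\psi(g)|\le(n-1)D$, so $|a_n/n-\psi(g)|\le\tfrac{n-1}{n}D$, and letting $n\to\infty$ yields $|\bar{\psi}(g)-\psi(g)|\le D$. The quasimorphism property (iii) becomes cheap thanks to (ii), which is fortunate since $(gh)^n\ne g^nh^n$ in general makes a direct estimate on $\bar{\psi}(gh)$ awkward. Instead I would split
\begin{align*}
\bar{\psi}(gh)-\bar{\psi}(g)-\bar{\psi}(h)=\bigl(\bar{\psi}(gh)-\psi(gh)\bigr)+\bigl(\psi(gh)-\psi(g)-\psi(h)\bigr)+\bigl(\psi(g)-\bar{\psi}(g)\bigr)+\bigl(\psi(h)-\bar{\psi}(h)\bigr),
\end{align*}
and bound each of the four brackets by $D$ using (ii) and the defect inequality, obtaining $D(\bar{\psi})\le 4D$ and in particular finiteness.

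For homogeneity (iv), a nonnegative exponent $k$ is handled by recognising $\bar{\psi}(g^k)=\lim_n \psi(g^{kn})/n=k\lim_n \psi(g^{kn})/(kn)=k\bar{\psi}(g)$, the last equality holding because $(\psi(g^{kn})/(kn))_n$ is a subsequence of $(a_n/n)_n$ and therefore has the same limit. The negative case reduces to proving $\bar{\psi}(g^{-1})=-\bar{\psi}(g)$: since $|\psi(e)|\le D$ (applying the defect inequality to $e\cdot e=e$) and $|\psi(g^n)+\psi(g^{-n})-\psi(e)|\le D$, the sum $\psi(g^n)+\psi(g^{-n})$ stays bounded by $2D$, so dividing by $n$ and passing to the limit forces $\bar{\psi}(g)+\bar{\psi}(g^{-1})=0$. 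Combining the two cases gives $\bar{\psi}(g^k)=k\bar{\psi}(g)$ for all $k\in\Z$.

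I expect the main obstacle to be step (i): the sequence $a_n$ is only \emph{almost} subadditive, so convergence of $a_n/n$ must be argued carefully rather than assumed. The Cauchy estimate above is the cleanest route, though one could equally invoke Fekete's subadditivity lemma applied to $n\mapsto a_n+D$. The remaining subtlety worth flagging is that the four-term bound in (iii) genuinely relies on the pointwise estimate (ii) already being available, so the ordering of the steps is not interchangeable.
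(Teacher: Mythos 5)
Your proposal is correct and follows essentially the standard argument: the paper offers no proof of its own here but cites Calegari, and your steps --- the Cauchy estimate $|a_n/n - a_m/m| \le D/n + D/m$ via the common index $mn$, the pointwise bound $|\bar{\psi}(g)-\psi(g)| \le D$, the four-term splitting for the quasimorphism property, and the $g^n g^{-n}=e$ trick for negative exponents --- are exactly that standard route. The only cosmetic difference is that your four-term decomposition yields $D(\bar{\psi}) \le 4D(\psi)$ rather than the sharper bound $2D(\psi)$ obtained in Calegari by a more careful estimate, but since the lemma only asserts that $\bar{\psi}$ is a quasimorphism, finiteness of the defect suffices and this is immaterial.
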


Homogeneous quasimorphisms are constant on conjugacy classes by \cite[p.19]{Calegari}. A perfect group $G$ is called \textit{uniformly perfect} if there exists a constant $K$ such that every $g \in G$ can be expressed as a product of at most $K$ commutators. It follows by elementary calculation, that a uniformly perfect group $G$ does not admit an unbounded quasimorphism.

\begin{defi}
A function $\nu \colon G \to \R$ is called a \textit{norm} on $G$ if $\nu$ satisfies for all $g, h \in G$ that
\begin{itemize}
\item $\nu(g) \geq 0$, 
\item $\nu(g)=0$ if and only if $g=1$, 
\item $\nu(gh) \leq \nu(g)+ \nu(h)$,
\end{itemize}
If in addition $\nu$ satisfies for all $g,h \in G$ 
\begin{itemize}
\item $\nu(hgh^{-1}))= \nu(g)$, 
\end{itemize}
then it is called \textit{conjugation invariant}. The supremum $\nu(G)= \sup \{\nu(g) \mid g \in G \}$ is called the \textit{diameter} of the norm $\nu$. If $\nu(G)= \infty$, then $\nu$ is called \textit{unbounded}.  If there exists $g \in G$ such that $\lim_{n \to \infty} \frac{\nu(g^n)}{n} > 0$, then $\nu$ is called \textit{stably unbounded}.
\end{defi}

\begin{example}
Let $G$ be a group together with a generating set $S$. The \textit{word norm} generated by $S$ is the norm on $G$ defined by 
\begin{align*}
\nu_S(g)= \min \{ n \mid g= s_1 \cdots s_n \hspace{1mm} \text{where} \hspace{1mm} n \in \N \hspace{1mm} \text{and} \hspace{1mm} s_i \in S \hspace{1mm} \text{for all} \hspace{1mm} i \}.
\end{align*}
If we assume additionally that the set $S$ is invariant under conjugation, then $\nu_S$ is a conjugation invariant norm. 
\end{example}

Note that, if $H$ is a subgroup of $G$ and a normal generating set $S$ of $H$ normally generates $G$ as well, the norms generated by $S$ on $H$ and on $G$ can differ significantly from one another. One example is $SL(2,\mathbb{Z})$ inside $SL(n,\mathbb{Z})$ for $n \geq 3$. Then $SL(2, \mathbb{Z})$ generates all higher $SL(n,\mathbb{Z})$ under conjugation, but $SL(n, \mathbb{Z})$ is bounded for $n \geq 3$ (see for example \cite{Morris}, originally due to \cite{Carter}). However, $\SL(2,\Z)$ admits unbounded conjugation invariant norms since it possesses many unbounded quasimorphisms. In fact, $\SL(2,\Z)$ even admits unbounded quasimorphisms that are invariant under all automorphisms by \cite[Example 6.6]{ich1}. 

\begin{defi}
Two conjugation invariant norms $\|.\|_1$ and $\|.\|_2$ are called \textit{equivalent} if there exist Lipschitz-constants $A,B >0$ such that $\|.\|_1 \leq A \|.\|_2$ and $\|.\|_2 \leq B\|.\|_1$. Furthermore, $\|.\|_1$ and $\|.\|_2$ are \textit{coarsely equivalent} if there exist constants $A > 0$ and $B \geq 0$ such that 
\begin{align*}
\frac{1}{A} \|.\|_1 - B \leq \|.\|_2 \leq A\|.\|_1 + B
\end{align*}
\end{defi}

\begin{lemma}\label{fcgmaximal}
Let $S \subset G$ be a generating set and $\nu_S$ be the conjugation invariant word norm defined by conjugacy classes of $S$. Then any other conjugation invariant norm $\mu \colon G \rightarrow \R$ for which $C_\mu = \sup_{s \in S} \|s\| < \infty$ satisfies $\mu \leq C_\mu \nu_S$. In particular, this always holds if $S$ is finite. 
\end{lemma}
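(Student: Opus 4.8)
The plan is to unwind the definition of the conjugation invariant word norm $\nu_S$ and combine it with the two defining properties of $\mu$ that we are allowed to invoke, namely subadditivity and conjugation invariance. Recall that $\nu_S$ is the word norm associated to the conjugation-closed generating set $S^G = \{ hsh^{-1} \mid s \in S,\ h \in G \}$, so $\nu_S(g)$ counts the minimal number of factors needed to write $g$ as a product of conjugates of elements of $S$.

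First I would fix $g \in G$. If $g = 1$ then $\nu_S(g)=0$ and $\mu(g)=0$, so the inequality is trivial; hence assume $g \neq 1$ and set $n = \nu_S(g) \geq 1$. Using a minimal decomposition I write $g = c_1 \cdots c_n$, where each $c_i = h_i s_i h_i^{-1}$ for suitable $s_i \in S$ and $h_i \in G$. Applying subadditivity of $\mu$ to this product gives $\mu(g) \leq \sum_{i=1}^n \mu(c_i)$, and conjugation invariance collapses each term, since $\mu(c_i) = \mu(h_i s_i h_i^{-1}) = \mu(s_i) \leq C_\mu$. Summing over the $n$ factors yields $\mu(g) \leq n\, C_\mu = C_\mu\, \nu_S(g)$, which is the claimed bound. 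The final assertion follows because a finite set $S$ automatically satisfies $C_\mu = \max_{s \in S}\mu(s) < \infty$.

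There is essentially no real obstacle here: the statement expresses that, up to the normalising constant $C_\mu$, the norm $\nu_S$ is the \emph{largest} conjugation invariant norm whose values on $S$ are controlled. The only point requiring care is that the generators of $\nu_S$ are the conjugates of the $s_i$ rather than the $s_i$ themselves, which is exactly why conjugation invariance of $\mu$ (and not merely subadditivity) is needed to bound each factor by $C_\mu$.
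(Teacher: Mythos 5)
Your proof is correct and is essentially the paper's own argument: write $g$ as a minimal product of $n=\nu_S(g)$ conjugates of elements of $S$, apply subadditivity, and bound each factor by $C_\mu$ via conjugation invariance. Your version merely spells out the conjugates $h_i s_i h_i^{-1}$ explicitly and handles $g=1$ separately, which the paper leaves implicit, so there is nothing to change.
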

 
\begin{proof}
Given $g \in G$ with $\nu_S(g)=n$ we write $g=s_1 \cdots s_n$ for $s_i$ conjugates of elements in $S$ and calculate
\begin{align*}
\mu(g)=\mu(s_1 \cdots s_n) \leq \sum_{i=1}^n \mu(s_i) \leq nC_\mu = C_\mu \nu(g).
\end{align*}
\end{proof} 

\begin{defi}
A group $G$ is called \textit{bounded} if $G$ does not admit a conjugation invariant norm that has infinite diameter.
\end{defi}

By Lemma \ref{fcgmaximal} any finitely normally generated group $G$ is bounded if there exists one word norm generated by finitely many conjugacy classes that has finite diameter on $G$.
Another immediate consequence of Lemma \ref{fcgmaximal} is the following lemma.

\begin{lemma}\label{equiva}
On a group $G$ any two generating sets $S$ and $R$ consisting of finitely many conjugacy classes define equivalent conjugation invariant word norms $\nu_S$ and $\nu_R$. \qed
\end{lemma}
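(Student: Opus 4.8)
The plan is to derive the statement as an immediate corollary of Lemma \ref{fcgmaximal}, applying it twice, once in each direction. The key observation is that a conjugation invariant word norm generated by finitely many conjugacy classes takes only finitely many values on the generators of any other such set, so the relevant constants are automatically finite.

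First I would fix the two conjugation invariant word norms $\nu_S$ and $\nu_R$ and note that both are genuine conjugation invariant norms, since $S$ and $R$ are each invariant under conjugation. To bound $\nu_R$ by $\nu_S$, I apply Lemma \ref{fcgmaximal} with generating set $S$ and the conjugation invariant norm $\mu = \nu_R$. The hypothesis requires $C_{\nu_R} = \sup_{s \in S} \nu_R(s)$ to be finite. This is the one place where the finiteness assumption enters: since $\nu_R$ is constant on conjugacy classes and $S$ consists of only finitely many conjugacy classes, the supremum is really a maximum over finitely many values, hence finite. Lemma \ref{fcgmaximal} then yields $\nu_R \leq C_{\nu_R}\,\nu_S$.

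By the symmetric argument, interchanging the roles of $S$ and $R$, I apply Lemma \ref{fcgmaximal} with generating set $R$ and norm $\mu = \nu_S$; the constant $C_{\nu_S} = \sup_{r \in R} \nu_S(r)$ is finite for the same reason, giving $\nu_S \leq C_{\nu_S}\,\nu_R$. Setting $A = C_{\nu_R}$ and $B = C_{\nu_S}$ produces the two Lipschitz bounds $\nu_R \leq A\,\nu_S$ and $\nu_S \leq B\,\nu_R$ demanded by the definition of equivalence, completing the proof.

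There is essentially no serious obstacle here; the only point requiring any care is verifying the finiteness of $C_{\nu_R}$ and $C_{\nu_S}$, which hinges precisely on combining conjugation invariance (so that each norm is constant on conjugacy classes) with the hypothesis that $S$ and $R$ decompose into finitely many conjugacy classes. I therefore expect the whole argument to occupy only a few lines.
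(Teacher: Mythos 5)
Your proposal is correct and matches the paper's intent exactly: the paper states Lemma \ref{equiva} with no written proof, labelling it ``another immediate consequence of Lemma \ref{fcgmaximal},'' and your two symmetric applications of that lemma, together with the observation that conjugation invariance plus the finiteness of the number of conjugacy classes makes each supremum a maximum over finitely many (finite) values, is precisely the argument being left to the reader. Nothing is missing.
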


This means that although generating sets consisting of finitely many conjugacy classes usually have infinite cardinality, there is an analogue of the classic statement that any two finite generating sets on a group $G$ will yield quasi-isometric Cayley graphs. However, if one relaxes this assumption further to allow generating sets $S$ consisting of infinitely many conjugacy classes, many problems arise even if the group remains unbounded with respect to $S$. A particularly pathological example is given in Proposition \ref{Z-torsion}.

\begin{lemma} \label{Autqm implies Autnorm}
Let $\psi \colon G \to \R$ be a quasimorphism with unbounded image, but bounded on a generating set $S$ of $G$. Then the word norm $\|.\|_S$ is stably unbounded on $G$.
\end{lemma}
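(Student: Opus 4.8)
The plan is to reduce to the homogeneous case and then play the linear growth of a homogeneous quasimorphism along a cyclic subgroup against the subadditivity of the word norm. First I would replace $\psi$ by its homogenisation $\bar{\psi}$. By Lemma \ref{HomogenisationQM} we have $|\bar{\psi}(g) - \psi(g)| \leq D(\psi)$ for all $g \in G$, so $\bar{\psi}$ is again bounded on $S$, with bound $M := \sup_{s \in S}|\psi(s)| + D(\psi)$; and since a homogeneous quasimorphism satisfies $\bar{\psi}(s^{-1}) = -\bar{\psi}(s)$, the same bound holds on $S^{-1}$. The crucial observation is that $\bar{\psi}$ cannot vanish identically: if $\bar{\psi} \equiv 0$, the estimate above would give $|\psi(g)| \leq D(\psi)$ for every $g$, contradicting the hypothesis that $\psi$ has unbounded image. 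Hence there exists some $h \in G$ with $\bar{\psi}(h) \neq 0$.

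Next I would establish a Lipschitz-type bound of $\bar{\psi}$ against the word norm. Writing an arbitrary $g \in G$ as a shortest expression $g = s_1 \cdots s_n$ with $n = \|g\|_S$ and each $s_i \in S \cup S^{-1}$, the telescoping inequality for quasimorphisms (the displayed consequence of the definition, applied to $\bar{\psi}$ with defect $D(\bar{\psi})$) gives
\begin{equation*}
|\bar{\psi}(g)| \leq \sum_{i=1}^n |\bar{\psi}(s_i)| + (n-1)D(\bar{\psi}) \leq \bigl(M + D(\bar{\psi})\bigr)\,\|g\|_S.
\end{equation*}
Setting $C := M + D(\bar{\psi})$, this reads $|\bar{\psi}(g)| \leq C\,\|g\|_S$ for all $g \in G$, and $C > 0$ because $\bar{\psi}$ does not vanish identically.

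Finally I would apply this to the powers of the element $h$. Homogeneity gives $\bar{\psi}(h^n) = n\,\bar{\psi}(h)$, so the bound forces
\begin{equation*}
\|h^n\|_S \geq \frac{|\bar{\psi}(h^n)|}{C} = \frac{|\bar{\psi}(h)|}{C}\, n .
\end{equation*}
Since $\|.\|_S$ is subadditive, $\|h^{m+n}\|_S \leq \|h^m\|_S + \|h^n\|_S$, the limit $\lim_{n \to \infty}\|h^n\|_S/n$ exists by Fekete's subadditivity lemma and is bounded below by $|\bar{\psi}(h)|/C > 0$. By the definition in Section 2 this means exactly that $\|.\|_S$ is stably unbounded on $G$.

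The only genuinely delicate point is the non-vanishing of $\bar{\psi}$; everything else is a direct computation. It is precisely here that the hypothesis of \emph{unbounded image} enters: it guarantees that $\psi$ is not a bounded function, which via the homogenisation estimate is equivalent to $\bar{\psi} \not\equiv 0$, and hence to the existence of the element $h$ driving the whole argument. A minor point worth checking explicitly is that boundedness of $\psi$ on $S$ carries over to $S^{-1}$, so that the word-norm estimate is legitimate; for the homogeneous representative this is automatic from $\bar{\psi}(s^{-1}) = -\bar{\psi}(s)$.
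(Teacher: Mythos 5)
Your proposal is correct and follows essentially the same route as the paper's proof: homogenise $\psi$ via Lemma \ref{HomogenisationQM}, use the telescoping defect estimate to get $|\bar{\psi}(g)| \leq C\,\|g\|_S$, and then exploit homogeneity on powers of an element where $\bar{\psi}$ is nonzero. The extra details you supply --- the explicit non-vanishing argument for $\bar{\psi}$ (which is exactly where the unboundedness hypothesis enters), the bound on $S^{-1}$, and Fekete's lemma for the existence of the stable limit --- are careful fillings of steps the paper leaves implicit, not a different argument.
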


\begin{proof}
By Lemma \ref{HomogenisationQM} we can assume that $\psi$ is homogeneous. Let $K$ be a positive bound for the absolute value of $\psi$ on $S$. Write $g \in G$ as a product of these generators $g=s_1 \cdots s_n$ for some $n \in \N$ where $s_i \in S$. Then 
\begin{align*}
| \psi(g)|  = | \psi(s_1 \cdots s_n) | \leq |\psi(s_1)|+ \dots + |\psi(s_n)| \hspace{-0.1mm}+(n-1)D(\psi) 
\leq n(K + D(\psi)) 
\end{align*}
shows that $\|g\|_{S} \geq \frac{|\psi(g)|}{K+D(\psi)}$ for all $g \in G$. Since $\psi$ is homogeneous, $\|g^k\|_{S} \geq k \cdot \frac{|\psi(g)|}{K+D(\psi)}$  for all $k \in \N$, $g \in G$. Since $\psi$ does not vanish everywhere, $\|.\|_{S}$ is stably unbounded on $G$. 
\end{proof} 

\subsection{Conjugation invariant norms on the infinite symmetric group}

Define the infinite symmetric group $\Sym_\infty$ to be the colimit over the inclusions of the symmetric groups $\Sym_n \hookrightarrow \Sym_{n+1}$. That is, $\Sym_\infty$ is the group of finitely supported permutations of the natural numbers. Analogously, define the infinite alternating group $\A_\infty$ as the group of finitely supported alternating permutations. Since $\A_n$ is simple for any $n$ it follows that the colimit $\A_\infty$, which has index 2 in $\Sym_\infty$, is as well. Recall, that every permutation has a unique \textit{cycle decomposition}. We follow the classical convention that in cycle notation cycles are performed from \textit{left to right}, that is $(x \enspace y)(y \enspace z) = (x \enspace z \enspace y)$.

\begin{defi}
Define the support norm $\|.\|_{\rm supp}$ on $\Sym_\infty$ and $\A_\infty$ by
\begin{align*}
 \|\sigma\|_{\rm supp} = 
 \begin{cases}
 | \lbrace n \in \mathbb{N}: \enspace \sigma(n) \neq n \rbrace | &  \text{if} \hspace*{2mm} \sigma \neq id, \\
 0 & \text{if} \hspace*{2mm} \sigma=id.
 \end{cases}
\end{align*}
\end{defi}

It is elementary to verify that this defines a conjugation invariant norm on both $\Sym_\infty$ and $\A_\infty$. Moreover, the inclusion $\A_\infty \hookrightarrow \Sym_\infty$ is an isometric embedding and quasi-surjective with constant 1 with respect to the support norms. Hence, it will follow in Proposition \ref{qi-invariant} that $\Cone_\omega(\Sym_\infty,\|.\|_{\rm supp})=\Cone_\omega(A_\infty, \|.\|_{\rm supp})$.

\begin{remark}
If one considers $\Sym_\infty$ as a subgroup of the group $\GL_\infty(k)$ arising as the colimit over the finite general linear groups for any field $k$, then the restriction of the rank norm defined by $\|g\|_{\rk} = \rk(g -id)$ for any $g \in GL_\infty$ is equivalent to the support norm on $\Sym_\infty$. In fact, it is clear that for any $\sigma \in \Sym_\infty$ we have $\|\sigma \|_{\rk} \leq \| \sigma \|_{\rm supp}$ and Lemma \ref{1} can be used to obtain $\|.\|_{\rk} \geq \frac{1}{3} \|.\|_{\supp}$. This implies the equivalence of the norms $\|.\|_{\rk}$ and $\|.\|_{\supp}$ on $\Sym_\infty$. 
\end{remark}

\begin{defi}
Define the transposition norm $\|.\|_{tr} \colon \Sym_\infty \to \R_{\geq 0}$ to be the word norm generated by the set $T$ consisting of all transpositions. Since $T$ consist of a single conjugacy class in $\Sym_\infty$, the norm $\|.\|_{tr}$ is conjugation invariant.
\end{defi}

\begin{lemma}\label{symequiva}
The two norms $\|.\|_{tr}$ and $\|.\|_{\rm supp}$ on $S_\infty$ satisfy 
\begin{align*}
\|.\|_{tr} \leq \|.\|_{\rm supp} \leq 2\|.\|_{tr}.
\end{align*}
Consequently, $\|.\|_{\rm supp}$ is equivalent to any word norm $\nu$ on $\Sym_\infty$ that is generated by finitely many conjugacy classes.
\end{lemma}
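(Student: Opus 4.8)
The plan is to reduce both norms to elementary functions of the cycle structure and then compare them termwise. For a nontrivial $\sigma \in \Sym_\infty$ write its disjoint cycle decomposition as a product of $k$ cycles of lengths $\ell_1, \dots, \ell_k$, each $\ell_i \geq 2$. First I would record that the support norm equals the total length of the nontrivial cycles, $\|\sigma\|_{\supp} = \sum_{i=1}^k \ell_i$, since disjoint cycles move disjoint point sets and a cycle of length $\ell$ moves exactly $\ell$ points. The real content is the companion formula for the transposition norm, namely $\|\sigma\|_{tr} = \sum_{i=1}^k (\ell_i - 1)$.

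Establishing this second formula is where the argument must be done carefully. For the upper bound one writes each $\ell$-cycle as a product of $\ell-1$ transpositions supported on its own entries (telescoping adjacent transpositions, respecting the paper's left-to-right convention), so that $\sigma$ is a product of $\sum(\ell_i-1)$ transpositions and $\|\sigma\|_{tr} \leq \sum(\ell_i-1)$. For the lower bound I would invoke the standard cycle-counting invariant: given $\sigma = \tau_1 \cdots \tau_N$ with each $\tau_j$ a transposition, let $P$ be the finite union of the supports of the $\tau_j$, so $\supp(\sigma) \subseteq P$ and the whole computation takes place in $\Sym_P$. Multiplying a permutation by a transposition changes the number of cycles of its decomposition on $P$ (fixed points counted as $1$-cycles) by exactly $\pm 1$. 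Starting from the identity, which has $|P|$ cycles, and ending at $\sigma$, which has $k + (|P| - \sum_i \ell_i)$ cycles, forces $N \geq |P| - \bigl(k + |P| - \sum_i \ell_i\bigr) = \sum_i(\ell_i - 1)$. This is precisely the step that rules out any shortcut from auxiliary transpositions disjoint from $\supp(\sigma)$, and I expect it to be the main obstacle; everything else is bookkeeping.

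With both formulas in hand the stated inequalities are immediate termwise comparisons summed over $i$: from $\ell_i - 1 \leq \ell_i$ we get $\|\sigma\|_{tr} \leq \|\sigma\|_{\supp}$, and from $\ell_i \leq 2(\ell_i - 1)$, valid for every $\ell_i \geq 2$, we get $\|\sigma\|_{\supp} \leq 2\|\sigma\|_{tr}$. This yields the chain $\|.\|_{tr} \leq \|.\|_{\supp} \leq 2\|.\|_{tr}$, so in particular the two norms are equivalent.

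For the concluding assertion I would note that the set $T$ of all transpositions is a single conjugacy class that generates $\Sym_\infty$, so $\|.\|_{tr}$ is a word norm generated by finitely many (in fact one) conjugacy classes. Lemma \ref{equiva} then gives that $\|.\|_{tr}$ is equivalent to any word norm $\nu$ on $\Sym_\infty$ generated by finitely many conjugacy classes; composing this with the equivalence $\|.\|_{tr} \sim \|.\|_{\supp}$ just proved shows that $\|.\|_{\supp}$ is equivalent to every such $\nu$, as claimed.
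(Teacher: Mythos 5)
Your proof is correct, but it takes a genuinely different route from the paper. The paper never computes $\|.\|_{tr}$ exactly: it proves $\|\sigma\|_{tr} \leq \|\sigma\|_{\supp}$ by a direct induction on the support size (picking $x$ with $\sigma(x)=y\neq x$ and $z=\sigma^{-1}(x)$, it peels off one or two transpositions while shrinking the support by at least $2$), and it obtains $\|\sigma\|_{\supp} \leq 2\|\sigma\|_{tr}$ in one line from subadditivity, since each transposition has support exactly $2$. You instead establish the exact formula $\|\sigma\|_{tr} = \sum_i(\ell_i-1)$, whose lower bound rests on the cycle-counting parity invariant (multiplying by a transposition splits or merges cycles, changing their number by exactly $\pm 1$), carried out correctly in the finite group $\Sym_P$ so that auxiliary transpositions off $\supp(\sigma)$ cannot help. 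The trade-off is clear: the paper's argument is shorter and entirely elementary, and in particular gets the factor-$2$ inequality for free, where your route needs the full lower bound even for that direction; your route costs more but buys strictly more, namely the exact value $\|\sigma\|_{tr} = \|\sigma\|_{\supp} - c(\sigma)$ (with $c(\sigma)$ the number of nontrivial cycles), hence optimal constants rather than mere equivalence. Your handling of the final assertion — observing that $T$ is a single conjugacy class and invoking Lemma \ref{equiva} — coincides with the paper's.
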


\begin{proof}
Since the support of every transposition equals 2, the second inequality is immediate. We now prove $\|.\|_{tr} \leq \|.\|_{\rm supp}$ inductively on the size of the support. Let $\sigma \in S_\infty$ with $\|\sigma \|_{\rm supp}=n$ be given. If $n \leq 1$ then $\sigma$ is in fact the identity and the inequality holds trivially. Now assume $n > 1$. Let $x \in \N$ be such that $\sigma(x)=y \neq x$. and let $z= \sigma^{-1}(x)$. Then $z \neq x$. Recall, that we use the convention of multiplying permutations from left to right, which mean first the left one, then the right one. If $z=y$, write $\sigma = (x \enspace y) \sigma^\prime$ for some $\sigma^\prime$ with $\|\sigma^\prime\|_{\rm supp} \leq n-2$ and apply the induction hypothesis. If $z \neq y$, consider $(y \enspace z) \sigma $ which has the property that it maps $y$ to $x$ and $x$ to $y$ whence $(y \enspace z) \sigma = (x \enspace y) \sigma^\prime $ for some $\sigma^\prime$ with $\|\sigma^\prime\|_{\rm supp} \leq n-2$ which shows by induction that $\|\sigma\|_{tr} = \|(y \enspace z) (x \enspace y) \sigma^\prime \|_{tr} \leq 2 + n-2 = n$. Therefore, $\|.\|_{\rm supp}$ and $\|.\|_{tr}$ are equivalent and the result follows from Lemma \ref{equiva}.
\end{proof}

\begin{defi}
Any two 3-cycles are conjugate in $\A_\infty$ and the set of 3-cycles normally generates $\A_\infty$. Define $\|.\|_{3}$ to denote the conjugation invariant word norm on $\A_\infty$ generated by the single conjugacy class of 3-cycles.
\end{defi}

The following lemma verifies that under the restriction of conjugation invariant norms to the subgroup $\A_\infty$ of $\Sym_\infty$ we still stay within the equivalence class of word norms generated by finitely many conjugacy classes. Hence, by \cite[Lemma 10.83]{KapDru}, which will be recalled in Proposition \ref{qi-invariant}, it will follow that all asymptotic cones of $\A_\infty$ and $\Sym_\infty$ are Lipschitz equivalent as metric spaces and isomorphic as groups.

\begin{lemma}\label{alternatingequiva}
The two norms $\|.\|_3$ and $\|.\|_{tr}$ on $A_\infty$ are equivalent satisfying $\|.\|_{tr} \leq 2\|.\|_{3}$ and $\|.\|_{3} \leq \frac{3}{2}\|.\|_{tr}$. Moreover, $\|.\|_{\rm supp}$ is equivalent to any word norm $\nu$ that is generated by finitely many conjugacy classes on $\A_\infty$.
\end{lemma}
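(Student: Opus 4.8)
The plan is to establish the two quantitative inequalities first and then deduce the general equivalence statement by chaining them with Lemma~\ref{symequiva} and Lemma~\ref{equiva}. For the bound $\|.\|_{tr} \leq 2\|.\|_{3}$ I would use the elementary identity that, under the left-to-right convention, every product of two transpositions sharing a point is a $3$-cycle, e.g.\ $(a\ b)(b\ c) = (a\ c\ b)$, so conversely every $3$-cycle factors as a product of two transpositions. Writing a given $\sigma \in \A_\infty$ as a minimal product of $\|\sigma\|_3$ three-cycles and expanding each factor into two transpositions immediately yields a transposition expression of length $2\|\sigma\|_3$, hence $\|\sigma\|_{tr} \leq 2\|\sigma\|_3$.

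For the reverse bound the key observation, which I expect to be the only real content, is that a product of two transpositions can always be rewritten inside $\A_\infty$ using few $3$-cycles: if the two transpositions share a point their product is a single $3$-cycle, and if they are disjoint their product is a product of two $3$-cycles (for instance $(a\ b\ c)(b\ c\ d)=(a\ c)(b\ d)$ after relabelling). Since $\sigma \in \A_\infty$ is an even permutation, $\|\sigma\|_{tr}$ is even; writing a minimal transposition expression $\sigma = \tau_1 \cdots \tau_{2m}$ and grouping the factors into the $m$ consecutive pairs $(\tau_1\tau_2),\dots,(\tau_{2m-1}\tau_{2m})$, each pair contributes at most two $3$-cycles, so $\|\sigma\|_3 \leq 2m = \|\sigma\|_{tr} \leq \tfrac32 \|\sigma\|_{tr}$. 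A point to handle with a little care here is that the transpositions themselves lie in $\Sym_\infty \setminus \A_\infty$, so $\|.\|_{tr}$ on $\A_\infty$ must be read as the restriction of the transposition norm of $\Sym_\infty$; the regrouping above nonetheless keeps every intermediate product inside $\A_\infty$, so the resulting $3$-cycle count is a legitimate $\A_\infty$-word length.

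For the final claim I would observe that $\|.\|_3$ is itself a conjugation invariant word norm on $\A_\infty$ generated by a single conjugacy class, so by Lemma~\ref{equiva} every word norm $\nu$ generated by finitely many conjugacy classes on $\A_\infty$ is equivalent to $\|.\|_3$. The first part gives $\|.\|_3 \sim \|.\|_{tr}$, while Lemma~\ref{symequiva} gives $\|.\|_{tr} \leq \|.\|_{\supp} \leq 2\|.\|_{tr}$, valid in particular on the subgroup $\A_\infty$. Chaining these equivalences yields $\nu \sim \|.\|_3 \sim \|.\|_{tr} \sim \|.\|_{\supp}$, which is exactly the asserted equivalence.

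The main obstacle is thus concentrated entirely in the pair-of-transpositions lemma together with the parity bookkeeping that lets one group a minimal transposition word into pairs; once this is in place both named inequalities drop out, and the remaining equivalence is a formal consequence of Lemma~\ref{equiva} and Lemma~\ref{symequiva}, which are already available.
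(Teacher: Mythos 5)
Your proposal is correct and follows the same overall strategy as the paper: expand each $3$-cycle into two transpositions for $\|.\|_{tr} \leq 2\|.\|_3$, then use evenness of $\|\sigma\|_{tr}$ to group a minimal transposition word into consecutive pairs and rewrite each pair in $3$-cycles. The one genuine difference is the disjoint-pair case. The paper picks an auxiliary point $z$ outside both supports and uses the identity $(x_1\ y_1)(x_2\ y_2) = (z\ y_1\ x_1)(x_2\ z\ y_1)(y_2\ x_2\ z)$, so each pair costs at most three $3$-cycles, yielding the stated constant $\tfrac32$. You instead use $(a\ b\ c)(b\ c\ d) = (a\ c)(b\ d)$, which is valid under the paper's left-to-right convention (check: $a \mapsto b \mapsto c$, $b \mapsto c \mapsto d$, $c \mapsto a$, $d \mapsto b$) and needs no auxiliary point; each pair then costs at most two $3$-cycles, so you actually prove the sharper inequality $\|.\|_3 \leq \|.\|_{tr}$, of which the lemma's $\|.\|_3 \leq \tfrac32\|.\|_{tr}$ is a trivial weakening. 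Your handling of the subtlety that transpositions lie outside $\A_\infty$ --- reading $\|.\|_{tr}$ on $\A_\infty$ as the restriction from $\Sym_\infty$, while noting each pair product lies in $\A_\infty$ so the $3$-cycle count is a legitimate $\A_\infty$-word length --- is correct and is implicitly what the paper does as well. The final equivalence via Lemma~\ref{equiva} (applied to $\|.\|_3$, which is generated by the single conjugacy class of $3$-cycles) chained with Lemma~\ref{symequiva} matches the paper's concluding argument.
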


\begin{proof}
Given $\sigma \in \A_\infty$ with $\|\sigma\|_{3}=n$. Write $\sigma=c_1...c_n$ as a product of 3-cycles $c_i$ and calculate: 
\[
\|\sigma\|_{tr} = \|c_1...c_n\|_{tr} \leq \sum_{i=1}^n \|c_i\|_{tr} \leq 2n=2\|\sigma\|_{3} \hspace{1mm},
\]
where we just used that any 3-cycle is the product of two transpositions. 

For the other inequality we first recall that 
the product of any two transpositions $\tau_1$, $\tau_2$ satisfies $\|\tau_1 \tau_2\|_{3} \leq 3$. Namely, since every transposition has support norm equal to two, we have that if the support of the two transpositions is equal, then their product is the identity. If their supports do not agree and are not disjoint either then the product yields a 3-cycle. If the supports are disjoint then we can write $\tau_1=(x_1 \enspace y_1)$ and $\tau_2=(x_2 \enspace y_2)$ and pick $z \in \mathbb{N}$ different from all of them. We have that
\begin{align*}
\tau_1 \tau_2 = (x_1 \enspace y_1) (x_2 \enspace y_2) = (z \enspace y_1 \enspace x_1)(x_2 \enspace z \enspace y_1 )(y_2 \enspace x_2 \enspace z)
\end{align*}
 is a product of three 3-cycles. So now suppose that
we are given $\sigma \in \A_\infty$ with $\|\sigma\|_{tr}=2n$. Then $\sigma=\tau_1...\tau_{2n}$ and the following calculation shows the other inequality
\[
\|\sigma\|_{3} =\|\tau_1...\tau_{2n}\|_{3} \leq \sum_{i=1}^{n}\|\tau_{2i-1} \tau_{2i} \|_{3} \leq 3n= \frac{3}{2}\|\sigma\|_{tr} \hspace{1mm}. 
\]
The second statement follows from the equivalence of $\|.\|_{tr}$ and $\|.\|_{\rm supp}$ on $\Sym_\infty$ together with the fact that all word norms on $\A_\infty$ generated by finitely many conjugacy classes are equivalent by Lemma \ref{equiva}.
\end{proof}

\subsection{Ultrafilters}

\begin{defi}
Let $I$ be a set. A \textit{filter} $\omega$ on $I$ is a set $\omega$ consisting of subsets of $I$ such that
\begin{enumerate}[label=(\roman*)]
\item $\emptyset \notin \omega$ ,
\item $A \subset B \subset I$ and $A \in \mathfrak{B}$ implies $B \in \omega$ , 
\item $A,B \in \omega$ implies $A \cap B \in \omega$ . 
\end{enumerate}

The filter $\omega$ is called an \textit{ultrafilter} if it additionally satisfies

\begin{enumerate}[label=(\roman*), resume]
\item$\forall A \subset I$ either $A \in \omega$ or $(I \setminus A) \in \omega$ .
\end{enumerate}
\end{defi}

\begin{defi}
Let $\omega$ be a non-principal ultrafilter on the set $I$ and let $Y$ be a topological space. Let $f \colon I \rightarrow Y$ be a function. Then a point $y \in Y$ is called ultralimit of the function $f$, denoted by $y= \lim_\omega f(i)$ if it satisfies that for all $\epsilon > 0$ we have 
\begin{align*}
\lbrace i \in I \hspace{2mm} : \hspace{2mm} d(f(i),y) < \epsilon \rbrace \in \omega.
\end{align*}
From now onwards we will always take $I= \N$, identify the function $f \colon \N \to Y$ with its image sequence $(x_n)_n$ and just write $y=\lim_\omega x_n $ for the ultralimit. 
\end{defi}

\begin{remark}
Given $p \in I$ one can define an ultrafilter by setting $A \in \omega$ iff $p \in A$. Filters arising in this way are called principal. For such a princial ultrafilter the limit of every function $f \colon I \to Y$ is the image $f(p)$ (compare \cite[p.339]{KapDru}). 
\end{remark}

We will require all ultrafilters to be \textit{non-principal} from now onwards. The following properties of ultralimits are well-known (see e.g. \cite[p.338]{KapDru}). 

\begin{theorem}
Let $\omega$ be a non-principal ultrafilter on $\N$ and let $Y$ be a compact metric space. Then: 
\begin{enumerate}[label=(\roman*)]
\item  For every function $f \colon \N \rightarrow Y$ the ultralimit exists and is unique , 
\item For any convergent function $f \colon \N \rightarrow Y$  the ultralimit agrees with the usual limit, 
\item The ultralimit of any function $f \colon \N \rightarrow Y$ is an accumulation point of $f$,
\item If $g \colon Y \rightarrow Z$ is a continuous function to a metric space $Z$ then for every $f \colon \N \rightarrow Y$ it holds that
$ \lim_\omega g(f(i))= g \big ( \lim_\omega f(i) \big ) $.
\end{enumerate}
\end{theorem}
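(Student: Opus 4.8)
The plan is to treat all four assertions as consequences of the ultrafilter axioms (i)--(iv) in the definition, isolating first a short combinatorial lemma about non-principal ultrafilters and then feeding it into each part. The lemma I would record at the outset is this: a non-principal ultrafilter $\omega$ on $\N$ contains no finite set (hence, by axiom (iv), it contains every cofinite set), and whenever $\N = A_1 \cup \dots \cup A_k$ is a finite cover, some $A_j \in \omega$. The first claim holds because if a finite $A \in \omega$, then partitioning $A$ into singletons and applying the dichotomy (iv) repeatedly forces some $\{p\} \in \omega$, which (as in the Remark on principal ultrafilters) makes $\omega$ principal; the second follows by induction on $k$ from (iv) together with upward closure (ii). These two facts are the only inputs beyond the metric geometry of $Y$.

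For \emph{uniqueness} in (i) I would argue by contradiction: if $y \neq y'$ were both ultralimits, set $\epsilon = \tfrac13 d(y,y') > 0$, so that $A = \{i : d(f(i),y) < \epsilon\}$ and $A' = \{i : d(f(i),y') < \epsilon\}$ both lie in $\omega$. By axiom (iii) their intersection lies in $\omega$, hence is nonempty, and choosing $i$ in it gives $d(y,y') \leq d(y,f(i)) + d(f(i),y') < 2\epsilon < d(y,y')$, a contradiction. The \emph{existence} half is the substantive step, and it is where compactness of $Y$ enters through total boundedness and completeness. I would build a nested sequence $\N = S_0 \supseteq S_1 \supseteq \cdots$ of members of $\omega$ with $\diam f(S_n) \to 0$: given $S_n \in \omega$, cover $Y$ by finitely many balls of radius $2^{-(n+1)}$, so intersecting $S_n$ with their finitely many $f$-preimages exhibits $S_n$ as a finite union; the finite-cover lemma selects one piece $S_{n+1} \in \omega$ with $\diam f(S_{n+1}) \leq 2^{-n}$. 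Picking $y_n \in f(S_n)$ yields a Cauchy sequence converging by completeness to some $y \in Y$, and one checks that for each $\epsilon$ the set $\{i : d(f(i),y)<\epsilon\}$ contains some $S_n$ and so lies in $\omega$ by upward closure; thus $y = \lim_\omega f$.

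The remaining parts are bookkeeping with the opening lemma. For (ii), if $f(i) \to L$ in the ordinary sense then $\{i : d(f(i),L) < \epsilon\}$ is cofinite for every $\epsilon$, hence in $\omega$, so $L$ is an ultralimit and by uniqueness the only one. For (iii), each $\{i : d(f(i),y) < \epsilon\} \in \omega$ is infinite (no member of a non-principal ultrafilter is finite), so every ball about $y$ captures infinitely many terms $f(i)$, which is exactly the statement that $y$ is an accumulation point. For (iv), writing $y = \lim_\omega f(i)$ and using continuity of $g$ at $y$, for each $\epsilon$ there is $\delta>0$ with $\{i : d(f(i),y)<\delta\} \subseteq \{i : d(g(f(i)),g(y))<\epsilon\}$; the left set is in $\omega$, hence so is the right set, giving $g(y) = \lim_\omega g(f(i))$. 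I note that the uniqueness argument above used only that the target is a metric space, so it applies verbatim in $Z$ and justifies calling $g(y)$ \emph{the} ultralimit.

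The only genuine obstacle is the existence construction in (i): marrying the finite-cover property of $\omega$ with total boundedness to produce the nested sets $S_n$, and then extracting the limit via completeness. Everything else reduces to upward closure, finite intersections, and the dichotomy applied to explicit $\epsilon$-balls.
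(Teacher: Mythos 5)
Your proposal is correct and complete, but note that the paper itself offers no proof of this proposition: it is stated as a well-known fact with a citation to \cite[p.338]{KapDru}, so there is no in-paper argument to match against. Your route for the substantive part (existence in (i)) differs from the standard one in the cited source: Drutu--Kapovich-style arguments establish existence for maps into an arbitrary compact \emph{topological} space by observing that the family $\{\overline{f(S)} : S \in \omega\}$ of closed sets has the finite intersection property (any two members intersect because $\omega$ is closed under intersection and $\emptyset \notin \omega$), so compactness yields a common point, which one then checks is the ultralimit. Your argument instead uses the metric structure explicitly --- total boundedness to run the finite-cover lemma on balls of radius $2^{-(n+1)}$, producing nested $S_n \in \omega$ with $\diam f(S_n) \to 0$, then completeness to extract the limit of the Cauchy sequence $y_n \in f(S_n)$. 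The FIP argument is shorter and more general; yours is more constructive and makes visible exactly where compactness enters (total boundedness plus completeness), which is well suited to the metric setting of the statement. The combinatorial preliminaries are all sound: nonemptiness of $A \cap A'$ in the uniqueness argument follows from filter axioms (i) and (iii), and your two lemmata (no finite sets in a non-principal ultrafilter; finite covers of $\N$ meet $\omega$) are correctly derived from the dichotomy and upward closure.

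One pedantic patch: you state the finite-cover lemma for covers of $\N$ but apply it to the finite cover of $S_n \in \omega$ by the sets $S_n \cap f^{-1}(B)$. This needs the (equally easy) variant for covers of a member of $\omega$ --- either adjoin $\N \setminus S_n$ to the cover and note $\N \setminus S_n \notin \omega$, or argue directly that if no piece lay in $\omega$ then the intersection of their complements with $S_n$ would exhibit $\emptyset \in \omega$. Parts (ii)--(iv) are exactly the standard bookkeeping, and your closing remark that uniqueness needs only a metric target (so that $g(y)$ in (iv) is legitimately \emph{the} ultralimit in $Z$, without compactness of $Z$) is a point worth making explicit, as you did.
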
 

Moreover, ultralimits satisfy the usual rules of calculus \cite[p.338]{KapDru}. For example, let $(a_n)_n , (b_n)_n \subset \R$ be two sequences such that $\lim_\omega a_n$ and $\lim_\omega b_n$ are both real numbers. Then 
\begin{align*}
\lim_\omega a_n + b_n & = \big ( \lim_\omega a_n \big ) + \big( \lim_\omega b_n \big),
& \lim_\omega a_n b_n  = \big ( \lim_\omega a_n \big ) \cdot \big( \lim_\omega b_n \big).
\end{align*} 
Also, if two sequences $(x_n)_n, (y_n)_n \subset Y$ agree for a subset of indices $A \in \omega$, then the ultralimits $\lim_\omega x_n = \lim_\omega y_n$ agree. As an example of how to work with ultralimits, let us very they respect closed inequalities on $\omega$-heavy subsets.

\begin{lemma}\label{ultralimitinequality}
Let $(a_n), (b_n) \subset \R$ be two sequences of real numbers. Assume there is a set $A \in \omega$ such that for all $n \in A$ we have $a_n \leq b_n$. Then it holds that $\lim_\omega a_n \leq \lim_\omega b_n $.
\end{lemma}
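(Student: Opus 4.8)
The plan is to prove Lemma \ref{ultralimitinequality} by contradiction, exploiting property (iv) of the ultrafilter (the defining dichotomy) together with the properties of ultralimits already recalled. Suppose, for contradiction, that $a = \lim_\omega a_n$ and $b = \lim_\omega b_n$ satisfy $a > b$. Set $\epsilon = (a-b)/3 > 0$. By the definition of the ultralimit, the sets
\begin{align*}
U = \{ n \in \N : |a_n - a| < \epsilon \} \quad \text{and} \quad V = \{ n \in \N : |b_n - b| < \epsilon \}
\end{align*}
both lie in $\omega$. Since $\omega$ is closed under finite intersection by axiom (iii), we have $A \cap U \cap V \in \omega$, and in particular this set is non-empty because $\emptyset \notin \omega$ by axiom (i).

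Next I would pick any index $n$ in $A \cap U \cap V$ and derive a numerical contradiction. For such an $n$ we simultaneously have $a_n \leq b_n$ (because $n \in A$), $a_n > a - \epsilon$ (because $n \in U$), and $b_n < b + \epsilon$ (because $n \in V$). Chaining these inequalities gives
\begin{align*}
a - \epsilon < a_n \leq b_n < b + \epsilon,
\end{align*}
hence $a - b < 2\epsilon = \tfrac{2}{3}(a-b)$, which is impossible since $a - b > 0$. This contradiction forces $a \leq b$, i.e. $\lim_\omega a_n \leq \lim_\omega b_n$, as required.

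I do not anticipate any genuine obstacle here; the statement is a routine ``closed inequalities pass to the ultralimit'' fact, and the only subtlety is making sure the three relevant $\omega$-heavy sets are intersected correctly so that their common part is non-empty. One should note that the lemma as stated implicitly assumes both ultralimits are real numbers (otherwise the inequality need not make sense), which is guaranteed whenever the sequences land in a compact interval; alternatively one could phrase the same argument directly in $\R$ with the convention that ultralimits take values in $\R \cup \{\pm\infty\}$. A clean variant would be to first establish the special case $b_n \equiv c$ constant (showing $a_n \leq c$ on an $\omega$-set implies $\lim_\omega a_n \leq c$) and then apply it to the difference, but the direct contradiction above is the shortest route and avoids invoking the algebraic rules for ultralimits of differences.
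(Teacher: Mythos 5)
Your argument is correct and is essentially the paper's proof: both proceed by contradiction from $a > b$, extract the two definitional $\omega$-heavy sets $\{n : |a_n - a| < \epsilon\}$ and $\{n : |b_n - b| < \epsilon\}$, and combine them with $A$ via the filter axioms to produce an empty set in $\omega$. Your version is in fact slightly cleaner: you intersect the three sets directly and read off a numerical contradiction at a single index, whereas the paper routes through the superset axiom (replacing $\{n : |a_n - a| < \epsilon\}$ by $\{n : a_n \geq b + \epsilon\}$) before intersecting. The one substantive difference is the infinite case: since the sequences are merely in $\R$, the ultralimits live in $[-\infty, +\infty]$, and the paper explicitly proves the case $a = \infty$, showing that $\{n : a_n \geq C\} \in \omega$ for every $C$, intersecting with $A$ to get $\{n : b_n \geq C\} \in \omega$, hence $b = \infty$ as well. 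You flag this issue in your closing remark but do not carry it out (and your suggestion that the inequality ``need not make sense'' otherwise is too pessimistic --- with the extended-real convention the statement is meaningful and true, and the two-line argument just quoted settles it). Adding that case would make your proof match the paper's in full.
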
 

\begin{proof}
Let $a= \lim_\omega a_n$ and $b= \lim_\omega b_n$. Assume that $a > b$ and both are real numbers. Then set $\epsilon= \min \{ \frac{a-b}{2},1 \}$. By the definition of ultralimits the sets 
\begin{align*}
A_\epsilon = \{ n \hspace{1mm} : \hspace{1mm} |a_n-a| < \epsilon \} \hspace{2mm}, \hspace{2mm} B_\epsilon = \{ n \hspace{1mm} : \hspace{1mm} |b_n-b| < \epsilon \} 
\end{align*}
both lie in $\omega$. We have $a- \epsilon \geq b+\epsilon$ and since ultrafilters are closed under taking supersets we conclude 
\begin{align*}
\{ n \hspace{1mm} : \hspace{1mm} |a_n-a| < \epsilon \} \subset \{ n \hspace{1mm} : \hspace{1mm} a_n \geq b+ \epsilon \} \in \omega . 
\end{align*}
Intersection of the latter set with $A$ yields 
\begin{align*}
B=\{ n \hspace{1mm} : \hspace{1mm} b_n \geq a_n  \geq b+ \epsilon \} \in \omega.
\end{align*}
However, this implies $ \emptyset= B \cap B_\epsilon \in \omega$, which is a contradiction. Hence, $a \leq b$.

If $a = \infty$ then for all $C>0$ we have $A_C=\{n \hspace{1mm} : \hspace{1mm} a_n \geq C \} \in \omega$. Hence $A \cap A_C \in \omega$ which yields $ B_C=\{n \hspace{1mm} : \hspace{1mm} b_n \geq C \} \in \omega$. Thus $b=a= \infty$ and the statement follows. 
\end{proof}

\subsection{Asymptotic Cones}

Let us recall the construction of an \textit{asymptotic cone} of a sequence of metric spaces $(X_n,d_n)$. The closely related construction of \textit{metric ultraproducts} is stated in the beginning of Section 6 to keep notation consistent for now. 

Pick an ultrafilter $\omega$ and a  scaling sequence $(s_n) \subset \R_{>0}$ such that $\lim_\omega s_n = \infty$. Pick a sequence of basepoints $p_n$. Let $X_n^b(\omega, s_n, p_n)$ be the set of sequences $(x_n) \subset \in X_n$ such that the sequence $ \left( \frac{d_n(x_n,p_n)}{s_n} \right)$ is bounded. Any sequence that belongs to $X_n^b(\omega, s_n, p_n)$ is called \textit{admissible}. Then an \textit{asymptotic cone} is defined to be the metric space
\begin{align*}
\Cone_\omega(X_n,s_n,p_n) = X_n^b(\omega, s_n, p_n)/ \sim 
\end{align*}
with $(x_n) \sim (y_n)$ if $\lim_\omega \frac{d_n(x_n,y_n)}{s_n}=0$. The equivalence class of an admissible sequence $(x_n)$ is denoted by $[x_n]$. An asymptotic cone carries the induced metric arising as the ultralimit $d([x_n],[y_n]) = \lim_\omega \frac{d_n(x_n,y_n)}{s_n}$.

The asymptotic cone depends on the particular choice of ultrafilter independently of the continuum hypothesis \cite[Thm 1.1]{Osin}. For a single metric space $(X_n,d_n)=(X,d)$ the space $\Cone_\omega(X,s_n,p_n)$ is called an asymptotic cone of $(X,d)$. Our canonical choice of scaling sequence will be $s_n=n$ and in this case we will omit the scaling sequence from the notation. Moreover, if $p_n=p \in X$ is a fixed basepoint then the asymptotic cone is independent of the particular choice of fixed basepoint.

Iterating the construction of an asymptotic cone just amounts to a change of ultrafilter \cite[Cor. 10.79]{KapDru}. In the case of group equipped with norms the homogeneity of the spaces makes the construction of an asymptotic cone independent from the choice of basepoint by the following Lemma \ref{ConeL1} (compare \cite[Exercise 10.66]{KapDru}). Thus, basepoints will be suppressed in the notation after this lemma. Lemma \ref{ConeL1} and \ref{ConeL2} should be well-known, but are recalled here for completeness.

\begin{lemma} \label{ConeL1}
Let $G_n$ be a sequence of groups together with norms $\|.\|_n$. Let $s_n$ be a scaling sequence and $p_n$ be a sequence of elements in $G$. Then 
\begin{align*}
\Cone_\omega(G_n,s_n,p_n) \cong \Cone_\omega(G_n,s_n,1)
\end{align*}
are isometric. 
\end{lemma}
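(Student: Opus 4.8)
The plan is to construct an explicit isometry between the two asymptotic cones by translating each admissible sequence so that it becomes based at the identity. The natural candidate is the map induced by left multiplication: given an admissible sequence $(x_n)$ with basepoint sequence $(p_n)$, send it to $(p_n^{-1} x_n)$, which is a sequence based at $1$. I would first verify that this map is well-defined, which splits into two checks: that $(p_n^{-1}x_n)$ is admissible for the basepoint $1$, and that the map respects the equivalence relation $\sim$.

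For admissibility, the key observation is that the norms $\|.\|_n$ need not be conjugation invariant, but left multiplication preserves distances in the following sense: if $d_n$ is the metric induced by the norm, then $d_n(g,h)=\|g^{-1}h\|_n$, so $d_n(1,p_n^{-1}x_n)=\|p_n^{-1}x_n\|_n = \|(p_n)^{-1}x_n\|_n = d_n(p_n,x_n)$. Hence the sequence $\bigl(\tfrac{d_n(1,p_n^{-1}x_n)}{s_n}\bigr)=\bigl(\tfrac{d_n(p_n,x_n)}{s_n}\bigr)$ is bounded precisely because $(x_n)$ was admissible with basepoint $(p_n)$; so admissibility transfers. This step relies only on the metric being defined by a norm via $d_n(g,h)=\|g^{-1}h\|_n$, which should be the standing convention in the paper, and I would state that convention explicitly at the start of the argument.

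Next I would check that the map descends to equivalence classes and is an isometry, both of which follow from the same translation-invariance computation. For the distance between two admissible sequences $(x_n)$ and $(y_n)$ we have
\begin{align*}
d_n(p_n^{-1}x_n, p_n^{-1}y_n) = \|(p_n^{-1}x_n)^{-1}(p_n^{-1}y_n)\|_n = \|x_n^{-1} p_n p_n^{-1} y_n\|_n = \|x_n^{-1}y_n\|_n = d_n(x_n,y_n),
\end{align*}
so after dividing by $s_n$ and taking $\lim_\omega$ the distances agree, using that ultralimits respect equalities on $\omega$-heavy sets. Setting $(y_n)=(x_n')$ for an equivalent sequence shows the relation is respected, and the general identity shows the map is distance-preserving. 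A symmetric argument with the inverse map $[z_n]\mapsto[p_n z_n]$ gives a two-sided inverse, so the map is a bijective isometry.

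The main obstacle, if any, is conceptual rather than computational: one must be careful that left translation is an isometry of $(G_n,d_n)$ for the \emph{left-invariant} word metric even when the norm is not conjugation invariant, so the computation $d_n(gz,gw)=d_n(z,w)$ must use the left-invariant convention $d_n(g,h)=\|g^{-1}h\|_n$ consistently rather than any two-sided invariance. Since the paper will later multiply representative sequences to build a group structure (which genuinely requires bi-invariance), I would take care to flag that this particular lemma needs only left-invariance of the metric, so that the reader does not conflate the two. Apart from that caveat, every step is a routine verification and the lemma follows.
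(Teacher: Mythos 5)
Your proposal is correct and essentially identical to the paper's proof: the paper also translates representative sequences by the basepoints, defining $[x_n]\mapsto[x_np_n]$ and $[y_n]\mapsto[y_np_n^{-1}]$ and verifying well-definedness and isometry by the same norm computations. The only cosmetic difference is that you multiply on the left using the left-invariant convention $d_n(g,h)=\|g^{-1}h\|_n$ while the paper multiplies on the right using right-invariance, a mirror-image of the same argument; your explicit remark that only one-sided invariance is needed here (bi-invariance entering only later for the group structure) is a fair clarification but not a new idea.
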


\begin{proof}
Indeed, the right invariance of a metric induced by a norm means that we can just use the group multiplication to translate one sequence into another. 

Spelled out in more detail this means that we define maps 
\begin{align*}
\varphi & \colon \Cone_\omega(G_n,s_n,1)  \rightarrow \Cone_\omega(G_n,s_n,p_n), & [x_n] \rightarrow [x_n p_n], \\
\psi & \colon \Cone_\omega(G_n,s_n,p_n)  \rightarrow \Cone_\omega(G_n,s_n,1) , & [y_n] \rightarrow [y_n p_n^{-1} ].  
\end{align*}
Then, $d_n(\varphi(x_n),p_n)= \|x_n p_n p_n^{-1} \|_n=\|x_n\|_n=d_n(x_n,1)$ and $d_n(\psi(y_n),1)= \|y_n p_n^{-1} \|=d_n(y_n,p_n)$, which shows that both maps are well defined on the level of sequences. Similarly, it is immediate from 
\[
d_n(\varphi(x_n),\varphi(z_n))=\|x_n p_n (z_n p_n)^{-1} \|_n = \|x_n z_n^{-1} \|_n=d_n(x_n,z_n)
\]
that $\varphi$ is an isometry and analogously for $\psi$. Consequently, both maps are well defined and continuous on the respective asymptotic cones. By definition $\varphi$ and $\psi$ are inverse to one another. 
\end{proof} 

\begin{lemma} \label{ConeL2}
Let $(X_n, d_n, p_n)$ be a sequence of pointed metric spaces. Let $(s_n), (r_n) \subset \R_{>0}$ be two scaling sequences with $\lim_\omega \frac{s_n}{r_n}=K$ with $K \in \R_{>0} $. Then $\Cone_\omega(X_n,s_n, p_n)$ and $\Cone_\omega(X_n, r_n, p_n)$ are bi-Lipschitz homeomorphic with Lipschitz constants $K$ and $\frac{1}{K}$. In particular, for $K=1$ the cones $\Cone_\omega(X_n,s_n,p_n)$ and $\Cone_\omega(X_n,r_n,p_n)$ are isometric. 
\end{lemma}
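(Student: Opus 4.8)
The plan is to show that the two cones are literally the \emph{same} metric space up to a global rescaling by the factor $K$, realised by the map that keeps a representative sequence and merely reinterprets it under the other scaling. Concretely, I would define
\[
\Phi \colon \Cone_\omega(X_n,s_n,p_n) \to \Cone_\omega(X_n,r_n,p_n), \qquad [x_n]_s \mapsto [x_n]_r,
\]
where $[\,\cdot\,]_s$ and $[\,\cdot\,]_r$ denote classes with respect to the scalings $s_n$ and $r_n$. The heart of the argument is a single distance computation: for admissible sequences $(x_n),(y_n)$ one has the pointwise identity $\frac{d_n(x_n,y_n)}{r_n} = \frac{d_n(x_n,y_n)}{s_n}\cdot\frac{s_n}{r_n}$, and since the first factor has a finite ultralimit (it is the $s_n$-cone distance, finite by the triangle inequality and admissibility) while $\lim_\omega \frac{s_n}{r_n}=K$ is finite, the product rule for ultralimits recalled before Lemma~\ref{ultralimitinequality} gives
\[
d_r(\Phi[x_n],\Phi[y_n]) = \lim_\omega \frac{d_n(x_n,y_n)}{r_n} = K\cdot \lim_\omega \frac{d_n(x_n,y_n)}{s_n} = K\cdot d_s([x_n]_s,[y_n]_s).
\]
Thus $\Phi$ multiplies every distance by the constant $K$, so once it is shown to be a bijection it is automatically a similarity with ratio $K$, hence bi-Lipschitz with constants $K$ and $\frac{1}{K}$, and an isometry when $K=1$.

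Before running that computation I must check that $\Phi$ is well defined, and this is where I expect the only real friction. Two points need care. First, the equivalence relations agree: if $\lim_\omega \frac{d_n(x_n,y_n)}{s_n}=0$ then, by the same product rule and finiteness of $K$, $\lim_\omega \frac{d_n(x_n,y_n)}{r_n}=0\cdot K=0$, and symmetrically using $\lim_\omega \frac{r_n}{s_n}=\frac{1}{K}$ (valid since $K>0$); so $\sim_s$ and $\sim_r$ coincide on sequences admissible for both scalings. Second, and more delicately, the admissibility conditions are not literally identical: the hypothesis $\lim_\omega \frac{s_n}{r_n}=K$ controls the ratio only on $\omega$-large sets, so a sequence bounded with respect to $s_n$ could in principle fail to be bounded with respect to $r_n$ along an $\omega$-null set of indices where $\frac{s_n}{r_n}$ is unbounded.

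To resolve this I would fix the set $A=\{\,n : \tfrac{K}{2}<\tfrac{s_n}{r_n}<\tfrac{3K}{2}\,\}$, which lies in $\omega$ by the definition of the ultralimit, and invoke the standard observation that modifying an admissible representative on the $\omega$-null complement $\N\setminus A$ alters neither the cone point it represents nor, after the modification, its admissibility. Precisely, given an $s_n$-admissible $(x_n)$ I replace it by $\tilde x_n = x_n$ for $n\in A$ and $\tilde x_n = p_n$ otherwise: since $(\tilde x_n)$ agrees with $(x_n)$ on $A\in\omega$ it represents the same $s_n$-cone point, while on $A$ the ratio $\frac{s_n}{r_n}$ is bounded, so $\frac{d_n(\tilde x_n,p_n)}{r_n}$ is bounded and $(\tilde x_n)$ is $r_n$-admissible. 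The symmetric construction turns any $r_n$-admissible sequence into an $\omega$-equivalent $s_n$-admissible one, giving the inverse of $\Phi$. Hence every cone point has a representative admissible for both scalings, $\Phi$ is a well-defined bijection, and the displayed identity $d_r=K\,d_s$ completes the proof, yielding a bi-Lipschitz homeomorphism with Lipschitz constants $K$ and $\frac{1}{K}$ that specialises to an isometry when $K=1$.
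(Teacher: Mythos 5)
Your proposal is correct and follows essentially the same route as the paper: both replace a representative by the basepoint off the $\omega$-large set where $\frac{s_n}{r_n}$ is pinned near $K$ to secure admissibility under the other scaling, and both use the product rule for ultralimits to compare the two cone distances. Your only (harmless) variation is to observe directly that the map is a similarity with ratio $K$, whereas the paper proves Lipschitz bounds $K$ and $\frac{1}{K}$ for the map and its inverse separately --- the two formulations are equivalent here.
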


\begin{proof}
First, let $A= \{ n | \frac{s_n}{r_n} \in [\frac{K}{2},2K] \}$. Since $\lim_\omega \frac{s_n}{r_n}=K$, it holds by definition of the ultralimit that $A \in \omega$.
Let $[g_n] \in \Cone_\omega(X_n,s_n,p_n)$ and $C_g$ be such that $d_n(g_n,p_n) \leq C_g s_n$ for all $n \in \N$. Define 
\begin{align*}
g_n^\prime = 
\begin{cases}
g_n , n \in A ,\\
p_n , n \notin A.
\end{cases}
\end{align*}
Then for $n \notin A$ we have $d_n(g_n^\prime,p_n)=0$ and for $n \in A$ we have 
\begin{align*}
d_n(g_n^\prime, p_n)= d_n(g_n,p_n) \leq C_g s_n = C_g \frac{s_n}{r_n}r_n \leq 2K C_g r_n.
\end{align*}
So, for all $n \in \N$ it holds that $d_n(g_n^\prime,p_n) \leq 2KC_g r_n$, which means that $[g_n^\prime]$ yields an element in $\Cone_\omega(X_n,r_n,p_n)$. Define 
\begin{align*}
\varphi \colon \Cone_\omega(X_n,s_n,p_n) \rightarrow \Cone_\omega(X_n,r_n,p_n) \hspace{5mm} \text{by} \hspace{5mm} \varphi([g_n])=[g_n^\prime].
\end{align*}

\begin{claim}
The map $\varphi$ is well-defined an Lipschitz continuous with constant $K$. 
\end{claim}

Let $[g_n],[h_n] \in \Cone_\omega(X_n,s_n,p_n)$. Then, by Lemma \ref{ultralimitinequality} and the product rule of ultralimits it holds that
\begin{align*}
d(\varphi([g_n]),\varphi([h_n])) & = \lim_\omega \frac{d_n(g_n^\prime,h_n^\prime)}{r_n}  \leq \lim_\omega \frac{d_n(g_n,h_n)}{r_n} = \lim_\omega \left( \frac{d_n(g_n,h_n)}{s_n} \cdot \frac{s_n}{r_n} \right) \\
& = \left( \lim_\omega \frac{d_n(g_n,h_n)}{s_n} \right) \cdot \left( \lim_\omega \frac{s_n}{r_n} \right) = K d([g_n],[h_n]).
\end{align*}
In particular, $\varphi$ does not depend on the particular choice of representative sequence. 

Conversely, define for $[g_n] \in \Cone_\omega(X_n, r_n , p_n)$ the sequence 
\begin{align*}
\widehat{g_n} = 
\begin{cases}
g_n , n \in A ,\\
p_n , n \notin A.
\end{cases}
\end{align*}
Let $D_g$ be such that $d_n(g_n,p_n) \leq D_g r_n$ for all $n$. Then, for all $n \in A$ it holds that 
\begin{align*}
d_n(\widehat{g_n}, p_n)= d_n(g_n,p_n) \leq D_g r_n = D_g \frac{r_n}{s_n}s_n \leq \frac{2}{K} D_g s_n.
\end{align*}
Since $\widehat{g_n}=p_n$ for all $n \notin A$ the above equality is in fact true for all $n \in \N$. 
We set 
\begin{align*}
\psi \colon \Cone_\omega(X_n,r_n,p_n) \rightarrow \Cone_\omega(X_n,s_n,p_n).
\end{align*}
As above, $\psi$ is well-defined and Lipschitz continuous with constant $\frac{1}{K}$.
 
If $[g_n] \in \Cone_\omega(X_n,s_n,p_n)$, then $(\psi \circ \varphi )([g_n])= [\widehat{g_n^\prime}]$. For all $n \in A$ we have $\widehat{g_n^\prime}=g_n$ and so $\frac{d_n(\widehat{g_n^\prime},g_n)}{s_n}=0$. Since $A \in \omega$, it follows that $d((\psi \circ \varphi )([g_n]),[g_n])=0$ and so $\psi \circ \varphi =id$. The equality $\varphi \circ \psi=id$ follows similarly.
\end{proof}

The previous lemma shows that the asymptotic cone only depends on the growth rate of a scaling sequence and not the sequence itself. Thus, we may assume all scaling sequences to consist of natural numbers from now onwards.

\begin{example}
The asymptotic cone of $(\Z, |.|)$ is $(\R, |.|)$. 
\end{example}

\begin{defi}
Let $(X,d)$ and $(X^\prime, d^\prime)$ be metric spaces. A map $f \colon X \rightarrow X^\prime$ is called a quasi-isometric embedding if there exist constants $A \geq 1$ and $B \geq 0$ such that for all $x,y \in X$ we have 
\begin{align*}
\frac{1}{A}d(x,y)-B \leq d^\prime( f(x), f(y)) \leq A d(x,y)+B.
\end{align*}
We call $f$ a quasi-isometry if it additionally is quasi-surjective. That is, there exists a third constant $C \geq 0$ such that for all $z \in X$ there exists $z^\prime \in X^\prime$ with $d^\prime(f(z), z^\prime) \leq C$. 
\end{defi}

\begin{theorem}[{\cite[Lemma 10.83]{KapDru}}]\label{qi-invariant}
Let $f \colon (X,d) \rightarrow (X',d')$ be a quasi-isometric embedding of metric spaces. Then this gives rise to a bi-Lipschitz embedding on the level of asymptotic cones $\Cone_\omega(X',d',s_n) \rightarrow \Cone_\omega(X,d,s_n)$. If $f$ is additionally quasi-surjective then $f$ is surjective and so $f$ is a bi-Lipschitz homeomorphism.  
\end{theorem}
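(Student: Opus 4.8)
The plan is to build the induced map explicitly and to exploit the single structural feature that makes asymptotic cones forgiving: once distances are rescaled by $s_n$ with $\lim_\omega s_n = \infty$, the additive defect of a quasi-isometry becomes invisible, so the two quasi-isometry inequalities collapse to genuine bi-Lipschitz bounds. Fix a basepoint $p \in X$, set $p' = f(p)$, and define the map in the natural direction by
\begin{align*}
\widehat{f} \colon \Cone_\omega(X,d,s_n,p) \to \Cone_\omega(X',d',s_n,p'), \qquad \widehat{f}([x_n]) = [f(x_n)].
\end{align*}
The first thing I would verify is that $\widehat f$ is well defined. For admissibility, if $\tfrac{d(x_n,p)}{s_n} \leq C$ then the upper bound $d'(f(x_n),p') \leq A\,d(x_n,p) + B$ gives $\tfrac{d'(f(x_n),p')}{s_n} \leq AC + \tfrac{B}{s_n}$, which is bounded since $\lim_\omega s_n = \infty$; hence $(f(x_n))$ is admissible. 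For independence of the representative, if $\lim_\omega \tfrac{d(x_n,y_n)}{s_n} = 0$ then $\tfrac{d'(f(x_n),f(y_n))}{s_n} \leq A\tfrac{d(x_n,y_n)}{s_n} + \tfrac{B}{s_n}$, whose ultralimit is $0$ by Lemma \ref{ultralimitinequality} together with the sum and product rules for ultralimits.

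The heart of the argument is the bi-Lipschitz estimate. Dividing the two-sided quasi-isometry inequality by $s_n$ gives, for every $n$,
\begin{align*}
\frac{1}{A}\frac{d(x_n,y_n)}{s_n} - \frac{B}{s_n} \leq \frac{d'(f(x_n),f(y_n))}{s_n} \leq A\frac{d(x_n,y_n)}{s_n} + \frac{B}{s_n}.
\end{align*}
Taking $\lim_\omega$ and using $\tfrac{B}{s_n} \to 0$, the additive constant disappears and I obtain
\begin{align*}
\frac{1}{A}\, d\big([x_n],[y_n]\big) \leq d'\big(\widehat f([x_n]), \widehat f([y_n])\big) \leq A\, d\big([x_n],[y_n]\big),
\end{align*}
which is precisely the bi-Lipschitz condition with constant $A$. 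In particular the lower bound forces $\widehat f$ to be injective, so $\widehat f$ is a bi-Lipschitz embedding onto its image.

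Finally, if $f$ is additionally quasi-surjective with constant $C$, I would prove surjectivity by lifting. Given $[z_n] \in \Cone_\omega(X',d',s_n,p')$, choose for each $n$ some $x_n \in X$ with $d'(f(x_n),z_n) \leq C$. Admissibility of $(x_n)$ follows from the lower quasi-isometry bound: $d(x_n,p) \leq A\big(d'(f(x_n),p') + B\big) \leq A\big(C + d'(z_n,p') + B\big)$, and $\tfrac{d'(z_n,p')}{s_n}$ is bounded because $(z_n)$ is admissible. Then $\tfrac{d'(f(x_n),z_n)}{s_n} \leq \tfrac{C}{s_n} \to 0$ shows $\widehat f([x_n]) = [z_n]$, so $\widehat f$ is onto and hence a bi-Lipschitz homeomorphism, its inverse realising the map in the direction stated. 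I expect the only genuine obstacle to be the bookkeeping around admissibility — confirming that the class of admissible sequences is preserved in both directions — all of which hinges on $\lim_\omega s_n = \infty$ sending the additive constants $B$ and $C$ to zero after rescaling.
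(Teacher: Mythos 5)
Your proof is correct, and it is essentially the canonical argument: the paper itself offers no proof of this statement, quoting it directly from \cite[Lemma 10.83]{KapDru}, so your proposal supplies exactly the argument the citation stands in for — induced map on representative sequences, additive defects killed by the rescaling $\lim_\omega s_n = \infty$, injectivity from the lower bi-Lipschitz bound, surjectivity by lifting within distance $C$. Two small remarks. First, the arrow direction in the paper's statement, $\Cone_\omega(X',d',s_n) \rightarrow \Cone_\omega(X,d,s_n)$, is evidently a misprint: for a non-surjective quasi-isometric embedding the induced map exists only in the direction you construct, $\Cone_\omega(X,d,s_n) \rightarrow \Cone_\omega(X',d',s_n)$, and you handle this correctly by noting that in the quasi-surjective case the stated direction is realised by the inverse homeomorphism. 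Second, your justification that $\tfrac{B}{s_n}$ is bounded ``since $\lim_\omega s_n = \infty$'' is slightly imprecise: an ultralimit condition controls $s_n$ only on an $\omega$-large set, so a priori $s_n$ could be arbitrarily small off that set and the paper's admissibility condition demands boundedness for \emph{all} $n$. This is harmless here because the paper's convention following Lemma \ref{ConeL2} takes scaling sequences in $\N$, so $s_n \geq 1$; alternatively one redefines $x_n$ to be the basepoint on the $\omega$-null set where $s_n$ is small, which does not change the class $[x_n]$. With either fix, every step of your argument goes through.
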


In fact, the asymptotic cone of any metric space is complete \cite[Prop. 4.2]{Dries}. Furthermore, the asymptotic space of a geodesic space is geodesic itself and the asymptotic cone of a product of metric spaces $X \times Y$ is the product of the asymptotic cones $X$ and the asymptotic cone of $Y$ \cite[Cor. 1068]{KapDru}.

\begin{example}
Let $n \in \N$ and let $\|.\|$ be any word norm on $\Z^n$ generated by a finite set $S \subset \Z^n$. Then $\|.\|_S$ is Lipschitz-equivalent to the $\ell_1$-metric on $\Z^n$ and thus $\Cone_\omega(\Z^n, \|.\|_S)$ is Lipschitz-equivalent to $(\R^n, \ell_1)$.  
\end{example}

\begin{example}
Consider $G= \bigoplus_{i \in \mathbb{N}} \mathbb{Z}/i$ with the norm $\|.\|$ generated by $e_i$ the $i$-th unit vectors. Then every element in $G$ has finite order since every element is torsion. However, the element $x$ in $\Cone_\omega(G,\|.\|)$ represented by the sequence $(x_n)_{n \in \mathbb{N}}= (ne_{n^2})_{n \in \mathbb{N}}$ satisfies $\|x^k\|=k$. In fact, there are only finitely many $n \in \mathbb{N}$ satisfying  $k> \frac{n}{2}$ and for all other $n$ we have $\|x_n^k\|=k\|x_n\|$. Therefore, the ultralimit doesn't change and we obtain $\|x^k\|=k\|x\|$ implying that $x$ generates an isometric embedding $(\mathbb{Z}, |.|) \hookrightarrow \Cone_\omega(G,\|.\|)$. Furthermore, it will be shown in Proposition \ref{circleproduct} that all asymptotic cones of $G$ are a topological product of $S^1$ with another space and thus not contractible.
\end{example}

\subsection{Asymptotic cones for bi-invariant metrics}

Our primary interest lies in the case where $X_n=G$ is a group and the bi-invariant metric $d_n=d$ is induced by a conjugation invariant norm $\|.\|$ on $G$. In this case the sequence of identity elements is a canonical choice of basepoint (compare Lemma \ref{ConeL1}). Then multiplication of representative sequences gives rise to a well-defined multiplication on $\Cone_\omega(G,||.||)$, which turns the latter into a group \cite[Prop. 3.3]{Zhuang}. For completeness we will verify the continuity of the multiplication and inversion maps here to conclude that $\Cone_\omega(G,||.||)$ is a topological group. Recall, that a metric on a group $G$ is called bi-invariant if it is both right and left invariant under multiplication in the group.

\begin{lemma}
Let $(G_n,d_n)_{n \in \N}$ be a sequence of groups $G_n$ together with bi-invariant metrics $d_n \colon G_n \times G_n \rightarrow \R$. Then $\Cone_\omega(G_n,d_n)$ is a topological group for which the induced metric is bi-invariant. 

\end{lemma}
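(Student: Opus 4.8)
The plan is to transport the group structure of the $G_n$ to the cone coordinatewise, using bi-invariance to supply a single master inequality from which every verification follows. Writing $\|g\|_n = d_n(1,g)$ for the norm attached to the bi-invariant metric $d_n$, I would first record two basic estimates. The \emph{product estimate}
\begin{align*}
d_n(x_n y_n, x_n' y_n') \leq d_n(x_n, x_n') + d_n(y_n, y_n')
\end{align*}
follows by inserting the intermediate point $x_n' y_n$ into the triangle inequality and then applying right invariance to get $d_n(x_n y_n, x_n' y_n) = d_n(x_n, x_n')$ and left invariance to get $d_n(x_n' y_n, x_n' y_n') = d_n(y_n, y_n')$. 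The \emph{inversion identity}
\begin{align*}
d_n(x_n^{-1}, (x_n')^{-1}) = d_n(x_n, x_n')
\end{align*}
follows by left-translating the left-hand side by $x_n$ and then right-translating by $x_n'$. Specialising $x_n' = y_n' = 1$ yields $\|x_n y_n\|_n \leq \|x_n\|_n + \|y_n\|_n$ and $\|x_n^{-1}\|_n = \|x_n\|_n$.

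With these in hand I would define $[x_n]\cdot[y_n] := [x_n y_n]$ and $[x_n]^{-1} := [x_n^{-1}]$. Admissibility of $(x_n y_n)$ and $(x_n^{-1})$ is immediate from the two specialised estimates after dividing by $s_n$ and using that the defining quotients are bounded. Well-definedness is the first place the ultralimit enters: if $[x_n]=[x_n']$ and $[y_n]=[y_n']$, then dividing the product estimate by $s_n$ and passing to the ultralimit via Lemma \ref{ultralimitinequality} and the sum rule gives $d([x_n y_n],[x_n' y_n']) \leq d([x_n],[x_n']) + d([y_n],[y_n']) = 0$, while the inversion identity gives $[x_n^{-1}] = [(x_n')^{-1}]$. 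The group axioms then hold because associativity, the identity $[1]$ (the constant sequence at $1_{G_n}$, admissible since $\|1\|_n = 0$), and inverses all hold in each $G_n$ and therefore on representatives.

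For the topological group structure, the very same product estimate makes multiplication continuous: dividing by $s_n$ and taking ultralimits yields
\begin{align*}
d\big([x_n]\cdot[y_n],\, [x_n']\cdot[y_n']\big) \leq d([x_n],[x_n']) + d([y_n],[y_n']),
\end{align*}
so multiplication is $1$-Lipschitz for the $\ell_1$ product metric on $\Cone_\omega(G_n,d_n) \times \Cone_\omega(G_n,d_n)$, hence continuous; the inversion identity shows that $[x_n] \mapsto [x_n^{-1}]$ is in fact an isometry, and so continuous. Finally, bi-invariance of the cone metric is read off directly: left invariance of $d_n$ gives $d_n(a_n x_n, a_n y_n) = d_n(x_n, y_n)$ on representatives, whence $d([a_n][x_n],[a_n][y_n]) = d([x_n],[y_n])$ after taking ultralimits, and the right-invariant case is identical.

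I do not expect a genuine obstacle: the entire content is the observation that bi-invariance upgrades the triangle inequality into the product estimate, after which continuity, well-definedness and bi-invariance all fall out at once. The only point demanding care is the bookkeeping of confirming that each per-coordinate (in)equality survives division by $s_n$ and the passage to the ultralimit, for which Lemma \ref{ultralimitinequality} together with the sum and product rules for ultralimits is invoked uniformly throughout.
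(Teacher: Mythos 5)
Your proof is correct and follows essentially the same route as the paper: the central point in both is that bi-invariance upgrades the triangle inequality to $d(x'y',xy) \leq d(x,x') + d(y,y')$, which simultaneously gives well-definedness and $1$-Lipschitz continuity of multiplication, while inversion is an isometry. The only difference is one of packaging: the paper verifies continuity on the cone directly and cites Zhuang for the group structure and well-definedness, whereas you rederive those parts at the level of representative sequences via Lemma \ref{ultralimitinequality}, making the argument self-contained.
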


\begin{proof}
Let us verify that the inversion and multiplication maps are continuous since the rest follows as in \cite[Prop. 3.3]{Zhuang}. 
In fact, the inversion map $\Cone_\omega(G_n,d_n) \rightarrow \Cone_\omega(G_n,d_n)$ is an isometry with respect to the induced bi-invariant metric on $\Cone_\omega(G_n,d_n)$, since for all $x,y \in \Cone_\omega(G_n,d_n)$
\begin{align*}
d(x^{-1},y^{-1})=d(x^{-1}y,1)=d(y,x)=d(x,y).  
\end{align*}
Isometries are continuous maps. Furthermore, given $\epsilon > 0$ the multiplication map $m$ satisfies $m \big( B_\frac{\epsilon}{2}(x) \times B_\frac{\epsilon}{2}(y) \big) \subset B_\epsilon(xy)$ for all $x,y \in \Cone_\omega(G_n,d_n)$. Indeed, for $x^\prime \in  B_\frac{\epsilon}{2}(x)$ and $y^\prime \in B_\frac{\epsilon}{2}(y)$ we calculate 
\begin{align*}
d(x^\prime y^\prime, xy) & =d(x^{-1} x^\prime y^\prime, y) = d(x^{-1} x^\prime, y (y^\prime)^{-1}) \leq d(x^{-1} x^\prime , 1) + d(1, y(y^\prime)^{-1}) \\
& = d(x,x^\prime) + d(y,y^\prime) < \epsilon. 
\end{align*}
This implies that preimages of open sets are open which concludes the proof.
\end{proof}

\begin{remark}
If $G_n$ carries a conjugation invariant norm $\|.\|_n$ which induces the metric $d_n$ for all $n$, then the bi-invariant metric on the asymptotic cone is induced by the conjugation invariant norm $\|.\| \colon \Cone_\omega(G_n,d_n) \to \R_{\geq 0}$ defined for $x=[(x_n)_{n \in \N}] \in \Cone_\omega(G_n,d_n)$ by
\begin{align*}
 \|x  \| = \lim_\omega \frac{\|x_n \|}{n},
\end{align*}
where $\omega$ is the non-principal ultrafilter on $\N$ used to define $\Cone_\omega(G_n,d_n)$.
\end{remark}

A few immediate observations are as follows. If $G$ is abelian, then so are all asymptotic cones of $G$ for any conjugation invariant norm on $G$. If $H \leq G$ is a subgroup and $\|.\|_G$ a conjugation invariant norm on $G$, then $\Cone_\omega(H,\|.\|_G)$ is a subgroup of $\Cone_\omega(G, \|.\|_G)$. If $f \colon H \rightarrow G$ is a group homomorphism which is a quasi-isometry, then $f$ induces a bi-Lipschitz homeomorphism which is an isomorphism of groups on the level of asymptotic cones.

\begin{theorem}\label{coarseequiv}
Let $G$ be a group together with coarsely equivalent conjugation invariant norms $\|.\|_1$ and $\|.\|_2$. Then the respective cones $\Cone_\omega(G,\|.\|_1)$ and $\Cone_\omega(G,\|.\|_2)$ are bi-Lipschitz homeomorphic. 
\end{theorem}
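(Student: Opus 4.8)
The plan is to realise the coarse equivalence of the two norms as a quasi-isometry given by the identity map on $G$, and then to invoke the quasi-isometry invariance of asymptotic cones from Theorem \ref{qi-invariant}.

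First I would recall that the metric $d_i$ induced by $\|.\|_i$ is right-invariant, so that $d_i(g,h) = \|gh^{-1}\|_i$ for all $g,h \in G$. Applying the defining inequalities of coarse equivalence to the element $w = gh^{-1}$ then immediately translates the comparison of norms into a comparison of metrics: with the same constants $A$ and $B$ one obtains $\frac{1}{A}d_1(g,h) - B \leq d_2(g,h) \leq A d_1(g,h) + B$ for all $g,h \in G$. After possibly enlarging $A$ so that $A \geq 1$ (which only weakens these inequalities, so they continue to hold), this is precisely the statement that the identity map $\mathrm{id} \colon (G, \|.\|_1) \to (G, \|.\|_2)$ is a quasi-isometric embedding.

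Next, since the identity map is a bijection it is in particular surjective, hence quasi-surjective with constant $C = 0$, so it is a quasi-isometry. By Theorem \ref{qi-invariant} a quasi-isometry induces a bi-Lipschitz homeomorphism on the level of asymptotic cones; applied here this yields the desired bi-Lipschitz homeomorphism between $\Cone_\omega(G,\|.\|_2)$ and $\Cone_\omega(G,\|.\|_1)$, the direction reversal in the statement of Theorem \ref{qi-invariant} being harmless since a bi-Lipschitz homeomorphism is invertible.

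There is no serious obstacle in this argument; the only point requiring care is the bookkeeping with the constants, namely that the additive constant $B$ survives unchanged through the passage from norms to metrics and that one may take $A \geq 1$ without loss of generality. Conceptually, the additive defect $B$ becomes invisible in the cone precisely because it is divided by the scaling sequence $s_n \to \infty$, which is the underlying reason why Theorem \ref{qi-invariant} promotes a coarse comparison of norms to a genuine bi-Lipschitz equivalence of the limit spaces.
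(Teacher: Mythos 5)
Your proposal is correct and follows exactly the paper's route: the paper's proof is a one-line appeal to the definition of coarse equivalence together with Proposition \ref{qi-invariant}, and you have merely spelled out the routine verification (right-invariance turning the norm comparison into a metric comparison, the identity being a surjective quasi-isometric embedding) that the paper leaves implicit.
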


\begin{proof}
This statement immediately follows from the definition of coarse equivalence and Proposition \ref{qi-invariant}. 
\end{proof}

\begin{lemma}\label{2}
Let $G$ be a group with conjugation invariant word norm $\|.\|$ and let $(s_n)_n$ be a scaling sequence. Then for any non-trivial $g \in \Cone_\omega(G,s_n)$ , there exists a constant $K > 0$ and an admissible sequence $(g_n)_n \subset G$ such that $\|g_n\| \geq Ks_n$ for all $n$ and $g= [(g_n)_n]$.
\end{lemma}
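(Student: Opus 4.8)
The plan is to start from an arbitrary admissible representative $(h_n)$ of $g$ and to repair it on the indices where its norm fails to be comparable to $s_n$, without disturbing its equivalence class. Write $c = \|g\| = \lim_\omega \frac{\|h_n\|}{s_n}$. Since the induced norm on the cone is a genuine norm, $g$ being non-trivial gives $c > 0$, and admissibility of $(h_n)$ makes $c$ finite. Applying the definition of the ultralimit with $\epsilon = c/2$, the set
\[
A = \Big\{ n : \|h_n\| \geq \tfrac{c}{2}\, s_n \Big\}
\]
belongs to $\omega$. On $A$ the representative already has the desired lower bound, so all the work lies on the complement $\N \setminus A$.

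First I would record that $G$ is necessarily unbounded: if $\sup_{x \in G} \|x\| = D < \infty$, then every admissible sequence satisfies $\lim_\omega \frac{\|h_n\|}{s_n} \leq \lim_\omega \frac{D}{s_n} = 0$ (using $\lim_\omega s_n = \infty$ and Lemma \ref{ultralimitinequality}), forcing every cone element to be trivial and contradicting $c > 0$. Next I would exploit the defining feature of a word norm: if $x = a_1 \cdots a_N$ realises $\|x\| = N$ as a minimal product of generators, then every prefix $a_1 \cdots a_j$ has norm exactly $j$ — the bound $\leq j$ is the triangle inequality, and $\geq j$ follows from $N \leq \|a_1 \cdots a_j\| + (N - j)$. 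Combined with unboundedness, this shows every value in $\{0,1,2,\dots\}$ is attained as a norm.

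The construction is then straightforward: for $n \in A$ set $g_n = h_n$, and for $n \notin A$ choose, using the standing convention that scaling sequences consist of natural numbers so that $s_n \geq 1$, an element $g_n$ with $\|g_n\| = \lceil s_n \rceil$, which exists by the previous paragraph. I would verify the three requirements in turn. The sequence $(g_n)$ agrees with $(h_n)$ on $A \in \omega$, hence $[g_n] = [h_n] = g$. It is admissible because $\frac{\|g_n\|}{s_n}$ is bounded by the admissibility constant of $(h_n)$ on $A$ and by $\frac{\lceil s_n \rceil}{s_n} \leq 2$ off $A$. Finally, setting $K = \min\{c/2, 1\} > 0$ yields $\|g_n\| \geq K s_n$ for all $n$: on $A$ this is the defining inequality of $A$, and off $A$ we have $\|g_n\| = \lceil s_n \rceil \geq s_n \geq K s_n$.

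The only genuinely delicate point is the treatment of $\N \setminus A$. The ultralimit is insensitive to what happens there, but the uniform lower bound and admissibility are demanded for \emph{every} $n$, so one really must manufacture elements of prescribed norm on these indices rather than, say, inserting the identity. This is precisely where the unboundedness of $G$ and the prefix property of word norms are used, and where the convention $s_n \in \N$ (hence $s_n \geq 1$) is needed to keep $\lceil s_n \rceil$ simultaneously $\geq s_n$ and $\leq 2 s_n$.
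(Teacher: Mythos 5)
Your proposal is correct and follows essentially the same route as the paper: extract the $\omega$-heavy set $A$ on which $\|h_n\| \geq \frac{c}{2}s_n$ via the ultralimit definition with $\epsilon = c/2$, keep the representative there, and replace it off $A$ by elements of norm comparable to $s_n$, which exist because the word norm is unbounded. The only difference is cosmetic: the paper picks replacement elements of norm in $(\frac{c}{2}s_n, \frac{c}{2}s_n + 1]$ and takes $K = c/2$, while you pick norm exactly $s_n$ with $K = \min\{c/2,1\}$ and, usefully, spell out the prefix argument showing word norms attain every natural number -- a point the paper leaves implicit.
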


\begin{proof}
Let $g$ be a non-trivial element represented by a sequence $(a_n)_n$ and let $\epsilon > 0$. Set $a:= \lim_\omega \frac{\|a_n\|}{s_n}$, where the limit is taken with respect to the ultrafilter $\omega$. Then $a=\|g\| > 0$. By definition of the ultralimit
\begin{align*}
\left \{ n \in \mathbb{N} \hspace{2mm} : \hspace{2mm} \left | \frac{\|a_n\|}{s_n}- a \right | < \epsilon \right \} \in \omega.
\end{align*}
Since filters are closed under taking supersets, this implies for $\epsilon=a/2$ using the inverse triangle inequality $ a -  \frac{\|a_n\|}{s_n} < \left | \frac{\|a_n\|}{s_n}- a \right |$ that
\begin{align*}
A:= \left \{ n \in \mathbb{N} \hspace{2mm} : \hspace{2mm} \|a_n\| > s_n(a- \epsilon)= \frac{s_na}{2} \right \} \in \omega.
\end{align*}
For $n \in A$ set $g_n=a_n$ and for $n \notin A$ define $g_n$ to be some element in $G$ satisfying $\|g\| \in (\frac{s_na}{2}, \frac{s_na}{2}+1]$, which exists since the norm $\|.\|$ is an unbounded word norm since $\Cone_\omega(G,s_n)$ possesses a non-trivial element. Then all $g_n$ satisfy $\|g_n\| \geq Ks_n$ with $K=a/2$. Moreover, $(g_n)_n$ defines an admissible sequence which still represents $g$ in $\Cone_\omega(G,s_n)$ since by definition $(g_n)_n$ and $(a_n)_n$ agree on $A \in \omega$. 
\end{proof}

\begin{lemma}\label{effectiveupperbound}
Let $G$ be a group and let $\|.\|$ be a conjugation invariant norm on $G$. Then for all $\delta >0$ and $g \in \Cone_\omega(G, \|.\|)$ there is a sequence $(g_n)_{n \in \N} \subset G$ with $g = [ (g_n)_n] $ and $\|g_n \| < (\|g \| + \delta)s_n $, where $\|g\|$ is the induced norm on $\Cone_\omega(G, \|.\|)$.  
\end{lemma}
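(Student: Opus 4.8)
The plan is to start from an arbitrary representative sequence $(a_n)_n$ of $g$ and to surgically replace it on a set of indices lying in $\omega$, so that the desired strict bound holds for \emph{every} $n$ rather than merely $\omega$-almost everywhere. By the description of the induced norm on the cone we have $\|g\| = \lim_\omega \frac{\|a_n\|}{s_n}$, and since $(a_n)_n$ is admissible this ultralimit is a finite nonnegative real number. This is the strict-inequality counterpart of Lemma \ref{2}, so the same ``modify on a heavy set'' technique should apply.

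First I would fix $\delta > 0$ and invoke the definition of the ultralimit: the set $\left\{ n : \left| \frac{\|a_n\|}{s_n} - \|g\| \right| < \delta \right\}$ lies in $\omega$. Because filters are closed under taking supersets, the larger set
\[
A := \left\{ n \in \N : \frac{\|a_n\|}{s_n} < \|g\| + \delta \right\}
\]
also lies in $\omega$. I would then define the modified sequence by setting $g_n = a_n$ for $n \in A$ and $g_n = 1$ for $n \notin A$. For $n \in A$ the inequality $\|g_n\| = \|a_n\| < (\|g\| + \delta)s_n$ holds by construction, while for $n \notin A$ we have $\|g_n\| = 0 < (\|g\| + \delta)s_n$, using that $s_n > 0$ and $\|g\| + \delta > 0$. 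Hence the bound $\|g_n\| < (\|g\| + \delta)s_n$ holds for all $n$, and in particular $\frac{\|g_n\|}{s_n} < \|g\| + \delta$ shows that $(g_n)_n$ is admissible.

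It then remains to check that $(g_n)_n$ still represents $g$: since $g_n = a_n$ for all $n \in A$ and $A \in \omega$, the two sequences agree on a set in $\omega$, so their ultralimits coincide and $[g_n] = [a_n] = g$. I expect no genuine obstacle here, as the whole argument is routine once the definition of the ultralimit is unwound; the only point requiring a moment of care is ensuring the strict inequality survives on the complement $\N \setminus A$, which is exactly why those terms are replaced by the identity so that their norm vanishes and the bound $(\|g\|+\delta)s_n > 0$ is met trivially.
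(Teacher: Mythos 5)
Your proposal is correct and matches the paper's own proof essentially verbatim: both pass to the $\omega$-heavy set $A=\{n : \|a_n\| < (\|g\|+\delta)s_n\}$ obtained from the definition of the ultralimit with $\epsilon=\delta$, replace the sequence by the identity off $A$, and conclude $[(g_n)_n]=g$ since the sequences agree on $A\in\omega$. Your extra remarks on admissibility and the trivial bound $0<(\|g\|+\delta)s_n$ off $A$ are fine and only make explicit what the paper leaves implicit.
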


\begin{proof}
This is similiar to the proof of Lemma \ref{2} above, but we include it for completeness. Let $(a_n)_{n \in \N}$ be a sequence representing $g \in \Cone_\omega(G, \|.\|)$. Set $a= \lim_\omega \frac{\|a_n\|}{n}=\|g \|$ with respect to the ultrafilter $\omega$. Then for all $\epsilon > 0 $ by definition of the ultralimit 
\begin{align*}
\left \{ n \in \mathbb{N} \hspace{2mm} : \hspace{2mm} \left | \frac{\|a_n\|}{s_n}- a \right | < \epsilon \right \} \in \omega.
\end{align*}
Since filters are closed under taking supersets, this implies setting $\epsilon = \delta$ that
\begin{align*}
A:= \left \{ n \in \mathbb{N} \hspace{2mm} : \hspace{2mm} \|a_n\| < s_n(\delta + a) \right \} \in \omega.
\end{align*}
Define $g_n=a_n $ for all $n \in A$ and $g_n=1 \in G$ for all $n \notin A$. Since $A \in \omega$ and $(g_n)_n$ agrees with $(a_n)_n$ on $A$, it follows that $[(g_n)_n] =g$. 
\end{proof}

\subsection{A bad example for a word norm generated by infinitely many conjugacy classes on the integers}

Let us construct an example of a word norm on $\Z$ generated by infinitely many conjugacy classes that despite remaining unbounded behaves very differently from the standard absolute value which is the word norm generated by $\{ \pm 1\}$. Recall, that $\Cone_\omega(Z, \|.\|) = \R$ for all norms $\|.\|$ generated by finitely many conjugacy classes by Lemma \ref{equiva}. 

Consider the word norm $\|.\|_S$ on $\Z$ generated by the set 
\[
S= \{ \pm 2^m m! \mid m \in \Z_{\geq 0} \}.
\]

\begin{claim}
Let $(x_n)_{n \in \N}$ be the sequence $x_n= 2^{n-1} n!$ for $n \geq 1$ and $x_0=0$. Then $\|x_n \|_S=n$ for all $n \in \N$. 
\end{claim}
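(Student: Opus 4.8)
The plan is to prove the two inequalities $\|x_n\|_S \leq n$ and $\|x_n\|_S \geq n$ separately, writing $g_m \defeq 2^m m!$ for the positive generators so that $S = \{\pm g_m \mid m \in \Z_{\geq 0}\}$. The first thing to record is the identity $x_n = 2^{n-1}n! = n \cdot 2^{n-1}(n-1)! = n\,g_{n-1}$, which exhibits $x_n$ as a sum of $n$ copies of the single generator $g_{n-1} \in S$ and hence gives the upper bound $\|x_n\|_S \leq n$ at once. I would also note two elementary supporting facts: the sequence $(g_m)$ is strictly increasing (since $g_m = 2m\,g_{m-1}$ for $m \geq 1$), and $g_n = 2x_n$.

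For the lower bound I would take an arbitrary shortest expression $x_n = \sum_{i=1}^k \epsilon_i g_{m_i}$ with $\epsilon_i \in \{\pm 1\}$ and $k = \|x_n\|_S$, then split the terms according to whether the index is \emph{small} ($m_i < n$) or \emph{large} ($m_i \geq n$). Writing $L$ for the sum of the large terms and $\Sigma$ for the sum of the small terms, so that $x_n = L + \Sigma$, the crucial observation is that every large generator is divisible by $g_n$: for $m \geq n$ one has $g_m = g_n \cdot 2^{m-n}(n+1)\cdots m$. Hence $L$ is an integer multiple of $g_n = 2x_n$, say $L = 2t\,x_n$, and therefore $\Sigma = x_n - L = (1-2t)x_n$ for some $t \in \Z$.

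The argument then closes by combining a size estimate with a parity observation. Since $1-2t$ is odd it is nonzero, so $|\Sigma| \geq x_n$; on the other hand each small term has absolute value at most $g_{n-1} = x_n/n$ by monotonicity, so if $k_S$ denotes the number of small terms then $|\Sigma| \leq k_S\,g_{n-1} = k_S\,x_n/n$. Comparing the two bounds forces $k_S \geq n$, whence $k \geq k_S \geq n$. The case $n=0$ is the trivial $\|0\|_S = 0$, and everything is consistent with the edge case $n=1$, where $x_1 = g_0 = 1$.

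The main obstacle here is conceptual rather than computational: because the generators are \emph{unbounded}, the usual homomorphism/quasimorphism lower bounds for word norms are unavailable, and one must instead rule out that the large generators permit a cheaper representation. This is precisely the role of the divisibility-plus-parity step — reducing modulo $g_n = 2x_n$ annihilates all large generators and forces the small generators alone to produce an odd, hence nonzero, multiple of $x_n$. The two supporting facts (strict monotonicity of $(g_m)$ and the divisibility $g_n \mid g_m$ for $m \geq n$) are routine, so the heart of the proof is the realisation that $\Sigma$ is necessarily an odd multiple of $x_n$.
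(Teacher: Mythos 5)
Your proposal is correct and follows essentially the same route as the paper: the identity $x_n = n\cdot 2^{n-1}(n-1)!$ for the upper bound, and for the lower bound a split of a minimal expression into generators of absolute value $\leq 2^{n-1}(n-1)!$ versus $\geq 2^n n!$, with the large part lying in $2^n n!\,\Z = 2x_n\Z$ so that the small part is a nonzero (odd) multiple of $x_n$, forcing at least $n$ small terms. Your version merely makes explicit the divisibility-plus-parity step that the paper leaves implicit in the estimate $\left|2^{n-1}n! - (2^n n!)(\bar{s}_{k+1}+\dots+\bar{s}_\ell)\right| \geq 2^{n-1}n!$.
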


Clearly, $x_n = n ( 2^{n-1} (n-1)!)$ and so $\| x_n \|_S \leq n$. Now assume that $x_n=s_1 + \dots + s_\ell$ is a presentation of $x_n$ of minimal length $\ell$ where $s_i \in S$ for all $i$. Reordering this expression we can assume that there is $k \in \{0, \dots, \ell \}$ such that the absolute values of the generators satisfy
\[
|s_i| \leq 2^{n-1} (n-1)! \hspace{2mm} \forall i \leq k \hspace{2mm} \text{and} \hspace{2mm} |s_i| \geq 2^n n! \hspace{2mm} \forall i > k.
\] 
Note that for all $i >k $ the generators are of the form $(2^n n!)\bar{s}_i$ for a positive integer $\bar{s}_i$. Thus, we calculate 
\[
\left | s_1 + \dots + s_k \right | = \left | x_n - (s_{k+1} + \dots + s_\ell) \right | =   \left | 2^{n-1} n! - (2^n n!)(\bar{s}_{k+1} + \dots + \bar{s}_\ell) \right | \geq 2^{n-1} n!.
\]
However, since $|s_i| \leq 2^{n-1} (n-1)! \hspace{2mm} \forall i \leq k$, it follows that $\left | s_1 + \dots + s_k \right | \leq k \cdot 2^{n-1} (n-1)! $. Therefore, $k \geq n$, which implies that $\ell = n$ since $\ell$ was the minimal length of an expression of $x_n$ in generators to begin with. This establishes the claim.

Therefore, the sequence $(x_n)_{n \in \N}$ defines a non-trivial element in $\Cone_\omega(\Z, \|.\|_S)$ for which we have $\|2x_n \|_S = \|2^n n!\|=1$ for all $n$. Thus, we have seen that $(x_n)_{n \in \N}$ represents a non-trivial element of order 2 in $\Cone_\omega(\Z, \|.\|_S)$.

\begin{theorem} \label{Z-torsion}
The asymptotic cone $\Cone_\omega(\Z, \|.\|_S)$ contains non-trivial elements of order two. In particular, $\Cone_\omega(Z, \|.\|_S)$ and $\R$ are not isomorphic. 
\end{theorem}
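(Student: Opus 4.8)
The plan is to assemble the element already constructed before the statement into a witness for $2$-torsion, relying entirely on the Claim that $\|x_n\|_S = n$. First I would set $x = [(x_n)_n] \in \Cone_\omega(\Z, \|.\|_S)$, where $x_n = 2^{n-1} n!$ for $n \geq 1$ and $x_0 = 0$, and confirm that this sequence is admissible for the canonical scaling $s_n = n$: by the Claim $\frac{\|x_n\|_S}{n} = 1$ for all $n \geq 1$, so the defining quotient is bounded. Using the induced conjugation invariant norm on the cone, $\|x\| = \lim_\omega \frac{\|x_n\|_S}{n} = \lim_\omega 1 = 1 > 0$, which shows that $x$ is non-trivial.

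Next I would exploit that $\Z$ is abelian, so by the earlier observations $\Cone_\omega(\Z, \|.\|_S)$ is abelian and its group law is induced by termwise addition of representative sequences. Hence $x + x = [(2x_n)_n]$, and since $2x_n = 2^n n! \in S$ we have $\|2x_n\|_S = 1$ for every $n$. Therefore $\|2x\| = \lim_\omega \frac{\|2x_n\|_S}{n} = \lim_\omega \frac{1}{n} = 0$, so $2x$ is the identity. Combined with $x \neq 1$, this exhibits $x$ as a non-trivial element of order exactly two.

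For the final assertion I would simply observe that $(\R, +)$ is torsion-free, so it contains no non-trivial element of finite order. Since any group isomorphism preserves the order of elements, a group carrying a non-trivial involution cannot be isomorphic to $\R$, whence $\Cone_\omega(\Z, \|.\|_S) \not\cong \R$. No genuine obstacle remains at this stage: all the difficulty is concentrated in the already-established Claim, whose content is precisely the dramatic gap between $\|x_n\|_S = n$ and $\|2x_n\|_S = 1$, since doubling the element collapses a word of length $n$ into a single generator. The only care needed in the packaging is to verify that $x$ is non-trivial, i.e.\ of order two rather than one, which is exactly what the computation $\|x\| = 1$ guarantees.
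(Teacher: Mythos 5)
Your proposal is correct and follows essentially the same route as the paper: it packages the pre-established Claim that $\|x_n\|_S = n$ into the non-trivial element $x = [(x_n)_n]$ with $\|x\| = 1$, uses $2x_n = 2^n n! \in S$ to get $\|2x\| = \lim_\omega \frac{1}{n} = 0$, and concludes via torsion-freeness of $\R$. The only difference is that you spell out the admissibility check and the torsion-free argument explicitly, which the paper leaves implicit.
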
 

\begin{remark}
A similar argument shows that for all $t \in \N$, $t \geq 2$ there exists an infinite generating set such $S_t$ such that $\Cone_\omega(\Z, \|.\|_{S_t})$ has non-trivial elements which are $t$-torsion. 
\end{remark} 

\subsection{How conjgation invariant norms and finitely generated word norms have trivial intersection}

This subsection is supposed to remind us that we cannot hope for interesting asymptotic behaviour whenever the generating set of our conjugation invariant word norm happens to be finite. 

\begin{lemma}
Let $G$ be a group generated by a finite set $S \subset G$ that is invariant under conjugation. Then $[G,G]$ is finite. 
\end{lemma}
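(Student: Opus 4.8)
The plan is to reduce the statement to Schur's theorem, which asserts that a group whose center has finite index has finite commutator subgroup. The first step is to observe that every generator $s \in S$ has a \emph{finite} conjugacy class in $G$: since $S$ is invariant under conjugation, the entire conjugacy class $\{gsg^{-1} \mid g \in G\}$ is contained in the finite set $S$ and is therefore finite. By the orbit-stabiliser correspondence, the conjugates of $s$ are in bijection with the cosets in $G/C_G(s)$, where $C_G(s)$ denotes the centraliser of $s$; hence each $C_G(s)$ has finite index in $G$.

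The second step is to identify the centre of $G$. Because $S$ generates $G$, an element of $G$ commutes with all of $G$ precisely when it commutes with every element of $S$, so $Z(G) = \bigcap_{s \in S} C_G(s)$. As $S$ is finite and each $C_G(s)$ has finite index, this is a finite intersection of finite-index subgroups and is therefore itself of finite index in $G$ (the index of an intersection of finitely many finite-index subgroups is bounded by the product of their indices). Thus $G$ is center-by-finite, i.e. $[G : Z(G)] < \infty$.

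The final step is to invoke Schur's theorem: if $[G : Z(G)] < \infty$, then $[G,G]$ is finite. Applied to our $G$, this yields the claim immediately.

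The only genuinely nontrivial ingredient is Schur's theorem itself; everything else is elementary bookkeeping with conjugacy classes and centralisers. If a self-contained argument were wanted, the main obstacle would be reproving Schur's theorem: the standard route fixes a transversal of $Z(G)$ in $G$, notes that commutators $[x,y]$ depend only on the cosets $xZ(G),\, yZ(G)$ so that there are only finitely many distinct commutators, and then shows via a transfer-type argument that $[G,G]$ is finite. Since this is a classical result, I would simply cite it rather than reconstruct the proof.
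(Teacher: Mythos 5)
Your proposal is correct, and its first half is essentially the paper's: the paper considers the homomorphism $\pi \colon G \to \Sigma_{|S|}$ induced by the conjugation action on $S$ and notes that $\ker(\pi) = Z(G)$ because $S$ generates $G$, which is the same observation as your $Z(G) = \bigcap_{s \in S} C_G(s)$ being a finite intersection of finite-index centralisers; in both cases one concludes $[G : Z(G)] < \infty$. The two arguments diverge at the finiteness of $[G,G]$. You invoke Schur's theorem as a classical black box, whereas the paper reproves the needed instance of it by a cohomological splitting argument: since $G$ is finitely generated and $Z(G)$ has finite index, $Z(G)$ is a finitely generated abelian group containing a finite-index free abelian subgroup $Z \cong \Z^k$; the extension class of $1 \to Z \to G \to G/Z \to 1$ in $H^2(G/Z, Z)$ is annihilated by $m = |G/Z|$ (citing Weibel), so pushing the extension forward along multiplication by $m$ yields an embedding of $G$ into $\widetilde{G} = (\Z^k \times G)/Z \cong \Z^k \times G/Z$, whence $[G,G] \subseteq [\widetilde{G},\widetilde{G}] = [G/Z, G/Z]$, a finite group. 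Your route is shorter and more elementary modulo the citation, and it avoids needing Schreier's lemma to see that $Z(G)$ is finitely generated; the paper's route buys a self-contained proof (up to the standard torsion fact about $H^2$ of finite groups) and explicitly locates $[G,G]$ inside the finite group $[G/Z,G/Z]$. Both arguments are complete and correct.
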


\begin{proof}
Let $\pi \colon G \rightarrow \Sigma_{|S|}$ be the homomorphism induced by the action via conjugation as permutations of the set $S$. The kernel $\ker(\pi)=Z(G)$ is the center of $G$. We deduce that the center is a finite index subgroup. Hence, it is finitely generated itself and contains a finite index subgroup $Z$ which is free abelian of some rank $k$ and presents $G$ as an extension of a finite by a free abelian group
\begin{figure}[H]
\centering
\begin{tikzcd}
1 \arrow[-latex]{r} & Z \arrow[-latex]{r}{\iota}
 & G \arrow[-latex]{r}&  G/Z \arrow[-latex]{r} & 1.
\end{tikzcd}
\end{figure} 
The extension class of this extension is an element in $H^2(G/Z,Z)$, where the action on the coefficients is trivial. Let $m$ denote the cardinality of the group $G/Z$. Every element $c \in H^2(G/Z,Z)$ is torsion of order $m$ by \cite[Thm. 6.5.8]{Weibel}.

The group $\widetilde{G}= \frac{\Z^k \times G}{Z}$ where $Z$ is embedded diagonally via multiplication by $m$ and $\iota$ is an extension of $G/Z$ by $\Z^k \cong Z$ since  $\widetilde{G}/\Z^k= G/Z$. We obtain the following commutative diagram
\begin{figure}[H]
\centering
\begin{tikzcd}
&&&& 1 \arrow[-latex]{r} & Z \arrow[-latex]{r}{\iota} \arrow[-latex]{d}{\cdot m}
 & G \arrow[-latex]{r} \arrow[-latex]{d} &  G/Z \arrow[-latex]{r} \arrow[-latex]{d}{id} & 1 \\
&&&& 1 \arrow[-latex]{r} & \mathbb{Z}^k \arrow[-latex]{r}
 & \widetilde{G} \arrow[-latex]{r}&  G/Z \arrow[-latex]{r} & 1. & & (\star)
\end{tikzcd}
\end{figure}

By the naturality of the identification of isomorphism classes of extensions of $G/Z$ by $Z$ with elements of $H^2(G, G/Z)$ the extension class of $(\star)$ is given by the image of $c$ under the change of coefficients homomorphism induced by multiplication by $m$, which vanishes. Therefore, $(\star)$ is the trivial central extension which implies that $\widetilde{G} \cong \Z^k \times G/Z$. Thus,
\begin{align*}
[G,G] \subset [ \widetilde{G}, \widetilde{G}] = [ \Z^k \times G/Z,  \Z^k \times G/Z] = [G/Z,G/Z]
\end{align*}
and since the latter is finite, so is $[G,G]$. 
\end{proof}

\begin{theorem}
Let $G$ be a group generated by a finite set $S \subset G$ which is invariant under conjugation. Then $G$ is quasi-isometric to $\Z^n$ where $n = \rk(G^{ab})$.
\end{theorem}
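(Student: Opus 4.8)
The plan is to reduce the statement to two standard facts: a surjective homomorphism of finitely generated groups with finite kernel is a quasi-isometry, and $\Z^n$ with any finite-generating-set word norm is quasi-isometric to $(\R^n, \ell_1)$. By the preceding lemma the commutator subgroup $[G,G]$ is finite, so $G^{ab}$ is a finitely generated abelian group; I would write $G^{ab} \cong \Z^n \oplus T$ with $T$ finite and $n = \rk(G^{ab})$. Since $S$ is finite, $\nu_S$ is an honest word norm for a finite generating set, so it suffices to exhibit a quasi-isometry from $(G,\nu_S)$ to $\Z^n$.

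First I would define $\pi \colon G \to \Z^n$ as the composition of the abelianisation map $G \to G^{ab}$ with the projection $G^{ab} \to G^{ab}/T \cong \Z^n$ killing the torsion subgroup. Its kernel $N$ is an extension of the finite group $T$ by the finite group $[G,G]$, hence finite, and I would set $D = \max_{h \in N} \nu_S(h) < \infty$. The finite set $\pi(S)$ generates $\Z^n$, and I would equip $\Z^n$ with the associated word norm $\nu_{\pi(S)}$.

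Then I would verify that $\pi$ is a quasi-isometry with respect to these norms. Any expression $g = s_1 \cdots s_k$ of minimal length $k = \nu_S(g)$ projects to $\pi(g) = \pi(s_1) \cdots \pi(s_k)$, giving $\nu_{\pi(S)}(\pi(g)) \leq \nu_S(g)$, so $\pi$ is $1$-Lipschitz. For the reverse estimate, given $g$ I would take a $\nu_{\pi(S)}$-geodesic word for $\pi(g)$, lift each letter to an element of $S$, and note that the resulting lift differs from $g$ by an element of $N$; this yields $\nu_S(g) \leq \nu_{\pi(S)}(\pi(g)) + D$. Hence $\pi$ is a $(1,D)$-quasi-isometric embedding in the sense of the earlier definition, and it is surjective (so in particular quasi-surjective), whence a quasi-isometry. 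Composing with the standard quasi-isometry $(\Z^n, \nu_{\pi(S)}) \to (\R^n, \ell_1)$ would complete the argument.

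The computational core is the lifting step, and the single point requiring care is that the error term introduced there is uniformly bounded; this is exactly where the finiteness of $N$, and thus of $[G,G]$, is essential. Everything else is a routine application of the definitions of word norm and quasi-isometry, so I expect no serious obstacle beyond bookkeeping the two finite kernels.
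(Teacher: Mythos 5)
Your proposal is correct and follows essentially the same route as the paper: both rest on the preceding lemma that $[G,G]$ is finite, the $1$-Lipschitz property of the projection, and the lifting of a geodesic word letter by letter back into $S$ so that the lift differs from $g$ by an element of a finite kernel of uniformly bounded norm. The only difference is organisational: you compose with the quotient by the torsion subgroup $T$ so that the finite kernel becomes $N$ (an extension of $T$ by $[G,G]$) and you land in $\Z^n$ directly, whereas the paper proves the quasi-isometry onto $G^{ab}\cong \Z^n \oplus T$ and leaves the standard final identification of $G^{ab}$ with $\Z^n$ up to quasi-isometry implicit, so if anything your write-up is marginally more complete on that last step.
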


\begin{proof}
Let $f \colon G \to G^{ab}$ be the abelianisation map and equip $G^{ab}$ with the word norm $\|.\|_{f(S)}$ generated by the set $f(S)$, which is equivalent to the standard norm on $G^{ab}$ since $f(S)$ is a finite generating set of $G^{ab}$. It is clear, that $\|f(g) \|_{f(S)} \leq \|g\|_S$ for all $g \in G$ and hence $f$ is Lipschitz continuous with constant one with respect to the norms $\|.\|_S$ and $\|.\|_{f(S)}$. Conversely, let $g \in G$ be arbitrary and $\|f(g)\|_{f(S)}=n$. Write $f(g)=f(s_1) \cdots f(s_n)$ and define $z = s_1 \cdots s_n \in G$. Then $\|z\|_S \leq n$ by definition of the norm, $f(z)=g$ and since $f$ is Lipschitz with constant 1 it follows that $\|z\|_s=n = \|f(g)\|_{f(S)}$. Moreover, $gz^{-1}= a \in \ker(f)=[G,G]$, which is finite. So, $\|g\|_S = \|az\|_S \leq \|a \|_S + \|z\|_S \|a\|_S + \|f(g)\|_{f(S)}$. Consequently, 
\begin{align*}
\|f(g) \|_{f(S)} \geq \| g \|_S - \max_{a \in [G,G]}\|a\|_S \hspace{2mm} \text{for all } g \in G.
\end{align*}
Since $f$ is surjective, $f$ is a quasi-isometry. 
\end{proof}

\section{Short exact sequences}

To keep the notation is this subsection simple we will set the scaling sequence to be $s_n=n$ and omit it from our notation.  

\begin{defi}
Let $f\colon G \rightarrow H$ be a surjective homomorphism and $\|.\|_S$ be a conjugation invariant word norm on $G$ generated by $S \subset G$. Then we call the conjugation invariant word norm $\|.\|_{f(S)}$ on $H$ generated by $f(S)$ the norm \textit{induced by} $\|.\|_S$ \textit{via} $f$. 
\end{defi}

It is immediate that in the setting of the above definition the map $f$ is Lipschitz-continuous with constant one with respect to the norms $\|.\|_S$ and $\|.\|_{f(S)}$. 

\begin{lemma}[SES-Lemma]
\label{SES-Lemma}
Let $ 1 \rightarrow H \xrightarrow{f} G \xrightarrow{g} G/H \rightarrow 1$ be a short exact sequence of groups where $H$ and $G$ are equipped with conjugation invariant norms such that $\|.\|_G$ is a word norm generated by a set $S \subset G$ and $f$ is a quasi-isometric embedding. Moreover, assume that the norm $\|.\|_{G/H}$ on $G/H$ is the norm induced by $S$ via $g$. Then the short exact sequence gives rise to a short exact sequence of asymptotic cones
\begin{equation} \label{cses}
\centering
\begin{tikzcd}
1  \arrow[-latex]{r} & \Cone_\omega(H,\|.\|_H) \arrow[-latex]{r}{\widehat{f}} 
 & \Cone_\omega(G,\|.\|_G) \arrow[-latex]{r}{\widehat{g}} & \Cone_\omega( G/H , \|.\|_{G/H} ) \arrow[-latex]{r} & 1.
\end{tikzcd}
\end{equation}
\end{lemma}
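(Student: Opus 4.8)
The plan is to define the two cone maps by applying $f$ and $g$ termwise, check that they are well-defined group homomorphisms, and then verify exactness separately at the three nontrivial spots. I would set $\widehat f([h_n]) = [f(h_n)]$ and $\widehat g([x_n]) = [g(x_n)]$. Since $f$ is a quasi-isometric embedding, Proposition \ref{qi-invariant} already supplies that $\widehat f$ is a well-defined bi-Lipschitz embedding, and it is a group homomorphism because $f$ is. For $\widehat g$, the map $g$ is Lipschitz with constant one with respect to $\|.\|_G$ and the induced norm $\|.\|_{G/H}$ by the remark following the definition of the induced norm; hence $(g(x_n))$ is admissible whenever $(x_n)$ is, and $\lim_\omega \|g(x_n)g(y_n)^{-1}\|_{G/H}/n \le \lim_\omega \|x_n y_n^{-1}\|_G/n$ shows $\widehat g$ is well-defined and $1$-Lipschitz; it is a homomorphism because $g$ is.

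Injectivity of $\widehat f$ is immediate from the lower bi-Lipschitz bound in Proposition \ref{qi-invariant}. For surjectivity of $\widehat g$ I would exploit that $\|.\|_{G/H}$ is precisely the word norm generated by $g(S)$: given $[\bar z_n] \in \Cone_\omega(G/H,\|.\|_{G/H})$ with $\|\bar z_n\|_{G/H} = m_n \le Cn$, write each $\bar z_n$ as a product of $m_n$ (conjugates of) elements of $g(S)$, lift each factor to the corresponding (conjugate of an) element of $S$ in $G$, and let $z_n$ be the product of these lifts. Then $g(z_n) = \bar z_n$ and $\|z_n\|_G \le m_n = \|\bar z_n\|_{G/H}$, so $(z_n)$ is admissible and $\widehat g([z_n]) = [\bar z_n]$.

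It remains to prove exactness in the middle, which I expect to be the main obstacle. The inclusion $\im(\widehat f) \subseteq \ker(\widehat g)$ is trivial, since $g\circ f = 1$ forces $\widehat g \circ \widehat f = 1$ termwise. For the reverse inclusion, take $[x_n]$ with $\lim_\omega \|g(x_n)\|_{G/H}/n = 0$. Exactly as in the surjectivity step, I would write $g(x_n)$ as a product of $\|g(x_n)\|_{G/H}$ (conjugates of) elements of $g(S)$ and lift it to $y_n \in G$ with $g(y_n) = g(x_n)$ and $\|y_n\|_G \le \|g(x_n)\|_{G/H}$. Then $\lim_\omega \|y_n\|_G/n = 0$, so $[y_n] = 1$ in the cone, while $x_n y_n^{-1} \in \ker(g) = f(H)$, say $x_n y_n^{-1} = f(h_n)$. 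Since multiplication on the cone is given by multiplication of representatives, $[x_n] = [f(h_n)]\,[y_n] = [f(h_n)] = \widehat f([h_n])$, provided $(h_n)$ is admissible in $H$. This admissibility is the crucial final check: from $\|f(h_n)\|_G \le \|x_n\|_G + \|y_n\|_G \le (\text{const})\cdot n$ together with the lower bound $\frac{1}{A}\|h_n\|_H - B \le \|f(h_n)\|_G$ coming from $f$ being a quasi-isometric embedding, one gets $\|h_n\|_H \le A\|f(h_n)\|_G + AB$, so $\|h_n\|_H/n$ stays bounded and $[h_n] \in \Cone_\omega(H,\|.\|_H)$. The whole argument turns on the interplay of the two hypotheses: the induced-norm condition lets us lift $g(x_n)$ with controlled $G$-norm so that the error $[y_n]$ vanishes in the cone, while the quasi-isometric-embedding hypothesis on $f$ is exactly what forces the resulting correction $(h_n)$ to be admissible.
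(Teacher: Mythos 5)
Your proposal is correct, and it follows the same overall skeleton as the paper (termwise maps, injectivity of $\widehat f$ from Proposition \ref{qi-invariant}, surjectivity and middle exactness from the word-norm hypothesis), but the key step is implemented differently. The paper exploits that the word norm $\|.\|_G$ is discrete-valued to choose in each left $H$-coset a representative $a_i$ of smallest norm, and then derives the equality $\|a_i\|_G = \|g(a_i)\|_{G/H}$, which drives both surjectivity (lift $x_n$ to $a_{i(n)}$) and middle exactness (write $x_n = a_{i(n)}h_n$ and note $d_G(x_n,h_n) = \|a_{i(n)}\|_G = \|g(x_n)\|_{G/H} \to 0$ after rescaling). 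You instead lift word-length-minimal expressions in conjugates of $g(S)$ factor by factor to get $z_n$ (resp.\ $y_n$) with $g(z_n)=\bar z_n$ and $\|z_n\|_G \le \|\bar z_n\|_{G/H}$; this is in fact the mechanism implicitly justifying the paper's claim that strict inequality $\|g(a_i)\|_{G/H} < \|a_i\|_G$ would contradict minimality, so the two arguments are two organisations of the same idea, with yours avoiding the choice of a global system of minimal coset representatives. One genuine merit of your write-up: you spell out the final admissibility check that $(h_n)$ is bounded in $(H,\|.\|_H)$ via the lower quasi-isometric-embedding bound $\frac{1}{A}\|h_n\|_H - B \le \|f(h_n)\|_G$ --- this is exactly where the hypothesis on $f$ enters, and the paper's proof passes over it silently (it only bounds $\|h_n\|_G \le 2\|x_n\|_G$), so your version is, if anything, more complete on this point.
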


\begin{proof}
By Proposition \ref{qi-invariant} $\widehat{f}$ exists and is injective. Moreover, $\widehat{g}$ is well defined since $g$ is Lipschitz-continuous with constant one. Since $g \circ f $ is trivial, the same holds true for $\widehat{g} \circ \widehat{f}$. 

As a set $G$ is the disjoint union of its left $H$-cosets $G= \coprod_{i \in I} a_i H$. Since the norm $\|.\|_G$ only assumes discrete values, we can choose a set of coset representatives $\lbrace a_i \rbrace_{i \in I} $ of smallest norm. That is, $\|a_i\|_G \leq \| x\|_G $ for all $x \in a_i H$. 

It holds that $\|a_i\|_G = \|g(a_i)\|_{G/H}$. Indeed, since $g$ is Lipschitz-continuous with constant one, it holds that $\|g(a_i)\|_{G/H} \leq \|a_i\|_G $. However, if this inequality was strict, then $a_i$ was not the coset representative of smallest norm to begin with. To prove surjectivity let $x=[(x_n)_{n \in \mathbb{N}}] \in \Cone_\omega( G/H , \|.\|_{G/H} )$ be given. Then for each $n$ there exists $i(n) \in I$ with $g(a_{i(n)}) = x_n$. Since $\|a_{i(n)}\|_G = \|x_n\|_{G/H}$ the sequence $(a_{i(n)})_{n \in \mathbb{N}} $ is admissible and defines an element in  $\Cone_\omega(G,\|.\|_G)$ which satisfies $\widehat{g}([(a_{i(n)})_{n \in \mathbb{N}}])= [x_n]$.

It remains to show $\ker(\widehat{g}) \subset \im(\widehat{f})$. Let $x= [x_n] \in \ker(\widehat{g})$. Then, $\lim_\omega \frac{1}{n} \|g(x_n)\|_{G/H}= 0$. Write $x_n= a_{i(n)}h_n$ in the coset decomposition for $n \in \mathbb{N}$. Then  
\begin{align*}
\lim_\omega \frac{d_G(x_n,h_n)}{n}=\lim_\omega \frac{\|a_i(n)\|_G}{n} = \lim_\omega \frac{\|g(x_n)\|_{G/H}}{n}=0.
\end{align*}
Moreover, $\|h_n\|_G=\|x_n a_{i(n)}^{-1}\| \leq \|x_n\|_G + \|a_{i(n)}\|_G \leq 2 \|x_n\|_G$ since $a_{i(n)}$ is the element of smallest norm inside the left $H$-coset that $x_n$ belongs to. Thus, the sequence $(h_n)_{n \in N}$ represents an element in the asymptotic cone with $x=[x_n]=[h_n] \in \im(\widehat{f})$.
\end{proof}

If we only consider word norms generated by finitely many conjugacy classes, the assumption that the quotient carries the induced norm is not necessary. Namely, since any two word norms $\nu_R$ and $\nu_S$ generated by finitely many conjugacy classes are equivalent by Lemma \ref{equiva} and the identity, which clearly preserves the group structure, gives rise to a bi-Lipschitz isomorphism of the topological groups $\Cone_\omega(G/H, \nu_R)$ and $\Cone_\omega(G/H, \nu_S)$ by Proposition \ref{qi-invariant}. Then Lemma \ref{SES-Lemma} becomes:

\begin{lemma}\label{SES-fcg}
Let $ 1 \rightarrow H \xrightarrow{f} G \xrightarrow{g} G/H \rightarrow 1$ be a short exact sequence of groups and let $\|.\|_G$ and $\|.\|_{G/H}$ be conjugation invariant word norms generated by finitely many conjugacy classes. If $f$ is a quasi-isometric embedding, then there is the following short exact sequence on the level of asymptotic cones
\begin{equation*} 
\centering
\begin{tikzcd}
1  \arrow[-latex]{r} & \Cone_\omega(H,\|.\|_H) \arrow[-latex]{r}{\widehat{f}} 
 & \Cone_\omega(G,\|.\|_G) \arrow[-latex]{r}{\widehat{g}} & \Cone_\omega( G/H , \|.\|_{G/H}) \arrow[-latex]{r} & 1.
\end{tikzcd}
\end{equation*}
\end{lemma}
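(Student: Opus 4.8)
The plan is to reduce the statement directly to Lemma \ref{SES-Lemma}, which already produces the desired short exact sequence whenever the quotient $G/H$ carries the norm \emph{induced} by the generating set of $\|.\|_G$. The only work left is therefore to exchange this induced norm for the arbitrary word norm $\|.\|_{G/H}$ generated by finitely many conjugacy classes, and for this the equivalence statement of Lemma \ref{equiva} together with the bi-Lipschitz invariance of asymptotic cones from Proposition \ref{qi-invariant} does everything needed. The norm $\|.\|_H$ on $H$ is taken, as in Lemma \ref{SES-Lemma}, to be the conjugation invariant norm witnessing that $f$ is a quasi-isometric embedding, and it is carried along unchanged throughout.

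Concretely, I would first fix a generating set $S \subseteq G$ consisting of finitely many conjugacy classes with $\|.\|_G = \nu_S$. The key preliminary observation is that the induced norm $\|.\|_{g(S)}$ on $G/H$ is again generated by finitely many conjugacy classes: since $g$ is a surjective homomorphism, the image of a conjugacy class of $G$ is a single conjugacy class of $G/H$, so $g(S)$ is a union of finitely many conjugacy classes. Applying Lemma \ref{SES-Lemma} to the given short exact sequence, with the norm on the quotient taken to be $\|.\|_{g(S)}$, then yields the short exact sequence
\begin{equation*}
\begin{tikzcd}
1 \arrow[-latex]{r} & \Cone_\omega(H,\|.\|_H) \arrow[-latex]{r}{\widehat{f}} & \Cone_\omega(G,\|.\|_G) \arrow[-latex]{r}{\widehat{g}} & \Cone_\omega(G/H, \|.\|_{g(S)}) \arrow[-latex]{r} & 1 .
\end{tikzcd}
\end{equation*}

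Next I would compare $\|.\|_{g(S)}$ with the prescribed norm $\|.\|_{G/H}$. Both are conjugation invariant word norms on $G/H$ generated by finitely many conjugacy classes, hence equivalent by Lemma \ref{equiva}. Consequently the identity map $(G/H, \|.\|_{g(S)}) \to (G/H, \|.\|_{G/H})$ is a quasi-isometry, so by Proposition \ref{qi-invariant} it induces a bi-Lipschitz homeomorphism of asymptotic cones; since the identity respects the group structure, this induced map is moreover an isomorphism of topological groups $\Cone_\omega(G/H, \|.\|_{g(S)}) \cong \Cone_\omega(G/H, \|.\|_{G/H})$. Post-composing $\widehat{g}$ with this isomorphism replaces the last nontrivial term of the above sequence while preserving exactness: injectivity of $\widehat{f}$ and exactness at $\Cone_\omega(G,\|.\|_G)$ are untouched, the kernel of the new surjection coincides with $\ker(\widehat{g}) = \im(\widehat{f})$ because the isomorphism is injective, and surjectivity is the composite of two surjections. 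This gives exactly the claimed short exact sequence.

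The only genuinely new point beyond invoking Lemma \ref{SES-Lemma} is the verification that the induced norm $\|.\|_{g(S)}$ lands in the same equivalence class of norms generated by finitely many conjugacy classes; everything else is a formal substitution. I do not expect a serious obstacle, since surjectivity of $g$ guarantees that images of conjugacy classes are again conjugacy classes. The only mild care required is to observe that the map on cones induced by the identity is simultaneously bi-Lipschitz and a group homomorphism, which holds because the identity respects both the metric up to the equivalence constants and the multiplication, so that the substitution is compatible with the topological group structure appearing in the statement.
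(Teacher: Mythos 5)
Your proposal is correct and follows essentially the same route as the paper: the paper likewise derives Lemma \ref{SES-fcg} from Lemma \ref{SES-Lemma} by applying it with the induced norm on $G/H$ and then invoking Lemma \ref{equiva} together with Proposition \ref{qi-invariant} to replace that norm, via the identity map, by any word norm generated by finitely many conjugacy classes. Your explicit check that $g(S)$ is again a union of finitely many conjugacy classes (so that Lemma \ref{equiva} applies) is a detail the paper leaves implicit, and it is verified correctly.
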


\begin{cor} \label{extentionR}
Let $G$ be an extension $\Z \to G \to K$ of a bounded group $K$ by $\mathbb{Z}$ and assume that $G$ is finitely normally generated.  
Let $\|.\|$ be a word norm generated by finitely many conjugacy classes on $G$ and assume that there exists quasimorphism on $G$ that is unbounded. Then $\Cone_\omega(G,\|.\|) \cong \mathbb{R}$ are bi-Lipschitz isomorphic as metric groups.
\end{cor}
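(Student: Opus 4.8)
The plan is to apply Lemma \ref{SES-fcg} to the short exact sequence $1 \to \Z \xrightarrow{f} G \xrightarrow{g} K \to 1$, with $H=\Z$ and $G/H=K$. For this I must check that $K$ carries a conjugation invariant word norm generated by finitely many conjugacy classes, that $f\colon \Z \to G$ is a quasi-isometric embedding, and that the resulting sequence of cones degenerates to an isomorphism. Since $G$ is finitely normally generated and $g$ is surjective, the image of a finite normal generating set normally generates $K$, so such a norm $\|.\|_K$ on $K$ exists. Because $K$ is bounded, every conjugation invariant norm on $K$ — in particular $\|.\|_K$ — has finite diameter $M<\infty$; hence for any admissible sequence the induced cone norm is $\lim_\omega \|x_n\|_K/n \le \lim_\omega M/n = 0$, so $\Cone_\omega(K,\|.\|_K)$ is the trivial group. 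Granting that $f$ is a quasi-isometric embedding, Lemma \ref{SES-fcg} yields the exact sequence $1 \to \Cone_\omega(\Z,\|.\|_\Z) \xrightarrow{\widehat f} \Cone_\omega(G,\|.\|) \to 1 \to 1$, so $\widehat f$ is an isomorphism. Finally, the quasi-isometric embedding $f$ makes the restricted norm $\|.\|_\Z = \|f(\cdot)\|$ coarsely equivalent to $|\cdot|$ on $\Z$, whence $\Cone_\omega(\Z,\|.\|_\Z) \cong \Cone_\omega(\Z,|\cdot|) = \R$ by Proposition \ref{qi-invariant}; as all maps are bi-Lipschitz this gives the desired bi-Lipschitz isomorphism of metric groups.

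It remains to prove that $f$ is a quasi-isometric embedding, which is where the unbounded quasimorphism enters. Let $t$ generate $\Z$ and set $c = \|f(t)\|$; the upper bound is immediate from $\|f(k)\| = \|f(t)^k\| \le |k|\,c$. For the lower bound, homogenise the given unbounded quasimorphism to $\bar\psi$; by Lemma \ref{HomogenisationQM} it is a homogeneous quasimorphism with $|\bar\psi-\psi|\le D(\psi)$, so $\bar\psi$ is again unbounded, in particular nonzero. As $\bar\psi$ is constant on conjugacy classes and $\|.\|$ is generated by a set $S$ of finitely many conjugacy classes, $\bar\psi$ is bounded on $S$, say by $L$. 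Repeating the computation in the proof of Lemma \ref{Autqm implies Autnorm} gives $\|h\| \ge |\bar\psi(h)|/(L+D(\bar\psi))$ for all $h\in G$, and applying this to $h=f(t)^k$ together with homogeneity yields $\|f(k)\| \ge |k|\,|\bar\psi(f(t))|/(L+D(\bar\psi))$. Thus the lower bound, and hence the quasi-isometric embedding, follows as soon as $\bar\psi(f(t)) \neq 0$.

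The main obstacle is therefore to rule out that $\bar\psi$ vanishes on $f(\Z)$. Suppose $\bar\psi|_{f(\Z)}=0$. Since $f(\Z)$ is normal, for every $x\in G$ and $n\in\N$ one has $(x f(t))^n = x^n w$ with $w\in f(\Z)$, so $\bar\psi(w)=0$ and the defect inequality gives $|\bar\psi((xf(t))^n)-\bar\psi(x^n)|\le D(\bar\psi)$; using homogeneity this reads $|n\bar\psi(xf(t))-n\bar\psi(x)|\le D(\bar\psi)$, and letting $n\to\infty$ shows $\bar\psi(xf(t))=\bar\psi(x)$. Hence $\bar\psi$ is constant on cosets of $f(\Z)$ and descends to a homogeneous quasimorphism $\phi$ on $K=G/f(\Z)$, which is nonzero because $\bar\psi$ is unbounded. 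This is the delicate step: a bounded group admits no nonzero homogeneous quasimorphism, since such a quasimorphism forces, via Bavard duality, unbounded stable commutator length and thereby an unbounded conjugation invariant norm (see \cite{Calegari}), contradicting boundedness of $K$. Therefore $\bar\psi(f(t)) \neq 0$, which completes the verification that $f$ is a quasi-isometric embedding and hence the proof. Everything apart from this implication "bounded $\Rightarrow$ no nonzero homogeneous quasimorphism" is bookkeeping around Lemma \ref{SES-fcg}.
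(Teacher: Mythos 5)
Your overall strategy is the same as the paper's: homogenise the quasimorphism, show it cannot vanish on the image of $\Z$ (else it descends to $K$ and contradicts boundedness of $K$), deduce that the restriction of $\|.\|$ to $\Z$ is equivalent to the absolute value so that the inclusion is a quasi-isometric embedding, and then conclude via Lemma \ref{SES-fcg}, with the cone of the bounded quotient $K$ collapsing to a point. Two of your deviations are harmless or even welcome: you prove the descent of $\bar\psi$ to $K$ by hand (the computation $(xf(t))^n = x^n w$ with $w \in f(\Z)$ is correct and replaces the paper's citation of \cite[Lemma 3.16]{HartnickSchweitzer}), and you spell out the bookkeeping around Lemma \ref{SES-fcg} that the paper leaves implicit.

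The one step that does not hold up as written is your justification of ``a bounded group admits no nonzero homogeneous quasimorphism'' via Bavard duality. Bavard duality only detects quasimorphisms through their values on $[K,K]$: if the descended $\phi$ vanishes on $[K,K]$ it yields no lower bound on $\scl$ at all, and this case is not vacuous (indeed, since $D(\phi)=\sup_{a,b}|\phi([a,b])|$ for homogeneous quasimorphisms, vanishing on $[K,K]$ forces $\phi$ to be a genuine homomorphism to $\R$). Moreover, even when $\scl$ is unbounded, it is unbounded on $[K,K]$, and commutator length is a conjugation invariant norm on $[K,K]$, not on $K$, so an additional argument is needed before this contradicts boundedness of $K$ itself. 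Both issues are easily repaired: either observe directly that a nonzero homogeneous quasimorphism $\phi$ on $K$ yields the unbounded conjugation invariant norm $\nu(k)=|\phi(k)|+D(\phi)$ for $k \neq 1$ (replacing $D(\phi)$ by $1$ if $\phi$ is a homomorphism), or follow the paper: you have already shown that $K$ is finitely normally generated by the image of a normal generating set of $G$; since $\phi$ is constant on conjugacy classes it is bounded on this set, and Lemma \ref{Autqm implies Autnorm} then makes the associated word norm on $K$ stably unbounded, contradicting boundedness. With either repair, your proof is complete and coincides in substance with the paper's.
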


\begin{proof}
By Lemma \ref{HomogenisationQM} we can assume that $\psi$ is an unbounded quasimorphism on $G$ that is homogeneous.  If $\psi_{|\Z}$ is bounded, then $\psi$ descends to a homogeneous quasimorphism $\bar	{\psi}$ by \cite[Lemma 3.16]{HartnickSchweitzer}. However, since $G$ is finitely normally generated so is $K$ by some set $S$. Then $\bar{\psi}$ is bounded on $S$. Thus, by Lemma \ref{Autqm implies Autnorm} the norm $\|.\|_S$ is stably unbounded if $\bar{\psi}$ is unbounded. This contradicts our assumption that $K$ is bounded. 
Therefore, $\psi_{|\Z}$ is an unbounded homogeneous quasimorphism on $\Z$. Since $\|.\|$ is generated by finitely many conjugacy classes, the restriction of $\|.\|$ onto $\Z$ is stably unbounded and thus equivalent to the standard absolute value on $\Z$. Then the result follows from Lemma \ref{SES-fcg} above.  
\end{proof}

\subsection{Applications}

According to \cite[Thm. 3.1]{KLM} all semisimple Lie groups are finitely normally generated.

\begin{theorem}\label{PropSemisimple}
Let $G$ be a connected semisimple Lie group with center $Z(G)$ isomorphic to the infinite cyclic group $\Z$ and let $\|.\|$ be a word norm generated by finitely many conjugacy classes on $G$. Then $\Cone_\omega(G,\|.\|) \cong \R$. 
\end{theorem}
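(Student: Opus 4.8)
The plan is to realize $\Cone_\omega(G,\|.\|)\cong\R$ as a direct application of Corollary \ref{extentionR}, so the entire task reduces to checking that the hypotheses of that corollary hold for a connected semisimple Lie group $G$ whose center is infinite cyclic. There are three things to verify: that $G$ is finitely normally generated, that $G$ arises as an extension $\Z\to G\to K$ with $K$ bounded, and that $G$ carries an unbounded quasimorphism.

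First I would dispose of the finite normal generation: this is quoted directly from \cite[Thm. 3.1]{KLM} in the sentence immediately preceding the statement, so it requires no further work. Next I would identify the extension. Since $Z(G)\cong\Z$ by hypothesis, setting $K=G/Z(G)$ gives the short exact sequence $1\to\Z\to G\to K\to 1$ with $\Z$ central. The quotient $K=G/Z(G)$ is a connected semisimple Lie group with trivial center, i.e.\ an adjoint semisimple Lie group, and the key point is that such a group is \emph{bounded} in the sense of the paper: every conjugation invariant norm on it has finite diameter. I would justify this by noting that adjoint (centerless) semisimple Lie groups are uniformly perfect --- indeed uniformly simple / boundedly generated by commutators --- so by the remark following the definition of bounded (uniformly perfect groups admit no unbounded quasimorphism) together with finite normal generation of $K$ (again from \cite{KLM}), $K$ admits no unbounded conjugation invariant norm. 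This boundedness of the adjoint quotient is the crux of the argument.

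The remaining hypothesis is the existence of an unbounded quasimorphism on $G$. Here the point is precisely that the central $\Z$ lifts a quasimorphism: the universal cover / central extension by $\Z$ of a semisimple Lie group carries an unbounded homogeneous quasimorphism coming from the bounded Euler-type class (the rotation/translation number on the $\Z$-factor). Concretely, the central $\Z$ gives a homogeneous quasimorphism on $G$ that restricts to an unbounded (in fact a genuine homomorphism) on the center $Z(G)\cong\Z$; such a quasimorphism exists exactly because the extension class of $1\to\Z\to G\to K\to 1$ is a nontrivial bounded cohomology class. I would cite the standard fact that a central $\Z$-extension with nonvanishing (bounded) Euler class produces an unbounded homogeneous quasimorphism unbounded on the center.

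With all three hypotheses in hand, Corollary \ref{extentionR} applies verbatim and yields a bi-Lipschitz isomorphism of metric groups $\Cone_\omega(G,\|.\|)\cong\R$, completing the proof. The main obstacle I anticipate is the boundedness of the adjoint quotient $K=G/Z(G)$: one must know that centerless connected semisimple Lie groups are uniformly perfect (hence carry no unbounded conjugation invariant norm), which relies on results about uniform commutator width / bounded generation for simple Lie groups rather than on anything developed earlier in the paper. Everything else is either quoted from \cite{KLM} or a direct invocation of Corollary \ref{extentionR}.
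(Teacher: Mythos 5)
Your overall strategy is exactly the paper's: form the central extension $Z(G)\to G\to G/Z(G)$, check that the centerless quotient is bounded and that $G$ carries a quasimorphism unbounded on the center, and then invoke Corollary \ref{extentionR}. The paper dispatches both nontrivial inputs by citation: boundedness of the adjoint quotient is \cite[Thm.\ 3.1]{KLM}, and the existence of a quasimorphism on $G$ unbounded on $Z(G)$ (given that $Z(G)$ is infinite) is \cite[Prop.\ 3.8]{KLM}. Your Euler-class sketch for the second input is morally the standard construction behind that citation (with the caveat that the correct hypothesis is that the real extension class is nonzero \emph{and bounded} --- automatic in degree two for semisimple Lie groups --- rather than merely ``nonvanishing''), and as a citation-level step it is acceptable.

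The genuine gap is your justification of the crux step, the boundedness of $K=G/Z(G)$. You argue: $K$ is uniformly perfect, uniformly perfect groups admit no unbounded quasimorphism, and $K$ is finitely normally generated, hence $K$ admits no unbounded conjugation invariant norm. This inference is invalid: absence of unbounded quasimorphisms only rules out one \emph{source} of unbounded norms (indeed, by Lemma \ref{Autqm implies Autnorm} quasimorphisms produce \emph{stably} unbounded word norms), whereas boundedness in the sense of the paper requires, via Lemma \ref{fcgmaximal}, that some word norm generated by finitely many conjugacy classes actually have finite diameter --- and a norm can be unbounded without being stably unbounded. The paper itself contains a counterexample to your implication: $\A_\infty$ is normally generated by the single conjugacy class of $3$-cycles and is uniformly perfect of commutator width one (every element of $\A_k$, $k\geq 5$, is a commutator, as used in Proposition \ref{Sinftyperfect}), so it has no unbounded quasimorphism; yet $\|.\|_{\rm supp}$ is an unbounded conjugation invariant norm on $\A_\infty$. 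Uniform perfectness bounds commutator length, but the commutators involved can themselves have arbitrarily large conjugation-invariant word length, so it gives no control on such norms. The boundedness of centerless semisimple Lie groups is a substantially stronger ``strong boundedness'' theorem and must be taken from \cite[Thm.\ 3.1]{KLM} directly, as the paper does; with that citation substituted for your uniform-perfectness argument, the rest of your proof goes through.
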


\begin{proof}
Since $Z(G)$ is infinite, there exists a quasimorphism on $G$ which is unbounded on $Z(G)$ according to \cite[Prop.3.8]{KLM}. The quotient $G/Z(G)$ is a semisimple Lie group with trivial center and so bounded according to \cite[Thm 3.1]{KLM}. Then apply Corollary \ref{extentionR} to the central extension $Z(G) \rightarrow G \rightarrow G/{Z(G)}$ and obtain that the inclusion of the center into $G$ induces the bi-Lipschitz isomorphism of metric groups $\Cone_\omega(G,\|.\|) \cong \mathbb{R}$.
\end{proof}

\begin{example}
The universal cover of the modular group $\PSL(2,\mathbb{R})$ yields the central extension 
\begin{equation*}
\centering
\begin{tikzcd}
1  \arrow[-latex]{r} & \mathbb{Z} \arrow[-latex]{r} 
 & \widetilde{\PSL(2, \mathbb{R})} \arrow[-latex]{r} & \PSL(2,\mathbb{R}) \arrow[-latex]{r} & 1.
\end{tikzcd}
\end{equation*}
The group $\PSL(2,\mathbb{R})$ is bounded \cite[Prop 3.7]{KLM} and there is an unbounded quasimorphism, called the translation number, whose restriction to $\mathbb{Z}$ is unbounded as well \cite{Ghys}. The translation number arises as the lift of the famous rotation number of a homeomorphism of the circle. Therefore, for any norm $\|.\|$ on $\widetilde{\PSL(2, \mathbb{R})}$ generated by finitely many conjugacy classes the inclusion induces a bi-Lipschitz isomorphism  $\Cone_\omega(\widetilde{\PSL(2, \mathbb{R})}, \|.\|) \cong \mathbb{R}$ by Proposition \ref{PropSemisimple}. 

\end{example}

\begin{example}
The universal cover of the group $\Diff_0(S^1)$ of orientation preserving diffeomorphisms of the circle yields the central extension
\begin{equation*}
\centering
\begin{tikzcd}
1  \arrow[-latex]{r} & \mathbb{Z} \arrow[-latex]{r} 
 & \widetilde{\Diff_0(S^1)} \arrow[-latex]{r} & \Diff_0(S^1) \arrow[-latex]{r} & 1 .
\end{tikzcd}
\end{equation*}
According to \cite[Thm. 1.11]{Burago} the group $\Diff_0(S^1)$ is bounded. The translation number defines a quasimorphism $\widetilde{\Diff_0(S^1)} \rightarrow \mathbb{Z}$ which is unbounded on the image of $\mathbb{Z}$ in $\widetilde{\Diff_0(S^1)}$. It follows from Corollary \ref{extentionR} that for any norm $\|.\|$ on $\widetilde{\Diff_0(S^1)}$ generated by finitely many conjugacy classes $\Cone_\omega(\widetilde{\Diff_0(S^1)}, \|.\|)$ is bi-Lipschitz isomorphic to $\mathbb{R}$. 
\end{example}

\subsection{Topological Splittings}

\begin{defi}
Let $G$ be equipped with a word norm $\|.\|_G$ generated by finitely many conjugacy classes. Let $H \leq G$ be a normal subgroup and let $\|.\|_{G/H}$ the word norm on $g/H$ induced by $\|.\|$. A collection of coset representatives $\{a_i \}$ is called \textit{linearly bounded} if there exist positive constants $C,D$ such that  $\|a_i\|_G \leq C\|a_i\|_{G/H} + D$ for all $i$.
\end{defi}


\begin{lemma}
\label{Serre}
Let the conditions be as in the SES-Lemma \ref{SES-Lemma}. Additionally assume that there exists a constant $C>0$ and a linearly bounded collection of coset representatives satisfying $\|a_ia_j^{-1}\|_G \leq C\| g(a_i) g(a_j)^{-1}\|_{G/H}$  for all $i,j$. 

Then the right section of sets $r \colon G/H \rightarrow G$ defined by $r(g(a_i))=a_i$ is Lipschitz. The induced map $\widehat{r} \colon \Cone_\omega( G/H , \|.\|_{G/H} ) \to \Cone_\omega(G,\|.\|_G)$ is a continuous right section of equation \eqref{cses} and there exists a homeomorphism 
\begin{align*}
\Cone_\omega(G,\|.\|_G) \cong \Cone_\omega(H,\|.\|_H) \times \Cone_\omega( G/H , \|.\|_{G/H}) .
\end{align*}   
\end{lemma}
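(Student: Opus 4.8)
The Lipschitz property of the set section $r$ is essentially the content of the extra hypothesis. Writing the right-invariant metrics as $d_G(a_i,a_j)=\|a_ia_j^{-1}\|_G$ and $d_{G/H}(g(a_i),g(a_j))=\|g(a_i)g(a_j)^{-1}\|_{G/H}$, the assumption $\|a_ia_j^{-1}\|_G \leq C\|g(a_i)g(a_j)^{-1}\|_{G/H}$ reads exactly as $d_G(r(x),r(y)) \leq C\,d_{G/H}(x,y)$ for all $x,y\in G/H$, so $r$ is $C$-Lipschitz. The linearly bounded hypothesis supplies admissibility: if $(x_n)_n$ is admissible for $G/H$, then $\|r(x_n)\|_G=\|a_{i(n)}\|_G \leq C\|x_n\|_{G/H}+D$ stays bounded after dividing by $s_n$, so $(r(x_n))_n$ is admissible for $G$.

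Given this, I would define $\widehat{r}([x_n])=[r(x_n)]$ and check that it descends to the cones exactly as in Lemma \ref{ConeL2}: admissibility is the previous observation, and independence of the representative follows from the Lipschitz bound since $\lim_\omega \frac{d_G(r(x_n),r(y_n))}{s_n} \leq C\lim_\omega \frac{d_{G/H}(x_n,y_n)}{s_n}$. The same inequality shows $\widehat{r}$ is Lipschitz, hence continuous. That $\widehat{r}$ is a right section of \eqref{cses} is then purely formal: since $g\circ r=\mathrm{id}$ on $G/H$ we obtain $\widehat{g}\circ\widehat{r}=\mathrm{id}$.

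For the product decomposition I would exploit that \eqref{cses} is a short exact sequence of topological groups split by the continuous section $\widehat{r}$. Set $N=\im\widehat{f}=\ker\widehat{g}$; the map $\widehat{f}$ is a bi-Lipschitz embedding by Proposition \ref{qi-invariant}, so $\widehat{f}^{-1}$ is continuous on $N$. I would define
\begin{align*}
\Phi\colon \Cone_\omega(G,\|.\|_G) &\to \Cone_\omega(H,\|.\|_H)\times\Cone_\omega(G/H,\|.\|_{G/H}),\\
\Phi(x) &= \Big(\widehat{f}^{-1}\big(x\,\widehat{r}(\widehat{g}(x))^{-1}\big),\ \widehat{g}(x)\Big),
\end{align*}
which is meaningful because $\widehat{g}\big(x\,\widehat{r}(\widehat{g}(x))^{-1}\big)=\widehat{g}(x)\widehat{g}(x)^{-1}=1$, placing $x\,\widehat{r}(\widehat{g}(x))^{-1}$ in $N$. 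Its candidate inverse is $\Psi(h,y)=\widehat{f}(h)\,\widehat{r}(y)$, and a short computation using $\widehat{g}\circ\widehat{f}=1$ and $\widehat{g}\circ\widehat{r}=\mathrm{id}$ gives $\Phi\circ\Psi=\mathrm{id}$ and $\Psi\circ\Phi=\mathrm{id}$. Both maps are continuous: $\Psi$ is the continuous multiplication applied to the continuous maps $\widehat{f}$ and $\widehat{r}$, while $\Phi$ is assembled from $\widehat{g}$, $\widehat{r}$, inversion, multiplication and $\widehat{f}^{-1}|_N$, all of which are continuous. Hence $\Phi$ is the asserted homeomorphism.

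The only genuinely non-formal steps are the two just used to push $r$ to the cones: linear boundedness for admissibility of $\widehat{r}$, and the pairwise bound for its Lipschitz property. The conceptual caveat I would emphasize is that $\widehat{r}$ is merely a section of sets and not a homomorphism, so $\Phi$ is not expected to respect the group structure; the target carries the product topology but only a twisted group law, which is why the statement claims a homeomorphism rather than an isomorphism of topological groups.
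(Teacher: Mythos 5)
Your proposal is correct and follows essentially the same route as the paper: the paper likewise pushes $r$ to a well-defined Lipschitz section $\widehat{r}$ via the pairwise bound plus linear boundedness, writes every $b \in \Cone_\omega(G,\|.\|_G)$ uniquely as $\widehat{f}(a)\,\widehat{r}(c)$ with $c=\widehat{g}(b)$, and uses exactly your pair of mutually inverse maps $\Psi(a,c)=\widehat{f}(a)\,\widehat{r}(c)$ and $\Phi$. The only divergence is in one step: where you obtain continuity of the $H$-component abstractly by composing the continuous group operations with $\widehat{r}\circ\widehat{g}$ and the continuous inverse $\widehat{f}^{-1}|_{\ker\widehat{g}}$ furnished by Proposition \ref{qi-invariant}, the paper proves it by an explicit norm estimate showing $b\mapsto a$ is $(1+C)$-Lipschitz --- and your version quietly avoids the paper's opening identity $\|a_1a_2^{-1}\|_H=\|\widehat{f}(a_1a_2^{-1})\|_G$, which for $f$ merely a quasi-isometric embedding holds only up to the bi-Lipschitz constant of $\widehat{f}$.
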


\begin{proof}
The Lipschitz-continuity of the right section $r$ is immediate from the condition that $\|a_ia_j^{-1}\|_G \leq C\| g(a_i) g(a_j)^{-1}\|_{G/H}$  for all $i,j$.
Since the collection is linearly bounded, the induced map $\widehat{r} \colon \Cone_\omega(G/H,\|.\|_{G/H}) \rightarrow \Cone_\omega(G,\|.\|_G)$ is well-defined and the Lipschitz continuity of $\widehat{r}$ follows from the Lipschitz continuity of $r$. It remains to prove that $\Cone_\omega(G,\|.\|_G)$ is homeomorphic to the product of the two other asymptotic cones.

Every $b \in \Cone_\omega(G,\|.\|_G)$ can be written uniquely as a product $b=\widehat{f}(a) \widehat{r}(c)$ where $a \in \Cone_\omega(H,\|.\|_H)$ and  $c=\widehat{g}(b) \in \Cone_\omega(G/H, \|.\|_{G/H})$. That is, on the level of sets there is the well-defined map 
\begin{align*}
\psi \colon \Cone_\omega(G, \|.\|_G) & \rightarrow \Cone_\omega(H,\|.\|_H) \times \Cone_\omega ( G/H , \|.\|_{G/H} ) \\
b & \rightarrow (a,c)  
\end{align*}
On the second factor $\psi$ is just $\widehat{g}$, so it remains to check continuity of the map $b \rightarrow a$ in order to establish continuity of $\psi$ by the universal property of the product. Let $b_1= \widehat{f}(a_1)s(c_1)$ and $b_2= \widehat{f}(a_2)s(c_2)$ be two elements in $\Cone_\omega(G, \|.\|_G)$. Abuse notation to denote the norms induced norms on the asymptotic cones by $\|.\|_H$, $\|.\|_G$ and $\|.\|_{G/H}$ as well. Then
\begin{align*}
\|a_1a_2^{-1}\|_H & = \|\widehat{f}(a_1 a_2^{-1})\|_G  \\
& = \|\widehat{f}(a_2)^{-1} \widehat{f}(a_1)\|_G \\
& = \|\widehat{f}(a_2)^{-1} \widehat{f}(a_1) \widehat{r}(c_1) \widehat{r}(c_2)^{-1} \widehat{r}(c_2) \widehat{r}(c_1)^{-1} \|_G \\
& \leq \|\widehat{f}(a_2)^{-1} \widehat{f}(a_1) \widehat{r}(c_1) \widehat{r}(c_2)^{-1}\|_G + \|\widehat{r}(c_2) \widehat{r}(c_1)^{-1} \|_G \\
& = \|\widehat{r}(c_2)^{-1} \widehat{f}(a_2)^{-1} \widehat{f}(a_1) \widehat{r}(c_1)\|_G + \|\widehat{r}(g(b_2) \widehat{r}(g(b_1))^{-1} \|_G\\
& \leq \|b_2^{-1} b_1\|_G + C\|g(b_2) g(b_1)^{-1}\|_{G/H} \\
& \leq \|b_2^{-1} b_1\|_G + C\|b_2 b_1^{-1} \|_G \\
& = (1+C)\|b_1 b_2^{-1}\|_G ,
\end{align*}
which shows Lipschitz continuity of the map $b \rightarrow a$. Define 
\[
\phi\colon \Cone_\omega(H,\|.\|) \times \Cone_\omega(G/H, \|.\|_{G/H} ) \rightarrow \Cone_\omega(G, \|.\|)
 \hspace{2mm}\text{ by } \hspace{2mm} \phi(a,c)= \widehat{f}(a) \widehat{r}(c).
 \]
 Then continuity of $\phi$ follows since $\phi = m \circ ( \widehat{f} \times \widehat{r})$ where $m$ is the continuous multiplication map on $\Cone_\omega(G, \|.\|)$. Clearly, we have that $\psi$ and $\phi$ are inverse to one another which finishes the proof.
\end{proof}

\begin{lemma}
\label{fcg}
Let $G$ be a group together with a word norm $\|.\|_G$ generated by finitely many conjugacy classes. Then for any word norm $\|.\|_{G^{ab}}$ generated by finitely many conjugacy classes on the abelianisation $G^{ab}=G/[G,G]$ there exists $C \geq 0$ such that a system of coset representatives $\{a_i\}$ of $[G,G]$ in $G$ satisfies  $\|a_ia_j^{-1}\|_G \leq C\|p(a_i) p(a_j)^{-1}\|_{G^{ab}}$ where $p \colon G \to G^{ab}$ denotes the projection.
\end{lemma}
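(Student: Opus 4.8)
The plan is to exhibit one explicit, well-adapted system of coset representatives of $[G,G]$ and to read the estimate off from the conjugation invariance of $\|.\|_G$; I read the statement as asserting that such a system, together with a constant $C$, \emph{exists}, which is exactly what Lemma \ref{Serre} consumes (for an arbitrary system the inequality would be false, since representatives of arbitrarily large norm can be chosen in each coset). Since $G$ is normally generated by finitely many conjugacy classes, $G^{ab}$ is generated by their finitely many images, so it is finitely generated abelian and $G^{ab}\cong\Z^k\oplus T$ with $T$ finite and $k=\rk(G^{ab})$. On an abelian group a word norm ``generated by finitely many conjugacy classes'' is just a word norm for a finite generating set, so $\|.\|_{G^{ab}}$ is of this kind.

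Next I would fix lifts $\tilde e_1,\dots,\tilde e_k\in G$ of the free generators of $\Z^k$ and a lift $\tilde\sigma\in G$ of each of the finitely many $\sigma\in T$, taking the lift of the identity to be the identity. For $x=(v,\sigma)\in\Z^k\oplus T$ I set $s(v)=\tilde e_1^{v_1}\cdots\tilde e_k^{v_k}$ and $a_x=s(v)\,\tilde\sigma$. As $p(a_x)=x$ these form a full system of coset representatives, and the assertion becomes $\|a_x a_y^{-1}\|_G\le C\|x-y\|_{G^{ab}}$.

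The heart of the argument is the estimate on $\|a_x a_y^{-1}\|_G$, and this is the one place where conjugation invariance is essential. With $y=(w,\tau)$ and $b=\tilde\sigma\tilde\tau^{-1}$ (so $\|b\|_G\le 2M'$, $M'=\max_{\rho\in T}\|\tilde\rho\|_G$) I first conjugate the bounded torsion factor out:
\[
a_x a_y^{-1}=s(v)\,b\,s(w)^{-1}=\bigl(s(v)\,b\,s(v)^{-1}\bigr)\bigl(s(v)\,s(w)^{-1}\bigr).
\]
The first factor is a conjugate of $b$, hence of norm $\le 2M'$; for the pure free-part term $s(v)s(w)^{-1}$ I would peel off one generator at a time via the telescoping identity
\[
s(v)s(w)^{-1}=A_k\,\bigl(s_{k-1}(v)s_{k-1}(w)^{-1}\bigr),\qquad A_k=s_{k-1}(v)\,\tilde e_k^{\,v_k-w_k}\,s_{k-1}(v)^{-1},
\]
where $s_{k-1}(v)=\tilde e_1^{v_1}\cdots\tilde e_{k-1}^{v_{k-1}}$. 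Since $A_k$ is conjugate to $\tilde e_k^{\,v_k-w_k}$, conjugation invariance gives $\|A_k\|_G\le|v_k-w_k|\,\|\tilde e_k\|_G$, and induction on $k$ yields $\|s(v)s(w)^{-1}\|_G\le\sum_i|v_i-w_i|\,\|\tilde e_i\|_G\le M\|v-w\|_1$ with $M=\max_i\|\tilde e_i\|_G$. I expect this telescoping --- turning the naive bound $\|v\|_1+\|w\|_1$ into $\|v-w\|_1$ --- to be the only genuinely non-routine step; without conjugation invariance it fails.

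Combining, $\|a_x a_y^{-1}\|_G\le 2M'+M\|v-w\|_1$. To finish I would convert the right-hand side into $\|x-y\|_{G^{ab}}$: the projection $\pi\colon G^{ab}\to\Z^k$ is a homomorphism, hence $1$-Lipschitz for the word norms, and all word norms on $\Z^k$ are equivalent to $\ell_1$ (Lemma \ref{equiva} and the preceding example), whence $\|v-w\|_1\le C'\|x-y\|_{G^{ab}}$ for a suitable $C'$. Finally, as $\|.\|_{G^{ab}}$ is integer valued, $\|x-y\|_{G^{ab}}\ge 1$ for $x\ne y$, which absorbs the additive constant $2M'$, while for $x=y$ both sides vanish. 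This gives the claim with $C=2M'+MC'$.
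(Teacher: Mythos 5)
Your proposal is correct and takes essentially the same route as the paper's proof: the same monomial system of coset representatives $a_x=(\tilde e_1)^{v_1}\cdots(\tilde e_k)^{v_k}\tilde\sigma$ built from lifts of the generators of $\Z^k\oplus T\cong G^{ab}$, the same conjugation-invariance telescoping that improves the naive bound $\|v\|_1+\|w\|_1$ to $\|v-w\|_1$, and the same absorption of the bounded torsion contribution into the multiplicative constant using that word norms are integer valued and $\geq 1$ off the identity. The only cosmetic difference is that you estimate directly in $\|.\|_G$ via the norms $\|\tilde e_i\|_G$ of the lifts (conjugating the torsion factor to the front), whereas the paper enlarges the generating set to include the lifts so each peeled factor has norm one and then passes back to $\|.\|_G$ through Lemma \ref{equiva}.
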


\begin{proof}
$G$ is generated by finitely many conjugacy classes $Conj(g_1),...,Conj(g_n)$ and so $G^{ab}$ is generated by images of those. Then $G^{ab}$ is a finitely generated abelian group and by the classification theorem $G^{ab} \cong \mathbb{Z}^k \oplus T$, where $k \leq n$ and $T$ is a finite abelian group.  Let $g_1^\prime, ..., g_k^\prime$ be preimages under $p$ of the standard generators of $\mathbb{Z}^k$ in $G^{ab}$ and let $f_t$, $t \in T$ be a collection of preimages of the torsion factor $T$ in $G^{ab}$. 

Then the conjugacy classes of the finite collection $g_1,...,g_n, g_1^\prime , ... , g_k^\prime , f_t$ generate $G$ and define a word norm $\|.\|$ on $G$. Then $\|.\|_G$ and $\|.\|$ on $G$ are Lipschitz equivalent on $G$ by Lemma \ref{equiva}. Also, the finitely generated word norm $\|.\|_{G^ab}$ on $G^{ab} \cong \mathbb{Z}^k \oplus T$ is Lipschitz equivalent to the finitely generated word norm $\|.\|_{std}$ generated by the standard generators of $\mathbb{Z}^k$, their inverses and every element of $T$. 

Let $x=(l,t) \in \mathbb{Z}^k \oplus T \cong G^{ab}$.  Set the representative $a_x$ to be $a_x= (g_1^\prime)^{l_1}...(g_k^\prime)^{l_k} f_t$. For $x=(l,t)$ and $y=(m,s)$ it holds that $\|xy^{-1}\|_{std}=\sum_{i=1}^k |l_i - m_i| + d(t,s)$, where $d$ is the discrete metric. Then
\begin{align*}
\|a_xa_y^{-1}\| & = \|(g_1^\prime)^{l_1}...(g_k^\prime)^{l_k} f_t \Big((g_1^\prime)^{m_1}...(g_k^\prime)^{m_k} f_s \Big )^{-1} \| \\
& = \|(g_1^\prime)^{l_1-m_1} (g_2^\prime)^{l_2}...(g_k^\prime)^{l_k} f_t f_s^{-1} (g_k^\prime)^{-m_k} ...   (g_2^\prime)^{-m_2}\| \\
& \leq \|(g_1^\prime)^{l_1-m_1}\|+ \|(g_2^\prime)^{l_2-m_2} (g_3^\prime)^{l_3}...(g_k^\prime)^{l_k} f_t f_s^{-1} (g_k^\prime)^{-m_k} ...   (g_3^\prime)^{-m_3}\| \\
& \leq \dots \leq \sum_{i=1}^k \|(g_i^\prime)^{l_i- m_i}\| + \|f_tf_s^{-1}\| \\
& \leq \sum_{i=1}^k |l_i - m_i|+ \|f_tf_s^{-1}\|\\
& = \|xy^{-1}\|_{std} - d(t,s) + \|f_tf_s^{-1}\| .
\end{align*}
Since $T$ is finite, there exists a constant $K> 0 $ such that $\|f_tf_s^{-1}\|-d(t,s) \leq K$  for all $t,s \in T$. Let $L_1 >0 $ denote the Lipschitz constant such that $ \|.\|_G \leq L_1 \|.\|$ and $L_2 > 0$ the Lipschitz constant such that $\|.\|_{std} \leq L_2\|.\|_{G^{ab}}$. Then the above calculation yields 
\begin{align*}
\|a_xa_y^{-1}\|_G \leq L_1 \|a_xa_y^{-1}\| \leq L_1 \big( \|xy^{-1}\|_{std}+K \big) \leq L_1 L_2 \|xy^{-1}\|_{G^{ab}}+ L_1 K. 
\end{align*}
Thus, for all $x \neq y$ it holds that $\|a_xa_y^{-1}\|_G \leq (L_1 L_2 + L_1 K) \|xy^{-1}\|_{G^{ab}}$ and the result follows.
\end{proof}

\begin{theorem}\label{producthomeo}
Let $G$ be a group. Let $\|.\|$ and $\|.\|_{G^{ab}}$ be word norms generated by finitely many conjugacy classes on $G$ and its abelianisation $G^{ab}$. Then there are the homeomorphisms 
\begin{align*}
\Cone_\omega(G,\|.\|) \cong \Cone_\omega([G,G],\|.\|) \times \Cone_\omega(G^{ab} , \|.\|_{ab}) \cong \Cone_\omega([G,G],\|.\|) \times (\R^k, \ell_1) 
\end{align*} 
where $k=\rk(G^{ab})$. 
\end{theorem}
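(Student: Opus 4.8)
The plan is to apply the product decomposition of Lemma \ref{Serre} to the abelianisation short exact sequence and then compute the cone of the abelianisation directly. First I would consider
\begin{equation*}
1 \to [G,G] \xrightarrow{\iota} G \xrightarrow{p} G^{ab} \to 1,
\end{equation*}
equipping $[G,G]$ with the restriction of $\|.\|$ and $G^{ab}$ with the word norm $\|.\|_{p(S)}$ induced by the generating set $S$ of $\|.\|$ via $p$, where $S$ consists of finitely many conjugacy classes. Since $G^{ab}$ is abelian, each conjugacy class of $S$ maps to a single element, so $p(S)$ is a \emph{finite} set and $\|.\|_{p(S)}$ is a genuine finitely generated word norm. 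The inclusion $\iota$ is an isometric embedding for the restricted norm, hence in particular a quasi-isometric embedding, and the restriction of a conjugation invariant norm to the normal subgroup $[G,G]$ is again conjugation invariant; thus the hypotheses of the SES-Lemma \ref{SES-Lemma} are satisfied, and $\|.\|_{p(S)}$ is by construction the norm induced via $p$.

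Next I would verify the additional hypothesis of Lemma \ref{Serre}. Lemma \ref{fcg} supplies a system of coset representatives $\{a_i\}$ of $[G,G]$ in $G$ together with a constant $C$ satisfying $\|a_i a_j^{-1}\|_G \le C\|p(a_i)p(a_j)^{-1}\|_{G^{ab}}$ for all $i,j$. Choosing the representative of the trivial coset to be $1$ and setting $a_j=1$ gives $\|a_i\|_G \le C\|p(a_i)\|_{G^{ab}}$, so the collection is linearly bounded; even without this normalisation, linear boundedness follows from the triangle inequality with $D=\|a_0\|_G$. Lemma \ref{Serre} then produces the homeomorphism
\begin{equation*}
\Cone_\omega(G,\|.\|) \cong \Cone_\omega([G,G],\|.\|) \times \Cone_\omega(G^{ab},\|.\|_{p(S)}).
\end{equation*}

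To reach the stated form I would first replace $\|.\|_{p(S)}$ by the given norm $\|.\|_{ab}$: on the abelian group $G^{ab}$ every norm is conjugation invariant, and any two word norms generated by finitely many conjugacy classes are equivalent by Lemma \ref{equiva}, so the identity induces a bi-Lipschitz homeomorphism $\Cone_\omega(G^{ab},\|.\|_{p(S)}) \cong \Cone_\omega(G^{ab},\|.\|_{ab})$ by Proposition \ref{qi-invariant}. Finally I would identify $\Cone_\omega(G^{ab},\|.\|_{ab})$ with $(\R^k,\ell_1)$: using the classification of finitely generated abelian groups, write $G^{ab} \cong \Z^k \oplus T$ with $T$ finite; the projection onto $\Z^k$ is a quasi-isometry because $T$ has bounded diameter, and the example that $\Cone_\omega(\Z^k,\|.\|_S) \cong (\R^k,\ell_1)$ for any finite word norm finishes the computation.

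I expect the main obstacle to be bookkeeping rather than conceptual difficulty: one must ensure the representatives furnished by Lemma \ref{fcg} are the ones used in Lemma \ref{Serre} and that the finite torsion factor $T$ genuinely collapses in the cone. Both points are handled by the boundedness of $T$ together with Lemma \ref{equiva}, so no new estimate beyond those already established is needed.
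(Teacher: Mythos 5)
Your proposal is correct and takes essentially the same route as the paper's proof: apply the abelianisation short exact sequence via Lemma \ref{SES-Lemma}/Lemma \ref{SES-fcg}, obtain the topological splitting from Lemma \ref{Serre} using the coset estimate of Lemma \ref{fcg}, and identify $\Cone_\omega(G^{ab},\|.\|_{ab})$ with $(\R^k,\ell_1)$ through the classification of finitely generated abelian groups. Your intermediate step of first working with the induced norm $\|.\|_{p(S)}$ and then switching to $\|.\|_{ab}$ via Lemma \ref{equiva} and Proposition \ref{qi-invariant} is precisely the norm-equivalence that the paper's Lemma \ref{SES-fcg} already packages.
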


\begin{proof}
Since $G$ is finitely conjugation generated, $G^{ab} \cong \mathbb{Z}^k \oplus T$ for a finite torsion group $T$. By Lemma \ref{fcg} the induced norm on $G^{ab}$ is equivalent to the standard word norm and therefore $\Cone_\omega(G,\|.\|_{ab}) \cong \Cone_\omega(\mathbb{Z}^k \oplus T, \|.\|_{std})$ is Lipschitz-isomorphic to $( \mathbb{R}^k, \ell_1)$ by Proposition \ref{coarseequiv}.
By Lemma \ref{SES-fcg} there exists a short exact sequence on the level of asymptotic cones 
\begin{equation*}
\centering
\begin{tikzcd}
1  \arrow[-latex]{r} & \Cone_\omega([G,G],\|.\|) \arrow[-latex]{r} 
 & \Cone_\omega(G,\|.\|) \arrow[-latex]{r} & \Cone_\omega( G^{ab} , \|.\|_{ab} ) \arrow[-latex]{r} & 1.
\end{tikzcd}
\end{equation*}
By Lemma \ref{Serre} and Lemma \ref{fcg} this sequence is topologically right split and
gives rise to a homeomorphism $\Cone_\omega(G,\|.\|) \cong \Cone_\omega([G,G],\|.\|) \times \Cone_\omega(G^{ab} , \|.\|_{ab})$. 
\end{proof}

\noindent
The contraction of $( \mathbb{R}^n, \ell_1)$ to the origin is a deformation retraction of $\Cone_\omega(G,\|.\|)$ onto $\Cone_\omega([G,G],\|.\|) \times  \lbrace 0 \rbrace$.

\begin{cor}
The inclusion $\Cone_\omega([G,G],\|.\|) \hookrightarrow \Cone_\omega(G,\|.\|)$ is a deformation retraction.
\end{cor}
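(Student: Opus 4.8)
The plan is to transport the obvious linear deformation retraction of the product space back through the homeomorphism supplied by Proposition~\ref{producthomeo}. By that proposition there is a homeomorphism
\[
\phi \colon \Cone_\omega([G,G],\|.\|) \times (\R^k, \ell_1) \longrightarrow \Cone_\omega(G,\|.\|),
\]
under which the inclusion $\Cone_\omega([G,G],\|.\|) \hookrightarrow \Cone_\omega(G,\|.\|)$ corresponds to the inclusion of the slice $\Cone_\omega([G,G],\|.\|) \times \{0\}$ into the product; indeed this slice is exactly the image $\widehat{f}(\Cone_\omega([G,G],\|.\|))$ appearing in the short exact sequence of Lemma~\ref{SES-fcg}.

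Next I would contract the Euclidean factor by the straight-line homotopy. Define
\[
F \colon \big(\Cone_\omega([G,G],\|.\|) \times \R^k\big) \times [0,1] \longrightarrow \Cone_\omega([G,G],\|.\|) \times \R^k, \qquad F\big((a,v),t\big) = (a,(1-t)v).
\]
This map is continuous because scalar multiplication $\R^k \times [0,1] \to \R^k$ is continuous and the $\ell_1$-metric induces the standard topology on $\R^k$, so straight-line paths remain continuous. One checks directly that $F(\cdot,0)$ is the identity, that $F((a,v),1) = (a,0)$ lies in $\Cone_\omega([G,G],\|.\|) \times \{0\}$ for every $(a,v)$, and that $F((a,0),t) = (a,0)$ for all $t$, so the slice is fixed throughout. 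Hence $F$ is a strong deformation retraction of the product onto $\Cone_\omega([G,G],\|.\|) \times \{0\}$, which in particular is a deformation retraction.

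Finally I would conjugate through $\phi$. Setting
\[
H \colon \Cone_\omega(G,\|.\|) \times [0,1] \longrightarrow \Cone_\omega(G,\|.\|), \qquad H(b,t) = \phi\big(F(\phi^{-1}(b),t)\big),
\]
continuity of $\phi$ and $\phi^{-1}$ makes $H$ continuous, and $H(\cdot,0)$ is the identity while $H(\cdot,1)$ has image $\phi\big(\Cone_\omega([G,G],\|.\|)\times\{0\}\big) = \widehat{f}(\Cone_\omega([G,G],\|.\|))$, which is fixed by $H(\cdot,t)$ for all $t$. This exhibits the required deformation retraction. There is no genuine obstacle here, as all the content is carried by Proposition~\ref{producthomeo}; the only point worth noting is that $\phi$ is merely a homeomorphism, so the homotopy $H$ need respect neither the metric nor the group structure, only the topology, which is precisely what a deformation retraction demands.
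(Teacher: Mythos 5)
Your proof is correct and takes essentially the same route as the paper: the paper also obtains the deformation retraction by contracting the $(\R^k,\ell_1)$ factor of the product homeomorphism from Proposition~\ref{producthomeo} to the origin, transported back to $\Cone_\omega(G,\|.\|)$. Your version merely spells out the straight-line homotopy and the conjugation by $\phi$ explicitly, including the correct identification of the slice $\Cone_\omega([G,G],\|.\|)\times\{0\}$ with the image of $\widehat{f}$.
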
 \qed

\begin{lemma}\label{qmshortexseq}
Let $G$ be a group together with a norm $\|.\|$ generated by finitely many conjugacy classes.
Let $\psi \colon G \to \R$ be a non-trivial homogeneous quasimorphism.  Then:
\begin{enumerate}[label=(\roman*)]
\item There is an induced short exact sequence of groups
\begin{equation} \label{qmses}
\centering 
\begin{tikzcd}
1  \arrow[-latex]{r} & \ker \widehat{\psi} \arrow[-latex]{r} 
 & \Cone_\omega(G,\|.\|) \arrow[-latex]{r}{\widehat{\psi}} & \mathbb{R} \arrow[-latex]{r} & 1 ,
\end{tikzcd}
\end{equation}
\item There exists a Lipschitz continuous right section $s\colon \mathbb{R} \rightarrow \Cone_\omega(G,\|.\|)$ that is a group homomorphism ,
\item The short exact sequence \eqref{qmses} is topologically split. That is, there exsits a homeomorphism $\Cone_\omega(G,\|.\|) \cong \ker \widehat{ \psi} \times \mathbb{R}$ .
\end{enumerate}
\end{lemma}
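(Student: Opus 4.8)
The plan is to construct the quasimorphism $\widehat{\psi}$ on the asymptotic cone first, then realise the three claims in order, leaning on the topological splitting machinery already established in Lemma \ref{Serre} via a suitable linearly bounded system of coset representatives. First I would define $\widehat{\psi} \colon \Cone_\omega(G,\|.\|) \to \R$ by $\widehat{\psi}([x_n]) = \lim_\omega \frac{\psi(x_n)}{n}$. The key point making this well-defined is that a non-trivial homogeneous quasimorphism $\psi$ is Lipschitz with respect to any word norm generated by finitely many conjugacy classes: since $\psi$ is constant on conjugacy classes and bounded on the finitely many generating classes, Lemma \ref{Autqm implies Autnorm} (or a direct computation as in its proof) gives $|\psi(g)| \leq L\|g\|$ for some constant $L$, hence $\frac{|\psi(x_n)|}{n} \leq L \frac{\|x_n\|}{n}$ stays bounded along admissible sequences and the ultralimit exists. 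The quasimorphism inequality passes to the limit to show $\widehat{\psi}$ is a genuine homomorphism $\Cone_\omega(G,\|.\|) \to \R$ (the defect $D(\psi)/n \to 0$), and it is Lipschitz with constant $L$. That $\widehat{\psi}$ is surjective uses that $\psi$ is unbounded on $G$: pick $g$ with $\psi(g) \neq 0$, then $\frac{\psi(g^n)}{n} = \psi(g)$ is constant, so $[g^n]$ maps to $\psi(g)$, and scaling representatives realises all of $\R$. This establishes (i) with $\ker\widehat{\psi}$ as indicated.

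For (ii) I would produce a group-theoretic section directly from $g$ as above. Fix $g_0 \in G$ with $\psi(g_0) = c \neq 0$. Define $s \colon \R \to \Cone_\omega(G,\|.\|)$ on the rationals first by sending $\frac{p}{q}$ to a suitable root/power of $[g_0^n]$, or more cleanly, define $s(t)$ to be the class of the sequence whose $n$-th term is $g_0^{\lfloor tn/c \rfloor}$. Homogeneity of $\psi$ gives $\widehat{\psi}(s(t)) = t$, and since powers of a single element commute, the assignment $t \mapsto s(t)$ is a homomorphism from $\R$ (the subgroup generated is abelian, and the induced norm $\|s(t)\| = \lim_\omega \frac{\|g_0^{\lfloor tn/c\rfloor}\|}{n}$ is controlled by $|t|$ via the Lipschitz bound $\|g_0^m\| \leq m\|g_0\|$, giving $\|s(t)\| \leq \frac{\|g_0\|}{|c|}|t|$). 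Lipschitz continuity of $s$ then follows from right-invariance of the cone metric and this linear bound on the norm. This $s$ is the required Lipschitz group-homomorphism right section.

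For (iii) the cleanest route is to feed the data into Lemma \ref{Serre}. The section $s$ provides a linearly bounded system of coset representatives for $\ker\widehat{\psi}$ in $\Cone_\omega(G,\|.\|)$, so that every $b$ factors uniquely as $b = (b \cdot s(\widehat{\psi}(b))^{-1}) \cdot s(\widehat{\psi}(b))$ with the first factor in $\ker\widehat{\psi}$; the map $b \mapsto (b\,s(\widehat{\psi}(b))^{-1}, \widehat{\psi}(b))$ and its inverse $(a,t) \mapsto a\,s(t)$ are mutually inverse, and both are continuous because multiplication and inversion on $\Cone_\omega(G,\|.\|)$ are continuous (the cone is a topological group) and $s$, $\widehat{\psi}$ are Lipschitz. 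This yields the homeomorphism $\Cone_\omega(G,\|.\|) \cong \ker\widehat{\psi} \times \R$. I expect the main obstacle to be the well-definedness and especially the surjectivity and section steps: one must check carefully that the ultralimit defining $\widehat{\psi}$ does not depend on the representative (using the $\sim$-relation together with the Lipschitz bound, so that $\frac{|\psi(x_n)-\psi(y_n)|}{n} \leq L\frac{\|x_n y_n^{-1}\|}{n} + \frac{D(\psi)}{n} \to 0$), and that the floor-function section is genuinely a homomorphism and Lipschitz rather than merely a set-theoretic section. Once $\widehat{\psi}$ and $s$ are in hand with their Lipschitz estimates, part (iii) is a formal consequence of Lemma \ref{Serre} and the topological group structure.
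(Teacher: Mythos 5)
Your proposal is correct and follows essentially the same route as the paper: the paper likewise obtains $\widehat{\psi}$ as the induced homomorphism on cones (using that a homogeneous quasimorphism is constant on conjugacy classes, hence Lipschitz for a norm generated by finitely many conjugacy classes), builds the section from powers of a fixed $g$ with $\psi(g)\neq 0$ via the identification $\R = \Cone_\omega(\Z,|.|)$ --- which is exactly your floor-function section, with the bounded floor discrepancy vanishing in the cone --- and constructs the product homeomorphism ``analogously to Lemma \ref{Serre}'', just as you do with $b \mapsto \bigl(b\,s(\widehat{\psi}(b))^{-1}, \widehat{\psi}(b)\bigr)$. Your write-up is in fact more explicit than the paper's on the well-definedness of $\widehat{\psi}$ and the continuity of the splitting maps.
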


\begin{proof}
\textit{(i)} \hspace{1mm} Since $\psi$ is a quasimorphism bounded on a generating set of $\|.\|$, it induces a homomorphism $\widehat{\psi} \colon \Cone_\omega(G,\|.\|) \to \Cone_\omega(\Z, |.|)= \R$. Since $\psi$ is non-trivial, there exists $g \in G$ such that $\psi(g) \neq 0$. Assume for simplicity that $\psi(g)=1$ since the general case follows by rescaling.

Let $x=[(x_n)_{n \in \mathbb{N}}] \in \Cone_\omega(\Z, |.|) = \mathbb{R}$ be arbitrary. Set  $y_n=g^{x_n}$. Then $\|y_n\| \leq \|g\| \cdot |x_n|$ and so $(y_n)$ is an admissible sequence since $(x_n)$ is. Then 
\begin{align*}
\widehat{\psi}(y)=[(\psi(y_n))_{n \in \mathbb{N}}]= [(\psi(g^{x_n})_{n \in \mathbb{N}}]= [(x_n\psi(g))_{n \in \mathbb{N}}] = [(x_n)_{n \in \mathbb{N}}]=x,
\end{align*}
which shows that $\widehat{\psi}$ is surjective.

\textit{(ii)} \hspace{1mm} Define $s^\prime \colon \mathbb{Z} \rightarrow G$ by $s^\prime(n)=g^n$. By definition $s^\prime$ is a homomorphism and satisfies $\|s^\prime(n)\| \leq \|g\| \cdot |n| $ for all $n \in \Z$. Therefore, $s^\prime$ induces a group homomorphism $s \colon \Cone_\omega(\mathbb{Z}, |.|) \rightarrow \Cone_\omega(G,\|.\|)$ given by $s \big( [x_n] \big) = [s^\prime ( x_n)]= [g^{x_n}]$. By construction $s$ is a right inverse of the homomorphism $\widehat{\psi} \colon \Cone_\omega(G,\|.\|) \to \Cone_\omega(\Z, |.|)$.

\textit{(iii)} \hspace{1mm} The product homeomorphism is constructed analogously to the one in Lemma \ref{Serre}. 
\end{proof}

\begin{cor}
The kernel $\ker \widehat{\psi}$ arising from any quasimorphism $\psi \colon G \to \R$ is a deformation retract of $\Cone_\omega(G, \|.\|)$ whenever $\|.\|$ is a word norm generated by finitely many conjugacy classes. \qed
\end{cor}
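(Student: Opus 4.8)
The plan is to build the strong deformation retraction explicitly from the homomorphism section supplied by Lemma \ref{qmshortexseq}, rather than merely invoking the abstract splitting. First I would dispose of the degenerate case: if $\psi$ is bounded, then its homogenisation vanishes and the induced map $\widehat{\psi}$ on the cone is identically zero, so $\ker \widehat{\psi} = \Cone_\omega(G,\|.\|)$ and the statement is trivial. Otherwise I would replace $\psi$ by its homogenisation $\bar{\psi}$ using Lemma \ref{HomogenisationQM}; since $|\bar{\psi}(g)-\psi(g)| \leq D(\psi)$ is uniformly bounded, this difference is rescaled to zero in the ultralimit, so $\widehat{\psi} = \widehat{\bar{\psi}}$ and we are reduced to the non-trivial homogeneous case to which Lemma \ref{qmshortexseq} applies.

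Let $s \colon \R \to \Cone_\omega(G,\|.\|)$ be the Lipschitz group-homomorphism section of part (ii). Exploiting that $s$ is a homomorphism, I would define the homotopy
\begin{align*}
H \colon \Cone_\omega(G,\|.\|) \times [0,1] \to \Cone_\omega(G,\|.\|), \qquad H(b,t) = b \cdot s\big(-t\,\widehat{\psi}(b)\big).
\end{align*}
Then $H(b,0) = b \cdot s(0) = b$, while $\widehat{\psi}\big(H(b,1)\big) = \widehat{\psi}(b) - \widehat{\psi}(b) = 0$ shows that $H(\cdot,1)$ maps into $\ker \widehat{\psi}$; and for $b \in \ker \widehat{\psi}$ one has $\widehat{\psi}(b) = 0$, so $H(b,t) = b$ for all $t$. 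Hence $H$ exhibits $\ker \widehat{\psi}$ as a strong deformation retract of $\Cone_\omega(G,\|.\|)$. Equivalently, one may read this off the product homeomorphism $\Cone_\omega(G,\|.\|) \cong \ker \widehat{\psi} \times \R$ of part (iii) by linearly contracting the contractible factor $\R$, but the section formula above is more self-contained.

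The remaining task is continuity of $H$, and this is where the topological-group structure of the cone does the work: the induced homomorphism $\widehat{\psi}$ is Lipschitz hence continuous, the scaling $(c,t) \mapsto -tc$ is continuous on $\R \times [0,1]$, the section $s$ is continuous by part (ii), and the multiplication map of the topological group $\Cone_\omega(G,\|.\|)$ is continuous, so $H$ is a composite of continuous maps. I expect the only points genuinely requiring care to be the reduction to a homogeneous quasimorphism (so that Lemma \ref{qmshortexseq} is applicable) and the verification that $H(\cdot,1)$ actually lands in the kernel; beyond these, the argument is a formal consequence of the splitting and no substantial obstacle arises.
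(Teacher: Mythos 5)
Your proof is correct and is essentially the paper's argument: the explicit homotopy $H(b,t)=b\cdot s\bigl(-t\,\widehat{\psi}(b)\bigr)$ is precisely the linear contraction of the $\R$-factor transported through the splitting homeomorphism $\Cone_\omega(G,\|.\|)\cong \ker\widehat{\psi}\times\R$ of Lemma \ref{qmshortexseq}(iii), which is how the paper (implicitly, mirroring the corollary after Proposition \ref{producthomeo}) obtains the retraction. Your added reduction --- disposing of bounded $\psi$ and passing to the homogenisation via Lemma \ref{HomogenisationQM} so that Lemma \ref{qmshortexseq} applies to ``any quasimorphism'' --- is a point the paper leaves implicit, and is handled correctly.
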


\section{General Contraction Lemma for a single group}

In the proof of the continuity of our contracting homotopy on the asymptotic cone in Lemma \ref{ContractionLemma} we will use the following statement from calculus to derive the continuity of a map from a product space from continuity on the individual factors.

\begin{lemma}\label{continuity on factors}
Let $X,Y,Z$ be metric spaces together with a map $F \colon X \times Y \rightarrow Z$. Assume that for any $x \in X$ there is a constant $L_x$ such that $F(x,-) \colon Y \rightarrow Z$ is Lipschitz continuous with constant $L_x$, the map assigning $L_x$ to $x$ is continuous and further assume that for all $y \in Y$ the map $F(-,y) \colon X \rightarrow Z$ is continuous. Then $F$ is continuous on $X \times Y$. 
\end{lemma}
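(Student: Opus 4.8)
The plan is to prove joint continuity at an arbitrary point $(x_0,y_0) \in X \times Y$ by a direct $\epsilon$--$\delta$ argument, splitting the increment of $F$ into a variation in the $Y$-direction (controlled by the Lipschitz hypothesis) and a variation in the $X$-direction (controlled by the sectional continuity of $F(-,y_0)$). Concretely, I would fix $(x_0,y_0)$ and, for any nearby $(x,y)$, apply the triangle inequality in $Z$:
\[
d_Z\big(F(x,y),F(x_0,y_0)\big) \leq d_Z\big(F(x,y),F(x,y_0)\big) + d_Z\big(F(x,y_0),F(x_0,y_0)\big).
\]
The second summand tends to $0$ as $x \to x_0$ because $F(-,y_0)$ is continuous by hypothesis. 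The first summand is at most $L_x\, d_Y(y,y_0)$ by the Lipschitz assumption on $F(x,-)$.

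The key step --- and the only place where the hypothesis on $x \mapsto L_x$ is used --- is to bound $L_x$ uniformly on a neighbourhood of $x_0$, so that the $Y$-variation can be made small \emph{independently} of $x$. Since $x \mapsto L_x$ is continuous as a real-valued map, it is in particular locally bounded near $x_0$: there is $\delta_1 > 0$ with $L_x \leq L_{x_0}+1 =: M$ whenever $d_X(x,x_0) < \delta_1$.

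Assembling the estimate, given $\epsilon > 0$ I would take $\delta_1$ as above, then choose $\delta_2 > 0$ from the continuity of $F(-,y_0)$ at $x_0$ so that $d_Z(F(x,y_0),F(x_0,y_0)) < \epsilon/2$ for $d_X(x,x_0) < \delta_2$, and finally set $\delta_3 = \epsilon/(2M)$. For every $(x,y)$ with $d_X(x,x_0) < \min(\delta_1,\delta_2)$ and $d_Y(y,y_0) < \delta_3$ the first summand is bounded by $M\,\delta_3 = \epsilon/2$ and the second by $\epsilon/2$, whence $d_Z(F(x,y),F(x_0,y_0)) < \epsilon$, giving continuity at $(x_0,y_0)$.

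There is no serious obstacle here; this is a standard separate-continuity-plus-local-equi-Lipschitz argument. The one point that must not be glossed is precisely the local boundedness of the Lipschitz constants: without the continuity (or at least local boundedness) of $x \mapsto L_x$, the constant could blow up as $x \to x_0$, the $Y$-variation term would no longer be controllable, and the conclusion would genuinely fail. Everything else is routine triangle-inequality bookkeeping, with no metric-space pathology intervening.
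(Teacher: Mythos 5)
Your proposal is correct and follows essentially the same route as the paper's proof: the same triangle-inequality split through the intermediate point with the base $Y$-coordinate, the same use of continuity of $x \mapsto L_x$ only through its consequence of local boundedness ($L_x \leq L_{x_0}+1$ near $x_0$), and the same $\epsilon/2 + \epsilon/2$ bookkeeping with $\delta = \min\{\delta_1,\delta_2,\epsilon/(2(L_{x_0}+1))\}$. Your closing remark that local boundedness of the Lipschitz constants is the one essential ingredient is exactly the point the paper's choice of $\delta_0$ encodes.
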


\begin{proof}
Let $(x,y) \in X \times Y$ and let $\epsilon > 0$ be given. Since the assignment of Lipschitz constants is continuous, there exists $\delta_0 > 0$ such that $|L_{x^\prime}- L_x| < 1$ for all $x^\prime$ with $d_X (x,x^\prime) < \delta_0$. This implies $L_{x^\prime} < L_x+1$ for all such $x^\prime$. By continuity of $F(-,y)$ there exists $\delta_1 > 0$ such that for all $x^\prime$ with $d(x,x^\prime) < \delta_1$ we have $d_Z(F(x,y),F(x^\prime,y)) < \epsilon/2$.  For the product metric it holds for any $(x^\prime, y^\prime)$ that $d_{X}(x,x^\prime) \leq d_{X \times Y} ((x,y), (x^\prime, y^\prime))$ and  $d_{Y}(y,y^\prime) \leq d_{X \times Y} ((x,y), (x^\prime, y^\prime))$. Choose $\delta = \min \{ \delta_0, \delta_1, \frac{\epsilon}{2(L_x+1)} \} >0 $. So for all $(x^\prime, y^\prime) \in B_{\delta}((x,y))$ we calculate
\begin{align*}
d_Z(F(x,y),F(x^\prime, y^\prime)) & \leq d_Z(F(x,y),F(x^\prime, y)) + d_Z (F(x^\prime,y),F(x^\prime, y^\prime)) \\
& < \epsilon/2 + L_{x^\prime} d_Y(y,y^\prime) \\
& < \epsilon/2 + (L_x+1)\delta \\
& \leq \epsilon/2 + \epsilon/2 = \epsilon.
\end{align*}
Therefore, $F$ is continuous.
\end{proof}

\begin{lemma}[Contraction Lemma]\label{ContractionLemma}
Let $G$ be a group together with a norm $\|.\|$ that induces the metric $d$. Assume there exist constants $K,L$ and projection maps $p_k \colon G \rightarrow G$ for all $k \in \N$ with $p_0=id$ such that 
\begin{enumerate}[label=(\roman*)]
\item $d(p_k(g),p_k(h)) \leq K d(g,h)$ for all $k \in \N$, $g,h \in G$,
\item $d(p_k(g),p_\ell (g)) \leq L |k- \ell |$ for all $k, \ell \in \N$, $g \in G$, 
\item $\| p_k(g) \| \leq \Big | \|g\|-k \Big |$ for all $k \in \N$, $g \in G$. 
\end{enumerate}
Then all asymptotic cones of $G$ are contractible. That is, $\Cone_\omega(G,s_n)$ is contractible for all ultrafilters $\omega$ and scaling sequences $s_n$. 
\end{lemma}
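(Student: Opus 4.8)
The plan is to build an explicit null-homotopy of the identity on $\Cone_\omega(G,s_n)$ by using the projections $p_k$ to push an element along the discrete scale $k=0,1,2,\dots$ towards the basepoint. Concretely, I would set
\begin{align*}
H\colon \Cone_\omega(G,s_n) \times [0,1] \to \Cone_\omega(G,s_n), \qquad H([g_n],t) = \big[\, p_{\lfloor t\|g_n\|\rfloor}(g_n) \,\big].
\end{align*}
At $t=0$ we have $p_0=id$, so $H(-,0)$ is the identity; at $t=1$ hypothesis (iii) gives $\|p_{\lfloor\|g_n\|\rfloor}(g_n)\| \le |\,\|g_n\| - \lfloor\|g_n\|\rfloor\,| \le 1$, so the induced cone-norm of $H([g_n],1)$ is $\lim_\omega 1/s_n = 0$ and $H(-,1)$ is the constant map at the identity element. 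Since $\lfloor t\|g_n\|\rfloor \le t\|g_n\| \le \|g_n\| \le Cs_n$ for $t \le 1$, hypothesis (iii) also yields $\|p_{\lfloor t\|g_n\|\rfloor}(g_n)\| \le \|g_n\|$, so every sequence appearing in the construction is admissible.

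Next I would verify that $H$ is independent of the chosen representative and continuous; both rest on the same two-term triangle estimate. For admissible sequences $(g_n)$, $(h_n)$, writing $k_n = \lfloor t\|g_n\|\rfloor$ and $\ell_n = \lfloor t\|h_n\|\rfloor$, I bound
\begin{align*}
d\big(p_{k_n}(g_n), p_{\ell_n}(h_n)\big) \le d\big(p_{k_n}(g_n), p_{k_n}(h_n)\big) + d\big(p_{k_n}(h_n), p_{\ell_n}(h_n)\big) \le K\,d(g_n,h_n) + L|k_n-\ell_n|,
\end{align*}
using (i) for the first term and (ii) for the second. The crucial point is that $|k_n-\ell_n| \le |\,\|g_n\|-\|h_n\|\,| + 1 \le d(g_n,h_n)+1$ by the reverse triangle inequality for the norm, so the floor function costs only an additive constant, which is annihilated upon dividing by $s_n$ and passing to the ultralimit. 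Taking $(h_n)$ equivalent to $(g_n)$ gives well-definedness; keeping them general gives $d(H(x,t),H(y,t)) \le (K+L)\,d(x,y)$, so $H(-,t)$ is Lipschitz with constant $K+L$ uniformly in $t$.

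For continuity in the homotopy parameter I would fix $x=[g_n]$ and estimate, via (ii) and $|\lfloor t\|g_n\|\rfloor - \lfloor s\|g_n\|\rfloor| \le |t-s|\,\|g_n\|+1$,
\begin{align*}
d(H(x,t),H(x,s)) \le \lim_\omega \frac{L\big(|t-s|\,\|g_n\| + 1\big)}{s_n} = L\,|t-s|\,\|x\|,
\end{align*}
so $H(x,-)$ is Lipschitz with constant $L\|x\|$, a quantity that varies continuously in $x$ because the induced cone-norm is continuous. Combining this with the uniform Lipschitz bound in $x$ from the previous step, Lemma \ref{continuity on factors} delivers joint continuity of $H$, and hence the desired contraction. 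The main obstacle is exactly the representative-dependence of the index $\lfloor t\|g_n\|\rfloor$: one must confirm that perturbing the representative, the parameter $t$, or the base element $x$ shifts the index only by an amount comparable to the relevant distance (plus the harmless unit from the floor), and hypotheses (i) and (ii) are precisely what convert these index perturbations back into metric estimates that survive the rescaling by $s_n$.
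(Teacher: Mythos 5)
Your proof is correct, and it is the same homotopy as the paper's: the paper defines $H([x_n],t)=[p_{\lambda_n}(x_n)]$ where $(\lambda_n)\subset\N$ is \emph{any} sequence with $\lim_\omega \lambda_n/\|x_n\| = t$ (with a separate case $\lambda_n=0$ when $[x_n]=1$), and your $\lambda_n=\lfloor t\|g_n\|\rfloor$ is one admissible instance of that choice. The difference lies in the execution, and your canonical choice genuinely streamlines it. The paper must verify three things separately: that a valid $(\lambda_n)$ exists (via an $\omega$-heavy set on which $\|x_n\|\geq s_n\|x\|/2$), that $H$ is independent of the choice of $(\lambda_n)$, and that it is independent of the representative; it must also split off the case $x=1$, and its continuity-in-$x$ argument at fixed $t$ is a softer limiting argument comparing $\lim_\omega \tau_n/\|x_n\|$ with $t$ via the ratio $\|y_n\|/\|x_n\|$, yielding convergence but no modulus. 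Your explicit index makes the existence and choice-independence checks vacuous, handles $\|g_n\|=0$ automatically (the index is $0$ and $p_0=id$), and -- this is the real gain -- converts continuity in the space variable into the uniform Lipschitz bound $d(H(x,t),H(y,t))\leq (K+L)\,d(x,y)$, because the reverse triangle inequality $|\,\|g_n\|-\|h_n\|\,|\leq d(g_n,h_n)$ controls the index discrepancy pointwise up to the floor error $+1$, which dies under division by $s_n$. Both arguments then feed the same data (Lipschitz in $t$ with constant $L\|x\|$, continuity in $x$) into Lemma \ref{continuity on factors}, and the endpoint identities $H(-,0)=id$ and $H(-,1)\equiv 1$ are verified identically from $p_0=id$ and hypothesis (iii). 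The paper's abstract formulation buys a little flexibility (any index sequence with the right ultralimit gives the same point of the homotopy, which it exploits in later variants); yours buys a shorter, quantitatively sharper proof.
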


\begin{proof}
Denote the equivalence class of the identity sequence by $1 \in \Cone_\omega(G, s_n)$ and write $\|[g_n] \|= d([g_n],1)= \lim_\omega \frac{\|g_n\|}{s_n}$ for $[g_n] \in \Cone_\omega(G, s_n)$.
We want to use the projection maps to collapse a representative sequence with respect to a time parameter. For this we set our contracting homotopy to be 
\begin{align*}
H \colon & \Cone_\omega(G,s_n) \times [0,1]  \rightarrow \Cone_\omega(G,s_n),  \\
\big ( & [x_n], t \big)  \rightarrow [p_{\lambda_n}(x_n)] \hspace{3mm} \text{with} \hspace{1mm} \lambda_n \in \N \hspace{1mm} \text{such that} 
\begin{cases}      
 \lim_\omega \frac{\lambda_n}{\|x_n\|}= t & \text{if} \hspace{1mm} [x_n] \neq 1,\\
      \lambda_n=0 \hspace{1mm} \text{for all} \hspace{1mm} n & \text{if} \hspace{1mm} [x_n]=1.
\end{cases}
\end{align*}
\begin{claim}
$H$ is well defined. 
\end{claim}
First, we show that such a sequence $(\lambda_n)$ can be chosen. Recall, that for $x \in \Cone_\omega(G,s_n)$ with $x \neq 1$ any representative sequence is nonzero on an $\omega$-heavy subset. Namely, by definition of the ultralimit for all $\epsilon >0$ it holds that 
\begin{align*}
\left \{ n \in \mathbb{N} \hspace{2mm} : \hspace{2mm} \left | \frac{\|x_n\|}{s_n}- \|x\| \right | < \epsilon \right \} \in \omega.
\end{align*}
Since filters are closed under taking supersets, this implies by the inverse  triangle inequality and setting $\epsilon = \frac{\|x\|}{2}$ that
\begin{align*}
A := \left \{ n \in \mathbb{N} \hspace{2mm} : \hspace{2mm} \|x_n\| > s_n(\|x\|- \epsilon)= \frac{s_n\|x\|}{2} >0 \right \} \in \omega.
\end{align*}
This implies that any representative sequence of any non-trivial $x \in \Cone_\omega(G,s_n)$ admits for any $t \in [0,1]$ a choice of a positive real sequence $(\lambda_n^\prime)$  such that $\lim_\omega \frac{\lambda_n^\prime}{\|x_n\|}= t$. Since $\|x_n\| \rightarrow \infty$, we have for $\lambda_n= \lceil \lambda_n^\prime \rceil$ that $\lim_\omega \frac{\lambda_n}{\|x_n\|}= t$ as well.

Second, we verify the independence of the choice of such a sequence $(\lambda_n)$ while keeping the representative sequence $(x_n)$ of $x \neq 1$ fixed. Suppose $(\tau_n)$ is  a second sequence such that $\lim_\omega \frac{\tau_n}{\|x_n\|}= t = \lim_\omega \frac{\lambda_n}{\|x_n\|}$. Both limits $\lim_\omega \frac{\|x_n\|}{s_n}= \|x\|$ and $\lim_\omega \frac{s_n}{\|x_n\|}=\frac{1}{\|x\|}$  exist and belong to the open interval $(0, \infty)$. Then, calculate using the multiplicativity of ultralimits and condition $(ii)$ that 
\begin{align*}
d([p_{\lambda_n}(x_n)],[p_{\tau_n}(x_n)]) & = \lim_\omega \frac{d(p_{\lambda_n}(x_n),p_{\tau_n}(x_n))}{s_n} \leq \lim_\omega \frac{L|\lambda_n - \tau_n|}{s_n} \\
& = L \cdot \left (\lim_\omega \frac{|\lambda_n - \tau_n|}{\| x_n \|} \cdot \frac{\|x_n\|}{s_n} \right ) 
 = L \cdot \left | \lim_\omega \frac{\lambda_n - \tau_n}{\|x_n \|} \right | \cdot \|x\| = 0.
\end{align*}

Third, we verify the independence of $H$ of the choice of representative sequence $(x_n)$ of $x$ while keeping the sequence $\lambda_n$ fixed. For this let $(y_n)$ be another choice of representative sequence for $x \neq 1$. Then, calculate using condition $(i)$ that
\begin{align*}
d([p_{\lambda_n}(x_n)],[p_{\lambda_n}(y_n)]) = \lim_\omega \frac{d(p_{\lambda_n}(x_n),p_{\lambda_n}(y_n))}{s_n} \leq \lim_\omega \frac{Kd(x_n,y_n)}{s_n} = 0.
\end{align*}
This concludes the proof that $H$ is indeed well defined.  

Clearly, $H(-,0)=id_{\Cone_\omega(G,s_n)}$ using for example $\lambda_n =0$ for all $x \in \Cone_\omega(G,s_n)$ and  $n \in \N$. Moreover, condition $(iii)$ forces $H(-,1) \equiv 1$ using $\lambda_n = \lceil \| x_n \| \rceil$ for some representative sequence of $x$. It only remains to check continuity of $H$ to show that $H$ is the desired contracting homotopy.

\begin{claim}
$H$ is continuous. 
\end{claim}

Since $H$ is a map from a product of metric spaces it suffices by Lemma \ref{continuity on factors} to verify continuity in each argument separately and see that for each $x \in \Cone_\omega(G,s_n)$ the map $H(x,-)$ is Lipschitz with constant $L\|x\|$ since this is a continuous assignment. 

We start by checking continuity in the time parameter. By definition $H(1,-) \equiv 1$ is constant. Let $t,t^\prime \in [0,1]$ and $x=[x_n] \in \Cone_\omega(G,s_n)$ with $x \neq 1$. Let $(\lambda_n)$ be such that $\lim_\omega \frac{\lambda_n}{\|x_n\|}= t$ and $(\tau_n)$ be such that $\lim_\omega \frac{\tau_n}{\|x_n\|}= t^\prime$. Then
\begin{align*}
 d(H(x,t),H(x,t^\prime))= & d([p_{\lambda_n}(x_n)],[p_{\tau_n}(x_n)])= \lim_\omega \frac{d(p_{\lambda_n}(x_n),p_{\tau_n}(x_n))}{s_n} \leq \lim_\omega \frac{L|\lambda_n - \tau_n|}{s_n} \\
 = & L \lim_\omega \frac{|\lambda_n - \tau_n|}{\| x_n \|} \cdot \frac{\|x_n\|}{s_n}  = L \lim_\omega \frac{|\lambda_n - \tau_n |}{\|x_n\|} \cdot \lim_\omega \frac{\|x_n\|}{s_n}
 \\
 \leq & L \|x\| \cdot  |t-t^\prime| ,
\end{align*}
which establishes Lipschitz continuity with constant $L\|x\|$ for $H(x,-)$. 

Next we need to show for fixed $t$ continuity in $x \in \Cone_\omega(G,s_n)$. Since $H(-,1)$ is the identity map and $H(-,1)$ is constant we can assume that $t \in (0,1)$. Moreover, since the projection maps are all strictly norm decreasing the continuity at $1 \in \Cone_\omega(G,s_n)$ is immediate. So let $x=[x_n] \neq 1$ be given and let $y=[y_n]$ be a different element converging to $x$. 
We calculate for $(\lambda_n)$ and $(\tau_n)$ such that $\lim_\omega \frac{\tau_n}{\|y_n\|}= t = \lim_\omega \frac{\lambda_n}{\|x_n\|}$ the following
\begin{align*}
d(H(x,t),H(y,t)) & =
d([p_{\lambda_n}(x_n)],[p_{\tau_n}(y_n)]) \\
& = \lim_\omega \frac{d(p_{\lambda_n}(x_n),p_{\tau_n}(y_n))}{s_n} \\
& \leq \lim_\omega \frac{d(p_{\lambda_n}(x_n),p_{\tau_n}(x_n))}{s_n} + \frac{d(p_{\tau_n}(x_n),p_{\tau_n}(y_n))}{s_n} \\
& \leq \lim_\omega \frac{L|\lambda_n- \tau_n |}{s_n} + \lim_\omega \frac{Kd(x_n,y_n)}{s_n} \\
& = \lim_\omega \frac{L|\lambda_n- \tau_n |}{s_n} + Kd(x,y).
\end{align*}
Clearly $Kd(x,y) \xrightarrow{y \to x} 0$, so it remains to show that $\lim_\omega \frac{|\lambda_n- \tau_n |}{s_n} \xrightarrow{y \to x} 0$ as well. For this we first note that by continuity of the norm we have $\|y \| \xrightarrow{y \to x} \|x\| > 0$. Then 
\begin{align*}
\lim_\omega \frac{\tau_n}{\|x_n\|} = \lim_\omega \frac{\tau_n}{\|y_n\|} \cdot \frac{\|y_n \|}{\| x_n \|} = \lim_\omega \frac{\tau_n}{\|y_n\|} \cdot \lim_\omega \frac{\|y_n \|}{\| x_n \|} = t \cdot \lim_\omega \frac{\|y_n \|}{\| x_n \|} \xrightarrow{ y \to x} t. 
\end{align*}
Finally, we have
\begin{align*}
\lim_\omega  \frac{|\lambda_n- \tau_n |}{s_n} = \lim_\omega \frac{|\lambda_n- \tau_n |}{\|x_n\|} \cdot \frac{\|x_n\|}{s_n} \leq \left | \lim _\omega \frac{\lambda_n}{\|x_n\|} - \lim_\omega \frac{\tau_n}{\|x_n\|} \right | \cdot \|x\|  \xrightarrow{y \to x} \left | t - t \right | = 0,
\end{align*} 
which establishes $H(y,t) \xrightarrow{ y \to x} H(x,t)$ and concludes the proof. 
\end{proof}

\begin{lemma}\label{SingleprojectionContractionLemma}
Let $G$ be a group with a norm $\|.\|$ that induces the metric $d$. Then the conditions of the Contraction Lemma \ref{ContractionLemma} are fulfilled for $(G, \|.\|)$ if there is a single map $p \colon G \rightarrow G$ and a constant $L \in \N$ such that 
 \begin{enumerate}[label=(\roman*)]
\item $d(p(g),p(h)) \leq  d(g,h)$ for all $g,h \in G$,
\item $d(p(g),g) \leq L $ for all $g \in G$, 
\item $\| p(g) \| \leq \|g\|-1 $ for all $g \in G$ of norm $\|g\| \geq 1$,
\item $p(g)=1$, where $1 \in G$ is the trivial element, for all $g \in G$ of norm $\|g\| \leq 1$.
\end{enumerate}
\end{lemma}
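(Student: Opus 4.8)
The plan is to set $p_k$ to be the $k$-fold iterate of the single map $p$, i.e. $p_k := \underbrace{p \circ \cdots \circ p}_{k}$ with $p_0 := id$, and to check that the family $(p_k)_{k \in \N}$ satisfies the three hypotheses of the Contraction Lemma \ref{ContractionLemma} with $K = 1$ and the same constant $L$. Since the Contraction Lemma then yields contractibility of all asymptotic cones, establishing these three conditions is all that is required.

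Condition (i) is immediate: since $p$ is $1$-Lipschitz by hypothesis (i), so is every composite, because $d(p_k(g), p_k(h)) \leq d(p_{k-1}(g), p_{k-1}(h)) \leq \cdots \leq d(g,h)$ by induction on $k$. Hence $K = 1$ works. For condition (ii) I would assume without loss of generality that $\ell \leq k$, write $m = k - \ell$ and $h = p_\ell(g)$, and telescope: $d(p_k(g), p_\ell(g)) = d(p_m(h), h) \leq \sum_{j=1}^{m} d(p_j(h), p_{j-1}(h))$. Each summand equals $d(p(p_{j-1}(h)), p_{j-1}(h)) \leq L$ by hypothesis (ii) applied to the point $p_{j-1}(h)$, so the whole sum is at most $mL = L|k - \ell|$, as desired.

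The crux is condition (iii), namely $\|p_k(g)\| \leq |\,\|g\| - k\,|$. First I would combine hypotheses (iii) and (iv) into the single uniform statement $\|p(h)\| \leq \max(\|h\| - 1, 0)$, valid for every $h \in G$ (for $\|h\| \geq 1$ this is (iii), and for $\|h\| \leq 1$ it follows from $p(h) = 1$ in (iv)). An induction on $k$ then gives $\|p_k(g)\| \leq \max(\|g\| - k, 0)$: the inductive step reduces to the identity $\max(\max(\|g\|-k,0) - 1, 0) = \max(\|g\| - (k+1), 0)$, which is verified by splitting into the cases $\|g\| - k \geq 0$ and $\|g\| - k < 0$. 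Finally, since $\max(x, 0) \leq |x|$ for every real $x$, we conclude $\|p_k(g)\| \leq \max(\|g\| - k, 0) \leq |\,\|g\| - k\,|$. The only delicate point, and the main (if mild) obstacle, is ensuring that this norm bound survives the transition where $\|p_j(g)\|$ drops below $1$ and hypothesis (iv) takes over from (iii); packaging both hypotheses into the single estimate $\|p(h)\| \leq \max(\|h\| - 1, 0)$ is precisely what makes the induction go through cleanly.
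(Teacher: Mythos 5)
Your proposal is correct and follows essentially the same route as the paper: iterate $p$ to define $p_k$, get $K=1$ for condition (i) by induction, telescope with the triangle inequality for condition (ii), and induct on $k$ for condition (iii) using the fact that hypotheses (iii) and (iv) together give the uniform bound $\|p(h)\| \leq \max(\|h\|-1,0)$. Your explicit packaging of the norm estimate via $\max(\,\cdot\,,0)$ merely spells out the case split that the paper states piecewise, so there is nothing further to add.
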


\begin{proof}
We set $p_k$ to be the $k$-fold composition of $p$ for all $k \in \N$ using the convention that $p_0=id$. By induction it is immediate that the first condition $(i)$ implies the first condition of the Contraction Lemma \ref{ContractionLemma} with $K=1$. Similarly, the second condition $(ii)$ together with the triangle inequality implies by induction that $d(p_k(g),p_\ell (g)) \leq L |k- \ell |$ for all $k, \ell \in \N$ and for all $g \in G$. Finally, the third and fourth condition imply by induction that
\begin{align*}
\|p_k(g)\| \leq
\begin{cases}
\|g\|-k \hspace{2mm} \text{if} \hspace{2mm} \|g\| \geq k, \\
0 \hspace{12mm} \text{otherwise},
\end{cases}
\end{align*}
since $p_k(g)=1$ if $\|g\| \leq k$.
Thus, all conditions of the Contraction Lemma \ref{ContractionLemma} are satisfied.  
\end{proof}

\begin{remark}
Note, that if the norm $\|.\|$ is integer valued, the conditions $(iii)+(iv)$ are equivalent to assuming that $p(1)=1$ and $\|p(g)\| < \|g\|$ for all non-trivial $g \in G$. 
\end{remark}

\subsection{Direct sums}

Let $\{(G_i, \|.\|_i) \}_{i \in I}$ of groups together with norms on them for some indexing set $I$. Then the direct sum $\bigoplus_{i \in I} G_i$ can be equipped with the $\ell_1$-norm $\|.\| = \sum_i \|.\|_i$. Note, that in the case of a finite set $I$ this norm does induce the product topology on the direct sum. 

\begin{theorem}\label{directsumcontraction}
Let $I$ be any totally ordered set. Let $\{ (G_i,\|.\|_i, p_i) \}$ be a collection of groups together with integer valued norms $\|.\|_i$ and projections $p_i \colon G_i \rightarrow G_i$. Denote the induced metrics by $d_i$ and assume for all $i \in I$ the collection satisfies 
 \begin{enumerate}[label=(\roman*)]
\item $d_i(p(g),p(h)) \leq  d_i(g,h)$ for all $g,h \in G_i$,
\item $d_i(p_i(g),g) \leq 1 $ for all $g \in G_i$, 
\item $\| p_i(g) \|_i < \|g\|_i$ for all non-trivial $g \in G_i$,
\item $p_i(1)=1$, where $1 \in G_i$ is the trivial element.
\end{enumerate}
Then all asymptotic cones of $G= \bigoplus_{i \in I}$ with respect to the $\ell_1$-norm are contractible. 
\end{theorem}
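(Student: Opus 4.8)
The plan is to reduce to the single-projection criterion of Lemma \ref{SingleprojectionContractionLemma} by manufacturing one map $p \colon G \to G$ on the direct sum out of the coordinate projections $p_i$. Since $I$ is totally ordered and every nontrivial $g = (g_i)_{i \in I}$ has finite, hence nonempty, support, there is a well-defined minimal active index $i_0(g) = \min \supp(g)$. I would set $p(1) = 1$ and, for $g \neq 1$, let $p(g)$ agree with $g$ in every coordinate except the leading one, where I replace $g_{i_0(g)}$ by $p_{i_0(g)}(g_{i_0(g)})$. Thus $p$ only ever edits the single smallest active coordinate, and it is well-defined because a finite nonempty subset of a totally ordered set has a minimum.

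The first thing to check is that $p$ satisfies conditions $(iii)$ and $(iv)$ of Lemma \ref{SingleprojectionContractionLemma}. Writing $a = i_0(g)$, hypotheses $(ii)$ and $(iii)$ on $G_a$ give $\|g_a\|_a - 1 \le \|p_a(g_a)\|_a < \|g_a\|_a$, so integer-valuedness forces $\|p_a(g_a)\|_a = \|g_a\|_a - 1$; hence $\|p(g)\| = \|g\| - 1$, which is $(iii)$, and when $\|g\| = 1$ the unique active coordinate is sent to the identity, so $p(g) = 1$, which is $(iv)$. Condition $(ii)$ of the lemma is equally direct: $p$ alters only coordinate $a$, so $d(p(g), g) = d_a(p_a(g_a), g_a) \le 1$ by hypothesis $(ii)$ on $G_a$, and I may take $L = 1$.

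The heart of the argument, and the main obstacle, is verifying that $p$ is $1$-Lipschitz, i.e. condition $(i)$ of Lemma \ref{SingleprojectionContractionLemma}. The difficulty is that $p(g)$ and $p(h)$ may edit different coordinates when $g$ and $h$ have different leading indices, so coordinatewise contraction does not apply verbatim. I would argue by cases on $a = i_0(g)$ and $b = i_0(h)$. If $a = b$, only coordinate $a$ is touched in both images and hypothesis $(i)$ on $G_a$ bounds the distance coordinatewise. If $a < b$, then $h_a = 1$ since $a$ lies below $\supp(h)$; comparing images coordinate by coordinate, the leading coordinate $a$ contributes $\|p_a(g_a)\|_a = \|g_a\|_a - 1$ against the original $\|g_a\|_a$, a decrease of exactly one, while coordinate $b$ passes from $d_b(g_b, h_b)$ to $d_b(g_b, p_b(h_b)) \le d_b(g_b, h_b) + 1$ by the triangle inequality and hypothesis $(ii)$ on $G_b$, an increase of at most one; all remaining coordinates are untouched in both images. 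Summing, the net change is at most $-1 + 1 = 0$, so $d(p(g), p(h)) \le d(g, h)$. The case $a > b$ is symmetric, and the cases where $g$ or $h$ is trivial are immediate, since then one side is fixed at $1$ and $\|p(h)\| = \|h\| - 1 \le \|h\|$.

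With all four hypotheses of Lemma \ref{SingleprojectionContractionLemma} verified for $(G, \|.\|)$, whose $\ell_1$-norm is integer-valued as a finite sum of integer-valued norms, that lemma feeds into the Contraction Lemma \ref{ContractionLemma} and yields that every asymptotic cone of $G$ is contractible. The only essential use of integer-valuedness is in upgrading the strict inequality $\|p_a(g_a)\|_a < \|g_a\|_a$ to a decrease of a full unit, which is precisely what offsets the possible unit increase at the index $b$ in the Lipschitz estimate.
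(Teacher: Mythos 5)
Your proposal is correct and follows essentially the same route as the paper: the paper's proof defines exactly the same map $p$ (applying $p_{i_1}$ to the minimal active coordinate of the ordered decomposition, with $p(1)=1$), verifies the same four hypotheses of Lemma \ref{SingleprojectionContractionLemma}, and handles the Lipschitz estimate by the same case split on whether the leading indices agree. The only cosmetic difference is that in the case $i_0(g) < i_0(h)$ the paper bounds $d(p(g),p(h)) \leq d(p(g),h) + d(h,p(h)) \leq (d(g,h)-1)+1$ via the global triangle inequality rather than your coordinatewise bookkeeping at index $b$; both hinge on the same observation you single out, namely that integer-valuedness upgrades the strict decrease at $g$'s leading coordinate to a full unit, offsetting the unit perturbation at $h$'s.
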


\begin{proof}
Let $g \in G$ be non-trivial and write $g$ in its decomposition as an ordered direct sum $g=g_{i_1} + \dots + g_{i_k}$ for some $k \in \N$, where $i_1 < \dots < i_k$ and $g_{i_j} \in G_{i_j}$ are all non-trivial. Define 
\[
p \colon G \rightarrow G , \hspace{5mm} p(g)= p_{i_1}(g_{i_1})+ g_{i_2} + \dots + g_{i_k}
\]
and $p(1)=1$.

We will verify now that this projection satisfies the conditions in Lemma \ref{SingleprojectionContractionLemma}. By definition $p(1)=1$ and for all non-trivial $g \in G$ it holds that
\begin{align*}
d(p(g),g) &= d_{i_1}(p_{i_1}(g_{i_1}),g_{i_1}) + d_{i_2}(g_{i_2},g_{i_2}) + \dots + d_{i_k}(g_{i_k}, g_{i_k}) = d_{i_1}(p_{i_1}(g_{i_1}),g_{i_1}) \leq 1, \\
\|p(g)\| & = \|p_{i_1}(g_{i_1})\|_{i_1} + \|g_{i_2} \|_{i_2}+ \dots + \|g_{i_k} \|_{i_k}  \\
& < \|g_{i_1} \|_{i_1} + \|g_{i_2} \|_{i_2} + \dots + \|g_{i_k} \|_{i_k}  = \|g \| .
\end{align*}
It remains to verify that $d(p(g),p(h)) \leq d(g,h)$ for all $g,h \in G$. Write $g= g_{i_1} + \dots + g_{i_k}$ and $h= h_{j_1} + \dots + h_{j_\ell}$ in their ordered direct sum decomposition. If $i_1 = j_1$ then 
\begin{align*}
d(p(g),p(h)) & = d_{i_1}(p_{i_1}(g_{i_1}),p_{i_1}(h_{i_1})) + \sum_{i > i_1} d_i(g_i, h_i) \\
& \leq d_{i_1}(g_{i_1}, h_{i_1}) + \sum_{i > i_1} d_i(g_i, h_i) = d(g,h).
\end{align*}
If $i_1 \neq j_1$ then we can assume without loss of generality that $i_1 < j_1$. Then
\begin{align*}
d(g,h)= \|g_{i_1} \|+ \sum_{i > i_1} \|g_i h_i^{-1} \|_i .
\end{align*}
Consequently, we have 
\begin{align*}
d(p(g),h) = \|p_{i_1}(g_{i_1}) \|+ \sum_{i > i_1} \|g_i h_i^{-1} \|_i  \leq \|g_{i_1} \|+ \sum_{i > i_1} \|g_i h_i^{-1} \|_i  -1 =d(g,h)-1,
\end{align*}
and we conclude that $d(p(g),p(h)) \leq d(p(g),h) + d(h,p(h)) \leq d(g,h)-1 + 1 = d(g,h)$. 

Therefore, by Lemma \ref{SingleprojectionContractionLemma} the Contraction Lemma \ref{ContractionLemma} applies and every asymptotic Cone of $(G, \|.\|)$ is contractible.
\end{proof}

\begin{remark}
If one uses the axiom of choice to choose a total order on $I$, then $I$ can be taken to be any set. 
\end{remark}

\begin{defi}
Let $\{ G_i \}_{i \in I}$ be a collection of groups and $G= \bigoplus_{i \in I} G_i$. Define the support norm $\|.\|_{\supp}$ on $G$ to be the conjugation invariant norm counting the non-trivial summands in a direct sum decomposition of a given element in $G$. So for $g= \oplus_n g_n$ set
\[
\|g\|_{\supp} = \# \{ i : g_i \neq 1 \} . 
\]
So the norm $\|.\|_{\supp}$ is just the sum of discrete norms on the individual $G_i$, where the discrete norm is the norm that equals zero on the identity and one on every other element. 
\end{defi}

\begin{lemma}\label{equivsupp}
Let $(G_i,\|.\|_i)$ be a collection of groups together with conjugation invariant norms of uniformly bounded diameter and fineness. That is 
\[
\sup_i \sup_{g_i \in G_i} \|g_i\|_i < \infty \hspace{5mm} \text{and} \hspace{5mm} \inf_i \inf_{g_i \in G_i^\circ} \|g_i\|_i >0 ,
\]
where $G_i^\circ$ is the set of non-identity elements in $G_i$.
Then on $G= \bigoplus_{i \in I} G_i$ the norm $\|.\|_{supp}$ is equivalent to the norm $\|.\|$ given by the sum of the individual norms $\|.\|= \Sigma_{i \in I} \|.\|_i $. 
\end{lemma}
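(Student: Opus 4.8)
The plan is to compare the two norms directly on a single element, exploiting that the support norm merely counts the non-trivial summands of $g$ while the $\ell_1$-norm sums their individual norms, each of which is trapped between two uniform positive constants. First I would name the two uniform bounds supplied by the hypothesis: set
\[
M = \sup_i \sup_{g_i \in G_i} \|g_i\|_i < \infty \qquad \text{and} \qquad m = \inf_i \inf_{g_i \in G_i^\circ} \|g_i\|_i > 0,
\]
where finiteness of $M$ and strict positivity of $m$ are exactly the two conditions assumed in the statement.

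Next, for an arbitrary $g \in G$ I would write its ordered direct sum decomposition $g = g_{i_1} + \dots + g_{i_k}$ with all $g_{i_j} \neq 1$, so that by definition $\|g\|_{\supp} = k$ while $\|g\| = \sum_{j=1}^k \|g_{i_j}\|_{i_j}$. Since each summand is non-trivial, every term satisfies $m \leq \|g_{i_j}\|_{i_j} \leq M$, and summing over the $k$ terms yields $m\, k \leq \|g\| \leq M\, k$, that is
\[
m\, \|g\|_{\supp} \leq \|g\| \leq M\, \|g\|_{\supp}.
\]
Rearranged, these read $\|g\|_{\supp} \leq \tfrac{1}{m}\|g\|$ and $\|g\| \leq M\,\|g\|_{\supp}$, which is precisely the definition of equivalence with Lipschitz constants $A = 1/m$ and $B = M$.

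The argument has no real obstacle beyond correctly invoking both halves of the hypothesis: finiteness of $M$ is what prevents the $\ell_1$-norm from blowing up relative to the support count, whereas strict positivity of $m$ is what prevents it from collapsing to zero faster than the support count; the failure of either bound would break the corresponding inequality. The only point requiring a moment of care is the trivial case $g = 1$, where $k = 0$ and both sides vanish so the inequalities hold vacuously.
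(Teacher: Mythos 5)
Your proposal is correct and follows essentially the same route as the paper's proof: decompose $g$ into its $k$ non-trivial summands and bound each $\|g_{i_j}\|_{i_j}$ between the uniform constants $m$ and $M$, yielding $m\,\|g\|_{\supp} \leq \|g\| \leq M\,\|g\|_{\supp}$. Your naming of the constants and the explicit remark on the trivial case $g=1$ are harmless additions to what is otherwise the identical two-line estimate.
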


\begin{proof}
Let $g$ be given with $\|g\|_{supp}=k$. Then $g= g_{i_1}+ \dots + g_{i_k}$ where $g_{i_j} \in G_{i_j}$ are all non-identity elements. Then 
\begin{align*}
\|g \| = \|g_{i_1} \|_{i_1} + \dots + \| g_{i_k} \|_{i_k} & \leq k \sup_i \sup_{g_i \in G_i} \|g_i\|_i \\
\|g \| = \|g_{i_1} \|_{i_1} + \dots + \| g_{i_k} \|_{i_k} & \geq k \inf_i \inf_{g_i \neq 1} \|g_i\|_i ,
\end{align*}
which shows the equivalence.
\end{proof}

\begin{remark}\label{remindivnorm}
Note that the above applies to the case where all individual norms $\|.\|_i$ are word norms with uniformly bounded diameter. Then $\|.\|= \Sigma_i \|.\|_i $ is just the word norm generated by the union of all individual generating sets. 
\end{remark}

\begin{theorem}\label{suppcontractible}
Let $I$ be a totally ordered set. Let $\{G_i \}_{i \in I}$ be any collection of groups and let $G= \bigoplus_i G_i$. Then all asymptotic cones $\Cone_\omega(G,s_n)$ for $(G,\|.\|_{supp})$ and $m \geq 1$ are contractible. 
\end{theorem}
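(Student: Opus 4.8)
The plan is to reduce the claim to a single application of the direct sum contraction theorem, Theorem \ref{directsumcontraction}. The starting point is the observation, recorded in the definition of the support norm, that $\|.\|_{\supp}$ on $G = \bigoplus_{i \in I} G_i$ is exactly the $\ell_1$-sum $\sum_{i \in I} \|.\|_i$ of the \emph{discrete} norms on the factors, where $\|g\|_i = 0$ for $g = 1$ and $\|g\|_i = 1$ otherwise. Each discrete norm is a conjugation invariant, integer valued norm, so the data $(G_i, \|.\|_i)$ is of the type handled by Theorem \ref{directsumcontraction}.

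It remains to produce factorwise projections satisfying the four hypotheses of that theorem. Here the simplest choice already works: I would take $p_i \colon G_i \to G_i$ to be the constant map $p_i(g) = 1$ collapsing every element to the identity. Checking the conditions is then immediate. Condition (iv), $p_i(1) = 1$, is clear. For condition (iii), any non-trivial $g \in G_i$ has $\|p_i(g)\|_i = \|1\|_i = 0 < 1 = \|g\|_i$. For condition (ii), $d_i(p_i(g), g) = \|g\|_i \leq 1$ holds for every $g$ by the very definition of the discrete norm. Finally condition (i) is trivial, since $p_i(g) = p_i(h) = 1$ gives $d_i(p_i(g), p_i(h)) = 0 \leq d_i(g,h)$.

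Having verified all four conditions, Theorem \ref{directsumcontraction} applies to the collection $\{(G_i, \|.\|_i, p_i)\}$ and guarantees that every asymptotic cone $\Cone_\omega(G, s_n)$ of $G$ with respect to the $\ell_1$-sum $\sum_i \|.\|_i = \|.\|_{\supp}$ is contractible, which is precisely the assertion (the $\ell_1$-norm of the theorem being the support norm in the present setting).

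Since the argument is a direct invocation of the previously established Theorem \ref{directsumcontraction}, I do not expect a genuine obstacle. The only points that warrant care are confirming that the support norm coincides with the $\ell_1$-sum of the discrete factor norms, and noting that the degenerate projection $p_i \equiv 1$ is legitimate — which it is precisely because a discrete norm takes only the values $0$ and $1$, so a single projection step always suffices to reach the identity from any non-trivial element.
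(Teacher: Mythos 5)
Your proposal is correct and coincides with the paper's own proof: the paper likewise identifies $\|.\|_{\supp}$ with the $\ell_1$-sum of the discrete norms on the factors and applies Proposition \ref{directsumcontraction} with the constant projections $p_i(g_i)=1$. Your verification of the four hypotheses is exactly the (immediate) check the paper leaves implicit.
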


\begin{proof}
The support norm is just the sum $\Sigma_i \|.\|_i$ where all $\|.\|_i$ are the discrete norms on $G_i$, that is $\|g_i \|_i = 1$ if and only if $g_i \neq 1$ in $G_i$. Then the projections $p_i(g_i)=1 $ for all $g_i \in G_i$ satisfy all conditions of Proposition \ref{directsumcontraction} and its application implies the statement.  
\end{proof}

\begin{cor}
Let $F$ be a finite group and $\|.\|_F$ any norm on $F$. For any totally ordered set $I$ all asymptotic cones of the $I$-fold direct sum $G= \bigoplus_{i \in I} F$ with the $\ell_1$-norm $\|.\|= \Sigma_i \|.\|_F$ are contractible.   
\end{cor}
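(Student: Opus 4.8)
The plan is to deduce this from the contractibility of the support norm established in Theorem \ref{suppcontractible} together with the equivalence of norms in Lemma \ref{equivsupp}. First I would record that, because $F$ is a finite group, the two quantities attached to the norm $\|.\|_F$ are controlled: the diameter $d_F = \max_{g \in F} \|g\|_F$ is finite, and since $F$ has only finitely many non-identity elements the fineness $c_F = \min_{g \in F,\, g \neq 1} \|g\|_F$ is a genuine minimum and hence strictly positive. Since every summand of $G = \bigoplus_{i \in I} F$ carries the very same norm $\|.\|_F$, the uniform bounds required by Lemma \ref{equivsupp} hold trivially, namely $\sup_i \sup_{g} \|g\|_i = d_F < \infty$ and $\inf_i \inf_{g \neq 1} \|g\|_i = c_F > 0$.

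Next I would invoke Lemma \ref{equivsupp} to conclude that the $\ell_1$-norm $\|.\| = \Sigma_i \|.\|_F$ is equivalent to the support norm $\|.\|_{\supp}$ on $G$; concretely the proof of that lemma yields the explicit two-sided bound $c_F \|g\|_{\supp} \le \|g\| \le d_F \|g\|_{\supp}$ for every $g \in G$. In particular the two norms are coarsely equivalent, so the identity map is a bi-Lipschitz homeomorphism $(G, \|.\|) \to (G, \|.\|_{\supp})$ and a fortiori a quasi-isometry. By Proposition \ref{qi-invariant} this induces a bi-Lipschitz homeomorphism of asymptotic cones $\Cone_\omega(G, \|.\|) \cong \Cone_\omega(G, \|.\|_{\supp})$ for every ultrafilter $\omega$ and every scaling sequence $s_n$.

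Finally, Theorem \ref{suppcontractible} applies to the totally ordered index set $I$ and the collection of groups $\{G_i\}_{i \in I}$ with $G_i = F$, giving that all asymptotic cones of $(G, \|.\|_{\supp})$ are contractible. Since contractibility is a topological invariant and the cones for $\|.\|$ and $\|.\|_{\supp}$ are homeomorphic by the previous step, all asymptotic cones of $(G, \|.\|)$ are contractible as well.

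I do not expect a substantial obstacle here; the one point that warrants a remark is that the hypothesis merely asks for \emph{some} norm $\|.\|_F$ rather than a conjugation invariant one. This causes no difficulty, because the equivalence argument of Lemma \ref{equivsupp} never uses conjugation invariance, and Proposition \ref{qi-invariant} is a purely metric statement; thus contractibility transfers regardless of whether $\|.\|_F$, and hence $\|.\|$, is conjugation invariant.
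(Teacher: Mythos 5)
Your proof is correct and follows essentially the same route as the paper: finiteness of $F$ gives uniform bounds on diameter and fineness, Lemma \ref{equivsupp} yields equivalence of $\|.\|$ with $\|.\|_{\supp}$, and contractibility transfers from Proposition \ref{suppcontractible} via the bi-Lipschitz homeomorphism of cones (Proposition \ref{qi-invariant}). Your closing remark that neither the equivalence computation nor the quasi-isometry invariance uses conjugation invariance is a welcome clarification, since Lemma \ref{equivsupp} is nominally stated for conjugation invariant norms while the corollary allows arbitrary norms on $F$.
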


\begin{proof}
Any norm in a finite group assumes its infimum and supremum on the set of non-trivial elements. Hence, it is equivalent to the support norm by Remark \ref{remindivnorm}. It follows from Proposition \ref{suppcontractible} that the infinite direct sum only possesses contractible asymptotic cones. 
\end{proof}

Let us discuss one more example of a direct sum in which the norms on the factors are unbounded but Proposition \ref{directsumcontraction} still applies.

\begin{cor}
Let $I$ be a totally ordered set. Let $G= \bigoplus_{i \in I} \Z$ be equipped with the word norm $\|.\|$ coming from the standard basis $\{e_i \}_i$ and their inverses. Then all asymptotic cones of $(G,\|.\|)$ are contractible. 
\end{cor}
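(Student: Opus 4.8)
The plan is to apply the direct sum contraction result, Proposition~\ref{directsumcontraction}, taking each factor to be $G_i = \Z$ equipped with the absolute value $\|.\|_i = |.|$, which is exactly the word norm on $\Z$ generated by $\{+1,-1\}$. Since the individual factor norms are unbounded, this is precisely the situation flagged in the remark preceding the corollary: Lemma~\ref{equivsupp} and Remark~\ref{remindivnorm} do not apply, but the contraction result still does.

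First I would verify that the word norm $\|.\|$ on $G = \bigoplus_{i \in I} \Z$ generated by $\{e_i, -e_i\}_{i \in I}$ coincides with the $\ell_1$-sum $\Sigma_{i \in I} \|.\|_i$ of the factor norms. Writing any $g \in G$ as $g = \sum_i n_i e_i$ with only finitely many $n_i$ nonzero, the most economical expression of $g$ as a sum of generators uses exactly $\sum_i |n_i|$ of them, so $\|g\| = \sum_i |n_i| = \Sigma_i \|n_i e_i\|_i$. This matches the hypothesis of Proposition~\ref{directsumcontraction}.

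Next I would produce the projections. The natural choice is the ``step toward zero'' map $p_i \colon \Z \to \Z$ given by $p_i(m) = m - \sgn(m)$, so that $p_i(0)=0$, while $p_i(m) = m-1$ for $m > 0$ and $p_i(m)=m+1$ for $m<0$. I would then check the four conditions of Proposition~\ref{directsumcontraction} for each $i \in I$: condition (iv) is $p_i(0)=0$ by definition; condition (iii) is $|p_i(m)| = |m|-1 < |m|$ for $m \neq 0$; condition (ii) is $|p_i(m)-m| = |\sgn(m)| \leq 1$; and condition (i), that $p_i$ is $1$-Lipschitz, follows by a short case analysis, since for $m,n$ of the same sign one has $|p_i(m)-p_i(n)| = |m-n|$, while if $m$ and $n$ straddle $0$ the distance can only shrink.

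With all four conditions verified for every $i$, Proposition~\ref{directsumcontraction} applies directly and yields that all asymptotic cones of $(G, \|.\|)$ are contractible. I do not anticipate any genuine obstacle: the only steps requiring attention are the identification of the word norm with the $\ell_1$-sum and the elementary verification of the $1$-Lipschitz property of $p_i$.
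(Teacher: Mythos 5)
Your proposal is correct and takes essentially the same route as the paper: the paper's proof likewise applies Proposition~\ref{directsumcontraction} with the identical step-toward-zero projection $p(m)=m-\sgn(m)$ on each copy of $\Z$, checking the same four conditions. Your explicit verification that the word norm generated by $\{\pm e_i\}_{i \in I}$ coincides with the $\ell_1$-sum $\sum_i |n_i|$ of the factor norms is a detail the paper leaves implicit, but it does not change the argument.
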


\begin{proof}
On every copy of the integers we have the projection map 
\begin{align*}
p \colon \Z \rightarrow \Z, \hspace{5mm} p(n)= 
\begin{cases}
n-1 & \text{if} \hspace{2mm} n \geq 1, \\
0  & \text{if} \hspace{2mm} n = 0, \\
n+1 & \text{if} \hspace{2mm} n \leq -1. \\
\end{cases}
\end{align*}
This map clearly fixes the identity element. Moreover, $p$ is distance decreasing for the standard word metric generated by $\{ \pm 1 \}$, satisfies $d(p(n),n) \leq 1$ for all $n \in \Z$ and reduces the norm of any non-trivial element. Hence, it follows from Proposition \ref{directsumcontraction}, that any asymptotic cone of $\bigoplus_{i \in I} \Z$ is contractible with respect to the word norm given by the sum of all generators of the individual summands. 
\end{proof}

\subsection{Infinite Symmetric Group}

In the spirit of the projection maps before we now define cutting maps $c_k \colon \Sym_\infty \rightarrow \Sym_\infty$ that cut off the last $k$ numbers of the support of any given permutation. Then natural numbers $n$ that are sent by $\sigma$ to the last $k$ numbers of the support of $\sigma$ will in $c_k(\sigma)$ be sent to their first return point not belonging to the last $k$-numbers of the support.  

More precisely, let $\sigma \in \Sym_\infty$ where $\supp(\sigma)= \{ i_1, \dots , i_\ell \}$ with $i_1 < \dots < i_\ell$. Define $c_k \colon \Sym_\infty  \rightarrow \Sym_\infty$ by \\
\begin{align*}
 \sigma \rightarrow c_k(\sigma)=
\begin{cases} 
c_k(\sigma)(i)=i & \text{if} \hspace{1mm} i > i_{\ell-k}, \\     
c_k(\sigma)(i)= \sigma(i)  & \text{if} \hspace{1mm} i \leq i_{\ell-k} \hspace{1mm} \text{and} \hspace{1mm} \sigma(i) \leq i_{\ell-k}, \\
c_k(\sigma)(i)= \sigma^m(i) &  \text{if} \hspace{1mm} i \leq i_{\ell-k} \hspace{1mm} \text{and} \hspace{1mm} \sigma(i) > i_{\ell-k}, \\
 \text{where} \hspace{1mm} m \hspace{1mm}\text{minimal such that} \hspace{1mm} \sigma^m(i) \leq i_{\ell-k}. &
\end{cases}
\end{align*}
Note, that $c_k(\sigma)$ is indeed a permutation satisfying $\supp(c_k(\sigma)) \subset \{i_1, \dots , i_{\ell-k} \}$ by definition and for $k \geq \ell$ we have $c_k (\sigma)=1$.

\begin{lemma} \label{cutcontinuous1}
The cutting maps satisfy $d(c_k(\sigma), c_m (\sigma)) \leq 2|k-m| $ for all $\sigma \in \Sym_\infty$ and all $k,m \in \N$, where $d$ denotes the induced metric by $\|.\|_{\supp}$.
\end{lemma}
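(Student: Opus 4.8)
The plan is to reduce the statement to the case of consecutive indices and then analyse the effect of deleting a single point from the support. First I would record the elementary but crucial identity that, for any two $\pi,\pi' \in \Sym_\infty$,
\[
d(\pi,\pi') = \|\pi (\pi')^{-1}\|_{\supp} = \#\{x \in \N : \pi(x) \neq \pi'(x)\},
\]
which holds because $y \mapsto (\pi')^{-1}(y)$ is a bijection, so counting the non-fixed points of $\pi(\pi')^{-1}$ is the same as counting the points where $\pi$ and $\pi'$ disagree. By the triangle inequality it then suffices to prove the single-step estimate $d(c_k(\sigma), c_{k+1}(\sigma)) \leq 2$ for every $k$, since telescoping gives $d(c_k(\sigma), c_m(\sigma)) \leq \sum_{j} d(c_j(\sigma), c_{j+1}(\sigma)) \leq 2|k-m|$ (the summands with $j \geq \ell$ vanish, as $c_j(\sigma)=1$ there).

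The key structural observation is that $c_k(\sigma)$ is exactly the \emph{first-return map} of $\sigma$ to the set $\{1, \dots, i_{\ell-k}\}$, extended by the identity above the threshold $i_{\ell-k}$: this is precisely what the three cases in the definition spell out. I would then invoke the transitivity of the first-return construction, namely that for nested sets $B \subset A$, first returning $\sigma$ to $A$ and then returning the result to $B$ yields the first-return map of $\sigma$ to $B$ directly, because a return to $B$ is in particular a return to $A$ once $B \subseteq A$. Applying this with $A = \{1,\dots,i_{\ell-k}\}$ and $B = \{1,\dots,i_{\ell-k-1}\}$ identifies $c_{k+1}(\sigma)$ with the first-return map of $\pi := c_k(\sigma)$ to $B$. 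Since $\supp(c_k(\sigma)) \subseteq \supp(\sigma)$, every integer strictly between $i_{\ell-k-1}$ and $i_{\ell-k}$ is already fixed by $\pi$; as fixed points never occur in the excursions of other points, deleting them from the allowed set leaves the first-return map unchanged. Hence $c_{k+1}(\sigma)$ is obtained from $\pi$ by removing the single point $p := i_{\ell-k}$ and passing to the first-return map on $\N \setminus \{p\}$.

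It then remains to bound the effect of deleting one point $p$ from a permutation $\pi$. If $\pi(p)=p$ the map is unchanged and there is nothing to prove. Otherwise set $a = \pi^{-1}(p)$ and $b = \pi(p)$, both distinct from $p$. In the resulting permutation $\pi'$ every $x \notin \{a,p\}$ keeps its image (only $a$ was mapped into $p$), the point $a$ is rerouted to $\pi^2(a) = b$, and $p$ becomes fixed. Thus $\pi$ and $\pi'$ disagree in at most the two points $a$ and $p$ (the degenerate case $a=b$, i.e. $p$ lying in a transposition, is handled identically and still yields two disagreements), so by the identity above $d(c_k(\sigma), c_{k+1}(\sigma)) = d(\pi,\pi') \leq 2$, and the lemma follows.

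I expect the main obstacle to be making the first-return reformulation and its transitivity precise enough to justify that passing from $c_k(\sigma)$ to $c_{k+1}(\sigma)$ really amounts to deleting the single point $i_{\ell-k}$ — in particular the bookkeeping that all intermediate integers below the old threshold are already fixed and therefore contribute nothing to the first-return map. Once that reduction is in place, the two-point disagreement count is a short combinatorial check.
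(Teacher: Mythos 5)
Your proof is correct and follows essentially the same route as the paper: reduce to the consecutive case $m=k+1$ by the triangle inequality, then observe that a single cutting step changes the permutation in at most two points, namely the deleted support element $p=i_{\ell-k}$ and the point mapping onto it. Your first-return formalism makes this identification precise — and in fact slightly more carefully than the paper, which names the second disagreement point as $\sigma^{-1}(i_{\ell-k})$ where it should be $c_k(\sigma)^{-1}(i_{\ell-k})$, exactly the point $a=\pi^{-1}(p)$ in your notation.
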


\begin{proof}
Consider the case $m=k+1$. Writing $\sigma$ as a product of cycles we see that by definition $c_k(\sigma)$ and $c_{k+1}(\sigma)$ agree on all $i \in \{ i_1, \dots , i_{\ell-k} \}$ which are not $i_{\ell-{k}}$ or its preimage $\sigma^{-1}(i_{\ell-k})$. Hence, we have $d(c_k(\sigma), c_{k+1}(\sigma)) \leq 2$ and the lemma follows by induction. 
\end{proof}

\begin{lemma}\label{cutting}
Let $\sigma, \tau \in \Sym_\infty$ such that $\supp(\sigma) = \supp(\tau)$. Then $d(c_k(\sigma), c_k(\tau)) \leq d(\sigma, \tau)$ for all $k \in \N$. 
\end{lemma}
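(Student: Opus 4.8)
The plan is to begin by reinterpreting the bi-invariant metric $d$ induced by $\|.\|_{\supp}$ as a Hamming-type distance. Since $\|\sigma^{-1}\tau\|_{\supp}$ counts the non-fixed points of $\sigma^{-1}\tau$, and $\sigma^{-1}\tau$ fixes a point $n$ precisely when $\sigma(n)=\tau(n)$, one obtains $d(\sigma,\tau)=|\{n\in\N : \sigma(n)\neq\tau(n)\}|$, independently of the left-to-right convention. This reformulation turns the inequality into a purely combinatorial statement about the sets of disagreement points, which is what I would exploit throughout.

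Next I would extract the role of the hypothesis $\supp(\sigma)=\supp(\tau)$. It guarantees that both permutations share the same ordered support $\{i_1<\dots<i_\ell\}$, so the cutting maps use the \emph{same} threshold $i_{\ell-k}$ for both. Consequently $c_k(\sigma)$ and $c_k(\tau)$ are the first-return maps of $\sigma$ and of $\tau$ to the common set $K=\{n : n\leq i_{\ell-k}\}$, each extended by the identity on $R=\N\setminus K$ (the case $k\geq\ell$ being trivial, as both maps are then the identity). A short observation records that disagreement of the cut permutations can only occur at the kept support points $\{i_1,\dots,i_{\ell-k}\}$, since outside the support both cut maps act as the identity.

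The core of the argument is to construct an injection $\Phi$ from $D'=\{n : c_k(\sigma)(n)\neq c_k(\tau)(n)\}$ into $D=\{n : \sigma(n)\neq\tau(n)\}$, which yields $|D'|\leq|D|$ and hence the claim. For $n\in D'$, I would run the forward $\sigma$- and $\tau$-trajectories of $n$ in parallel and let $\Phi(n)$ be the last point on their common initial segment before they first diverge; at that point $\sigma$ and $\tau$ necessarily disagree, so $\Phi(n)\in D$. A preliminary lemma here is that divergence must happen \emph{strictly before} the first return of $n$ to $K$: if the trajectories agreed all the way to the first return, then $\sigma$ and $\tau$ would have the same first return and $n$ would not lie in $D'$. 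This pins $\Phi(n)$ on the segment lying in $R\cup\{n\}$ and gives the index bound $j(n)<p(n)$, where $p(n)$ is the first-return index.

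The hard part will be injectivity of $\Phi$. Suppose $\Phi(n)=\Phi(n')=x$ with divergence indices $j,j'$, so that $x=\sigma^{j}(n)=\sigma^{j'}(n')$ and, crucially, the backward trajectory from $x$ down to each starting point is traced identically by $\sigma$ and $\tau$. If $j=j'$ then $n=\sigma^{-j}(x)=n'$. If instead $j<j'$, then $n=\sigma^{\,j'-j}(n')$ sits at an intermediate index $1\leq j'-j\leq p(n')-1$ of the forward trajectory of $n'$, which forces $n\in R$; but $n\in K$ by construction, a contradiction. Thus $\Phi$ is injective and the inequality follows. The two delicate points to get exactly right are the claim that divergence precedes the first return (guaranteeing $j<p$) and the careful bookkeeping of $K$ versus $R$ membership along the trajectories; the remaining verifications are routine.
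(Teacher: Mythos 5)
Your proof is correct and takes essentially the same route as the paper: both arguments produce an injection from the disagreement set of $c_k(\sigma), c_k(\tau)$ into that of $\sigma, \tau$ by following the parallel $\sigma$- and $\tau$-trajectories of each disagreement point to their first divergence, which is either the point itself or a point above the cutting threshold. Your version simply unifies the paper's four-case analysis (your divergence index $j=0$ absorbs its first three cases) and makes the injectivity bookkeeping explicit --- divergence strictly before first return, so intermediate trajectory points lie outside the kept region --- where the paper only briefly asserts that each divergence point is distinct and not yet counted.
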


\begin{proof}
Without loss of generality assume that $\supp(\sigma)=\supp(\tau)= \{ 1, \dots , n \}$. Set 
\begin{align*}
Z= \{ z | \sigma(z) \neq \tau(z) \} &&
Z_k = \{ z | c_k(\sigma)(z) \neq c_k(\tau)(z) \}, 
\end{align*}
where by definition $ | Z| = \| \sigma \tau^{-1} \|_{supp}=d(\sigma, \tau)$ and 
\[ 
| Z_k| = \| c_k(\sigma) c_k(\tau)^{-1} \|_{\supp}= d(c_k(\sigma), c_k(\tau)).
\]
Our goal is to find a distinct corresponding element in $Z$ for every single element of $Z_k$, thus showing that $|Z_k| \leq |Z|$.

For $x> n-k$ we have $c_k(\tau)(x)= c_k(\sigma)(x)=x$ by definition of the cutting maps $c_k$. Let $x \leq n-k$ and distinguish the following cases 
\begin{enumerate}
\item $\sigma(x) \leq n-k$ and $\tau(x) \leq n-k$. Then $x \in Z_k$ if and only if $x \in Z$. 
\item $\sigma(x) \leq n-k$ and $\tau(x) > n-k$. Then $x \in Z$ and $x$ may or may not lie in $Z_k$. 
\item $\sigma(x) > n-k$ and $\tau(x) \leq n-k$. Then $x \in Z$ and $x$ may or may not lie in $Z_k$.  
\item $\sigma(x) > n-k$ and $\tau(x) > n-k$. Then write out both $\sigma$ and $\tau$ in their unique cycle decompositions. There exists a cycle in which $x=i_k= j_{k^\prime}$ is appearing of the form 
\begin{align*}
( i_1 \dots i_k i_{k+1} \dots i_\ell i_{\ell+1} \dots ) \subset \sigma, && (j_1 \dots j_{k^\prime} j_{k^\prime +1} \dots j_{\ell^\prime} j_{\ell^\prime +1} \dots) \subset \tau,
\end{align*}
such that $i_{\ell+1}$ and $j_{\ell^\prime +1}$ are the first natural numbers $\leq n-k$ which appear and all numbers in between are natural numbers larger than $n-k$. Then $c_k(\sigma)(x)=i_{\ell+1}$ and $c_k(\tau)(x)= j_{\ell^\prime +1}$. Now if $x \in Z_k$ then $c_k(\sigma)(x) \neq c_k(\tau)(x)$ and this means that there exists a first letter in $\{i_k, \dots , i_\ell \}$ and $\{j_{k^\prime}, \dots , j_{\ell^\prime} \}$ on which $\sigma$ and $\tau$ disagree. This is either $x$ itself or a letter $ > n-k$. In both cases this yields a unique $x^\prime \in Z$ which we have not counted in any of the other cases yet. 
\end{enumerate}
This shows every $x \in Z_k$ corresponds to some distinct $x^\prime \in Z$ which implies that $|Z_k | \leq |Z|$. Hence, statement of the lemma follows.  
\end{proof}

\begin{lemma}\label{cutcontinous2}
The maps satisfy $d(c_k(\sigma), c_k (\tau)) \leq 2d(\sigma, \tau) $ for all $\sigma, \tau \in \Sym_\infty$ and all $k \in \N$, where $d$ denotes the induced metric by $\|.\|_{\supp}$.
\end{lemma}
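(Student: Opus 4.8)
The plan is to reduce to the equal-support situation already treated in Lemma \ref{cutting}. Throughout I use the identity $d(\sigma,\tau)=|\{x\in\N:\sigma(x)\neq\tau(x)\}|$ (as in the proof of Lemma \ref{cutting}); this immediately yields $|\supp(\sigma)\triangle\supp(\tau)|\leq d(\sigma,\tau)$, since any $x$ lying in exactly one of the two supports is moved by exactly one of $\sigma,\tau$ and hence lies in $\{x:\sigma(x)\neq\tau(x)\}$. Writing $\supp(\sigma)$ and $\supp(\tau)$ in increasing order, the map $c_k$ deletes the top $k$ support points, so $c_k(\sigma)$ is the first-return permutation of $\sigma$ to $\{x\leq a\}$ (and the identity above $a$), where $a$ is the $k$-th largest point of $\supp(\sigma)$; likewise $c_k(\tau)$ is the first-return permutation of $\tau$ to $\{x\leq b\}$. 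Since the statement and $d$ are symmetric in $\sigma,\tau$, I may assume $a\leq b$, and I dispose of the degenerate cases $k\geq|\supp(\sigma)|$ or $k\geq|\supp(\tau)|$ (where one or both cuts are trivial) separately at the outset.

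First I would record the quick, non-sharp version: cutting $\sigma$ and $\tau$ at the \emph{common} threshold $b$ is governed by the argument of Lemma \ref{cutting} (whose proof uses only that both permutations are cut at the same threshold \emph{value}, not that their supports agree), giving $d(\pi_b(\sigma),\pi_b(\tau))\leq d(\sigma,\tau)$, where $\pi_N$ denotes cutting at threshold $N$; the remaining discrepancy $d(\pi_a(\sigma),\pi_b(\sigma))=d(c_k(\sigma),c_{k'}(\sigma))$ between the two cuts of $\sigma$ (where $c_{k'}(\sigma)=\pi_b(\sigma)$) is controlled by Lemma \ref{cutcontinuous1}, and the index difference $k-k'$ equals $|\supp(\sigma)\cap(a,b]|$, which the definitions of $a,b$ bound by $|\supp(\tau)\setminus\supp(\sigma)|\leq d(\sigma,\tau)$. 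This already produces a finite Lipschitz constant, but the factor $2$ in Lemma \ref{cutcontinuous1} costs us and gives only the constant $3$.

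To reach the sharp constant $2$ I would instead directly extend the injective counting of Lemma \ref{cutting}. Partition $\N$ into $R_1=\{x\leq a\}$, $R_2=(a,b]$ and $R_3=\{x>b\}$. On $R_3$ both cut permutations are the identity. On $R_2$ one has $c_k(\sigma)(x)=x$, so any $x\in R_2$ contributing to $Z_k:=\{x:c_k(\sigma)(x)\neq c_k(\tau)(x)\}$ must satisfy $c_k(\tau)(x)\neq x$ and hence lie in $\supp(\tau)\cap R_2$; the counting identity $|\supp(\tau)\cap R_2|=|\supp(\tau)\cap(a,\infty)|-|\supp(\sigma)\cap(a,\infty)|\leq|\supp(\tau)\setminus\supp(\sigma)|$ ties this region to the support difference. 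On $R_1$ I would run the first-return/first-disagreement matching of Lemma \ref{cutting}: following the $\sigma$- and $\tau$-orbits of a point $x$, a disagreement $c_k(\sigma)(x)\neq c_k(\tau)(x)$ is produced either by a genuine disagreement $\sigma(w)\neq\tau(w)$ somewhere along the orbit (charged to $\{x:\sigma(x)\neq\tau(x)\}$) or by the orbit entering the window $R_2$, where $\tau$ already returns but $\sigma$ does not (charged to the support difference). Assembling these charges into a single injection $Z_k\hookrightarrow\{x:\sigma(x)\neq\tau(x)\}\ \sqcup\ \big(\supp(\sigma)\triangle\supp(\tau)\big)$ would give $|Z_k|\leq d(\sigma,\tau)+|\supp(\sigma)\triangle\supp(\tau)|\leq 2\,d(\sigma,\tau)$, which is the claim.

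The hard part will be verifying that all of these charges can be made simultaneously injective with the correct budget, so that the window $R_2$ and the orbit-excursions into it are not double-counted: a crude treatment assigns both the $R_2$-disagreements and the $R_1$-excursions to sets of size comparable to $|\supp(\tau)\setminus\supp(\sigma)|$, which is exactly what inflates the constant from $2$ to $3$. Getting the bookkeeping to collapse these into the single budget $|\supp(\sigma)\triangle\supp(\tau)|$ is the crux, and it is where the case analysis in the proof of Lemma \ref{cutting} must be re-examined (exploiting that the relevant first-return map is a bijection of $\{x\leq b\}$) rather than invoked verbatim.
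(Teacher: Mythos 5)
Your reduction and budget identity are sound as far as they go: $A \sqcup B \subseteq Z$ (where $A = \supp(\sigma)\setminus\supp(\tau)$, $B = \supp(\tau)\setminus\supp(\sigma)$, $Z=\{x : \sigma(x)\neq\tau(x)\}$) does give $|\supp(\sigma)\,\triangle\,\supp(\tau)| \leq d(\sigma,\tau)$; the common-threshold variant of Lemma \ref{cutting} does go through, since its case analysis only uses a common cutoff value; and your first route, combined with Lemma \ref{cutcontinuous1}, correctly yields $d(c_k(\sigma),c_k(\tau)) \leq 3\,d(\sigma,\tau)$ --- which, incidentally, would already suffice for the application in Theorem \ref{Symcontractible}, since the Contraction Lemma \ref{ContractionLemma} only needs \emph{some} uniform constant. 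But the statement has constant $2$, and your argument for it is a plan, not a proof: you say yourself that the simultaneous injectivity of the charges on $R_1$ and $R_2$ into the single budget $Z \sqcup (A \sqcup B)$ is ``the crux'' and leave it unverified. The difficulty is real, not mere bookkeeping anxiety: a window point $w \in \supp(\tau)\cap(a,b]$ can be responsible for two distinct disagreements at once --- it lies itself in $Z_k \cap R_2$ (since $c_k(\tau)$ moves $w$ while $c_k(\sigma)$ fixes it), and it is simultaneously the first-return value $c_k(\tau)(x)=w$ of the unique $x \in R_1$ mapped to it, whose $\sigma$-excursion continues past $w$ and so may produce a disagreement at $x$ as well. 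Your static injection would have to show that whenever both happen, a genuine element of $Z$ is available along the relevant orbits to absorb one of the two charges; nothing in your outline rules out this configuration, so as written the region decomposition only certifies $|Z_k| \leq |Z| + 2\,|\supp(\tau)\cap(a,b]|$, i.e.\ the constant $3$ again.

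The paper closes exactly this gap by a different, dynamic decomposition: instead of cutting both permutations once and building one global injection, it erases support points one at a time in decreasing order of their value, interleaving the two permutations. When the current maximum of the remaining union of supports lies in the symmetric difference, say in $A$, erasing it from $\sigma$ strictly decreases the distance to $\tau$ by at least $1$ (the erased point was itself a disagreement), so even after the paired single erasure on the $\tau$-side --- which costs at most $2$ by Lemma \ref{cutcontinuous1} --- the net cost of the step is at most $+1$; matched erasures of common support points at the same threshold cost nothing extra, by the injection of Lemma \ref{cutting}. Since the asymmetric steps number at most $\max\{|A|,|B|\}$ and $\max\{|A|,|B|\} \leq |Z|$, this gives $|Z_k| \leq |Z| + \max\{|A|,|B|\} \leq 2|Z|$ directly --- in fact slightly better than your target budget $|Z|+|A|+|B|$. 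If you want to salvage your static approach, this sequential accounting is the missing idea: ordering the erasures by value lets each element of the symmetric difference be charged exactly once, at the moment it is erased, rather than asking the window $(a,b]$ to fund two kinds of charges after the fact.
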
 

\begin{proof}
The supports of $\sigma$ and $\tau$ can be written uniquely as disjoint unions
\begin{align*}
\supp(\sigma) = A \cup (\supp(\sigma) \cap \supp(\tau))  && \supp(\tau)= B \cup (\supp(\sigma) \cap \supp(\tau)),
\end{align*}
As in Lemma \ref{cutting} before use the notation 
\begin{align*}
Z= \{ z | \sigma(z) \neq \tau(z) \} &&
Z_k = \{ z | c_k(\sigma)(z) \neq c_k(\tau)(z) \}. 
\end{align*}
Clearly, we have $|Z| \geq \max \{|A|,|B| \}$. 

Let $k \in \N$ and consider the cutting map $c_k$. By definition of $c_k$ we erase $k$ numbers from both $\supp(\sigma)$ and $\supp(\tau)$ separately adhering to the total order on the natural numbers. Let $a \in A$ be the maximal element among all numbers in $\supp(\sigma) \cup \supp(\tau)$, then clearly $d(c_1(\sigma), \tau) \leq d(\sigma, \tau)-1$, since we just erased an element on which they were not the same. Consequently, in this case we have $d(c_1(\sigma),c_1(\tau)) \leq d(\sigma, \tau)+1$. When erasing a letter from $B$ we have the same behaviour. So whenever we erase a letter from $A$ or $B$ in the process of defining the cutting map we will increase distances by at most 1. 

However, we erase at least $\min \{ k-|A|, k-|B| \}$ common elements from $\supp(\sigma) \cap \supp(\tau)$ in both $\sigma$ and $\tau$, since there is a total order on these elements. For these the same argument of Lemma \ref{cutting} above shows that the elements of $Z_k \cap \supp(\sigma) \cap \supp(\tau)$ correspond to elements from $Z$. We deduce that $|Z_k| \leq |Z|+ \max \{|A|,|B| \}$. Since $\max \{|A|,|B| \} \leq |Z|$ it follows that $d(c_k(\sigma), c_k(\tau)) \leq 2d(\sigma, \tau)$ for all $k$. 
\end{proof}

Recall, that the equivalence of the norms and the quasi-isometry for $\Sym_\infty$ and $\A_\infty$ are discussed in Section 2.2. Moreover, coarsely equivalent norms and quasi-isometric spaces yield Lipschitz-isomorphic asymptotic cones by Proposition \ref{qi-invariant} and Proposition \ref{coarseequiv}. 

\begin{T} \label{Symcontractible}
All asymptotic cones of $(\Sym_\infty, \|.\|_{\supp})$ are contractible.
\end{T}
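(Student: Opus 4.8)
The plan is to apply the Contraction Lemma \ref{ContractionLemma} directly to $(\Sym_\infty, \|.\|_{\supp})$, taking the cutting maps $c_k$ as the family of projections $p_k$. The normalisation $p_0 = id$ holds automatically, since cutting off the last zero elements of a permutation's support changes nothing, so $c_0 = id$.

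I would then verify the three hypotheses of Lemma \ref{ContractionLemma} one by one. Condition (i), the uniform Lipschitz bound $d(c_k(\sigma), c_k(\tau)) \leq K\,d(\sigma, \tau)$, is exactly Lemma \ref{cutcontinous2} with $K = 2$. Condition (ii), the bound $d(c_k(\sigma), c_\ell(\sigma)) \leq L\,|k - \ell|$, is exactly Lemma \ref{cutcontinuous1} with $L = 2$. Condition (iii) is immediate from the construction: writing $\supp(\sigma) = \{i_1 < \dots < i_\ell\}$ so that $\|\sigma\|_{\supp} = \ell$, the definition of $c_k$ gives $\supp(c_k(\sigma)) \subseteq \{i_1, \dots, i_{\ell - k}\}$ for $k \leq \ell$ and $c_k(\sigma) = 1$ for $k > \ell$; hence $\|c_k(\sigma)\|_{\supp} \leq \max(\ell - k, 0) \leq \big|\,\|\sigma\|_{\supp} - k\,\big|$. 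With all three conditions established for the family $(c_k)_{k \in \N}$, the Contraction Lemma yields that $\Cone_\omega(\Sym_\infty, s_n)$ is contractible for every ultrafilter $\omega$ and scaling sequence $s_n$.

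The real content of the argument sits in the geometric lemmas this assembly relies on, especially the factor-two estimate of Lemma \ref{cutcontinous2}, whose proof tracks how cutting the top of the support introduces at most $\max(|A|, |B|)$ fresh disagreements between the two permutations; once that bound is available the present step is a routine check. It is worth noting why the single-projection form Lemma \ref{SingleprojectionContractionLemma} cannot be substituted here: it requires the strict constant $K = 1$, but iterating $c_1$ only achieves $K = 2$ on permutations of unequal support, so the indexed family $(c_k)$ must be fed into Lemma \ref{ContractionLemma} directly rather than generated from a single map.
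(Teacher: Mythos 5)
Your proposal is correct and follows essentially the same route as the paper's own proof: both apply the Contraction Lemma \ref{ContractionLemma} to the family of cutting maps $c_k$, obtaining condition (i) with $K=2$ from Lemma \ref{cutcontinous2}, condition (ii) with $L=2$ from Lemma \ref{cutcontinuous1}, and condition (iii) from the observation that $\supp(c_k(\sigma)) \subseteq \{i_1, \dots, i_{\ell-k}\}$ with $c_k(\sigma)=1$ once $k \geq \|\sigma\|_{\supp}$. Your closing remark on why the single-projection Lemma \ref{SingleprojectionContractionLemma} does not apply is accurate commentary consistent with the paper's choice to use the indexed family directly.
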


\begin{proof}
It is clear by definition of $c_k \colon \Sym_\infty \rightarrow \Sym_\infty$ that for all $\sigma \in \Sym_\infty$ we have $c_k(\sigma)=1$ if $\| \sigma \|_{\supp} \leq k $ and that $\|c_k(\sigma) \|_{\supp} \leq \| \sigma \|_{\supp} - k$ whenever $\| \sigma \|_{\supp} \geq k$. This yields the third condition of the Contraction Lemma \ref{ContractionLemma}. The first two conditions are guaranteed by Lemma \ref{cutcontinuous1} and Lemma \ref{cutcontinous2}. So the Contraction Lemma  \ref{ContractionLemma} applies and the result follows. 
\end{proof}

\subsection{Free Products}

The aim of this section is to apply our contraction lemma to recover classically known results regarding the contractibility of asymptotic cones of free products with respect to word norms associated to a finite generating set that is not invariant under conjugation. We derive very visual contraction maps from projection maps onto prefixes of words in this case. Thus, Section 4.3 and Proposition \ref{circleretract} in the appendix constitute the only parts in this paper in which the norms in consideration are \underline{not} conjugation invariant. An extensive study of asymptotic cones of hyperbolic groups with respect to word norms given by a finite generating set can be found in  \cite{Behrstock}, \cite{KapDru} and \cite{Osin}. In fact, for a finitely presented group $G$, hyperbolicity is equivalent to the existence of such an asymptotic cone that is a tree by \cite[Thm. 11.170]{KapDru}. 

\begin{defi}
Let $G= \ast_{i \in I} G_i$ be a free product of a family of groups $\{G_i\}_{i \in I}$ for some indexing set $I$. For each $i$ the factor $G_i$ is a subgroup of $G$ via the canonical inclusion. An element of $G$ that belongs to one of the factors is called a \textit{letter} of $G$. Any product of letters is called a \textit{word} in $G$. The product of any two letters belonging to the same factor in $G$ can be replaced by the letter that represents their product in that factor. Moreover, any identity letters appearing in a word can be omitted without changing the element the word represents in $G$.  Recall that any element $g \in G$ has a unique presentation as a word, where no two consecutive letters lie in the same factor and no identity letters appear. Such a word is called \textit{reduced}.
\end{defi}

\begin{defi}
Let $\{ (G_i,\|.\|_i) \}$ be a collection of groups together with norms for some indexing set $I$. Define a norm $\|.\|_{\ell_1}$ on the free product $G=\ast_{i \in I} G_i$ as follows. Write $g \in G$ uniquely as a reduced word $g=g_1 \dots g_k$ for some $k \in \N$ where $g_j \in G_{i_j}$ for all $j$. Then set \[
\|g\|_{\ell_1}= \|g_1\|_{i_1} + \dots + \|g_k\|_{i_k}.
\]
This defines a norm on the free product $G$ which we will refer to as the $\ell_1$\textit{-norm} \textit{associated to the collection} $\{ (G_i,\|.\|_i) \}$. 
\end{defi}

It follows from the definition of the $\ell_1$\textit{-norm} associated to a collection $\{ (G_i,\|.\|_i) \}$ that the projection onto the factor $G_j$ is Lipschitz-continuous with constant one with respect to the norms $\|.\|_{\ell_1}$ and $\|.\|_j$. 
 
\begin{lemma}\label{inclfreeprod}
Let $\{ (G_i,\|.\|_i) \}$ be a collection of groups together with norms for some set $I$. Then for any $j \in I$ the inclusion $\varphi_j \colon (G_j, \|.\|_i) \hookrightarrow (\ast_{i \in I} G_i, \|.\|_{\ell_1})$ is an isometry. 
\end{lemma}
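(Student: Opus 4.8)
The plan is to reduce the assertion that $\varphi_j$ is an isometry of metric spaces to the single norm identity $\|\varphi_j(a)\|_{\ell_1} = \|a\|_j$ for every $a \in G_j$, and then to read that identity off directly from the definition of the $\ell_1$-norm. First I would recall that $\varphi_j$ is the canonical inclusion, hence a group homomorphism, and that the metrics on $(G_j,\|.\|_j)$ and on $(\ast_{i\in I}G_i,\|.\|_{\ell_1})$ are the right-invariant ones induced by their norms, i.e. $d(g,h)=\|gh^{-1}\|$. Consequently, for $x,y\in G_j$ one has $d_{\ell_1}(\varphi_j(x),\varphi_j(y)) = \|\varphi_j(x)\varphi_j(y)^{-1}\|_{\ell_1} = \|\varphi_j(xy^{-1})\|_{\ell_1}$, while $d_j(x,y)=\|xy^{-1}\|_j$. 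Setting $a=xy^{-1}\in G_j$, it therefore suffices to prove $\|\varphi_j(a)\|_{\ell_1}=\|a\|_j$ for all $a\in G_j$.

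The key observation is that a single non-identity letter is already a reduced word. If $a=1$, then both sides vanish. If $a\neq 1$, then $\varphi_j(a)$ is the letter $a$ lying in the factor $G_j$, and since it consists of exactly one non-identity letter it is its own reduced word in the sense of the definition (no two consecutive letters share a factor and no identity letter appears, both vacuous for a length-one word). Applying the defining formula for $\|.\|_{\ell_1}$ to this length-one reduced word gives $\|\varphi_j(a)\|_{\ell_1}=\|a\|_j$, as required. Combining this with the reduction above yields $d_{\ell_1}(\varphi_j(x),\varphi_j(y))=d_j(x,y)$ for all $x,y\in G_j$, so $\varphi_j$ is an isometry.

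There is essentially no analytic difficulty here; the argument is purely bookkeeping around the definition of a reduced word. The two points that each deserve a sentence of care are that $\varphi_j$ is injective, which follows from the uniqueness of reduced words so that distinct elements of $G_j$ give distinct length-one words, and the handling of the degenerate case $a=1$ together with the convention that identity letters are omitted, so that the definition of $\|.\|_{\ell_1}$ applies verbatim to $\varphi_j(a)$. I expect the main---and rather mild---obstacle to be simply making these conventions explicit so that the one-line computation of the $\ell_1$-norm is justified.
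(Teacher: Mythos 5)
Your proposal is correct and follows essentially the same route as the paper: the paper's proof likewise observes that $\varphi_j$ sends a letter $g \in G_j$ to itself, that this single letter is its own unique reduced word, and hence $\|\varphi_j(g)\|_{\ell_1} = \|g\|_j$. Your explicit reduction from the metric statement to the norm identity via $\varphi_j(x)\varphi_j(y)^{-1} = \varphi_j(xy^{-1})$, and your handling of the case $a=1$, are left implicit in the paper but add nothing beyond routine bookkeeping.
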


\begin{proof}
The inclusion $\varphi_j$ maps any letter $g \in G_j$ to itself in $\ast_{i \in I} G_i$ and this is its unique presentation as a reduced word in $\ast_{i \in I} G_i$. Hence, we have $\| \varphi_j(g) \|_{\ell_1} = \|g \|_{\ell_1} = \|g \|_{j}$. 
\end{proof}

Recall, that the discrete norm on a group is the norm that takes value 0 on the identity element and value 1 on all other all other elements in the group. Then the associated $\ell_1$-norm is denoted by $\|.\|_{\rm supp}$ and measures the length of a reduced expression in terms of letters. The norm $\|.\|_{\rm supp}$ may be viewed as an analogue of the support norm before. Similar to the case of direct sums we have the following lemma. 

\begin{lemma}\label{freeproductequivalentnorms}
Let $\{(G_i, \|.\|_i) \}$ be any collection of groups $G_i$ together with norms $\|.\|_i$ of bounded diameter and bounded fineness. That is,  
\[
\sup_i \sup_{g_i \in G_i} \|g_i\|_i < \infty \hspace{5mm} \text{and} \hspace{5mm} \inf_i \inf_{g_i \in G_i^\circ} \|g_i\|_i >0 ,
\]
where $G_i^\circ$ is the set of non-identity elements in $G_i$.
Then the $\ell_1$-norm $\|.\|_{\ell_1}$ associated to $\{(G_i, \|.\|_i) \}$ is equivalent on $\ast_{i \in I} G_i$ to the $\ell_1$-norm $\|.\|_{\rm supp}$ associated to $\{(G_i, \|.\|_{d_i}) \}$, where $d_i$ is the discrete norm on $G_i$ for all $i$. 
\end{lemma}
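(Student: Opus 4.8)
The plan is to mirror the proof of Lemma \ref{equivsupp} for direct sums, replacing the direct-sum decomposition with the reduced-word decomposition in the free product. Write $M = \sup_i \sup_{g_i \in G_i} \|g_i\|_i$ for the uniform diameter bound and $\mu = \inf_i \inf_{g_i \in G_i^\circ} \|g_i\|_i$ for the uniform fineness bound; by hypothesis $M < \infty$ and $\mu > 0$.

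First I would fix $g \in \ast_{i \in I} G_i$ and write it in its unique reduced form $g = g_1 \cdots g_k$ with $g_j \in G_{i_j}$. The crucial observation is that reducedness forces every letter $g_j$ to be a non-identity element of its factor, so that each $g_j \in G_{i_j}^\circ$ and hence $\mu \leq \|g_j\|_{i_j} \leq M$. Moreover, by definition $\|g\|_{\rm supp} = k$ counts exactly the number of letters in this reduced word, while $\|g\|_{\ell_1} = \sum_{j=1}^k \|g_j\|_{i_j}$.

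Summing the pointwise bounds $\mu \leq \|g_j\|_{i_j} \leq M$ over the $k$ letters then yields $\mu k \leq \|g\|_{\ell_1} \leq M k$, that is $\mu \|g\|_{\rm supp} \leq \|g\|_{\ell_1} \leq M \|g\|_{\rm supp}$. This gives both Lipschitz inequalities $\|.\|_{\ell_1} \leq M \|.\|_{\rm supp}$ and $\|.\|_{\rm supp} \leq \frac{1}{\mu} \|.\|_{\ell_1}$ required by the definition of equivalence, with constants $A = M$ and $B = 1/\mu$.

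There is essentially no obstacle here beyond bookkeeping; the only point that genuinely uses the free-product structure---as opposed to a direct sum---is the appeal to uniqueness of the reduced word to guarantee that the number of letters equals $\|g\|_{\rm supp}$ and that no letter is trivial. Everything else is the same elementary two-sided estimate as in Lemma \ref{equivsupp}, so I would keep the write-up short and simply point to that analogy.
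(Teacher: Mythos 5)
Your proposal is correct and follows the paper's own proof essentially verbatim: both arguments take the unique reduced word $g = g_1 \cdots g_k$, note that reducedness forces each letter into $G_{i_j}^\circ$, and sum the uniform bounds $\mu \leq \|g_j\|_{i_j} \leq M$ to obtain $\mu\,\|g\|_{\rm supp} \leq \|g\|_{\ell_1} \leq M\,\|g\|_{\rm supp}$. No gaps.
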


\begin{proof}
Let $g=g_1 \dots g_k$ as a reduced word in $\ast_{i \in I} G_i$ with $g_j \in G_{i_j}$. Then $\|g\|_{\rm supp}=k$ and
\begin{align*}
\|g \|_{\ell_1} = \|g_1\|_{i_1} + \dots \|g_k \|_{i_k} \leq \sup_{h_1 \in G_{i_1}} \|h_1 \|_{i_1} + \dots + \sup_{h_k \in G_{i_k}} \|h_k \|_{i_k} \leq k \sup_i \sup_{h_i \in G_i} \|h_i\|_i, \\
 \|g \|_{\ell_1} = \|g_1\|_{i_1} + \dots \|g_k \|_{i_k} \geq \inf_{h_1 \in G_{i_1}^\circ} \|h_1 \|_{i_1} + \dots + \inf_{h_k \in G_{i_k}^\circ} \|h_k \|_{i_k} \geq k \inf_i \inf_{h_i \in G_i^\circ} \|h_i\|_i.
\end{align*}
Consequently, $\|.\|_{\ell_1}$ and $\|.\|_{\rm supp}$ are equivalent.
\end{proof}

\begin{theorem} \label{freeproductcontractible}
Let $I$ be a set. Let $\{ (G_i, \|.\|_i, p_i) \}_{i \in I}$ be a collection of groups $G_i$ together with integer valued norms $\|.\|_i$ and maps $p_i \colon G_i \rightarrow G_i$ which satisfy for all $i \in I$
\begin{enumerate}[label=(\roman*)]
\item $d_i(p_i(g),p_i(h)) \leq  d_i(g,h)$ for all $g,h \in G_i$,
\item $d_i(p_i(g),g) \leq 1 $ for all $g \in G_i$, 
\item $\| p_i(g) \|_i < \|g\|_i$ for all non-trivial $g \in G_i$,
\item $p_i(1)=1$, where $1 \in G_i$ is the trivial element,
\end{enumerate}
where $d_i$ denotes the metric induced by $\|.\|_i$. 
Then all asymptotic cones of \hspace{1mm}$G= \ast_{i \in I} G_i $ with respect to the $\ell_1$-norm associated to the collection $\{ (G_i, \|.\|_i, p_i) \}_{i \in I}$ are contractible.
\end{theorem}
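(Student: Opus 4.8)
The plan is to reduce the statement to the single-projection form of the Contraction Lemma, namely to Lemma \ref{SingleprojectionContractionLemma}, by assembling one map $p\colon G \to G$ out of the given $p_i$ that shrinks a reduced word by one unit of norm at its tail. Concretely, set $p(1)=1$, and for a nontrivial $g$ with reduced expression $g = g_1 \cdots g_k$, $g_\ell \in G_{i_\ell}$, define $p(g) = g_1 \cdots g_{k-1}\, p_{i_k}(g_k)$, where the last factor is deleted when $p_{i_k}(g_k)=1$ (the resulting word is then automatically reduced). Because $\|.\|_{\ell_1}$ is integer valued, by the Remark following Lemma \ref{SingleprojectionContractionLemma} it suffices to check that $p$ fixes the identity, strictly decreases the norm of every nontrivial element, moves every point a bounded distance, and is $1$-Lipschitz.

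First I would dispatch the three easy requirements. That $p(1)=1$ is immediate. For the norm decrease, $\|p(g)\|_{\ell_1} = \sum_{\ell<k}\|g_\ell\|_{i_\ell} + \|p_{i_k}(g_k)\|_{i_k} < \sum_{\ell\le k}\|g_\ell\|_{i_\ell} = \|g\|_{\ell_1}$ by assumption (iii) on $p_{i_k}$. For the displacement bound, the reduced word of $g^{-1}p(g)$ telescopes completely: $g^{-1}p(g) = g_k^{-1}\, p_{i_k}(g_k) \in G_{i_k}$, so $d(g,p(g)) = d_{i_k}\bigl(g_k, p_{i_k}(g_k)\bigr) \le 1$ by assumption (ii), giving condition (ii) of Lemma \ref{SingleprojectionContractionLemma} with $L=1$.

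The substance is the estimate $d(p(g),p(h)) \le d(g,h)$, which I would prove by analysing the reduced form of $g^{-1}h$ through the longest common prefix of $g$ and $h$. Writing $j$ for the length of that common prefix and using left-invariance to cancel it, $d(g,h)$ splits as the cost of the two disagreeing suffixes plus a junction contribution coming from the first letters $g_{j+1},h_{j+1}$ where $g$ and $h$ differ: this junction term is $\|g_{j+1}^{-1}h_{j+1}\|_{i}$ when those letters lie in a common factor $G_i$, and $\|g_{j+1}\|_{i}+\|h_{j+1}\|$ otherwise. Since $p$ only alters the final letters $g_k,h_m$, the inequality is routine whenever these truncated letters lie strictly beyond the junction: the suffix costs can only drop and the junction term is untouched. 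When both truncated letters are exactly the junction letters (that is, $k=m=j+1$ with $g_{j+1},h_{j+1}$ sharing a factor $G_i$), the required inequality $d_i\bigl(p_i(g_{j+1}),p_i(h_{j+1})\bigr)\le d_i(g_{j+1},h_{j+1})$ is precisely assumption (i).

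The hard part is the remaining boundary configuration, where exactly one truncated letter coincides with its junction letter (for instance when $g$ is a prefix of $h$, or $k=j+1<m$). There, truncating that junction letter can increase the junction contribution, but by at most $1$, which is exactly what assumption (ii), $d_i(p_i(\cdot),\cdot)\le 1$, controls. The key observation rescuing these cases is that in precisely such configurations the other word still carries a genuine tail letter beyond the junction which $p$ also truncates, and since the norm is integer valued assumption (iii) forces that truncation to lower the norm by at least $1$; the two $\pm 1$ effects cancel and yield $d(p(g),p(h))\le d(g,h)$. I would invoke the symmetry of $d$ to halve this casework. With all four conditions of Lemma \ref{SingleprojectionContractionLemma} verified, the Contraction Lemma \ref{ContractionLemma} applies and every asymptotic cone of $(G,\|.\|_{\ell_1})$ is contractible.
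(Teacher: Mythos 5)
Your proposal is, up to a left--right mirror, the paper's own proof: the paper defines $p$ by projecting the \emph{first} letter, $p(g)=p_{i_1}(g_1)g_2\cdots g_k$, verifies the same four conditions of Lemma \ref{SingleprojectionContractionLemma}, handles the shared-factor junction case via assumption (i), and packages your hard boundary case a little more cleanly: whenever the cancellation in $gh^{-1}$ stops short of the projected letter, it shows $d(p(g),h)\le d(g,h)-1$ (integer-valuedness plus (iii)) and then concludes $d(p(g),p(h))\le d(p(g),h)+d(h,p(h))\le d(g,h)$ by the triangle inequality together with (ii) --- the same $\pm 1$ cancellation you describe, organized so that only one word's truncation is analyzed at a time.

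There is, however, one genuine problem: you truncate the \emph{last} letter, while the metric the paper induces from a norm is right-invariant, $d(g,h)=\|gh^{-1}\|$ (this is the convention used in Lemma \ref{ConeL1} and in the paper's own displacement computation $d(p(g),g)=\|p(g)g^{-1}\|_{\ell_1}$). Section 4.3 is precisely the part of the paper where the norms are \emph{not} conjugation invariant, so left- and right-invariant distances genuinely differ here. With your $p$ one has $g\,p(g)^{-1}=g_1\cdots g_{k-1}\bigl(g_k\,p_{i_k}(g_k)^{-1}\bigr)g_{k-1}^{-1}\cdots g_1^{-1}$, whose $\ell_1$-norm is $\|g_k\,p_{i_k}(g_k)^{-1}\|_{i_k}+2\sum_{j<k}\|g_j\|_{i_j}$, which is unbounded; so condition (ii) of Lemma \ref{SingleprojectionContractionLemma} fails for your $p$ under the paper's metric. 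Your computation $d(g,p(g))=d_{i_k}\bigl(g_k,p_{i_k}(g_k)\bigr)$ silently uses the left-invariant metric $\|g^{-1}h\|$, and the same convention-sensitivity threads through your junction estimates. The fix costs nothing: either truncate the first letter, whereupon your common-prefix analysis becomes a common-suffix analysis and you have reproduced the paper's proof verbatim; or keep your $p$, work consistently with the left-invariant metric, and note that inversion is an isometry from $(G,\|gh^{-1}\|)$ to $(G,\|g^{-1}h\|)$, so the two cones are homeomorphic and contractibility transfers. As written, though, the displacement bound --- the step you labeled easy --- is the one that is false.
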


\begin{proof}
Let $g \in G$ and let $g=g_1 \dots g_k$ be its unique presentation as a reduced word. Let $i_1$ be such that $g_1 \in G_{i_1}$. Define 
\begin{align*}
p \colon G \rightarrow G, \hspace{5mm} p(g)=p_{i_1}(g_{1})g_2 \dots g_k
\end{align*}
with $p(1)=1$, where $1$ is the identity element of $G$. Note, that $p_{i_1}(g_{i_1})g_2 \dots g_k$ is a reduced word after potentially omitting $p_{i_1}(g_1)$ if it is equal to the identity in $G_{i_1}$. For all non-trivial $g \in G$ it holds that
\begin{align*}
\|p(g)\|_{\ell_1}& = \|p(g_1)\|_{i_1} + \|g_2 \|_{i_2} + \dots \|g_k\|_{i_k} < \| g_1 \|_{i_1} + \|g_2 \|_{i_2} + \dots \|g_k\|_{i_k} = \|g \|_{\ell_1}
\end{align*}
and
\begin{align*}
d(p(g),g) &= \| p(g) g^{-1} \|_{\ell_1} = \| p_{i_1}(g_{1})g_2 \dots g_k (g_1 \dots g_k)^{-1} \|_{\ell_1} = \|p_{i_1} (g_1) g_1^{-1} \|_{\ell_1} \\
&= \|p_{i_1} (g_1) g_1^{-1} \|_{i_1}  = d_{i_1}(p_{i_1}(g_1),g_1) \leq 1.
\end{align*}

It remains to check that $p$ is distance decreasing since then by Lemma \ref{SingleprojectionContractionLemma} the Contraction Lemma \ref{ContractionLemma} applies and the result follows. Let $g,h \in G$ with presentations as reduced words given by $g= g_1 \dots g_k$ and $h=h_1 \dots h_\ell$. The only operations to obtain the reduced presentation of $gh^{-1}$ from the product of their respective reduced expressions are multiplying adjacent elements in the same group and omitting identities. So, if $g_k$ and $h_\ell$ are letters from different groups then $g_1 \dots g_k h_\ell^{-1} \dots h_1^{-1}$ is already reduced. If $g_k h_\ell^{-1}$ is not the identity, then there are no cancellations in $g_1 \dots g_k h_\ell^{-1} \dots h_1^{-1}$. After possibly performing a sequence of cancellations there is a positive integer $a \geq 0$ such that 
\begin{align*}
gh^{-1} = g_1 \dots g_k h_\ell^{-1} \dots h_1^{-1} = g_1 \dots g_{k-a} h_{\ell-a}^{-1} \dots h_1, 
\end{align*} 
where the latter is a reduced word. Clearly $a \leq \min \{k,\ell\}$. Distinguish the following cases. 

If $a < k-1$ then 
\begin{align*}
d(p(g),h)& = \|p(g)p(h)^{-1} \|_{\ell_1} \\ & = \| p_{i_1}(g_1) g_2 \dots g_{k-a} h_{\ell-a}^{-1} \dots h_1 \|_{\ell_1} \\
 & = \| p_{i_1}(g_1) \|_{i_1} + \|g_2 \dots g_{k-a} h_{\ell-a}^{-1} \dots h_1 \|_{\ell_1}\\
  & \leq \|g_1\|_{i_1}-1 + \|g_2 \dots g_{k-a} h_{\ell-a}^{-1} \dots h_1 \|_{\ell_1} \\
 & = \|g_1 g_2 \dots g_{k-a} h_{\ell-a}^{-1} \dots h_1 \|_{\ell_1} -1 \\
 & = d(g,h)-1, 
\end{align*}
where we used the fact that $p_{i_1}$ decreases the integer valued norm $\|.\|_{i_1}$. Consequently, \[
d(p(g),p(h)) \leq d(p(g),h)+d(h,p(h)) \leq d(g,h)-1 + 1 =d(g,h).
\]
Similarly, if $a < \ell-1 $ we obtain $d(p(g),p(h)) \leq d(g,h)$ with the same argument. 

Finally, if $ a \geq \max \{k-1,\ell-1 \}$ then $gh^{-1}= g_1  h_1^{-1}$. If $g_1$ and $h_1$ belong to different groups, then \[
d(p(g),p(h))= \|p(g_1) \|_{i_1} + \|p(h_1)^{-1} \|_{j_1} < \|g_1 \|_{i_1} + \|h_1^{-1} \|_{j_1} = \|gh^{-1} \|_{\ell_1} =d(g,h).
\]
If both of them belong to the same group, then \[
d(p(g),p(h))=d_{i_1}(p_{i_1}(g_1),p_{i_1}(h_1)) \leq d_{i_1}(g_1,h_1)=d(g,h).
\]
Thus $p$ is distance decreasing in all cases and the result follows from Lemma \ref{SingleprojectionContractionLemma}. 
\end{proof}

\begin{cor}
Let $\{(G_i, \|.\|_{d_i}) \}$ be any collection of groups $G_i$ all of which are equipped with the discrete norm $\|.\|_{d_i}$. Then all asymptotic cones of $\ast_{i \in I} G_i$ equipped with the associated $\ell_1$-norm are contractible.
\end{cor}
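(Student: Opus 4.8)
The plan is to apply the free-product contraction theorem, Proposition~\ref{freeproductcontractible}, directly. The only thing that needs to be supplied is a family of maps $p_i \colon G_i \to G_i$ satisfying the four numbered hypotheses with respect to the discrete norms $\|.\|_{d_i}$, and the natural candidate is the constant map collapsing each factor to its identity.

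First I would set $p_i \colon G_i \to G_i$ to be $p_i(g) = 1$ for every $g \in G_i$ and every $i \in I$. Since the discrete norm only takes the values $0$ and $1$, it is integer valued, as required by the hypotheses of Proposition~\ref{freeproductcontractible}. I would then check the four conditions in turn. Condition (iv), namely $p_i(1) = 1$, holds by definition. Condition (i) is immediate because $d_i(p_i(g), p_i(h)) = d_i(1,1) = 0 \leq d_i(g,h)$ for all $g,h \in G_i$. For condition (ii), one has $d_i(p_i(g), g) = \|g\|_{d_i} \leq 1$, the inequality being exactly the statement that the discrete norm is bounded by $1$. Finally, for condition (iii), any non-trivial $g \in G_i$ satisfies $\|p_i(g)\|_{d_i} = \|1\|_{d_i} = 0 < 1 = \|g\|_{d_i}$.

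With all four hypotheses verified, Proposition~\ref{freeproductcontractible} applies to the collection $\{(G_i, \|.\|_{d_i}, p_i)\}_{i \in I}$ and yields that all asymptotic cones of the free product $\ast_{i \in I} G_i$, taken with respect to the associated $\ell_1$-norm $\|.\|_{\rm supp}$, are contractible. I do not expect any genuine obstacle here: the entire content is the observation that the discrete norm is small enough (bounded by $1$) that collapsing each factor directly to its identity is simultaneously distance non-increasing, strictly norm-decreasing on non-identity elements, and moves each point a distance at most $1$. The substantive work of building the contracting homotopy on the asymptotic cone has already been carried out once and for all in the Contraction Lemma~\ref{ContractionLemma} and its free-product specialisation, so this corollary is a clean instantiation rather than a fresh construction.
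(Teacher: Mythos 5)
Your proof is correct and follows exactly the paper's own argument: the paper likewise takes $p_i$ to be the constant map sending all of $G_i$ to the identity and notes that the collection $\{(G_i,\|.\|_{d_i},p_i)\}_{i \in I}$ satisfies the hypotheses of Proposition~\ref{freeproductcontractible}. Your explicit verification of the four conditions is a harmless elaboration of what the paper dismisses as trivial.
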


\begin{proof}
Take $p_i \colon G_i \rightarrow G_i$ to be the projection sending the whole group to the identity element. Then the collection $\{(G_i, \|.\|_{d_i},p_i) \}_{i \in I}$ trivially satisfies all the assumptions of Proposition \ref{freeproductcontractible}.  
\end{proof}

The equivalence of norms in Lemma \ref{freeproductequivalentnorms} yields the strengthening: 

\begin{cor} \label{freeproductcorollary}
Let $\{(G_i, \|.\|_i) \}$ be any collection of groups $G_i$ together with norms $\|.\|_i$ of bounded diameter and bounded fineness. That is  
\[
\sup_i \sup_{g_i \in G_i} \|g_i\|_i < \infty \hspace{5mm} \text{and} \hspace{5mm} \inf_i \inf_{g_i \in G_i^\circ} \|g_i\|_i >0 ,
\]
where $G_i^\circ$ is the set of non-identity elements in $G_i$. Then all asymptotic cones of $\ast_{i \in I} G_i$ equipped with the associated $\ell_1$-norm are contractible. \qed
\end{cor}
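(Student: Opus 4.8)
The plan is to reduce the general bounded-diameter, bounded-fineness situation to the discrete case that has just been settled, by a change-of-norm argument. The crucial point to keep in mind is that, in contrast to most of the paper, the associated $\ell_1$-norm on a free product is \emph{not} conjugation invariant, so I cannot route the transfer through Proposition \ref{coarseequiv}. Instead I will use the purely metric statement Proposition \ref{qi-invariant}, which applies to arbitrary metric spaces and uses no group-theoretic invariance.

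First I would record the base case. Applying Proposition \ref{freeproductcontractible} with each factor $G_i$ carrying its discrete norm $\|.\|_{d_i}$ and with the trivial projections $p_i$ collapsing all of $G_i$ to the identity (equivalently, invoking the immediately preceding corollary), all asymptotic cones of $\ast_{i \in I} G_i$ equipped with the support norm $\|.\|_{\supp}$ are contractible.

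Next I would invoke Lemma \ref{freeproductequivalentnorms}: under the bounded-diameter and bounded-fineness hypotheses, the associated $\ell_1$-norm $\|.\|_{\ell_1}$ is equivalent to $\|.\|_{\supp}$ on $\ast_{i \in I} G_i$. Equivalence of norms says precisely that the identity map between the two induced metric spaces is bi-Lipschitz, hence in particular a quasi-isometry. Feeding this identity map into Proposition \ref{qi-invariant} then produces a bi-Lipschitz homeomorphism between $\Cone_\omega(\ast_{i \in I} G_i, \|.\|_{\ell_1})$ and $\Cone_\omega(\ast_{i \in I} G_i, \|.\|_{\supp})$ for every choice of ultrafilter and scaling sequence.

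Finally, since contractibility is a topological property preserved under homeomorphism, the contractibility of the support-norm cones established in the first step transfers to the $\ell_1$-norm cones, which gives the claim. I do not anticipate any genuine obstacle: the argument is essentially a bookkeeping combination of Lemma \ref{freeproductequivalentnorms} with the discrete-norm corollary, and the only point requiring a moment of care is to perform the transfer via Proposition \ref{qi-invariant} rather than Proposition \ref{coarseequiv}, precisely because the norms under consideration here are not conjugation invariant.
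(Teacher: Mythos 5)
Your proposal is correct and is essentially the paper's own argument: the paper derives the corollary from Lemma \ref{freeproductequivalentnorms} combined with the discrete-norm case of Proposition \ref{freeproductcontractible}, transferring contractibility across the bi-Lipschitz homeomorphism of cones that the norm equivalence induces via Proposition \ref{qi-invariant}. Your explicit remark that the transfer must go through the purely metric Proposition \ref{qi-invariant} rather than Proposition \ref{coarseequiv} (since the $\ell_1$-norm is not conjugation invariant) is a correct and careful reading of exactly the mechanism the paper leaves implicit.
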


\begin{example}
One application of Proposition \ref{freeproductcontractible} is that any free product of integers with the standard metric only possess contractible cones. Namely, the projection map 
\begin{align*}
p \colon \Z \rightarrow \Z, \hspace{5mm} p(n)= 
\begin{cases}
n-1 & \text{if} \hspace{2mm} n \geq 1 \\
0  & \text{if} \hspace{2mm} n = 0 \\
n+1 & \text{if} \hspace{2mm} n \leq -1 \\
\end{cases}
\end{align*}
satisfies all the assumptions of Proposition \ref{freeproductcontractible}.  
\end{example}

\begin{example}
Let $\{(F_i, \|.\|_i) \}$ be any collection of finite groups together with arbitrary integer valued norms $\|.\|_i$ on them. Then by Corollary \ref{freeproductcorollary} all asymptotic cones of $(\ast_{i \in I} F_i)$ equipped with the associated $\ell_1$-norm are contractible.
\end{example}

\begin{example}
Let $\{(F_i, \|.\|_i) \}$ be any finite collection of finite groups together with arbitrary norms $\|.\|_i$ on them. Then the associated $\ell_1$-norm is equivalent to any word norm generated by finitely many elements in $\ast_{i \in I} F_i$. So by Corollary \ref{freeproductcorollary} all asymptotic cones of $\ast_{i \in I} F_i$ with respect to any such norm are contractible.
\end{example}

\section{Algebraic properties of the asymptotic cone of the infinite symmetric group}

Let us make a remark on the subgroups of $\Cone_\omega(\Sym_\infty, \|.\|_{\rm supp})$. First, any finite group $F$ is a subgroup of some finite symmetric group $\Sym_k$. Using disjoint copies of $\Sym_k$ in $\Sym_\infty$ one constructs for each $g \in F$ the sequence of elements $g_n = (g, \dots, g) \in F^{s_n} \leq (\Sym_k)^{s_n} \leq \Sym_\infty$. The sequence $(g_n)_n$ has support norm growing asymptotically like $s_n$ and this construction realises $F$ as a subgroup of $\Cone_\omega(\Sym_\infty, \|.\|_{\rm supp})$. Moreover, let $h_n=(1 \enspace 2 \enspace ... \enspace s_n)$ be the standard $s_n$-cycle. Then $h =[(h_n)_{n \in \N}]$ has infinite order in $\Cone_\omega(\Sym_\infty, \|.\|_{\rm supp})$. Disjointly arranging such $s_n$-cycles one can realise any finite free abelian group as a subgroup of $\Cone_\omega(\Sym_\infty, \|.\|_{\rm supp})$. Additionally, $\Cone_\omega(\Sym_\infty, s_n, \|.\|_{\rm supp})$ contains the group $\mathfrak{S}(s_n/\omega)= \Cone_\omega(\Sym_{s_n}, s_n, \|.\|_{\rm supp})$ that is universally sophic according to \cite[Thm. 1]{Elek}.

The main object of this section is to prove the following theorem. Note that the simplicity statement is similar to the one for universally sophic groups given in \cite{Elek}.  

\begin{theorem}\label{symmetricgroupthm}
Up to bi-Lipschitz isomorphism $\Sym_\infty$ and $\A_\infty$ define the same asymptotic cone $\Cone_\omega(\Sym_\infty, \|.\|_{\rm supp})$ with respect to the support norm or any word norm generated by finitely many conjugacy classes. For all scaling sequences and ultrafilters $\Cone_\omega(\Sym_\infty, \|.\|_{\rm supp})$ is contractible as a topological space and as a group $\Cone_\omega(\Sym_\infty, \|.\|_{\rm supp})$ is simple and uniformly perfect of commutator length one .
\end{theorem}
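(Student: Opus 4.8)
The plan is to treat the four assertions of Theorem~\ref{symmetricgroupthm} in turn, reusing the geometric results already established and isolating the genuinely group-theoretic content in the simplicity statement.

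First I would dispatch the ``sameness'' and contractibility claims, which are essentially formal. That all the relevant cones coincide up to bi-Lipschitz isomorphism follows from the norm comparisons already proved: Lemma~\ref{symequiva} and Lemma~\ref{alternatingequiva} show that every word norm generated by finitely many conjugacy classes on $\Sym_\infty$ or $\A_\infty$ is equivalent to the support norm, so Theorem~\ref{coarseequiv} yields bi-Lipschitz homeomorphisms between all the corresponding cones; and the inclusion $\A_\infty \hookrightarrow \Sym_\infty$ is an isometric, quasi-surjective homomorphism, so Proposition~\ref{qi-invariant} promotes it to a bi-Lipschitz homeomorphism which, being induced by a group homomorphism, is a group isomorphism. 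Contractibility for all scaling sequences and ultrafilters is exactly Theorem~\ref{Symcontractible}, and it transfers to the other norms through the same bi-Lipschitz identifications.

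For uniform perfectness of commutator length one I would invoke an effective form of the classical theorem of Ore that every element of $\A_m$ (for $m \ge 5$) is a single commutator; the key point is that the two commutator factors may be taken \emph{inside} $\A_m$, hence have support bounded by $m$. Given $x=[\sigma_n]\in\Cone_\omega(\Sym_\infty,\|.\|_{\supp})$, I first replace each $\sigma_n$ by an even permutation differing from it by at most one transposition, which changes neither the class in the cone nor its norm since the cone of $\Sym_\infty$ equals that of $\A_\infty$; thus $\sigma_n\in\A_{m_n}$ with $m_n=\|\sigma_n\|_{\supp}$. Writing $\sigma_n=[\alpha_n,\beta_n]$ with $\alpha_n,\beta_n\in\A_{m_n}$, the sequences $(\alpha_n),(\beta_n)$ satisfy $\|\alpha_n\|_{\supp},\|\beta_n\|_{\supp}\le m_n$ and are therefore admissible, so passing to the cone gives $x=[[\alpha_n],[\beta_n]]$, a single commutator. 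In particular the cone is perfect.

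The substantial part is simplicity. Let $N$ be a nontrivial normal subgroup and pick $1\ne x=[\sigma_n]\in N$ with $\|x\|=c>0$, so that $|\supp(\sigma_n)|\ge (c/2)s_n$ for $\omega$-almost all $n$. The strategy is the cone analogue of the classical proof that $\A_\infty$ is simple: there one forms a commutator $[\sigma,(b\ c)]$ to manufacture a single $3$-cycle inside $N$, but a single $3$-cycle is null in the cone, so I would perform this construction in parallel at a positive density of disjoint locations. Choosing alternate points along the cycles of $\sigma_n$ produces a set $P_n$ with $|P_n|\ge |\supp(\sigma_n)|/4$ and $P_n\cap\sigma_n P_n=\emptyset$; taking $t_n=\prod_{a\in P_n}(\sigma_n(a)\ c_a)$ with the $c_a$ fresh points, the commutator $[\sigma_n,t_n]=\sigma_n\,(t_n\sigma_n^{-1}t_n^{-1})$ lies in $N$ (it is $\sigma_n$ times a conjugate of $\sigma_n^{-1}$) and is, up to boundedly many exceptional factors, a product of $k_n\sim(c/8)s_n$ pairwise disjoint $3$-cycles. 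Hence $N$ contains an element $w$ of positive norm whose $n$-th coordinate is a product of $k_n$ disjoint $3$-cycles, and by invariance of cycle type under conjugation $N$ contains every product of exactly $k_n$ disjoint $3$-cycles.

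It then remains to realise an arbitrary $y=[\tau_n]$ as a product of a bounded number of such building blocks. Writing $\tau_n$ (taken even) as a product of two involutions and decomposing each disjoint $2+2$ block into a product of two $3$-cycles, one expresses $\tau_n$ as a product of $O(\|y\|/c)$ permutations, each of which can be arranged to be a product of exactly $k_n$ pairwise disjoint $3$-cycles after padding with cancelling $3$-cycles on fresh points. Since the number of factors is $\omega$-bounded and each factor is a conjugate of $w^{\pm1}$, this exhibits $y\in N$; as $y$ was arbitrary, $N=\Cone_\omega(\Sym_\infty,\|.\|_{\supp})$. The main obstacle is exactly this realisation step: the auxiliary data (the sets $P_n$, the padding $3$-cycles, the involution factors) must all have support growing only linearly in $s_n$, the number of factors must stay bounded independently of $n$, and the cycle-type and parity bookkeeping needed to present $\tau_n$ through the \emph{single} conjugacy class of $w$ must be carried out uniformly in $n$. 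I expect this combinatorial bookkeeping, rather than any single estimate, to be the crux of the argument.
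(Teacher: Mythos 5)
Your treatment of the first three assertions coincides with the paper's: the bi-Lipschitz identifications follow from Lemma \ref{symequiva} and Lemma \ref{alternatingequiva} together with Proposition \ref{qi-invariant} and Proposition \ref{coarseequiv}, contractibility is Theorem \ref{Symcontractible}, and your commutator-length-one argument is essentially Proposition \ref{Sinftyperfect} (the paper simply regards $a_n$ as an element of $\A_k$ with $k=\max\{\|a_n\|_{\supp},5\}$ and applies Miller/Ore; your parity correction by a single transposition is an equivalent normalisation, harmless since it moves the representative by norm at most $2$).

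For simplicity you take a genuinely different route. The paper (Proposition \ref{Sinftysimple}) never manufactures $3$-cycles: it uses the displacement criterion of Lemma \ref{4} --- if for every nontrivial $g$ there is a subgroup $H$ with $[H,H]$ normally generating the group and $H$ commuting with $gHg^{-1}$, then the identity $[[a,g],b]=[a,b]$ makes every commutator of $H$ a product of conjugates of $g$ --- taking $H$ to consist of classes of sequences supported in the displaced sets $D_n$ of Lemma \ref{1}, then chopping an arbitrary element into boundedly many pieces via Lemma \ref{3}, conjugating each piece into $D_n$ and writing it as a single commutator by Ore. That packaging avoids all cycle-type bookkeeping. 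Your route instead parallelises the classical simplicity proof of $\A_n$, and its core step is actually cleaner than you claim: with $P_n\cap\sigma_n(P_n)=\emptyset$ and the $c_a$ fresh and distinct, the commutator of $\sigma_n$ with $t_n=\prod_{a\in P_n}(\sigma_n(a)\enspace c_a)$ equals \emph{exactly} $\prod_{a\in P_n}(a\enspace \sigma_n(a)\enspace c_a)$, a product of $|P_n|$ disjoint $3$-cycles with no exceptional factors. You should, however, state explicitly that the conjugators realising ``invariance of cycle type'' in the cone are admissible; this is fine because two permutations of type $3^{k_n}$ are conjugate by an element supported on the union of their supports, hence of support norm $O(s_n)$.

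The one genuinely incomplete point is the step you flag yourself: padding remainder blocks to cycle type exactly $3^{k_n}$. The naive insertion of fresh $3$-cycles and their inverses regresses (the leftover factor never has the right count), so as written the realisation of an arbitrary $y$ through the single conjugacy class of $w$ is not proved. The gap is closable with one identity, using that the square of a $3$-cycle is again a $3$-cycle: if $R$ is a product of $r\le k_n$ disjoint $3$-cycles and $F$ a product of $k_n-r$ fresh disjoint $3$-cycles, then $R=(R^2F^{-1})(R^{-1}F)$, and both factors have type exactly $3^{k_n}$. Combined with your interleaving of the disjoint blocks of the two involution factors, and passing to an $\omega$-heavy set on which the (bounded) number of factors is constant, your argument then goes through. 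What the paper's approach buys is that none of this combinatorics is needed, and its chopping argument additionally yields the effective linear bound on the number of conjugates, which is reused in the coarse-equivalence result of Section 5.3; what yours buys is a concrete description of a normal generator of explicit cycle type, in the spirit of the sofic-group arguments of Elek--Szab\'o.
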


\begin{proof}
Since $\A_\infty \hookrightarrow \Sym_\infty$ is a quasi-isometry and by Lemma \ref{symequiva} and Lemma \ref{alternatingequiva} the support norms are equivalent on $\A_\infty$ and  $\Sym_\infty$ to any word norm generated by finitely many conjugacy classes all their asymptotic cones are bi-Lipschitz isomorphic as metric groups. The results on algebraic properties are in Proposition \ref{Sinftyperfect} and Proposition \ref{Sinftysimple} whereas contractibility was shown in Theorem \ref{Symcontractible}. 
\end{proof}

\subsection{Perfectness of the Cone}

\begin{theorem}\label{Sinftyperfect}
$\Cone_\omega(\A_\infty, \|.\|_{\rm supp})$ is a perfect group of commutator length one. 
\end{theorem}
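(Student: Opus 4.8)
The plan is to reduce the statement to a uniform, support-controlled single-commutator result for finitely supported permutations and then transport it to the cone one representative sequence at a time. Since the inclusion $\A_\infty \hookrightarrow \Sym_\infty$ is an isometric quasi-surjection for the support norms, it induces a group isomorphism $\Cone_\omega(\A_\infty,\|.\|_{\supp}) \cong \Cone_\omega(\Sym_\infty,\|.\|_{\supp})$ by Theorem \ref{qi-invariant}. As being a single commutator is a property preserved under group isomorphism, it suffices to prove commutator length one for $\Cone_\omega(\Sym_\infty,\|.\|_{\supp})$ and then read it back off via the inverse isomorphism, which automatically returns factors lying in $\Cone_\omega(\A_\infty,\|.\|_{\supp})$. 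I also note that every cone element has an \emph{even} representative: given $g=[(g_n)_n]$, replacing $g_n$ by $g_n\tau_n$ with $\tau_n$ a transposition of two points outside $\supp(g_n)$ whenever $g_n$ is odd changes the support norm by at most $2$, hence does not change the class, so we may assume each $g_n\in\A_\infty$.

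The key finite input I would isolate is the following uniform statement: there is a constant $C$ such that every even $\sigma$ with $\|\sigma\|_{\supp}=n$ can be written as a commutator $\sigma=[\alpha,\beta]$ with $\|\alpha\|_{\supp},\|\beta\|_{\supp}\le Cn$. I would prove it by rewriting the condition $\sigma=[\alpha,\beta]$ as $\alpha\beta\alpha^{-1}=\sigma\beta$, so that it is enough to produce a $\beta$ for which $\beta$ and $\sigma\beta$ have the \emph{same cycle type} and small support: the conjugating element $\alpha$ can then be taken with $\supp(\alpha)\subseteq\supp(\beta)\cup\supp(\sigma\beta)\subseteq\supp(\sigma)\cup\supp(\beta)$, hence of comparable norm. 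To build such a $\beta$, I enlarge $\supp(\sigma)$ to a set $S$ with $|S|=m\le 2n+2$ on which $\sigma$ acts (trivially on the added points) and take $\beta$ to be an $m$-cycle on $S$; the requirement becomes that $\sigma\beta$ is again an $m$-cycle, i.e. that $\sigma$ is a product of two $m$-cycles supported on $S$. Since $\sigma$ is even, the existence of such a factorisation is exactly Bertram's theorem that an even permutation of an $m$-set is a product of two $m$-cycles. This yields $\sigma=\gamma\beta^{-1}=[\alpha,\beta]$ with $\supp(\alpha),\supp(\beta)\subseteq S$, so $C=3$ works.

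To lift to the cone, let $g=[(g_n)_n]\in\Cone_\omega(\Sym_\infty,\|.\|_{\supp})$ be represented by an admissible sequence with each $g_n$ even, as arranged above. Applying the finite result to each $g_n$ produces $\alpha_n,\beta_n\in\Sym_\infty$ with $g_n=[\alpha_n,\beta_n]$ and $\|\alpha_n\|_{\supp},\|\beta_n\|_{\supp}\le C\|g_n\|_{\supp}$. Admissibility of $(g_n)_n$ then forces $(\alpha_n)_n$ and $(\beta_n)_n$ to be admissible, so they define $\alpha=[(\alpha_n)_n]$ and $\beta=[(\beta_n)_n]$ in the cone. Because the group operation on the cone is induced termwise from $\Sym_\infty$, we obtain $[\alpha,\beta]=[(\alpha_n\beta_n\alpha_n^{-1}\beta_n^{-1})_n]=[(g_n)_n]=g$, exhibiting $g$ as a single commutator. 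Hence $\Cone_\omega(\Sym_\infty,\|.\|_{\supp})$, and therefore the isomorphic group $\Cone_\omega(\A_\infty,\|.\|_{\supp})$, is perfect of commutator length one.

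The main obstacle is the finite quantitative statement, and specifically the factorisation of an even $\sigma$ into two cycles of equal, linearly bounded length; this is where Bertram's theorem (or an explicit cycle-merging construction achieving the same linear support bound) is essential, and it is also the step that fixes the constant $C$. I regard the parity bookkeeping as a minor but genuine point: working in $\Cone_\omega(\Sym_\infty)$ and transferring through the group isomorphism lets the commutator factors $\alpha_n,\beta_n$ be of arbitrary parity, which is why I prefer that route over trying to keep $\alpha,\beta$ inside $\A_\infty$ at every stage.
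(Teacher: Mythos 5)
Your proof is correct, and the cone-level skeleton is the same as the paper's: apply a uniform, support-controlled single-commutator statement for finite permutations termwise to an admissible representative sequence, check that the resulting factor sequences are admissible, and conclude in the cone. Where you diverge is in the finite input and the ambient group. The paper stays inside $\A_\infty$ throughout: it views each $a_n$ as an element of $\A_k$ with $k=\max\{\|a_n\|_{\supp},5\}$ and quotes the Miller--Ore theorem (\cite{Miller}, \cite{Ore}) that every element of $\A_k$, $k\geq 5$, is a commutator of elements of $\A_k$, which immediately gives factors supported on $\supp(a_n)$ plus at most $5$ points, i.e.\ the additive bound $\|b_n\|_{\supp},\|c_n\|_{\supp}\leq \|a_n\|_{\supp}+5$, with no parity bookkeeping and no transfer between cones needed. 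You instead reprove the finite step from Bertram's two-$m$-cycle factorisation together with the identity $\sigma=[\alpha,\beta]\iff\alpha\beta\alpha^{-1}=\sigma\beta$, which forces you to allow odd factors $\alpha_n,\beta_n$, hence the detour through $\Cone_\omega(\Sym_\infty,\|.\|_{\supp})$ and the transport back via the isomorphism of Proposition \ref{qi-invariant} -- all of which is valid (the paper itself establishes that isomorphism in Section 2.2), but is extra machinery the direct citation avoids. What your route buys is a self-contained construction of the commutator factors (an $m$-cycle $\beta=c_2^{-1}$ and an explicit conjugator supported on $S$) rather than a black-box appeal to Ore/Miller, at the cost of invoking Bertram's theorem, which is of comparable depth; your multiplicative bound $3n$ and the paper's additive bound $n+5$ are equally adequate for admissibility, so nothing is lost quantitatively at the cone level.
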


\begin{proof}
Let $a=[(a_n)_{n \in \mathbb{N}} ] \in \Cone_\omega(\A_\infty, \|.\|_{\rm supp})$ be an arbitrary element. For every $n$ there exists $k(n)$ such that $a_n \in A_{k(n)}$. In fact, $a_n$ can be viewed as an element of $A_{k}$ where $k= \max \lbrace \|a_n\|_{\rm supp}, 5 \rbrace$ since $a_n$ only permutes the elements within its support. Since every element in $A_k$ is a commutator for $k \geq 5$ (originally due to \cite[Thm. 1]{Miller}, but reproved in \cite [Thm. 7]{Ore}),  there exist $b_n, c_n \in A_k$ such that $[b_n,c_n]=a_n$. Then $\|b_n\|_{\rm supp}$ and $\|c_n\|_{\rm supp}$ are bounded by $\|a_n\|_{\rm supp}+ 5$. Hence, the sequences $(b_n)$,$(c_n)$ are are admissible and represent elements $b$,$c$ in the asymptotic cone $\Cone_\omega(\A_\infty, \|.\|_{\rm supp})$. Then $a=[b,c]$ in $\Cone_\omega(\A_\infty, \|.\|_{\rm supp})$, which in turn also implies non-triviality of $b$ and $c$ whenever $a$ is non-trivial.
\end{proof}

Since every element in $\Cone_\omega(\A_\infty, \|.\|_{\rm supp})$ is an elementary commutator, there is no non-trivial homogeneous quasimorphism on $\Cone_\omega(\A_\infty, \|.\|_{\rm supp})$.

\subsection{Simplicity}

\begin{lemma}\label{4}
Let $G$ be a group. If for all non-trivial $g \in G$ there exists a subgroup $H < G$ such that $[H,H]$ normally generates $G$ and $H$ commutes with $gHg^{-1}$, then $G$ is a simple group. 
\end{lemma}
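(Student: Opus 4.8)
The plan is to take an arbitrary nontrivial normal subgroup $N \trianglelefteq G$ and prove $N = G$. Since the hypothesis furnishes, for suitable elements, a subgroup $H$ whose commutator subgroup $[H,H]$ normally generates $G$, it suffices to exhibit one such $H$ with $[H,H] \subseteq N$. Indeed, once $[H,H] \subseteq N$, the normal closure of $[H,H]$ is contained in the normal subgroup $N$; but that normal closure is all of $G$, so $N = G$.

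To produce such an $H$, I would fix a nontrivial element $g \in N$ and apply the hypothesis to $g$, obtaining a subgroup $H < G$ such that $[H,H]$ normally generates $G$ and every element of $H$ commutes with every element of $gHg^{-1}$. Writing $\bar{x} = gxg^{-1}$ for $x \in H$, the commuting hypothesis reads $a\bar{b} = \bar{b}a$ for all $a,b \in H$.

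The heart of the argument is the commutator identity that, for all $a,b \in H$,
\[
[a,b] = [a,\, b\bar{b}^{-1}].
\]
Expanding the right-hand side and using that $\bar{b}^{-1} \in gHg^{-1}$ commutes with $a^{-1} \in H$, the two factors $\bar{b}^{-1}$ and $\bar{b}$ cancel and one recovers $aba^{-1}b^{-1}$. The purpose of this rewriting is that $b\bar{b}^{-1} = bgb^{-1}g^{-1} = [b,g]$ lies in $N$: since $bgb^{-1}$ is a conjugate of $g \in N$ it lies in $N$, and $g^{-1} \in N$, so their product is in $N$. Hence $[a,b] = [a,[b,g]]$ with $[b,g] \in N$. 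Because $N$ is normal, $a[b,g]a^{-1} \in N$, and therefore $[a,[b,g]] = \big(a[b,g]a^{-1}\big)[b,g]^{-1} \in N$. This shows $[H,H] \subseteq N$, which by the first paragraph forces $N = G$ and establishes simplicity.

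I do not anticipate a serious obstacle; the only genuine content is discovering the identity $[a,b] = [a,[b,g]]$, which converts an \emph{internal} commutator of $H$ into a commutator involving the element $g \in N$ and thereby lands it inside $N$. The remaining verifications are routine: the bookkeeping in the commutator expansion, and the observation that normality of $N$ is invoked twice, once to place $[b,g]$ into $N$ and once to keep $a[b,g]a^{-1}$ in $N$.
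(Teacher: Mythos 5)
Your proof is correct and follows essentially the same route as the paper: both arguments fix $g \in N$ nontrivial, invoke the hypothesis to get $H$, and use the elementwise commuting of $H$ with $gHg^{-1}$ to rewrite an arbitrary commutator of $H$ as a commutator involving $g$, forcing $[H,H] \subseteq N$ and hence $N = G$. Your identity $[a,b] = [a,[b,g]]$ is just the mirror image of the paper's $[a,b] = [[a,g],b]$, with the same verification.
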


\begin{proof}
Assume $N$ is a non-trivial normal subgroup of $G$ and let $g \neq 1$ be an element of $N$. Given any elementary commutator $[a,b] \in [H,H]$ it holds that 
\begin{align*}
[[a,g],b]=aga^{-1}g^{-1} b gag^{-1}a^{-1} b^{-1} = a ga^{-1}g^{-1}gag^{-1} b a^{-1} b^{-1} =[a,b] .
\end{align*}
That is, every elementary commutator is a product of conjugates of $g$. Therefore, $[H,H] \leq N$ and since $[H,H]$ normally generates the whole group, so does $N$. Consequently, $G$ does not admit any proper normal subgroups apart from the trivial one.
\end{proof}

\begin{lemma}\label{1}
Any $\sigma \in \Sym_\infty$ with $\|\sigma\|_{\rm supp}=n$ displaces a set of size at least $\frac{n}{3}$ from itself. 
\end{lemma}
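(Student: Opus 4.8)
The plan is to read "$\sigma$ displaces a set of size at least $\frac{n}{3}$ from itself" as the existence of a subset $A \subseteq \N$ with $|A| \geq \frac{n}{3}$ and $\sigma(A) \cap A = \emptyset$, and to build such an $A$ cycle by cycle. First I would observe that any such $A$ is automatically contained in $\supp(\sigma)$: if some $x \in A$ were fixed by $\sigma$, then $x = \sigma(x) \in \sigma(A) \cap A$, a contradiction. So I decompose $\sigma$ into its disjoint cycles $c_1, \dots, c_r$ of lengths $\ell_1, \dots, \ell_r \geq 2$, whose supports partition $\supp(\sigma)$ and satisfy $\sum_j \ell_j = n$. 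Since the cycles act on pairwise disjoint sets, it suffices to produce for each cycle a subset $A_j$ of its support with $\sigma(A_j) \cap A_j = \emptyset$ and then set $A = \bigcup_j A_j$; disjointness of the supports then guarantees both $\sigma(A) \cap A = \emptyset$ and $|A| = \sum_j |A_j|$.

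For a single cycle $(x_1\, x_2\, \cdots\, x_\ell)$, with $\sigma(x_i) = x_{i+1}$ read cyclically, the condition $\sigma(A_j) \cap A_j = \emptyset$ says precisely that $A_j$ contains no two cyclically consecutive points. I would take the alternating set $A_j = \{x_1, x_3, x_5, \dots\}$, ending at $x_{\ell-1}$ when $\ell$ is even and at $x_{\ell-2}$ when $\ell$ is odd. In both cases $\sigma(A_j) = \{x_2, x_4, \dots\}$ is disjoint from $A_j$ (one checks the wrap-around $\sigma(x_\ell) = x_1$ causes no collision in either parity), and $|A_j| = \lfloor \ell/2 \rfloor$.

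Summing gives $|A| = \sum_j \lfloor \ell_j/2 \rfloor$, so the lemma reduces to the elementary inequality $\lfloor \ell/2 \rfloor \geq \ell/3$ for every integer $\ell \geq 2$, whence $|A| \geq \sum_j \ell_j/3 = n/3$. The only point requiring care, and the closest thing to an obstacle, is this per-cycle inequality at its boundary: for a $3$-cycle one can displace only a single point, so $\lfloor 3/2 \rfloor = 1 = 3/3$ is an equality and the constant $\frac{1}{3}$ cannot be improved, while for $\ell = 2$ and all $\ell \geq 4$ the inequality is strict (the odd case follows from $3(\ell-1) \geq 2\ell \iff \ell \geq 3$, and the even case is immediate). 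Thus the $3$-cycles are exactly the extremal configurations, which explains the appearance of the factor $\frac{1}{3}$.
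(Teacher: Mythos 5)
Your proof is correct and follows essentially the same route as the paper: both decompose $\sigma$ into disjoint cycles, pick the alternating set of $\lfloor \ell/2 \rfloor$ points in each cycle (the paper phrases this as the odd numbers below $k$ after conjugating to the standard $k$-cycle), and conclude via $\lfloor \ell/2 \rfloor \geq \ell/3$ with the $3$-cycle as the extremal case. Your write-up is in fact slightly more careful than the paper's, spelling out the wrap-around check and the sharpness of the constant $\frac{1}{3}$, which the paper leaves implicit.
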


\begin{proof}
First, write $\sigma$ uniquely as a product of non-intersecting cycles $\sigma=\tau_1 \dots \tau_\ell$. Any cycle of size $k$ is conjugate to the $k$-cycle $(1 \hspace{1mm} ... \hspace{1mm} k)$. If $k$ is even, the set containing all odd natural numbers smaller than $k$ is displaced from itself. If $k$ is odd, then the set of all odd numbers strictly smaller than $k$ is displaced from itself. Hence, for any cycle of length $k$ we find a set of cardinality $\geq k/3$ which is displaced from itself and the same holds for $\sigma$ since all cycles $\tau_i$ have disjoint support. 
\end{proof}

\begin{lemma}\label{3}
Let $\sigma \in \Sym_\infty$ with $\|\sigma\|_{\rm supp}=n$. Then for any $k \in \mathbb{N}$ there exist $\sigma_1$,$\sigma_2$ with $\|\sigma_1\|_{\rm supp} \leq k$, $\|\sigma_2\|_{\rm supp} \leq n-k+1$ and $\sigma=\sigma_1 \sigma_2$. 
\end{lemma}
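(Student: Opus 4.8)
The plan is to decompose $\sigma$ into its disjoint cycles and to build $\sigma_1$ greedily out of whole cycles until adding the next cycle would push the support past $k$; that offending cycle is then split into two overlapping pieces, one absorbed into $\sigma_1$ and the other into $\sigma_2$. The arithmetic of supports then yields exactly the bounds $\|\sigma_1\|_{\supp}\le k$ and $\|\sigma_2\|_{\supp}\le n-k+1$. First I would dispose of the trivial range $k\ge n$ by taking $\sigma_1=\sigma$ and $\sigma_2=\mathrm{id}$, so one may assume $1\le k\le n$.

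Next I would write $\sigma=\tau_1\cdots\tau_\ell$ as a product of disjoint cycles of lengths $k_1,\dots,k_\ell$ with $\sum_i k_i=n$, and let $j$ be the smallest index with $k_1+\cdots+k_j\ge k$. Setting $s=k_1+\cdots+k_{j-1}$ (with $s=0$ if $j=1$), minimality forces $s<k$, while $s+k_j\ge k$; hence $q:=k-s$ satisfies $1\le q\le k_j$. The role of $q$ is that it is the amount of the cycle $\tau_j$ we need to peel off into $\sigma_1$.

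The key algebraic input is a single-cycle splitting identity, which in the paper's left-to-right convention reads
\[
(a_1\ \dots\ a_{c})=(a_{c-q+1}\ \dots\ a_{c})\,(a_1\ \dots\ a_{c-q+1}),
\]
expressing a $c$-cycle as a product of a cycle $\alpha$ on the $q$ letters $a_{c-q+1},\dots,a_c$ and a cycle $\beta$ on the $c-q+1$ letters $a_1,\dots,a_{c-q+1}$, overlapping only in the single point $a_{c-q+1}$. I would verify this identity by tracking the image of each $a_i$ under the product (this is the one place where the left-to-right convention $(x\ y)(y\ z)=(x\ z\ y)$ must be handled carefully, and is the main thing to get right). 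Applying it to $c=k_j$ and writing $\tau_j=\alpha\beta$, I set $\sigma_1=\tau_1\cdots\tau_{j-1}\,\alpha$ and $\sigma_2=\beta\,\tau_{j+1}\cdots\tau_\ell$; since the $\tau_i$ are disjoint, $\sigma_1\sigma_2=\tau_1\cdots\tau_\ell=\sigma$ follows at once.

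It then remains to count supports. As $\supp\alpha\subseteq\supp\tau_j$ is disjoint from $\supp\tau_1,\dots,\supp\tau_{j-1}$, one gets $\|\sigma_1\|_{\supp}=s+|\supp\alpha|\le s+q=k$, and likewise $\|\sigma_2\|_{\supp}\le(k_j-q+1)+(n-s-k_j)=n-s-q+1=n-k+1$, using $s+q=k$. The only wrinkle is that when $q=1$ or $q=k_j$ one of $\alpha,\beta$ degenerates to a $1$-cycle, i.e.\ the identity; but then its support is $0\le q$ (resp.\ $\le k_j-q+1$) and the same inequalities persist, so no separate case analysis is needed beyond a one-line remark. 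The genuinely delicate step is thus the cycle-splitting identity under the left-to-right convention; once that is pinned down, the support bookkeeping is routine.
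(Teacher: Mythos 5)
Your proof is correct and takes essentially the same route as the paper: decompose $\sigma$ into disjoint cycles and split the cycle containing the $k$-th point of the support via the left-to-right identity $(a_1\ \dots\ a_c)=(a_{c-q+1}\ \dots\ a_c)(a_1\ \dots\ a_{c-q+1})$, which is exactly the paper's splitting $(1\ \dots\ j)=(m\ \dots\ j)(1\ \dots\ m)$ with $m=c-q+1$. If anything, your write-up is more careful than the paper's terse argument, as you make the choice of the cycle to split precise, verify the identity under the stated convention, and handle the degenerate cases $q=1$ and $q=k_j$ explicitly.
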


\begin{proof}
Again, use the unique disjoint cycle decompostion of $\sigma$ to write $\sigma=\tau_1 \cdot \tau_\ell$ for some $\ell$. Consider the $k$-th number appearing in this cycle decomposition. If this happens to be the last element of a cycle, then the statement is immediate. Otherwise consider that cycle $\tau_i$ containing the $k$-th number appearing in the cycle decomposition. Without loss of generality we can assume $ \tau_i=(1 ... j)$. It can be split up at any $m \leq j$ to the two cycles 
\begin{align*}
(1 ... j) = (m ... j)(1 ... m) 
\end{align*}
and we have that their supports add up to $m+1$. The result follows.  
\end{proof} 

\begin{theorem}\label{Sinftysimple}
All asymptotic cones of $(\Sym_\infty,\|.\|_{\rm supp})$ are simple groups. 
\end{theorem}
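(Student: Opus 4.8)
The plan is to apply the simplicity criterion of Lemma \ref{4} to $G=\Cone_\omega(\Sym_\infty,\|.\|_{\supp})$: for each non-trivial $g=[(g_n)_n]$ I must produce a subgroup $H\le G$ whose commutator subgroup normally generates $G$ and which commutes with $gHg^{-1}$. The subgroup $H$ will be geometric, living on a set that $g$ displaces off itself. First I would normalise $g$ using Lemma \ref{2}, so that $\|g_n\|_{\supp}\ge Ks_n$ for some constant $K>0$ and all $n$. By the displacement Lemma \ref{1}, each $g_n$ moves some set of size at least $\|g_n\|_{\supp}/3$ off itself; choosing inside it a subset $D_n$ with $|D_n|=\lceil Ks_n/3\rceil$ I obtain sets with $g_n(D_n)\cap D_n=\emptyset$ and with $|D_n|$ growing linearly in $s_n$. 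I then set $H$ to be the subgroup of all classes $[(\sigma_n)_n]\in G$ admitting a representative with $\sigma_n\in\Sym(D_n)$ for every $n$.

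With this choice the commuting condition of Lemma \ref{4} is immediate: for $[(\sigma_n)_n],[(\tau_n)_n]\in H$ the permutation $\sigma_n$ is supported on $D_n$ while the conjugate $g_n\tau_n g_n^{-1}$ is supported on $g_n(D_n)$, and these sets are disjoint, so the two permutations commute coordinatewise and hence in the cone. Thus $H$ commutes with $gHg^{-1}$. Next I would verify that $H$ is perfect, so that $[H,H]=H$. Since $\A(D_n)\hookrightarrow\Sym(D_n)$ is a quasi-isometry with constant one (index two, isometric, quasi-surjective), exactly as for the inclusion $\A_\infty\hookrightarrow\Sym_\infty$ used to identify $\Cone_\omega(\Sym_\infty,\|.\|_{\supp})=\Cone_\omega(\A_\infty,\|.\|_{\supp})$, a single transposition supported in $D_n$ has cone-norm $\lim_\omega 2/s_n=0$. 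Hence I may correct the parity of any $\sigma_n\in\Sym(D_n)$ without changing the represented element, and since $|D_n|\ge 5$ for $\omega$-almost all $n$ and every element of $\A(D_n)$ is a commutator (as in Theorem \ref{Sinftyperfect}), every element of $H$ is a commutator of elements of $H$. Therefore $[H,H]=H$.

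The essential step is to show that $H$ normally generates $G$. Given any $x=[(x_n)_n]\in G$, I fix an admissible representative with $\|x_n\|_{\supp}\le C_x s_n$ and apply the splitting Lemma \ref{3} repeatedly to write $x_n=w_{1,n}\cdots w_{r,n}$ as a product of permutations each of support at most $|D_n|$. The accumulated $+1$ overhead per split is harmless, and because $|D_n|\ge Ks_n/3$ while $\|x_n\|_{\supp}\le C_x s_n$, a number of factors $r$ bounded by the \emph{constant} $\lceil 3C_x/K\rceil+1$ suffices for $\omega$-almost all $n$ (padding with identities makes $r$ uniform across $n$). Each factor $x^{(j)}=[(w_{j,n})_n]$ is supported coordinatewise on a set of size at most $|D_n|$, so a conjugating permutation $\pi_{j,n}$ of support at most $2|D_n|$ carries it into $\Sym(D_n)$; the class $\pi^{(j)}x^{(j)}(\pi^{(j)})^{-1}$ then lies in $H$, giving $x^{(j)}\in\langle\langle H\rangle\rangle$ and hence $x=x^{(1)}\cdots x^{(r)}\in\langle\langle H\rangle\rangle$. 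Thus $[H,H]=H$ normally generates $G$, and Lemma \ref{4} yields that $G$ is simple.

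The main obstacle is precisely this normal-generation step. Its two delicate points are that the decomposition of $x_n$ must use only \emph{boundedly many} pieces, so that the factorisation $x=x^{(1)}\cdots x^{(r)}$ survives in the cone as a finite product of cone elements, and that the parity obstruction to conjugating pieces into $\A(D_n)$ must dissolve. Both hinge on the same fact, namely the linear growth $|D_n|\asymp s_n$ guaranteed by Lemmata \ref{2} and \ref{1}: it forces $r$ to be constant and it makes $H$ perfect, which is what lets parity corrections be absorbed at cone-norm zero.
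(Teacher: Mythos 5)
Your proposal is correct and follows essentially the same route as the paper: Lemma \ref{2} plus Lemma \ref{1} to build the displaced sets $D_n$ and the subgroup $H$ supported on them, disjoint supports for the commuting condition, Lemma \ref{3} for the boundedly-many-pieces chopping, admissible conjugators carrying pieces into $D_n$, and the Miller--Ore commutator theorem with a cone-norm-zero parity correction, all fed into Lemma \ref{4}. The only difference is cosmetic: you isolate the perfectness of $H$ as a separate step and show $x\in\langle\langle H\rangle\rangle$, whereas the paper writes each conjugated piece directly as an elementary commutator $[c_{i,n},d_{i,n}]$ of elements supported in $D_n$, which amounts to the same thing.
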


\begin{proof}
Let $g \in \Cone_\omega(\Sym_\infty, \|.\|_{\rm supp})$ be non-trivial. By Lemma \ref{2} there exists a representative sequence $(g_n)$ of $g$ with $\|g_n\|_{\rm supp} \geq Ks_n$ for some $K > 0$. By Lemma \ref{1} every $g_n$ displaces some set $D_n$ of size at least $ \frac{K s_n n}{3}$ from itself. Define 
\begin{align*}
H:= \left \{ h=[(h_n)] \in \Cone_\omega(\Sym_\infty, \|.\|_{\rm supp}) \hspace{1mm} \mid \hspace{1mm} \supp(h_n) \subset D_n \right \} .
\end{align*}
Then $H$ and $gHg^{-1}$ commute by construction. 

It remains to show that the commutator $[H,H]$ normally generates $\Cone_\omega(\Sym_\infty, \|.\|_{\rm supp})$. Let $a=[(a_n)] \in \Cone_\omega(\Sym_\infty, \|.\|_{\rm supp})$ be arbitrary. By definition of the asymptotic cone there exists $C > 0 $ such that $\|a_n\| \leq Cs_n$ for all $n$. Apply Lemma \ref{3} to write $a_n = b_{1,n}c_n$ where the support of $b_{1,n}$ is of size at most $ |D_n| $ and the size of the support of $c_n$ is bounded by $Cn- |D_n|+1$. Proceed by applying Lemma \ref{3} again to chop up $c_n$ further. Inductively, after $m \leq \frac{Cs_n}{Ks_n/3}= \frac{3C}{K}$ iterations we have factored $a_n=b_{1,n} ... b_{m,n} b^\prime_n $ where the supports of each $b_{i_n}$ is smaller than $|D_n|$ and the support of $b^\prime_n$ is at most of size of the number of iterations of our chopping process which is bounded by $\frac{3C}{K}$. For $n$ large enough $\frac{3C}{K} \leq |D_n|$ and since changing finitely many elements of a sequence does not change the ultralimit we just assume this is true for all $n$ from here onwards. Set $b^\prime_n=b_{m+1,n}$. Then for each $b_{i,n}$ there exists a $\gamma_{i,n} \in \Sym_\infty$ which swaps the support of $b_{i,n}$ with a subset of $D_n$. Moreover, $\supp(\gamma_{i,n}) \subset \supp(b_{i,n}) \cup D_n$ and so $(\gamma_{i,n})_{n \in \N}$ is an admissable sequence for all $i$. 

Finally, $\gamma_{i,n} b_{i,n} \gamma_{i,n}^{-1}$ is supported in $D_n$. Without loss of generality we may assume that  $\gamma_{i,n} b_{i,n} \gamma_{i,n}^{-1}$ is an even permutation  since we can multiply it by some transposition supported in $D_n$ without changing the ultralimit. By \cite[Thm. 1]{Miller} (or \cite[Thm. 7]{Ore}) $\gamma_{i,n} b_{i,n} \gamma_{i,n}^{-1}$ can be written as an elementary commutator $[c_{i,n},d_{i,n}]$ of elements $c_{i_n}, d_{i,n}$ supported in $D_n$. The sequences $(b_{i,n})_n$, $(\gamma_{i,n})_n$, $(c_{i,n})_n$ and $(d_{i,n})_n$ are all admissible and represent elements $b_i,\gamma_i,c_i,d_i$ in $\Cone_\omega(\Sym_\infty, \|.\|_{\rm supp})$ for all $i$. Then
\begin{align*}
g= b_1...b_{m+1} = \gamma_1^{-1} [c_1,d_1] \gamma_1 ... \gamma_{m+1}^{-1} [c_{m+1},d_{m+1}] \gamma_{m+1} \hspace{1mm}.
\end{align*}
Thus, $\Cone_\omega(\Sym_\infty, \|.\|_{\rm supp})$ is simple by Lemma \ref{4}. 
\end{proof}

\begin{remark}
Let $g \in (\Sym_\infty,\|.\|_{\rm supp})$ Then the above proof shows that the number of factors required to express $h \in (\Sym_\infty,\|.\|_{\rm supp})$ as a product of conjugates of $g$ grows linearly in ${\|h \|_{\supp}}$.
\end{remark} 

\begin{remark}
An alternative strategy to prove Proposition \ref{Sinftysimple} relying on the simplicity shown in \cite[Thm 1]{Elek} is the following. Given two elements $g,h \in \Cone_\omega(\Sym_\infty,\|.\|_{\rm supp}, s_n)$ find a suitable common subgroup isomorphic to $\mathfrak{S}(s^\prime_n/\omega)$ where $s^\prime_n = k s_n$ for some fixed $k \in \N$ and all $n$. Since $\mathfrak{S}(s^\prime_n/\omega)$ is simple by \cite[Thm 1]{Elek} $h$ can be expressed as a product of conjugates of $g$ and its inverse in $\mathfrak{S}(s^\prime_n/\omega)$. Then, the same follows in $\Cone_\omega(\Sym_\infty,\|.\|_{\rm supp}, s_n)$.
\end{remark}

\subsection{Coarse equivalence of support norm on the cone}

By abuse of notation we denote the norm induced by $\|.\|_{\supp}$ on $\Cone_\omega(\Sym_\infty, \|.\|_{\rm supp})$ by $\|.\|_{\supp}$ again. The norm $\|.\|_{\supp} \colon \Cone_\omega(\Sym_\infty, \|.\|_{\rm supp}) \to \R$ is continuous and surjective on the positive real axis and thus not equivalent to any norm generated by finitely many conjugacy classes. Moreover, the norm $\|.\|_{\supp}$ is stably bounded on $\Cone_\omega(\Sym_\infty, \|.\|_{\rm supp})$. This section shows that $\|.\|_{\supp}$ is coarsely equivalent to word norms generated by finitely many conjugacy classes in a very explicit way and concludes that every conjugation invariant norm is stably bounded on $\Cone_\omega(\Sym_\infty, \|.\|_{\rm supp})$ despite the fact that it contains many elements of infinite order. 

\begin{T}\cite[Theorem 3.05]{Brenner} \label{effectiveconjugation}
Let $\sigma \in \A_n$. Suppose that $\sigma$ has an orbit of length two and $n -2r \geq -1$, where $r$ is the number of orbits of $\sigma$. Then $C_\sigma^4=A_n$, where $C_\sigma$ is the conjugacy class of $\sigma$. \qed
\end{T}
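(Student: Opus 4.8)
The plan is to first convert the two hypotheses into a single usable geometric fact: that $\sigma$ moves strictly more than half of $\{1,\dots,n\}$. Write $f$ for the number of fixed points of $\sigma$ and $c$ for the number of orbits of length $\geq 2$, so that $r=f+c$ and the support has size $s=n-f$. Because $\sigma$ has an orbit of length two we have $c\geq 1$, and the nontrivial orbits account for at least $2c$ points, so $s\geq 2c$. Feeding $c\geq 1$ into the hypothesis $n-2r\geq -1$, i.e. $2(f+c)\leq n+1$, gives $2f\leq n+1-2c\leq n-1$, hence $f\leq (n-1)/2$ and therefore $s=n-f\geq (n+1)/2$. Thus $\sigma$ displaces more than half of the letters; this is the only way the hypotheses will be used, and it places us squarely in the "large support'' regime where conjugacy classes multiply very efficiently.

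The strategy I would then follow is to factor the problem as $C_\sigma^4=(C_\sigma^2)^2$ and interpose a single convenient conjugacy class $D$ of $\A_n$, reducing everything to two lemmas: (a) $D\cdot D=\A_n$, and (b) $D\subseteq C_\sigma^2$. Granting both, any $g\in\A_n$ factors as $g=d_1d_2$ with $d_i\in D\subseteq C_\sigma^2$, whence $g\in C_\sigma^4$; since $g$ was arbitrary and the reverse inclusion is trivial, $C_\sigma^4=\A_n$. The natural choice for $D$ is the class of a near-full-length cycle (an $\ell$-cycle with $\ell$ within a bounded deficit of $n$, of the correct parity to lie in $\A_n$), since such classes are known to have square equal to all of $\A_n$.

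For Lemma (a) I would invoke a Bertram-type theorem on products of two long cycles: every even permutation of $n$ letters is a product of two $\ell$-cycles once $\ell$ is close enough to $n$, which is exactly the statement $D\cdot D=\A_n$ for the chosen $D$. Lemma (b) is the crux, and I expect it to be the main obstacle. Here one must control the \emph{cycle type} of a product $\sigma\cdot\alpha\sigma\alpha^{-1}$ of two conjugates and show that, for a suitable conjugating element $\alpha$, this product is a single (near-)full-length cycle. This is a genus/connectivity question (a connection-coefficient or Hurwitz-type computation): when the supports of $\sigma$ and $\alpha\sigma\alpha^{-1}$ overlap in more than $n/2$ points — guaranteed by the support bound of the first paragraph — one can splice the cycles of the two factors into one long cycle, and the orbit of length two is precisely the degree of freedom used to correct the resulting permutation's length and parity so that it lands in the prescribed class $D$. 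Making the splicing rule explicit, and checking it produces exactly one cycle of the desired length rather than several, is the delicate bookkeeping step.

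As an alternative route I would keep the Frobenius class-multiplication formula in reserve: the number of ways to write $g=x_1x_2x_3x_4$ with $x_i\in C_\sigma$ equals $\tfrac{|C_\sigma|^4}{|\A_n|}\sum_{\chi}\chi(\sigma)^4\overline{\chi(g)}\,\chi(1)^{-3}$, and it suffices to show the nonprincipal terms cannot overwhelm the positive principal term $|C_\sigma|^4/|\A_n|$. The support bound $s\geq (n+1)/2$ would enter through Murnaghan--Nakayama estimates forcing $|\chi(\sigma)|$ small relative to $\chi(1)$. The obstacle in this approach is sharp enough character bounds for $\A_n$ uniform over the allowed cycle types; I would prefer the combinatorial route above, treating the character estimate as a fallback for the boundary cases where the splicing argument is tightest.
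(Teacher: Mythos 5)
First, a point of comparison: the paper does not prove this statement at all --- it is quoted verbatim from Brenner [Bre78, Thm.\ 3.05] --- so your proposal has to stand on its own, and its overall architecture, $C_\sigma^4=(C_\sigma^2)^2\supseteq D\cdot D=\A_n$ with $D$ a long-cycle class handled by Bertram's theorem, is indeed the classical route to results of this kind. (Two fixable quibbles there: for $D\cdot D=\A_n$ you should pick $\ell$ odd with at least two fixed points, e.g.\ $\ell\leq n-2$, so that the $\ell$-cycle class does not split into two $\A_n$-classes, since Bertram's theorem does not control which split class each factor lands in.)

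The genuine gap is your first paragraph, where you replace the hypothesis $n-2r\geq -1$ by the support bound $s\geq (n+1)/2$ and declare this ``the only way the hypotheses will be used.'' That weakening is fatal to your Lemma (b). The binding quantity is not the support but the reflection length $n-r$ (the minimal number of transpositions expressing $\sigma$, equal to $\sum_i(\ell_i-1)$ over the cycles): it is conjugation-invariant and subadditive, so every $\gamma\in C_\sigma^2$ satisfies $n-c(\gamma)\leq 2(n-r)$, and an $\ell$-cycle can lie in $C_\sigma^2$ only if $\ell\leq 2(n-r)+1$. Reaching a near-full-length cycle therefore requires $n-2r\geq -O(1)$, i.e.\ essentially the hypothesis you discarded --- and the support bound does not imply it, because it permits many short cycles. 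Concretely, let $\sigma\in\A_{100}$ be a product of $26$ disjoint transpositions: it has an orbit of length two and support $52>50$, but $r=74$, so every element of $C_\sigma^2$ has reflection length at most $52$ and no cycle of length $\geq 54$ lies in $C_\sigma^2$; no near-full-length class $D$ can satisfy $D\subseteq C_\sigma^2$, so your splicing lemma is false under your reduced hypothesis (consistently, $n-2r=-48$, so Brenner's condition fails here). The repair is to carry the full orbit-count hypothesis into the splicing argument; note that it is exactly sharp, since if $\alpha\beta=\gamma$ with $\langle\alpha,\beta\rangle$ transitive on $n$ points then $c(\alpha)+c(\beta)+c(\gamma)\leq n+2$, which with $c(\alpha)=c(\beta)=r$ and $\gamma$ an $n$-cycle forces $n-2r\geq -1$. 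Your guess that the $2$-orbit serves to correct length and parity is plausible but unverified, and it is also needed to sidestep the split-class issue above.
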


Note, that $\sigma \in \A_n$ with $\| \sigma \|_{\supp}=n$ has at most $n/2$ orbits.  If $\sigma$ has an orbit of length two then Theorem \ref{effectiveconjugation} implies that $C_\sigma^4=A_n$.

\begin{theorem}
On $\Cone_\omega(\Sym_\infty, \|.\|_{\rm supp})$ the induced norm $\|.\|_{\supp}$ is coarsely equivalent to every word norm generated by finitely many conjugacy classes. 
\end{theorem}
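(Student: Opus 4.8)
The plan is to establish the two linear two-sided estimates required by the definition of coarse equivalence between $\|.\|_{\supp}$ and an arbitrary word norm $\nu_R$ generated by finitely many conjugacy classes on $\mathcal{C} := \Cone_\omega(\Sym_\infty,\|.\|_{\rm supp})$. By Lemma \ref{equiva} all such $\nu_R$ are equivalent to one another, and since a coarse equivalence composed with an equivalence is again a coarse equivalence, it suffices to produce a \emph{single} non-trivial $r \in \mathcal{C}$ for which the word norm $\nu_{\{r\}}$ generated by the conjugacy class of $r$ is coarsely equivalent to $\|.\|_{\supp}$.

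One inequality is essentially free. Since $\mathcal{C}$ is simple by Proposition \ref{Sinftysimple}, the class of any non-trivial $r$ normally generates $\mathcal{C}$, so $\nu_{\{r\}}$ is a genuine norm. As $\|.\|_{\supp}$ is a conjugation invariant norm on $\mathcal{C}$ which is finite, hence bounded by $\|r\|_{\supp}$, on the single generator $r$, Lemma \ref{fcgmaximal} yields $\|.\|_{\supp} \le \|r\|_{\supp}\,\nu_{\{r\}}$, that is $\nu_{\{r\}} \ge \tfrac{1}{\|r\|_{\supp}}\|.\|_{\supp}$. This is one half of the coarse equivalence, with vanishing additive constant.

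For the reverse estimate $\nu_{\{r\}} \le A\,\|.\|_{\supp}+B$ I would choose $r$ with Brenner's Theorem \ref{effectiveconjugation} in mind. Take $r=[(r_n)]$ where each $r_n \in \A_{N_n}$ is a fixed-point-free even permutation of $\{1,\dots,N_n\}$ possessing an orbit of length two and $N_n=\lceil s_n\rceil$; then $\|r_n\|_{\supp}=N_n$, so $\|r\|_{\supp}=1$. Having no fixed points, $r_n$ has at most $N_n/2$ orbits, so the orbit hypothesis of Theorem \ref{effectiveconjugation} is satisfied and $C_{r_n}^{4}=\A_{N_n}$: every even permutation supported in $\{1,\dots,N_n\}$ is a product of four conjugates of $r_n$. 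Given $g=[(g_n)]$, I would use Lemma \ref{effectiveupperbound} to pick a representative with $\|g_n\|_{\supp}<(\|g\|_{\supp}+\delta)s_n$, then chop $g_n$ by iterating Lemma \ref{3} exactly as in the proof of Proposition \ref{Sinftysimple} into $m_n \approx \|g_n\|_{\supp}/N_n$ factors of support at most $N_n$. Conjugate each factor by an admissible $\gamma_{i,n}$ (supported in its own support together with $\{1,\dots,N_n\}$) into $\{1,\dots,N_n\}$, correct its parity by a single transposition, and apply $C_{r_n}^4=\A_{N_n}$ to write it as four conjugates of $r_n$; conjugating back by $\gamma_{i,n}^{-1}$ keeps these conjugate to $r_n$. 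Because a transposition has support two, the correcting sequences are trivial in $\mathcal{C}$ and disappear upon passing to the cone; and since $m_n \to \|g\|_{\supp}/\|r\|_{\supp}$ along $\omega$, so that $m_n$ is $\omega$-almost-everywhere equal to a fixed integer, the presentation descends to $\mathcal{C}$ and gives $\nu_{\{r\}}(g) \le A\|g\|_{\supp}+B$ with $A,B$ independent of $g$. Combining with the previous paragraph and invoking Lemma \ref{equiva} proves the theorem.

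The hard part is the bookkeeping already present in the simplicity proof: one must check that each conjugating and parity-correcting sequence remains admissible, and, crucially, that the number $m_n$ of chopped blocks stays uniformly linear in $\|g\|_{\supp}$ rather than growing with the scaling $s_n$. It is exactly here that the \emph{fixed} constant $4$ supplied by Brenner's theorem — as opposed to a bound that could itself depend on the block being decomposed — is what upgrades the qualitative simplicity statement to the quantitative linear estimate that coarse equivalence requires.
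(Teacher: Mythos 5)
Your proposal is correct and follows essentially the same route as the paper's proof: reduce via Lemma \ref{equiva} and Lemma \ref{fcgmaximal} to a single conjugacy-class norm, represent elements economically via Lemma \ref{effectiveupperbound}, chop with Lemma \ref{3} as in the simplicity proof, conjugate the blocks into a common window by admissible sequences, and invoke Brenner's Theorem \ref{effectiveconjugation} to convert each block into exactly four conjugates of the generator, yielding the linear estimate. The only deviation is cosmetic: you build a tailor-made fixed-point-free generator $r$ of norm one (making Brenner's hypotheses automatic and the constant $K$ equal to $1$, with the varying block count $m_n$ handled by $\omega$-a.e.\ constancy), whereas the paper takes an arbitrary non-trivial $g$, adjusts its representative by two transpositions to secure a length-two orbit and evenness, and bounds the iteration count uniformly in $n$, arriving at $\|.\|_g \leq \frac{8}{K}\|.\|_{\supp}+4$.
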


\begin{proof}
By Lemma \ref{equiva} it suffices to show this for a single word norm generated by finitely many conjugacy classes. Let $g \in \Cone_\omega(\Sym_\infty, \|.\|_{\rm supp})$ be non-trivial. Since $\Cone_\omega(\Sym_\infty, \|.\|_{\rm supp})$ is simple, the conjugacy classes of $g$ and $g^{-1}$ define a conjugation invariant norm $\|.\|_g$ on $\Cone_\omega(\Sym_\infty, \|.\|_{\rm supp})$. 
By Lemma \ref{fcgmaximal} it suffices to show that there exist $A,B>0 \in \R$ such that $\|.\|_g \leq A \|.\|_{\supp}+B$ to conclude that $\|.\|_g$ and $\|.\|_{\supp}$ are coarsely equivalent.

By Lemma \ref{2} there exists a representative sequence $(g_n)_n$ of $g$ such that $\|g_n\|_{\rm supp} \geq Ks_n$ for some $K > 0$. Furthermore, modifying $g_n$ by at most two transpositions outside its support we may assume additionally that each of the $g_n$ has an orbit of length 2 and all $g_n$ are elements in $\A_\infty$. Since this modification still represents $g$ in $\Cone_\omega(\Sym_\infty, \|.\|_{\rm supp})$, assume $g_n$ to be element in $\A_\infty$ and contain an orbit of order two for all $n$. 

Let $h \in \Cone_\omega(\Sym_\infty, \|.\|_{\rm supp})$.  By Lemma \ref{effectiveupperbound} $h$ is represented by a sequence $(h_n)$ satisfying $\|h_n \|_{\supp} \leq 2\|h\|_{\supp} \cdot s_n$. As before in the proof of Proposition \ref{Sinftysimple} we now invoke Lemma \ref{3} in order to chop up $h_n$ in $m$ steps into pieces. That is, after $m$ chops we obtain a product decomposition 
\[
h_n= b_{1,n} ... b_{m,n} b^\prime_n
\]
where $\|b_{i,n}\|_{\supp} \leq Ks_n$ for all $i$ and $\|b^\prime_n \|_{\supp} \leq m$. The number of steps $m$ in the chopping procedure is bounded by  
\[
m \leq \frac{2\|h\|_{\supp} \cdot s_n}{ Ks_n } = \frac{2\|h\|_{\supp}}{K} .
\]
Thus $ \|b^\prime_n \|_{\supp} \leq \frac{2C}{K} \leq K s_n$ for large $n$ and so we may assume without loss of generality that this is the case for all $n$. Set $b_{m+1,n}=b^\prime_n$. For all $i,n$ there exists $c_{i,n}$ interchanging the support of $b_{i,n}$ with a subset of the first $Ks_n$ natural numbers which themselves have support bounded by $\| c_{i,n} \|_{\supp} \leq 2Ks_n$. Similarly, there exist $d_n$ interchanging the support of $g_n$ with the first $\|g_n \|_{\supp}$ natural numbers and having support norm $\|d_n \|_{\supp} \leq 2 \|g_n \|_{\supp}$. 

Summarising, for all $i,n$ the elements $c_{i,n} b_{i,n} c_{i,n}^{-1}$ are supported on a subset of the first $Ks_n$ natural numbers which is a subset of the support of $d_n g_n d_n^{-1}$. Thus, apply Proposition \ref{effectiveconjugation} and deduce that $c_{i,n} b_{i,n} c_{i,n}^{-1}$ is a product of at most four conjugates of $g_n$ which all have support norm bounded by the support of $g_n$. That is 
\[
c_{i,n} b_{i,n} c_{i,n}^{-1} = \gamma_{1,n} g_n \gamma_{1,n}^{-1} \gamma_{2,n} g_n \gamma_{2,n}^{-1} \gamma_{3,n} g_n \gamma_{3,n}^{-1} \gamma_{4,n} g_n \gamma_{4,n}^{-1}
\]
where $\| \gamma_{j,n} \|_{\supp} \leq \| g_n \|_{\supp}$ for all $j$. 
 Consequently, since all involved sequences of elements are admissible, they yield elements in the asymptotic cone $\Cone_\omega(\Sym_\infty, \|.\|_{\rm supp})$. Then $h$ in $\Cone_\omega(\Sym_\infty, \|.\|_{\rm supp})$ is the product
\[
h= \prod_{i=1}^{m+1} b_i =  \prod_{i=1}^{m+1} c_i^{-1} \gamma_{1} g \gamma_{1}^{-1} \gamma_{2} g \gamma_{2}^{-1} \gamma_{3} g \gamma_{3}^{-1} \gamma_{4} g \gamma_{4}^{-1} c_i ,
\]
which is a product of at most $4m+4 \leq \frac{8\|h\|_{\supp} }{K}+4$ conjugates of $g$. Since $h$ was arbitrary to begin with this shows that 
\begin{align*}
\|.\|_g \leq \frac{8}{K} \|.\|_{\supp} + 4 . 
\end{align*}
\end{proof}

\begin{cor}
$\Cone_\omega(\Sym_\infty, \|.\|_{\rm supp})$ does not admit a stably unbounded norm. \qed
\end{cor}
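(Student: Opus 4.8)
The plan is to show that every conjugation invariant norm $\nu$ on $G := \Cone_\omega(\Sym_\infty, \|.\|_{\rm supp})$ is dominated, up to an additive constant, by the induced support norm, and then to observe that the induced support norm is itself stably bounded. Concretely, I fix an arbitrary $h \in G$ and aim to prove $\lim_{k} \nu(h^k)/k = 0$, which is exactly the failure of stable unboundedness at $h$.

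First I would reduce an arbitrary conjugation invariant norm to a word norm generated by finitely many conjugacy classes. Pick any non-trivial $g \in G$. Since $G$ is simple (Proposition \ref{Sinftysimple}), the normal closure of $g$ is all of $G$, so the conjugacy classes of $g$ and $g^{-1}$ generate $G$ and the associated conjugation invariant word norm $\|.\|_g$ is finite on all of $G$. As $\nu$ takes the finite value $C_\nu := \max\{\nu(g),\nu(g^{-1})\}$ on this generating set, Lemma \ref{fcgmaximal} yields $\nu \leq C_\nu\,\|.\|_g$. Because $\|.\|_g$ is generated by finitely many conjugacy classes, the preceding Proposition on coarse equivalence of the induced support norm with any such word norm supplies constants $A>0$ and $B\ge 0$ with $\|.\|_g \leq A\,\|.\|_{\supp} + B$; combining the two inequalities gives $\nu \leq C_\nu(A\,\|.\|_{\supp} + B)$ on $G$.

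Next I would establish that the induced support norm is stably bounded. For $h = [(h_n)_n]$ one has $\supp(h_n^k) \subseteq \supp(h_n)$, hence $\|h_n^k\|_{\supp} \leq \|h_n\|_{\supp}$ for every $n$ and $k$, and passing to the ultralimit gives $\|h^k\|_{\supp} \leq \|h\|_{\supp}$ independently of $k$. Feeding this into the bound of the previous step yields $\nu(h^k) \leq C_\nu(A\,\|h\|_{\supp} + B)$ for all $k$, a quantity independent of $k$; dividing by $k$ and letting $k \to \infty$ forces $\lim_k \nu(h^k)/k = 0$. Since $h$ was arbitrary, $\nu$ is stably bounded, and since $\nu$ was an arbitrary conjugation invariant norm the corollary follows.

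The only genuinely substantive step is the first reduction: it is where simplicity of $G$ together with Lemma \ref{fcgmaximal} lets one control an arbitrary conjugation invariant norm by a single finitely-conjugacy-generated norm, at which point the coarse-equivalence Proposition becomes applicable. The remaining steps are routine, the only minor care being that a norm need not satisfy $\nu(g)=\nu(g^{-1})$, which is precisely why $C_\nu$ is taken as the maximum over the two-element set $\{g,g^{-1}\}$.
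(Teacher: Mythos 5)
Your proposal is correct and is exactly the argument the paper intends: the corollary is left as immediate after the coarse-equivalence theorem, whose proof already uses simplicity to form $\|.\|_g$ from the conjugacy classes of $g$ and $g^{-1}$, and the paper has already noted that the induced norm $\|.\|_{\supp}$ is stably bounded (which follows, as you say, from $\supp(h_n^k)\subseteq\supp(h_n)$). Your chain $\nu \leq C_\nu\,\|.\|_g \leq C_\nu(A\,\|.\|_{\supp}+B)$ via Lemma \ref{fcgmaximal} and the coarse-equivalence theorem, followed by dividing by $k$, fills in precisely the steps the paper's \qed suppresses.
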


\section{More examples: some classical metric ultraproducts}

Let us recall the definition of a metric ultraproduct. For more background see \cite{Stolz}, \cite{Wilson} and \cite{Wilson2}. Given a sequence $(G_n,d_n)$ of groups equipped with bi-invariant metrics; that is metrics that are invariant under left and right multiplication in $G_n$. Assume that the diameters $\sup_n \{ d_n(1,g_n)  \mid  g_n \in G_n \}$ are bounded independently of $n$. Then define the metric ultraproduct to be the quotient of the cartesian product $\prod_n G_n$ by the subgroup of elements with vanishing ultralimit
\begin{align*}
\prod_{n \to \omega} (G_n,d_n) =  \prod_n G_n \Big / \{ (g_n)_{n \in \N} \hspace{1mm} \mid \hspace{1mm} \lim_\omega d_n(1,g_n) = 0 \}.
\end{align*}

It becomes a metric space by setting $d= \lim_\omega d_n$. It is immediate from the definition that for any scaling sequence $s_n$ setting $d^\prime_n = s_n \cdot d_n$ we have  
\begin{align*}
\Cone_\omega(G_n, d^\prime_n, s_n) = \prod_{n \to \omega} (G_n,d_n).
\end{align*}

\begin{example}
Consider $(\Sym_n, d_n)$, where $d_n$ is the so-called Hamming distance induced by $\frac{\|.\|_{\rm \supp}}{n}$. Then $\prod_{n \to \omega} (\Sym_n,d_n)= \Cone_\omega(\Sym_n, \|.\|_{supp})$ is the asymptotic cone with respect to the standard scaling sequence $s_n=n$. It is one of the universally sophic groups considered in \cite{Elek} and thus simple. The contraction map of $\Cone_\omega(\Sym_\infty, \|.\|_{supp})$ in Theorem \ref{Symcontractible} restricts to a contraction map of $\prod_{n \to \omega} (\Sym_n,d_n)$. This remains true for $\prod_{n \to \omega} (\Sym_{s_n}, d_{s_n})$ and any other scaling sequence $(s_n)$. 
\end{example}

\subsection{Contraction Lemma for sequences of embedded spaces}

For simplicity of notation we will assume $s_n=n$ from here onwards.

\begin{lemma}[Contraction Lemma for Sequences]\label{GeneralContractionLemma}
Let $(X_n, d_n)$ be a sequence of subspaces such that $X_k \subset X_n$ for all $ k \leq n$, $d_n$ restricts to $d_k$ on $X_k$ for all $k \leq n$. Assume that $X_0$ consists of a single point $x_0$. Finally, assume that the exist maps $p_n \colon X_n \rightarrow X_{n-1}$ satisfying 
\begin{enumerate}[label=(\roman*)]
\item there exits $K \in \N$ such that for all $n$ and all $x_n \in X_n$ we have $d_n(p_n(x_n),x_n) \leq K$, 
\item $d_{n-1}(p_n(x_n),p_n(y_n)) \leq d_n(x_n,y_n)$ for all $x_n,y_n \in X_n$ and $n \in \N$.
\end{enumerate}
Then $\Cone_\omega(X_n, d_n)$ is contractible.  
\end{lemma}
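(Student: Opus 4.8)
The plan is to mirror the proof of the Contraction Lemma \ref{ContractionLemma}, replacing the single family of projections $p_k$ by the nested projections $p_n$ and exploiting that each $p_n$ drops the level by exactly one. Since $X_0=\{x_0\}$ and $X_0\subset X_n$ for every $n$, the constant sequence $x_0$ is a legitimate basepoint, which I will use throughout. For $0\le j\le n$ write $q^{(n)}_j = p_{n-j+1}\circ\cdots\circ p_{n}\colon X_n\to X_{n-j}$ for the $j$-fold composite, with the convention $q^{(n)}_0=\operatorname{id}_{X_n}$. Because $d_m$ restricts to $d_k$ on $X_k\subset X_m$, all the points $q^{(n)}_j(x_n)$ lie in $X_n$ and their mutual distances may be computed with the single metric $d_n$.

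First I would record three properties of these composites, each obtained by induction on $j$ from the hypotheses. From (ii) each $q^{(n)}_j$ is $1$-Lipschitz, $d_n(q^{(n)}_j(x),q^{(n)}_j(y))\le d_n(x,y)$; from (i) and the triangle inequality, $d_n(q^{(n)}_i(x),q^{(n)}_j(x))\le K\,|i-j|$ for all $i,j\le n$; and finally $q^{(n)}_n(x)=x_0$ for every $x\in X_n$, since $q^{(n)}_n$ has image in $X_0=\{x_0\}$. The displacement bound also shows $d_n(x_n,x_0)=d_n(q^{(n)}_0(x_n),q^{(n)}_n(x_n))\le Kn$, so every sequence $(x_n)$ with $x_n\in X_n$ is admissible; in particular every element of the cone is represented by such a sequence.

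Next I would define the contracting homotopy
\begin{align*}
H\colon \Cone_\omega(X_n,d_n)\times[0,1]\to \Cone_\omega(X_n,d_n),\qquad H([x_n],t)=\bigl[q^{(n)}_{\lfloor tn\rfloor}(x_n)\bigr],
\end{align*}
which is legitimate because $0\le \lfloor tn\rfloor\le n$ for $t\in[0,1]$. It does not depend on the chosen representative: if $\lim_\omega d_n(x_n,y_n)/n=0$, then the $1$-Lipschitz property gives $d_n(q^{(n)}_{\lfloor tn\rfloor}(x_n),q^{(n)}_{\lfloor tn\rfloor}(y_n))\le d_n(x_n,y_n)$, whose rescaled ultralimit vanishes. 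At the endpoints $H([x_n],0)=[x_n]$ since $q^{(n)}_0=\operatorname{id}$, and $H([x_n],1)=[x_0]$ since $q^{(n)}_n\equiv x_0$; thus $H$ deforms the identity onto the constant map at $[x_0]$.

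The remaining point is continuity, which I expect to be the only place requiring care, although here it is markedly simpler than in Lemma \ref{ContractionLemma} because the truncation index $\lfloor tn\rfloor$ depends on $t$ and $n$ but not on the norm of the representative. I would bound the two increments separately: fixing the representative and varying time, the displacement bound yields $d(H([x_n],t),H([x_n],t'))\le K\lim_\omega \lvert\lfloor tn\rfloor-\lfloor t'n\rfloor\rvert/n = K\,|t-t'|$, while fixing time and varying the point, the $1$-Lipschitz property yields $d(H([x_n],t),H([y_n],t))\le d([x_n],[y_n])$. Combining these through the triangle inequality shows that $H$ is Lipschitz for the product metric, hence continuous; so $H$ is the desired contraction and $\Cone_\omega(X_n,d_n)$ is contractible. (Alternatively one may feed the two one-variable estimates into the criterion of Lemma \ref{continuity on factors}.)
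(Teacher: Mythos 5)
Your proposal is correct and follows essentially the same route as the paper: the same iterated composites $p_{n,k}$, the same two inductive estimates (the $K|i-j|$ displacement bound and $1$-Lipschitzness), and the same homotopy $[x_n]\mapsto[p_{n,\lambda_n}(x_n)]$ with endpoint checks at $t=0,1$. Your only deviations are harmless simplifications: fixing the canonical choice $\lambda_n=\lfloor tn\rfloor$ spares you the paper's verification that $H$ is independent of the sequence $(\lambda_n)$ with $\lim_\omega \lambda_n/n=t$, and since both one-variable Lipschitz constants are uniform you may indeed conclude joint continuity directly rather than via Lemma \ref{continuity on factors}.
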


\begin{proof}
Define maps $p_{n,k} \colon X_n \rightarrow X_{n-k}$ by the concatenation $p_{n,k}= p_{n-k+1} \circ \dots \circ p_n$ where we use the convention that for $k > n$ the maps $p_{n,k}$ map every element in $X_n$ to the single point in $X_0$.
\begin{claim}
These maps satisfy the following properties
\begin{enumerate}[label=(\roman*)]
\item $d_{n}( p_{n,k}(x),p_{n,\ell}(x)) \leq K|k-\ell|$ for all natural numbers $n,k,\ell$ and all $x \in X_n$,  
\item $ d_{n-k}(p_{n,k}(x), p_{n,k}(y)) \leq d_n(x,y)$ for all $n,k \in \N$ and all $x,y \in X_n$. 
\end{enumerate} 
\end{claim}
For $\ell < k$ it holds that 
\begin{align*}
d_{n}(p_{n,k}(x), p_{n, \ell} (x))  & \leq d_n(p_{n,k}(x), p_{n,\ell+1}(x)) + d_n( p_{n, \ell+1}(x), p_{n, \ell} (x)) \\
& = d_n(p_{n,k}(x), p_{n,\ell+1}(x)) + d_n \big ( p_{n-\ell}(p_{n,\ell}(x)), p_{n, \ell} (x) \big )  \\
& \leq d_n(p_{n,k}(x), p_{n,\ell+1}(x)) + K ,
\end{align*}
and so $(i)$ follows by induction. The second part of the claim follows from assumption $(ii)$ by induction on $k$ fixing $n$ since all inclusions are isometries. This establishes the claim. 

Define the contracting homotopy $H \colon \Cone_\omega(X_n, d_{n}) \times [0,1] \rightarrow \Cone_\omega(X_n,d_{n})$ by
 \begin{align*} \big ( [x_n], t \big) \rightarrow [p_{n,\lambda_n}(x_n)], \hspace{1mm} \text{where} \hspace{1mm} (\lambda_n) \subset \N \hspace{1mm} \text{is such that}  
\hspace{1mm} \lim_\omega \frac{\lambda_n}{n}= t.
\end{align*}

First, let us verify that $H$ is well-defined. Let $[x_n]=[y_n]$. Then 
\begin{align*}
\lim_\omega \frac{1}{n} d_n( p_{n,\lambda_n}(x_n),  p_{n,\lambda_n} (y_n)) \leq \lim_\omega \frac{1}{n} d_n(x_n y_n) =0.
\end{align*}
Clearly, for any $t \in [0,1]$ a sequence $(\lambda_n) \subset \N $ such that $ \lim_\omega \frac{\lambda_n}{n}= t$ can be chosen. Moreover, for another sequence $(\tau_n)$ such that $\lim_\omega \frac{\tau_n}{n}= t = \frac{\lambda_n}{n}$ it holds that
\begin{align*}
\lim_\omega \frac{1}{n} d_n(p_{n,\lambda_n}(x_n), p_{n,\tau_n}(x_n)) \leq \lim_\omega \frac{K}{n} | \lambda_n - \tau_n | = K \left | \lim_\omega \frac{\lambda_n}{n} - \lim_\omega \frac{\tau_n}{n} \right |= K|t-t|=0,
\end{align*}
which shows independence of the particular choice of sequence $(\lambda_n)$. Thus, $H$ is well-defined.

\begin{claim}
$H$ is continuous.
\end{claim}
 By Lemma \ref{continuity on factors} $H$ is continuous if it is continuous when restricted to each factor and $H$ is globally Lipschitz continuous with respect to one of the factors. Let us start with the time parameter. Denote the induced metric on $\Cone_\omega(X_n,d_n)$ simply by $d$. Fix $x=[x_n]$. By property $(i)$ we calculate for $\lambda_n$ and $\tau_n$ such that $\lim_\omega \frac{\lambda_n}{n}=t$ and $\lim_\omega \frac{\tau_n}{n}= t^\prime$ that
\begin{align*}
d(H(x,t),H(x,t^\prime))=\lim_\omega \frac{d_n( p_{n,\lambda_n}(x_n), p_{n,\tau_n}(x_n))}{n} \leq \lim_\omega \frac{K|\lambda_n-\tau_n|}{n} = K|t-t^\prime|.
\end{align*}
For Lipschitz continuity with respect to a fixed time $t$ we calculate using $(ii)$
\begin{align*}
d(H(x,t),H(y,t))= \lim_\omega \frac{d_n(p_{n,\lambda_n}(x_n), p_{n,\lambda_n}(y_n))}{n} \leq \lim_\omega \frac{d_n(x_n, y_n)}{n}= d(x,y)
\end{align*}
This establishes continuity of $H$. 

Finally, $H(-,0)=id$ letting $\lambda_n = 0$ for all $n$. Respectively, letting $\lambda_n=n$ for all $n$. $H(-,1) \equiv x_0$, the unique point $x_0 \in X_0$. Therefore, $H$ is the desired contracting homotopy.

\end{proof}

\subsection{Upper triangular matrices}

\begin{defi}
Let $R$ be a commutative ring. Define the rank norm of an element $g$ in the infinite general linear group $\GL_\infty(R)= \colim_n \GL_n(R)$ to be
\[
\|g\|_{rk}= \rk(g-id).
\]
Moreover, for $g \in \GL_n(R)$ we define the projectivised rank norm to be $\|g\|_{\overline{\rk}}= \frac{\|g\|_{\rk}}{n}$.
\end{defi}

Observe, that the (projectivised) rank norms are conjugation invariant and that by definition $\|g\|_{\rk}= \dim(\im(g-id))=n- \dim \Fix(g)$ where $\Fix(g)$ denotes the subspace of fixed points of $g$.

\begin{theorem}
Let $\B_n(R)$ be the group of invertible upper triangular matrices with entries in a commutative ring $R$. Then the metric ultraproduct $\prod_{n \to \omega} (B_n,\|.\|_{\overline{\rk}})$ is contractible.  
\end{theorem}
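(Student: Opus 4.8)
The plan is to realise this metric ultraproduct as an asymptotic cone and apply the Contraction Lemma for Sequences \ref{GeneralContractionLemma}. With the standard scaling $s_n=n$ one has $n\cdot\|.\|_{\overline{\rk}}=\|.\|_{\rk}$, so by the identity relating ultraproducts and cones we have $\prod_{n\to\omega}(\B_n,\|.\|_{\overline{\rk}})=\Cone_\omega(\B_n,\|.\|_{\rk})$. Accordingly I set $X_n=\B_n(R)$, with $X_0=\B_0(R)$ the trivial group (a single point), and let $d_n$ be the metric induced by the non-projectivised rank norm $\|.\|_{\rk}$. The inclusions $X_k\hookrightarrow X_n$ for $k\le n$ are the usual block embeddings $g\mapsto g\oplus I_{n-k}$. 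I will use throughout two standard properties of rank over the commutative ring $R$: it is unchanged under right multiplication by an invertible matrix, and it does not increase upon passing to a submatrix. The first gives $d_n(g,h)=\rk(gh^{-1}-\mathrm{id})=\rk((g-h)h^{-1})=\rk(g-h)$; combined with the fact that the rank of a matrix equals the rank of its nonzero block, this shows the embeddings are isometric, i.e. $d_n$ restricts to $d_k$ on $X_k$.

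Next I define the projection maps $p_n\colon\B_n(R)\to\B_{n-1}(R)$ by truncation: $p_n(g)$ is the top-left $(n-1)\times(n-1)$ block of $g$. Since an upper triangular matrix over a commutative ring is invertible exactly when all its diagonal entries are units, the truncated block is again upper triangular with unit diagonal entries $g_{11},\dots,g_{n-1,n-1}$, hence lies in $\B_{n-1}(R)$; so $p_n$ is well defined.

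It then remains to verify conditions $(i)$ and $(ii)$ of Lemma \ref{GeneralContractionLemma}. For $(i)$, re-embedding $p_n(g)$ into $\B_n$, the matrix $p_n(g)-g$ has all of its nonzero entries in the last column (its $(n,n)$ entry is $1-g_{nn}$, and its remaining last-column entries are $-g_{in}$ for $i<n$), so $\rk(p_n(g)-g)\le 1$; since right multiplication by $g^{-1}$ preserves rank, $d_n(p_n(g),g)=\rk(p_n(g)-g)\le 1$, and $K=1$ works. For $(ii)$, note that $p_n(g)-p_n(h)$ is exactly the top-left $(n-1)\times(n-1)$ submatrix of $g-h$, so monotonicity of rank under deleting a row and a column gives $d_{n-1}(p_n(g),p_n(h))=\rk(p_n(g)-p_n(h))\le\rk(g-h)=d_n(g,h)$. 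With both hypotheses established, Lemma \ref{GeneralContractionLemma} yields that $\Cone_\omega(\B_n,\|.\|_{\rk})$, and hence the metric ultraproduct, is contractible.

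The geometric content is therefore entirely carried by the elementary truncation maps, and the whole argument is bookkeeping of block embeddings. The one point requiring genuine care is the behaviour of rank over an arbitrary commutative ring $R$: I would make precise that the rank making $\|.\|_{\rk}$ a norm (determinantal rank, whose minors of any submatrix form a subset of those of the full matrix) does satisfy the invariance under right multiplication by units and the submatrix monotonicity invoked above. Once these rank facts are confirmed in the chosen setting, no further obstacle remains.
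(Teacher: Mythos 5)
Your proposal is correct and takes essentially the same route as the paper: identify the ultraproduct with $\Cone_\omega(\B_n(R),\|.\|_{\rk})$ for the scaling $s_n=n$, project via truncation onto the upper-left block, and verify the two hypotheses of Lemma \ref{GeneralContractionLemma}. The only immaterial difference is in the $1$-Lipschitz check: the paper uses that $p_n$ is a group homomorphism together with $\|p_n(g)\|_{\rk}\leq\|g\|_{\rk}$, whereas you bypass the homomorphism property via the identity $d_n(g,h)=\rk(g-h)$ (right-multiplication invariance of rank) and submatrix monotonicity --- both verifications are valid, and your closing caveat about fixing the determinantal notion of rank over a general commutative ring is well placed.
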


\begin{proof}
Clearly, the diameter of $B_n$ with the projectivised rank norm is bounded by 1. The statement is equivalent to showing that the cone $\Cone_\omega(\B_n(R),\|.\|_{\rk})$ for the standard scaling sequence $s_n=n$ is contractible. For $n \in \N$ define the projection maps $p_n \colon \B_n(R) \rightarrow \B_{n-1}(R)$ to be the projections onto the upper left block with the convention that $\B_0(R)$ is the trivial group. These maps satisfy
\begin{enumerate}[label=(\roman*)]
\item $p_n$ is a group homomorphism for all $n$,
\item $\| p_n(g)g^{-1} \|_{\rk} \leq 1$ for all $n$,
\item $\| p_n(g) p_n(h)^{-1} \|_{\rk} \leq \|gh^{-1} \|_{\rk}$ for all $g,h$ and all $n$.  
\end{enumerate}
Indeed, it is clear that $p_n$ is a group homomorphism and that $\|p_n(g)g^{-1}\|_{|rk} \leq 1$ since $p_n(g)g^{-1}$ differs from the identity in at most one column. Moreover, for $g \in \B_n$ it holds that 
\[
\|p_n(g)\|_{\rk} = \rk(p_n(g)-id) \leq \rk(g-id) = \|g\|_{\rk},
\]
Then it follows for $h,g \in \B_n$ using the fact that $p_n$ is a group homomorphism that
\[
\|p_n(g)p_n(h)^{-1} \|_{\rk} = \|p_n(gh^{-1})\|_{\rk} \leq \|gh^{-1} \|_{\rk}.
\]
Consequently, the contractibility of $\Cone_\omega(B_n(R),\|.\|_{\rk})$ follows from Lemma \ref{GeneralContractionLemma}. 
\end{proof}

\subsection{Symmetric positive definite matrices}

Let $\Po_n$ be the set of real symmetric positive definite matrices equipped with the rank norm $\|.\|_{\rk}$ as a subspace $\Po_n \subset \GL_n(\R)$. For $n \in N$ define the projection map 
\begin{align*}
p_n \colon \Po_n \rightarrow \Po_{n-1}, \hspace{5mm} A \rightarrow A^{(n-1)}
\end{align*}
by forgetting the last row and column. The projection maps $p_n$ are well-defined since $A^{(n-1)}$ is clearly symmetric and by the criterion of positive principle minors $A^{(n-1)}$ is still positive definite hence invertible. 

\begin{theorem} \label{Possymcontractible}
The metric ultraproduct $\prod_{n \to \omega} (\Po_n,\|.\|_{\overline{\rk}})$ is contractible.
\end{theorem}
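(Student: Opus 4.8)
The plan is to verify the hypotheses of the Contraction Lemma for Sequences, Lemma \ref{GeneralContractionLemma}, for the sequence $(\Po_n, d_n)$, where $d_n$ is the metric induced by the rank norm. Exactly as in the proof for $\B_n(R)$, the identity $\Cone_\omega(G_n, d_n', s_n) = \prod_{n \to \omega}(G_n, d_n)$ recorded at the start of this section reduces the claim to showing that $\Cone_\omega(\Po_n, \|.\|_{\rk})$ for the standard scaling sequence $s_n = n$ is contractible. First I would note that the bi-invariant metric induced by the rank norm restricts on $\Po_n$ to the rank metric on matrices, since for $A, B \in \Po_n$ we have $d_n(A,B) = \|A^{-1}B\|_{\rk} = \rk(A^{-1}(B - A)) = \rk(A - B)$, the last equality holding because $A^{-1}$ is invertible.

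To set up the nested sequence I would use the inclusions $\iota \colon \Po_{n-1} \hookrightarrow \Po_n$, sending $A$ to the block matrix $\begin{pmatrix} A & 0 \\ 0 & 1 \end{pmatrix}$, and take $\Po_0$ to be the single point consisting of the empty matrix, whose image is $I_1 \in \Po_1$. These inclusions are isometric, because $\iota(A) - \iota(B)$ is the block matrix with $A - B$ in the upper-left corner and zeros elsewhere, so $\rk(\iota(A) - \iota(B)) = \rk(A - B)$, and they are compatible with the basepoints $I_n$. The projections $p_n \colon \Po_n \to \Po_{n-1}$, $A \mapsto A^{(n-1)}$, are well defined by the positive-principal-minors criterion already recalled above, since a principal submatrix of a symmetric positive definite matrix is again symmetric positive definite.

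It then remains to check the two conditions of Lemma \ref{GeneralContractionLemma}. Condition (ii) is immediate: as $(A-B)^{(n-1)}$ is the top-left principal submatrix of $A - B$, deleting one row and one column cannot increase the rank, so $d_{n-1}(p_n(A), p_n(B)) = \rk\big((A-B)^{(n-1)}\big) \leq \rk(A - B) = d_n(A,B)$. For condition (i), writing $A = \begin{pmatrix} A^{(n-1)} & v \\ v^T & a \end{pmatrix}$ and viewing $p_n(A)$ inside $\Po_n$ through $\iota$, the difference $\iota(p_n(A)) - A = \begin{pmatrix} 0 & -v \\ -v^T & 1-a \end{pmatrix}$ has all of its nonzero entries confined to the last row and last column, and any such matrix has rank at most $2$; hence $d_n(p_n(A), A) \leq 2 =: K$. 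With both conditions verified, Lemma \ref{GeneralContractionLemma} yields the contractibility of $\Cone_\omega(\Po_n, \|.\|_{\rk})$ and therefore of the metric ultraproduct. The main point requiring care is condition (i): one must compute the distance using the specific isometric embedding $\iota$, comparing $A$ with $\mathrm{diag}(A^{(n-1)}, 1)$ rather than with a naive truncation, so that the relevant difference is genuinely supported on a single row and column and thus of rank at most two; the remaining verifications are routine.
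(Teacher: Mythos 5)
Your proof is correct and follows essentially the same route as the paper: the same projections $p_n(A)=A^{(n-1)}$, the same rank bound $K=2$ for $d_n(p_n(A),A)$, and the same submatrix-rank argument for the $1$-Lipschitz condition, all fed into Lemma \ref{GeneralContractionLemma}. Your extra care in identifying $d_n(A,B)=\rk(A-B)$ and in computing the distance through the isometric embedding $\iota(A)=\mathrm{diag}(A,1)$ only makes explicit what the paper leaves implicit.
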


\begin{proof}
Again, we will verify the conditions of Lemma \ref{GeneralContractionLemma} to conclude the contractibility of $\Cone_\omega(\Po_n, \|.\|_{\rk}) = \prod_{n \to \omega} (\Po_n,\|.\|_{\overline{\rk}})$. Let $A \in \Po_n$. Then 
\[
\|p_n(A)A^{-1}\|_{\rk} = \|A^{(n-1)}A^{-1} \|_{\rk}=\rk (A^{(n-1)}-A) \leq 2
\]
since the only possible non-zero entries of $A^{(n-1)}-A$ are in the $n$-th row and column. Furthermore, $A,B \in \Po_n$ satisfy 
\begin{align*}
\|p_n(A)p_n(B)^{-1}\|_{\rk} = \rk(A^{(n-1)}- B^{(n-1)}) \leq \rk(A-B)= \|AB^{-1}\|_{\rk}
\end{align*}
since any collection of linearly independent vectors of $A^{(n-1)}- B^{(n-1)}$ stays linearly independent in $A-B$. Thus, Lemma \ref{GeneralContractionLemma} applies and it follows that $\Cone_\omega(\Po_n, \|.\|_{\rk})$ is contractible.
\end{proof}

\subsection{Special orthogonal group}

Call an element $g \in \SO(n)$ that belongs to a subgroup conjugate to $\SO(2) \leq SO(n)$ an \textit{elementary rotation}. That is, elementary rotations in $\SO(n)$ are precisely the elements that fix a subspace of codimension two.

\begin{lemma}
The inclusions $\iota_n \colon \SO(n) \rightarrow \Or(n)$ are all isometries which are quasi-surjective with constant 1 with respect to the rank norm.  
\end{lemma}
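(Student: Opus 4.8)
The plan is to treat the two assertions separately, since both reduce to an elementary observation about a single reflection once the conventions are fixed.

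First I would check that $\iota_n$ is an isometric embedding (which is what ``isometry'' must mean here, since $\iota_n$ is evidently not surjective; compare the usage in Lemma~\ref{inclfreeprod}). The point is that the rank norm is defined uniformly on all of $\GL_n(\R)$ by $\|g\|_{\rk} = \rk(g - id)$, and $\SO(n) \subset \Or(n) \subset \GL_n(\R)$, so the norm on $\SO(n)$ is exactly the restriction of the norm on $\Or(n)$. Since $\iota_n$ is a group homomorphism and the induced metric is right-invariant, for $g,h \in \SO(n)$ one has $gh^{-1} \in \SO(n)$ and hence $d(\iota_n g, \iota_n h) = \|gh^{-1}\|_{\rk} = d(g,h)$; this is immediate and requires no computation.

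For quasi-surjectivity with constant $1$, I would exploit that $\Or(n)$ consists of exactly two cosets of $\SO(n)$, separated by the determinant $\pm 1$. Fix the reflection $R \in \Or(n)$ in a coordinate hyperplane, i.e.\ the diagonal matrix with a single entry $-1$ and all other diagonal entries $+1$; then $\det R = -1$, $R^2 = id$, and $\|R\|_{\rk} = \rk(R - id) = 1$ because $R - id$ has a single nonzero column. Given any $g \in \Or(n)$, if $\det g = 1$ then $g$ already lies in the image and is its own witness at distance $0$; if $\det g = -1$ then $Rg \in \SO(n)$ and
\[
d(\iota_n(Rg),\, g) = \|(Rg)g^{-1}\|_{\rk} = \|R\|_{\rk} = 1,
\]
so every element of $\Or(n)$ lies within rank-distance $1$ of the image.

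There is no genuine obstacle in this argument; the only things that need care are (i) interpreting ``isometry'' as an isometric embedding, and (ii) confirming that the constant $1$ is actually correct. For the latter I would remark that the bound is even sharp: if $\det g = -1$ and $h \in \SO(n)$, then $gh^{-1}$ has determinant $-1$, so $gh^{-1} \neq id$ and $\|gh^{-1}\|_{\rk} \geq 1$ since the rank norm is integer-valued and positive off the identity. Thus the two determinant components sit at rank-distance exactly $1$, which is precisely what the reflection realises.
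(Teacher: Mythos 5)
Your proof is correct and follows essentially the same route as the paper: the isometry claim is immediate because the rank norm on $\SO(n)$ is the restriction of the one on $\Or(n)$, and quasi-surjectivity comes from correcting the determinant by an element of the embedded $\Or(1)$, which has rank norm at most $1$ (the paper does this uniformly via $h = ag$ with $a = \operatorname{diag}(\det g, 1, \dots, 1)$, where your case split and sharpness remark are harmless refinements). Nothing further is needed.
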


\begin{proof}
Since all spaces are equipped with the same subspace norm of $\GL_n(\R)$ all inclusions are isometries by definition. For any $g \in \Or(n)$ we have $|\det(g)|=1$ and so the matrix $a=(\det(g))$ lies in $ \Or(1)$. Hence, $h= ag$ satisfies $\|hg^{-1} \|_{\rk} \leq 1$.  
\end{proof}

\begin{cor}
$\Cone_\omega(\SO(n),\|.\|_{\rk})= \Cone_\omega(\Or(n), \|.\|_{\rk})$. That is, the metric ultraproducts of the special orthogonal groups and the orthogonal groups with the projectivesed rank metric are equal.
\end{cor}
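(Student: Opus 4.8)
The plan is to exploit the fact that $\SO(n)$ is \emph{literally} a subset of $\Or(n)$ carrying the restricted rank norm, so that the two metric ultraproducts are built from overlapping families of representative sequences and the only real content is showing that every element on the $\Or$-side already admits a representative coming from $\SO$. First I would recall the identification from the beginning of this section, namely $\prod_{n \to \omega}(G_n,d_n)=\Cone_\omega(G_n, n \cdot d_n, n)$, together with the observation that a subgroup $H \le G$ equipped with the restricted norm induces a subgroup inclusion $\Cone_\omega(H,\|.\|_G) \le \Cone_\omega(G,\|.\|_G)$. Applying the latter fibrewise to $\SO(n) \le \Or(n)$ yields a canonical isometric, hence injective, homomorphism
\begin{align*}
\prod_{n \to \omega}(\SO(n),\|.\|_{\overline{\rk}}) \hookrightarrow \prod_{n \to \omega}(\Or(n),\|.\|_{\overline{\rk}}),
\end{align*}
and it remains only to prove that this map is onto.

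The heart of the argument is surjectivity. I would take an arbitrary $[(g_n)_n] \in \prod_{n \to \omega}(\Or(n),\|.\|_{\overline{\rk}})$; since the projectivised rank norm has diameter at most $1$ every sequence is admissible, so admissibility costs nothing. By the Lemma preceding this corollary each inclusion $\iota_n$ is quasi-surjective with constant $1$, so for every $n$ I may select $h_n \in \SO(n)$ with $\|h_n g_n^{-1}\|_{\rk} \le 1$. The sequence $(h_n)_n$ then represents an element of the $\SO$-ultraproduct, and invoking Lemma \ref{ultralimitinequality} the computation reduces to
\begin{align*}
d\big([(h_n)_n],[(g_n)_n]\big) = \lim_\omega \frac{\|h_n g_n^{-1}\|_{\rk}}{n} \le \lim_\omega \frac{1}{n} = 0,
\end{align*}
so that $[(h_n)_n] = [(g_n)_n]$. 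Hence every class on the orthogonal side lies in the image of the special orthogonal one, and combining this with the injectivity above gives the asserted equality $\Cone_\omega(\SO(n),\|.\|_{\rk}) = \Cone_\omega(\Or(n),\|.\|_{\rk})$.

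The only delicate point, and the place where the hypotheses genuinely enter, is that the quasi-surjectivity constant is a \emph{fixed} number independent of $n$: after dividing by the scaling sequence $s_n = n$ (equivalently, after passing to the projectivised norm) the bounded discrepancy of rank at most one is washed out in the ultralimit. This is precisely the sequence-level analogue of the quasi-surjective case of Proposition \ref{qi-invariant}, so I do not expect any substantive obstacle beyond this bookkeeping; the argument would break only if the error term were allowed to grow linearly in $n$, which is exactly what the uniform constant $1$ prevents.
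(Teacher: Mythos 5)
Your proposal is correct and follows essentially the same route as the paper: the corollary there is an immediate consequence of the preceding lemma (the inclusions $\iota_n \colon \SO(n) \to \Or(n)$ are isometric and quasi-surjective with the uniform constant $1$), combined with the quasi-isometry invariance of Proposition \ref{qi-invariant}. Your only deviation is that you unpack the sequence-level bookkeeping explicitly --- choosing $h_n \in \SO(n)$ with $\|h_n g_n^{-1}\|_{\rk} \leq 1$ and observing the uniform error vanishes after dividing by $s_n = n$ --- which is exactly the content the paper delegates to that proposition.
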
 

\begin{lemma}\label{identificationranknorm}
There is the following identification of the rank norm using the standard maximal tori of $\SO(n)$. For $g \in \SO(n)$ the norm $\|g\|_{\rk}$ is twice the number of non-trivial rotations of a representative $hgh^{-1}$ in the maximal torus of $\SO(n)$. In particular, $\|g\|_{\rk}$ is always even. 
\end{lemma}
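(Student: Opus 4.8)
The plan is to use two facts recorded just after the definition of the rank norm: that $\|\cdot\|_{\rk}$ is conjugation invariant and that $\|g\|_{\rk} = n - \dim\Fix(g)$. Since conjugation preserves the rank norm, it suffices to pick $h$ so that $t := hgh^{-1}$ lies in the standard maximal torus and then to compute $\|t\|_{\rk}$ directly. The existence of such an $h$ is the real normal form for special orthogonal transformations: every $g \in \SO(n)$ is conjugate in $\SO(n)$ to a block-diagonal matrix, namely $t = \mathrm{diag}(R_{\theta_1},\dots,R_{\theta_m})$ when $n=2m$ and $t = \mathrm{diag}(R_{\theta_1},\dots,R_{\theta_m},1)$ when $n=2m+1$, where $R_\theta = \left(\begin{smallmatrix}\cos\theta & -\sin\theta\\ \sin\theta & \cos\theta\end{smallmatrix}\right)$; the eigenvalue pairs $-1,-1$, forced to occur in even number by $\det g = 1$, are packaged as blocks $R_\pi$. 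This exhibits $g$ inside a maximal torus and identifies its non-trivial rotations as those blocks $R_{\theta_i}$ with $\theta_i \not\equiv 0 \pmod{2\pi}$.

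Next I would compute $\|t\|_{\rk}$ blockwise. As $t - \mathrm{id}$ is block diagonal, $\rk(t-\mathrm{id})$ is the sum of $\rk(R_{\theta_i} - I_2)$ over the rotation blocks, the trailing $1$ in the odd case contributing $0$. A one-line computation gives $\det(R_\theta - I_2) = 2(1-\cos\theta)$, which vanishes exactly when $\theta \equiv 0$; hence each block contributes rank $0$ if trivial and rank $2$ otherwise. Equivalently, through $\|t\|_{\rk} = n - \dim\Fix(t)$, the fixed subspace of $t$ is spanned by the invariant planes of the trivial blocks together with the trailing fixed line when $n$ is odd, so $\dim\Fix(t) = 2\,\#\{i : \theta_i \equiv 0\} + (n \bmod 2)$. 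Either route yields $\|t\|_{\rk} = 2\,\#\{i : \theta_i \not\equiv 0\}$, which is twice the number of non-trivial rotations, and in particular even.

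Combining the steps gives $\|g\|_{\rk} = \|t\|_{\rk} = 2r$, where $r$ is the number of non-trivial rotation blocks of $t$; since $r = \|g\|_{\rk}/2$ is then intrinsic, the identification does not depend on the chosen representative. The blockwise rank count is routine, so I expect the only substantive input to be the reduction of the first paragraph, namely the real canonical form placing $g$ into the standard maximal torus of $\SO(n)$ and, as part of this, correctly bundling the determinant-forced pairs of $-1$ eigenvalues into $\pi$-rotations so that the representative genuinely lies in the torus.
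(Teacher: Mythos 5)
Your proposal is correct and follows essentially the same route as the paper: conjugate $g$ into the standard maximal torus via the real canonical form and observe, using $\|g\|_{\rk}=n-\dim\Fix(g)$ and the fact that a non-trivial element of $\SO(2)$ fixes no non-zero vector, that each non-trivial rotation block contributes exactly $2$ to the rank. You merely spell out details the paper's terse proof leaves implicit (the trailing $+1$ in odd dimension, the determinant-forced pairing of $-1$ eigenvalues into $R_\pi$ blocks, and the explicit blockwise rank computation), which is fine.
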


\begin{proof}
We have 
\begin{align*} hgh^{-1}=
\begin{pmatrix}
 R_1 &  &   \\
  & \ddots &  \\
  &   & R_n  \\
\end{pmatrix}, 
\end{align*}
where all $R_i$ are rotations belonging to $\SO(2)$. An element in $\SO(2)$ has a fixed point if and only if it is the identity. Thus, the largest subspace in $\R^n$ without any fixed point of $g$ has dimension twice the number of non-trivial rotations. Consequently, $\|g\|_{\rk}$ is twice the number of non-trivial rotations $R_i$.   
\end{proof}

Recall, that the action of $\SO(n)$ on the unit sphere $\mathbb{S}^{n-1}$ gives rise to a fibration with fibre $\SO(n-1)$. In this spirit we want to define projection maps $\SO(n) \rightarrow \SO(n-1)$ that rotate vectors back into this fibre and do not increase the norm $\|.\|_{\rk}$.

\begin{lemma} \label{rotationnorm}
Let $g,h \in \SO(n)$ such that $g(e_n) \neq h(e_n)$. Let $R_\alpha$ and $R_\beta$ be elementary rotations such that $ R_\alpha g(e_n)=e_n=  R_\beta h(e_n)$ and such that $R_\alpha$ (and $R_\beta$ respectively) is the identity if $g(e_n)=e_n$ (or $h(e_n)$ respectively). Then $\| R_\alpha g ( R_\beta h)^{-1} \|_{\rk} \leq \| gh^{-1} \|_{\rk}$. 
\end{lemma}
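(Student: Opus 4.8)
The plan is to reduce the claim to a single rank computation in linear algebra and to exploit the orthogonal splitting $\mathbb{R}^n = \Fix(g) \oplus \Fix(g)^\perp$, under which $\im(g-\mathrm{id}) = \Fix(g)^\perp$ for every $g \in \SO(n)$, so that $\|g\|_{\rk} = \rk(g-\mathrm{id}) = n - \dim\Fix(g)$. Write $a = g(e_n)$ and $b = h(e_n)$ (distinct by hypothesis), set $u = gh^{-1}$, and put $w = R_\alpha g (R_\beta h)^{-1} = R_\alpha u R_\beta^{-1}$. The key reduction is to introduce $T := R_\alpha^{-1}R_\beta$ and to observe, via the substitution $y = R_\beta z$ in the equation $w(y)=y$, that $\Fix(w) = R_\beta\big(\ker(u - T)\big)$; since $R_\beta$ is invertible this yields $\|w\|_{\rk} = \rk(u - T)$. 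Hence it suffices to prove $\rk(u - T) \le \rk(u - \mathrm{id}) =: r$.

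First I would record the two structural facts that drive the estimate. Since $R_\beta(b) = e_n$ and $R_\alpha^{-1}(e_n) = a$ we get $T(b) = a$, while $u(b) = gh^{-1}h(e_n) = g(e_n) = a$; hence $b \in \ker(u - T)$, and $b \notin \Fix(u)$ because $u(b) = a \neq b$. Moreover the nonzero vector $a-b$ equals both $(u-\mathrm{id})(b)$ and $(T-\mathrm{id})(b)$, so it lies in $\im(u-\mathrm{id}) \cap \im(T-\mathrm{id})$, forcing that intersection to be nonzero; this is exactly the slack needed below. Second, I would check that $T$ is at most an elementary rotation: both $R_\alpha$ and $R_\beta$ are rotations in the planes $\Span(a,e_n)$ and $\Span(b,e_n)$, hence fix $W^\perp$ pointwise, where $W = \Span(a,b,e_n)$ has $\dim W \le 3$. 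Therefore $T$ restricts to an element of $\SO(W)$; as every element of $\SO(3)$ is a rotation about an axis, we obtain $t := \rk(T-\mathrm{id}) \le 2$.

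With $V := \Fix(u)$ of dimension $n-r$ and $Q := \im(T-\mathrm{id}) = \Fix(T)^\perp$ of dimension $t$, the final step is a dimension count for $\ker(u-T)$. Every $x \in V \cap Q^\perp$ satisfies $u(x) = x = T(x)$, so $V \cap Q^\perp \subseteq \ker(u-T)$; writing $k = \dim(V^\perp \cap Q) \ge 1$ and using $\dim(V + Q^\perp) = n - k$ gives $\dim(V \cap Q^\perp) = n - r - t + k$. Adjoining $b$, which lies in $\ker(u-T)$ but not in $V \cap Q^\perp$ (indeed not even in $V$), yields $\dim\ker(u-T) \ge n - r - t + k + 1$, whence
\[
\|w\|_{\rk} = \rk(u-T) = n - \dim\ker(u-T) \le r + t - k - 1 \le r,
\]
using $t \le 2$ and $k \ge 1$. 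This is precisely the desired inequality $\|R_\alpha g(R_\beta h)^{-1}\|_{\rk} \le \|gh^{-1}\|_{\rk}$.

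I expect the main obstacle to be the observation that makes the count sharp, namely that $a-b$ is a nonzero vector lying simultaneously in $\im(u-\mathrm{id})$ and $\im(T-\mathrm{id})$ (so $k \ge 1$), together with the verification that $T$ is genuinely an elementary rotation ($t \le 2$); once these two facts are in place the rank bound is forced, and a naive subadditivity estimate $\rk(u-T) \le \rk(u-\mathrm{id}) + \rk(T-\mathrm{id})$ would only give $r+2$. I would also treat the degenerate cases $a = e_n$ or $b = e_n$ (where $R_\alpha$ or $R_\beta$ is the identity by convention), but these merely shrink $W$ and the same count applies verbatim.
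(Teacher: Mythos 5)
Your proof is correct, and it diverges from the paper's at exactly the decisive step. The shared skeleton: both arguments reduce to comparing fixed-point spaces, both observe that the composite of the two elementary rotations lives in $\SO(W)$ for $W=\Span(a,b,e_n)$ of dimension at most $3$ and hence has rank norm at most $2$, and both run a Grassmann dimension count on $\Fix(u)$ and $\Fix(T)$ enlarged by one explicit extra vector ($b$ in your normalisation; $g(e_n)$ as a fixed vector of $gh^{-1}R_\beta^{-1}R_\alpha$ in the paper's). The difference is how the final unit of slack is obtained. The paper's count uses only the trivial estimate $\dim\bigl(\Fix(gh^{-1})+\Fix(R_\beta^{-1}R_\alpha)\bigr)\leq n$, arrives at $\|R_\alpha g(R_\beta h)^{-1}\|_{\rk}\leq\|gh^{-1}\|_{\rk}+1$, and then kills the $+1$ by parity: Lemma~\ref{identificationranknorm} forces both rank norms to be even, so the odd bound self-improves. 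You avoid parity entirely by noticing that $a-b=(u-\mathrm{id})(b)=(T-\mathrm{id})(b)$ is a nonzero vector lying in $\im(u-\mathrm{id})\cap\im(T-\mathrm{id})=\Fix(u)^\perp\cap\Fix(T)^\perp$, which improves the estimate to $\dim\bigl(\Fix(u)+\Fix(T)\bigr)\leq n-1$ and yields $\rk(u-T)\leq r+t-k-1\leq r$ directly (your identity $\Fix(w)=R_\beta(\ker(u-T))$ and the fact $\im(g-\mathrm{id})=\Fix(g)^\perp$ for orthogonal $g$ both check out, as does the degenerate case where one rotation is the identity — note $T\neq\mathrm{id}$ is automatic from $a\neq b$). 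What each approach buys: the paper's parity trick is very short given that Lemma~\ref{identificationranknorm} was established anyway, but it is special to $\SO(n)$; your argument is self-contained linear algebra and shows the parity input is dispensable here — indeed the same vector works in the paper's own formulation, since $(T'-\mathrm{id})(a)=b-a=(u^{-1}-\mathrm{id})(a)$ with $T'=R_\beta^{-1}R_\alpha$, so your observation tightens the paper's intermediate bound $\leq k+1$ to $\leq k$ without any appeal to evenness. One caution against overreading the gain in generality: the paper's remark that the statement fails to transfer to $\SU(n)$ is not cured by your route, since over $\C$ the analogue of $T$ lies in $\SU(W)$ with $\dim_{\C}W\leq 3$ and can have $\rk(T-\mathrm{id})=3$ (no eigenvalue $1$), so the count only gives $r+1$ there; within the orthogonal setting, however, your proof is a genuine strengthening of the paper's.
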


\begin{proof}
By conjugation invariance $\| R_\alpha g( R_\beta h)^{-1} \|_{\rk} = \| g h^{-1} R_\beta^{-1} R_\alpha\|_{\rk}$. By our assumptions the rotations $R_\alpha$ and $R_\beta^{-1}$ either both act non-trivially on $e_n$ or one of them is the identity. In both cases their product is again an elementary rotation since every element of $\SO(3)$ is an elementary rotation. Consequently, $\| R_\beta^{-1} R_\alpha\|_{\rk} \leq 2$. Since 
$ R_\alpha g(e_n)=e_n=  R_\beta h(e_n)$, it holds that $g(e_n) \in \Fix( g h^{-1} R_\beta^{-1} R_\alpha)$, but $R_\beta^{-1} R_\alpha (g(e_n))=h(e_n) \neq g(e_n)$. Let $k \in \N$ be the norm $k=\|gh^{-1}\|_{\rk}$. Then $\dim(\Fix(gh^{-1}))=n-k$ and thus
\begin{align*}
\dim \Big (\Fix( g h^{-1} R_\beta^{-1} R_\alpha ) \Big) & \geq \dim \Big ( \big( \Fix(gh^{-1}) \cap \Fix(R_\beta^{-1} R_\alpha) \big) \oplus \langle g(e_n) \rangle \Big )\\
  = & \dim \Big (  \Fix(gh^{-1}) \cap \Fix(R_\beta^{-1} R_\alpha) \Big )+1
\\  = & \dim(\Fix(gh^{-1}))+\dim(\Fix(R_\beta^{-1} R_\alpha))- \dim(\Fix(gh^{-1}) \\
& + \Fix(R_\beta^{-1} R_\alpha) ) + 1 \\
 \geq &  n-k + n-2 - n +1 \\
 = & n-(k+1) .
\end{align*}
Therefore, $\| R_\alpha g( R_\beta h)^{-1} \|_{\rk} \leq \|gh^{-1}\|_{\rk} +1$. However, by Lemma \ref{identificationranknorm} the rank norms $\| R_\alpha g( R_\beta h)^{-1} \|_{\rk}$ and $\|gh^{-1}\|_{\rk}$ are both even, so $\|gh^{-1}\|_{\rk}+1$ is odd. Consequently, we must have $\| R_\alpha g( R_\beta h)^{-1} \|_{\rk} \leq  \|gh^{-1}\|_{\rk}$. 
\end{proof}

With these preliminary considerations at hand we will define our desired projection maps $p_n \colon \SO(n) \rightarrow \SO(n-1)$ as follows. For every $x \in \mathbb{S}^{n-1}$ we fix an elementary rotation $R_x \in \SO(n)$ which rotates $x$ to $e_n$ and set $R_{e_n}=id$. Note, that this choice is unique when $x \neq \pm e_n$ since $x$ and $e_n$ span a 2 dimensional subspace in that case. Define
\begin{align*}
p_n \colon \SO(n) \rightarrow \SO(n-1) \hspace{2mm} \text{by} \hspace{2mm}
p_n(g)= R_{g(e_n)} g. 
\end{align*}
Note, that $R_{g(e_n)} g$ is indeed an element of $\SO(n-1)$ since it belongs to $SO(n)$ and satisfies $R_{g(e_n)} g(e_n)=e_n$. Thus, it follows by orthogonality that the last entry in all other columns of $R_{g(e_n)} g$ vanishes and it lies in the image of $\SO(n-1)$. 

\begin{theorem}\label{SOncontractible}
The metric ultraproduct $\prod_{n \to \omega} (\SO(n),\|.\|_{\overline{\rk}})$ of the special orthogonal groups with the projectivised rank metric is contractible. 
\end{theorem}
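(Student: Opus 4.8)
The plan is to verify the two hypotheses of the Contraction Lemma for Sequences, Lemma \ref{GeneralContractionLemma}, for the tower $X_n = \SO(n)$ equipped with the rank norm and the projection maps $p_n(g) = R_{g(e_n)} g$ constructed just above. First I would dispose of the routine identifications. The standard inclusions $\SO(n-1) \hookrightarrow \SO(n)$, realised as the stabiliser of $e_n$, are isometric for $\|.\|_{\rk}$ since the rank of $g - \mathrm{id}$ is unaffected by padding with a fixed coordinate; the space $X_0 = \SO(0)$ is a single point; and by the normalisation relation stated at the opening of Section 6 (taking $s_n = n$ and $d_n$ the metric induced by $\|.\|_{\overline{\rk}} = \|.\|_{\rk}/n$) one has $\Cone_\omega(\SO(n), \|.\|_{\rk}) = \prod_{n \to \omega}(\SO(n), \|.\|_{\overline{\rk}})$. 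Hence it suffices to prove that the asymptotic cone of $(\SO(n), \|.\|_{\rk})$ is contractible.

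For condition $(i)$ I would simply observe that $p_n(g) g^{-1} = R_{g(e_n)}$, which by construction is an elementary rotation (or the identity when $g(e_n) = e_n$); it therefore fixes a subspace of codimension at most two, giving $\|p_n(g) g^{-1}\|_{\rk} \leq 2$. This yields the uniform bound $K = 2$.

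Condition $(ii)$, the distance-non-increasing property, is where the preceding lemmas do the work, and I would split it into two cases according to whether $g$ and $h$ send $e_n$ to the same point. Since $p_n(g) p_n(h)^{-1} = R_{g(e_n)}\, g h^{-1}\, R_{h(e_n)}^{-1}$, the case $g(e_n) \neq h(e_n)$ is exactly the quantity $\|R_\alpha g (R_\beta h)^{-1}\|_{\rk}$ with $R_\alpha = R_{g(e_n)}$ and $R_\beta = R_{h(e_n)}$, so Lemma \ref{rotationnorm} gives $\|p_n(g) p_n(h)^{-1}\|_{\rk} \leq \|g h^{-1}\|_{\rk}$ immediately. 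In the remaining case $g(e_n) = h(e_n)$ one has $R_{g(e_n)} = R_{h(e_n)}$, whence $p_n(g) p_n(h)^{-1} = R_{g(e_n)} (g h^{-1}) R_{g(e_n)}^{-1}$ is conjugate to $g h^{-1}$ and the two rank norms coincide by conjugation invariance. Thus $(ii)$ holds in both cases, Lemma \ref{GeneralContractionLemma} applies, and the resulting contracting homotopy shows $\Cone_\omega(\SO(n), \|.\|_{\rk})$—hence the metric ultraproduct—is contractible.

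The genuine obstacle has already been overcome before the statement: it lies in Lemma \ref{rotationnorm}, whose proof needs the parity observation of Lemma \ref{identificationranknorm} (rank norms on $\SO(n)$ are even) to improve the crude dimension estimate $\leq \|g h^{-1}\|_{\rk} + 1$ to the sharp $\leq \|g h^{-1}\|_{\rk}$. With that in hand the present argument is essentially a bookkeeping assembly; the only point requiring care is the degenerate coincidence $g(e_n) = h(e_n)$, which falls outside the hypothesis of Lemma \ref{rotationnorm} and must be handled separately by the conjugacy argument above.
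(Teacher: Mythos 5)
Your proposal is correct and follows the paper's own proof essentially verbatim: both arguments verify the hypotheses of Lemma \ref{GeneralContractionLemma} with the uniform bound $K=2$ coming from $p_n(g)g^{-1}=R_{g(e_n)}$ being an elementary rotation, and both establish the distance-non-increasing property by the same case split, invoking Lemma \ref{rotationnorm} when $g(e_n)\neq h(e_n)$ and conjugation invariance of $\|.\|_{\rk}$ when $g(e_n)=h(e_n)$. Your closing observation that the genuine work sits in Lemma \ref{rotationnorm} (via the parity fact of Lemma \ref{identificationranknorm}) accurately reflects the structure of the paper's argument as well.
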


\begin{proof}
We will verify the conditions of Lemma \ref{GeneralContractionLemma} for the cone with respect to the standard scaling sequence $\Cone_\omega(\SO(n), \|.\|_{\rk}) = \prod_{n \to \omega} (SO(n),\|.\|_{\overline{\rk}})$ to obtain a contracting homotopy. 

Clearly, $\|p_n(g)g^{-1} \|_{\rk} \leq 2$ for all $n \in \N$ and $g \in \SO(n)$. Additionally, by Lemma \ref{rotationnorm} all $g,h \in \SO(n)$ satisfy $\|p_n(g)p_n(h)^{-1}\|_{\rk} \leq \|gh^{-1}\|_{\rk}$ if $g(e_n) \neq h(e_n)$. However, if $g(e_n)=h(e_n)$ then $R_{g(e_n)}=R_{h(e_n)}$ and so
\begin{align*}
\| p_n(g) p_n(h)^{-1} \|_{\rk} = \| R_{g(e_n)}gh^{-1} R_{h(e_n)}^{-1} \|_{\rk} = \|gh^{-1} \|_{\rk}.
\end{align*}
Therefore, the result follows from Lemma \ref{GeneralContractionLemma}.
\end{proof}

\begin{remark}
The analogeous statement for unitary groups does not follow since the rank norm on $\SU(n)$ can assume odd values.   
\end{remark}

\section{Appendix: Non-contractible metric ultraproducts and asymptotic cones}

The purpose of this appendix is to give some immediate examples for groups that have non-contractible asymptotic cones stemming directly from the metric ultraproduct of cyclic groups of order $n$. More background and much more involved theory focussing mainly on the case of finitely generated groups can be found in \cite{Burillo}, \cite{Riley} and \cite{Osinagain}.

\begin{lemma}\label{ultracircle}
Let $G_n= \Z/n$ be equipped with the word norm $\|.\|_n$ generated by $1 \in \Z/n$ for all $n$ and let $\|.\|_{\overline{n}}$ be the projectivisation. Then $\prod_{n \to \omega} (\Z/n,\|.\|_{\overline{n}})$ is Lipschitz isomorphic to the unit circle $S^1$ with its standard metric. 
\end{lemma}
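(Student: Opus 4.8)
The plan is to write down an explicit map onto $S^1 = \R/\Z$ equipped with the arc-length metric $\bar d(\pi(s),\pi(t)) = \min_{m \in \Z} |s-t-m|$ (so the circle has total length $1$ and diameter $1/2$), where $\pi \colon \R \to \R/\Z$ is the quotient map. First I would record that, choosing for each element of $\Z/n$ its representative in $\{0,1,\dots,n-1\}$, the generating set $\{\pm 1\}$ gives $\|k\|_n = \min\{k, n-k\}$, hence $\|k\|_{\overline{n}} = \tfrac1n \min\{k,n-k\}$; this is precisely the arc distance $\bar d(\pi(k/n), \pi(0))$. Consequently, for $g_n, h_n \in \Z/n$ the projectivised distance $d_n(g_n,h_n)$ equals $\bar d(\pi(g_n/n),\pi(h_n/n))$. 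I would then define
\[
\Phi \colon \prod_{n \to \omega}(\Z/n, \|.\|_{\overline{n}}) \longrightarrow (S^1, \bar d), \qquad \Phi\big([(g_n)_n]\big) = \lim_\omega \pi\!\left(\tfrac{g_n}{n}\right),
\]
where the ultralimit is taken in the compact metric space $(S^1,\bar d)$ and therefore exists and is unique by the recalled properties of ultralimits.

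The second step is to establish simultaneously that $\Phi$ is well defined and isometric. Since $\bar d$ is continuous on $S^1 \times S^1$, the rules for ultralimits give $\bar d(\Phi([g_n]),\Phi([h_n])) = \lim_\omega \bar d(\pi(g_n/n),\pi(h_n/n)) = \lim_\omega d_n(g_n,h_n) = d([g_n],[h_n])$. In particular, if two admissible sequences represent the same class, so that $d([g_n],[h_n]) = 0$, then $\Phi([g_n]) = \Phi([h_n])$, which gives well-definedness; and the same identity shows $\Phi$ is an isometry onto its image, hence injective.

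Third, I would check that $\Phi$ is a group homomorphism and surjective. For the homomorphism property the key observation is that the modular reduction is invisible in $\R/\Z$: from $\tfrac{g_n + h_n}{n} = \tfrac{g_n}{n} + \tfrac{h_n}{n}$ together with the fact that reducing $g_n + h_n$ modulo $n$ only alters $\tfrac{g_n+h_n}{n}$ by an integer, one obtains $\pi\big(\tfrac{(g_n + h_n)\bmod n}{n}\big) = \pi(\tfrac{g_n}{n}) + \pi(\tfrac{h_n}{n})$; passing to the ultralimit and using continuity of addition on $S^1$ yields $\Phi([g_n]+[h_n]) = \Phi([g_n]) + \Phi([h_n])$. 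For surjectivity, given $x \in [0,1)$ the sequence $g_n = \lfloor nx \rfloor$ is admissible and $g_n/n \to x$, so $\Phi([g_n]) = \pi(x)$. Thus $\Phi$ is an isometric group isomorphism onto $(\R/\Z, \bar d)$.

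Finally, I would pass from $(\R/\Z,\bar d)$ to the standard unit circle. The standard circle (circumference $2\pi$ with its arc metric, or the bi-Lipschitz equivalent chordal metric) is obtained from $(\R/\Z,\bar d)$ by the homothety $x \mapsto 2\pi x$, which is a group isomorphism and bi-Lipschitz with constants $2\pi$ and $1/(2\pi)$; composing with $\Phi$ delivers the claimed Lipschitz isomorphism. The only genuinely delicate point is the treatment of the modular wraparound in $\Z/n$, which I resolve by computing everything inside $\R/\Z$ and taking the ultralimit in the compact circle rather than in a fundamental interval; once this is set up, well-definedness, the isometry property, and the homomorphism property all follow from continuity of the metric, continuity of addition, and the elementary rules for ultralimits.
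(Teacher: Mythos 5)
Your proof is correct, and it is built on the same underlying map as the paper's: the paper sends $k \in \Z/n$ to the root of unity $e^{2\pi i k/n}$, which is exactly your $\pi(k/n)$ after normalising the circumference. Where you genuinely diverge is in how bijectivity is established. The paper constructs an explicit, non-continuous near-inverse $\theta_n \colon S^1 \to \Z/n$ by choosing a nearest $n$-th root of unity, proves the estimate $\|\theta_n(x)\theta_n(y)^{-1}\|_n \leq n\, d_{\arc}(x,y) + 2$, and then verifies $\varphi \circ \theta = \mathrm{id}$ and $\theta \circ \varphi = \mathrm{id}$, the latter by an $\epsilon$-chase through a chain of $\omega$-heavy sets. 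You instead observe that the projectivised word metric on $\Z/n$ is \emph{exactly} the pullback of the normalised arc metric under $k \mapsto \pi(k/n)$, so that continuity of $\bar d$ on the compact space $S^1 \times S^1$ together with the componentwise behaviour of ultralimits makes $\Phi$ an isometry in one line -- well-definedness and injectivity then come for free -- while surjectivity reduces to the convergent sequence $\lfloor nx \rfloor / n \to x$ (all sequences being admissible in the metric ultraproduct). Your route avoids the inverse construction entirely and yields the slightly stronger conclusion that $\Phi$ is an isometric isomorphism onto $(\R/\Z, \bar d)$, with the Lipschitz constant $2\pi$ entering only through the final rescaling; the paper's approach pays for its fiddlier double-composition check by producing explicit Lipschitz constants in both directions from the outset. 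One minor remark: because you work with representatives in $\{0,1,\dots,n-1\}$, you must argue that the modular wraparound is invisible in $\R/\Z$ to get the homomorphism property, whereas the paper gets this for free by phrasing $\varphi_n$ multiplicatively as $k \mapsto e^{2\pi i k/n}$, which is visibly a homomorphism $\Z/n \to S^1$; your wraparound argument is correct, but you could streamline it by noting that $k \mapsto \pi(k/n)$ is the composite of the homomorphisms $\Z \to \R/\Z$ and the quotient $\Z \to \Z/n$ along which it descends.
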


\begin{proof}
Recall, that the standard subspace metric on $S^1$ is Lipschitz equivalent to the metric given by the arclength $d_{\arc}$. For each $n$ consider the homomorphism \[
\varphi_n \colon \Z/n \rightarrow S^1 \hspace{5mm} , \hspace{5mm} k \rightarrow e^{2 \pi i \frac{k}{n}} 
\] 
with image equal to the $n$-th roots of unity. Then $a,b \in \Z/n$ with $\|ab^{-1}\|_n=k$ satisfy $d_{\arc}(\varphi_n(a),\varphi_n(b)) = 2 \pi \frac{k}{n}$. Hence, the sequence $(\varphi_n)$ gives rise to a Lipschitz continuous homomorphism $\varphi \colon \Cone_\omega(\Z/n, \|.\|_n) \rightarrow S^1$ setting $\varphi(x)= \lim_\omega \varphi_n (x_n)$ for $x=[x_n]$.

Conversely, define $\theta_n \colon S^1 \rightarrow \Z/n$ by $\theta_n(x)=k$ where $k$ is a fixed choice for each $x$ such such that $d_{\arc}(x,e^{2 \pi i \frac{k}{n}})$ is minimal. These (non-continuous) maps satisfy 
\[
\| \theta_n(x) \theta_n(y)^{-1} \|_n \leq d_{\arc}(x,y)n+2.
\] 
Consequently, setting $\theta(x)=[\theta_n(x)]$ yields a map $\theta \colon S^1 \rightarrow \Cone_\omega(\Z/n, \|.\|_n)$ that is Lipschitz continous. Since for each $n$ the choice of $k$-th root of unity in the definition of $\theta_n(x)$ is distance minimising, this sequence converges again to $x \in S^1$ and so $(\varphi \circ \theta)=id$. 

For $[x_n] \in \Cone_\omega(\Z/n, \|.\|_n)$ let $x^\prime=\lim_\omega \varphi_n (x_n)$ and let $y_n = \theta_n(x^\prime)$ for all $n$. We claim that $[y_n]=[x_n]$. 
Let $\epsilon>0$ be given. Then the following sequence of implications holds where the first line follows from the definition of the ultralimit $x^\prime=\lim_\omega \varphi_n (x_n)$.
\begin{align*}
& \left \{ n \in \mathbb{N} \hspace{2mm} : \hspace{2mm} d_{\arc}(e^{2 \pi i \frac{x_n}{n}}, x^\prime)  < \epsilon \right \} \in \omega \\
\implies & \left \{ n \in \mathbb{N} \hspace{2mm} : \hspace{2mm} \| \theta_n(e^{2 \pi i \frac{x_n}{n}}) \theta_n(x^\prime)^{-1} \|_n  < \epsilon n +2 \right \} \in \omega \\
\implies & \left \{ n \in \mathbb{N} \hspace{2mm} : \hspace{2mm} \frac{ \| x_n   y_n^{-1} \|_n}{n}  < \frac{ \epsilon n +2}{n} \right \} \in \omega \\
\implies & \left \{ n \in \mathbb{N} \hspace{2mm} : \hspace{2mm} \frac{ \| x_n   y_n^{-1} \|_n}{n}  < 2 \epsilon \right \} \in \omega.
\end{align*}
Therefore, $[x_n]=[y_n]$ and so $(\theta \circ \varphi)= id$. Thus, $\varphi$ and $\theta$ are Lipschitz isomorphisms and  $\prod_{n \to \omega} (\Z/n,\|.\|_{\overline{n}})$ is Lipschitz isomorphic to the unit circle $S^1$ with its standard metric.
\end{proof}

\begin{theorem}\label{circleproduct}
Let $G= \bigoplus_n \Z/n $ be equipped with the word norm $\|.\|$ generated by the unit vectors $\{ 1_n \in \Z/n \}_n$. Then $\Cone_\omega(G,\|.\|)$ is Lipschitz isomorphic to $S^1 \times \Cone_\omega(X_n,\|.\|_n)$ for suitable sequence $(X_n, \|.\|_n)$. 
\end{theorem}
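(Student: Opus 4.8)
The plan is to exploit the fact that the word norm $\|.\|$ is precisely the $\ell_1$ sum of the individual word norms on the factors, which lets me peel off one factor at each scale. Concretely, for each index $m$ I would decompose $G = \Z/m \oplus H_m$, where $H_m = \bigoplus_{k \neq m} \Z/k$, and observe that under this (index-dependent) splitting the norm is additive:
\[
\|g\| = \|g_m\|_m + \|g\|_{H_m},
\]
where $g_m \in \Z/m$ is the $m$-th coordinate, $\|.\|_m$ is the word norm on $\Z/m$, and $\|.\|_{H_m}$ is the $\ell_1$ word norm on $H_m$ generated by the remaining unit vectors $\{1_k\}_{k \neq m}$ (this additivity is exactly Remark \ref{remindivnorm}). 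Thus, as a \emph{sequence} of metric spaces, $(G,d)$ is literally the sequence of $\ell_1$-products $(\Z/m,\|.\|_m) \times (H_m,\|.\|_{H_m})$.

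The key step is a product-splitting lemma for asymptotic cones of a sequence of $\ell_1$-products, generalising the product formula for asymptotic cones of \cite{KapDru} from a single fixed product to a varying one. I would show that the coordinate-splitting map $[(a_m,b_m)] \mapsto ([a_m],[b_m])$ is an isometric isomorphism of topological groups
\[
\Cone_\omega(G,\|.\|) \cong \Cone_\omega(\Z/m,\|.\|_m) \times \Cone_\omega(H_m,\|.\|_{H_m}),
\]
where the right-hand side carries the $\ell_1$-product metric. The verifications are routine consequences of additivity of the norm and of ultralimits: a sequence $(a_m,b_m)$ is admissible if and only if both $(a_m)$ and $(b_m)$ are; the map is independent of the chosen representative because $\lim_\omega \frac{1}{m}\|(a_m,b_m)(a_m',b_m')^{-1}\| = 0$ forces each of the two summands to vanish separately; and the induced metric splits as $\bar{d} = \bar{d}_{S^1} + \bar{d}_{H}$ by additivity of ultralimits, so the map is an isometry onto the $\ell_1$-product of the two cones.

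With this in hand I would set $X_n := H_n = \bigoplus_{k \neq n} \Z/k$ equipped with $\|.\|_n := \|.\|_{H_n}$, so that the second factor is $\Cone_\omega(X_n,\|.\|_n)$ by definition. For the first factor I invoke Lemma \ref{ultracircle} together with the identity $\Cone_\omega(\Z/n,\|.\|_n) = \prod_{n \to \omega}(\Z/n,\|.\|_{\overline{n}})$ recorded at the start of this section, which yields a Lipschitz isomorphism $\Cone_\omega(\Z/m,\|.\|_m) \cong S^1$. Composing the two identifications produces the desired Lipschitz isomorphism $\Cone_\omega(G,\|.\|) \cong S^1 \times \Cone_\omega(X_n,\|.\|_n)$.

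The main obstacle I anticipate is conceptual rather than computational: the decomposition $G = \Z/m \oplus H_m$ depends on the very index $m$ over which the ultralimit is taken (it is the factor matching the scaling that collapses to the circle of circumference one), so I must argue carefully that the splitting map descends to a well-defined map on the cone. This is exactly the point at which the additive $\ell_1$ structure of the norm is indispensable — it guarantees simultaneously that admissibility is inherited coordinatewise and that the two components of the cone are genuinely independent, so that isolating the diagonal factor $\Z/m$ from the remainder introduces no interference and the resulting map is a bijective isometry.
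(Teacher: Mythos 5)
Your proposal is correct and takes essentially the same route as the paper: both peel off the $n$-th cyclic factor at scale $n$ via the index-dependent decomposition $G = \Z/n \oplus X_n$ with $X_n = \bigoplus_{k \neq n} \Z/k$, the paper packaging this as a short exact sequence split by the section $\iota = [\iota_n]$ with the Lipschitz-$1$ projections $p_n$ playing exactly the role of your coordinate-splitting map, and both conclude by identifying the first factor with $S^1$ via Lemma \ref{ultracircle}. Your emphasis on norm additivity giving an isometric $\ell_1$-splitting matches the paper's closing observation that $\psi$ is an isometric isomorphism (your citation of Remark \ref{remindivnorm} is slightly off, since that remark assumes uniformly bounded diameters, but the additivity fact you need is elementary and true here).
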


\begin{proof}
Let $p_n \colon G \rightarrow \Z/n$ be the projection on the $\Z/n$ factor and let $\iota_n \colon \Z/n \rightarrow G$ be the inclusion. Then for all $n$ the inclusion $\iota_n$ is an isometric embedding, $p_n$ is Lipschitz continuous with constant 1 and $p_n \circ \iota_n= id_{\Z_n}$. On the level of asymptotic cones this induces the maps $p \colon \Cone_\omega(G,\|.\|) \rightarrow \Cone_\omega(\Z/n, \|.\|_n)$ and $\iota \colon \Cone_\omega(\Z/n, \|.\|_n) \rightarrow \Cone_\omega(G,\|.\|)$. Thus, the exact sequence of abelian groups 
\begin{figure}[H]
\centering
\begin{tikzcd}
0 \arrow[-latex]{r} & \ker(p) \arrow[-latex]{r}
 & \Cone_\omega(G,\|.\|) \arrow[-latex]{r}{p}&   \Cone_\omega(\Z/n, \|.\|_n) \arrow[-latex, bend right=40,swap]{l}{\iota} \arrow[-latex]{r} & 0.
\end{tikzcd}
\end{figure}
splits as a product of groups. The maps
\begin{align*}
& \varphi \colon \Cone_\omega(G,\|.\|) \rightarrow \Cone_\omega(\Z/n, \|.\|_n) \times \ker(p), &  & g \rightarrow \left( p(g), g-p(g) \right), \\
& \psi \colon \Cone_\omega(\Z/n, \|.\|_n) \times \ker(p) \rightarrow \Cone_\omega(G,\|.\|),  & & (a,b) \rightarrow \iota(a)+b.
\end{align*}
display the product structure on the level of groups. However, all these involved maps are identities or Lipschitz continuous and so $\varphi$ and $\psi$ are Lipschitz continuous as well. The result follows from Lemma \ref{ultracircle}. 

The kernel $\ker(p)$ is given precisely by equivalence classes having a representative sequence $(x_n)$ such that $x_n \notin \Z/n$ for all $n$. Hence, setting $X_n =  \bigoplus_{k \neq n} \Z/k$ and equipping this space with the natural subspace metric $d_n$ generated by $\{ 1_k \in \Z/k \ : k \neq n \}_k$ it holds that $\ker(p)= \Cone_\omega(X_n, d_n)$. Finally, it follows that $\psi$ is an isometric isomorphism. 
\end{proof}

\begin{cor}
Let $G= \bigoplus_n \Z/n $ be equipped with the word norm generated by $\{ 1_n \in \Z/n \}_n$. There is an injective map $\pi_1(S^1) \hookrightarrow \pi_1(\Cone_\omega(G,\|.\|))$. In particular, $\Cone_\omega(G,\|.\|)$ is not contractible. 
\end{cor}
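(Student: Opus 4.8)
The plan is to deduce the statement directly from the product decomposition established in Theorem \ref{circleproduct}, so that essentially no new work is required. First I would invoke that theorem to obtain a Lipschitz isomorphism, and in particular a homeomorphism, $\Cone_\omega(G,\|.\|) \cong S^1 \times Y$, where I abbreviate $Y = \Cone_\omega(X_n,\|.\|_n)$ for the complementary factor. Since a homeomorphism induces an isomorphism on fundamental groups, it then suffices to exhibit an injection $\pi_1(S^1) \hookrightarrow \pi_1(S^1 \times Y)$ and transport it along this isomorphism.

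The key observation is that the circle factor is a \emph{topological retract} of the product. Fixing a basepoint $y_0 \in Y$, the inclusion $i \colon S^1 \to S^1 \times Y$, $s \mapsto (s,y_0)$, and the projection $\mathrm{pr}_1 \colon S^1 \times Y \to S^1$ satisfy $\mathrm{pr}_1 \circ i = \mathrm{id}_{S^1}$. Applying the fundamental group functor yields $(\mathrm{pr}_1)_* \circ i_* = \mathrm{id}_{\pi_1(S^1)}$, so $i_*$ admits a left inverse and is therefore injective. Composing $i_*$ with the isomorphism induced by the homeomorphism of Theorem \ref{circleproduct} produces the desired injection $\pi_1(S^1) \hookrightarrow \pi_1(\Cone_\omega(G,\|.\|))$.

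Finally, using the identification of the first factor with the genuine topological circle from Lemma \ref{ultracircle}, we have $\pi_1(S^1) \cong \Z$, which is nontrivial; hence $\pi_1(\Cone_\omega(G,\|.\|))$ is nontrivial. Because a contractible space is simply connected, this immediately forces $\Cone_\omega(G,\|.\|)$ to be non-contractible. I do not expect any genuine obstacle here: the entire content is carried by the splitting of Theorem \ref{circleproduct}, and the retraction argument is the standard device for converting a product decomposition into injectivity on $\pi_1$. The only point I would state with a little care is the choice of basepoint $y_0 \in Y$, which is harmless since the retraction identity $\mathrm{pr}_1 \circ i = \mathrm{id}_{S^1}$ holds for any fixed $y_0$ regardless of whether $Y$ is path-connected.
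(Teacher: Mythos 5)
Your proposal is correct and coincides with the paper's own argument: the paper likewise invokes the splitting $\Cone_\omega(G,\|.\|)\cong S^1\times\Cone_\omega(X_n,\|.\|_n)$ from Proposition \ref{circleproduct} and exhibits continuous maps $S^1\hookrightarrow S^1\times\Cone_\omega(X_n,\|.\|_n)\rightarrow S^1$ composing to the identity, so that functoriality of $\pi_1$ yields the injection $\pi_1(S^1)\hookrightarrow\pi_1(\Cone_\omega(G,\|.\|))$ and hence non-contractibility. Your additional remark about the basepoint $y_0$ being harmless is a fine (if unneeded) refinement of the same retraction device.
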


\begin{proof}
Every map collapsing $S^1$ to a point and including it in another topological space $Y$ is continuous. Therefore, by the universal property of the product there are continuous maps $S^1 \hookrightarrow S^1 \times \Cone_\omega(X_n,\|.\|_n) \rightarrow S^1$ whose composition is the identity. This induces an injective map $\pi_1(S^1) \hookrightarrow \pi_1(\Cone_\omega(G,\|.\|))$ on the level of fundamental groups. 
\end{proof}

\begin{theorem}\label{circleretract}
Let $\Z/n$ be equipped with the word norm $\|.\|_n$ generated by $1 \in \Z/n$ for all $n$ and let the free product $\ast_{k \in \N} \Z/k$ be equipped with the associated $\ell_1$-norm. Then $S^1$ is a topological retract of $\Cone_\omega(\ast_{k \in  \N} \Z/k, \|.\|_{\ell_1})$.  
\end{theorem}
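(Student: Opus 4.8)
The plan is to produce an explicit isometric copy of $S^1$ inside the cone together with a continuous retraction onto it, built from the inclusion and projection maps attached to the individual free factors, matching the $n$-th sequence entry with the $n$-th factor.

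First I would fix the model for the circle. By Lemma \ref{ultracircle}, combined with the identification of metric ultraproducts with asymptotic cones for the standard scaling sequence noted at the start of Section 6, the cone $\Cone_\omega(\Z/n, \|.\|_n)$ is Lipschitz isomorphic to $S^1$; I will treat this cone as $S^1$ throughout. Next I would construct the embedding. For each $n$ let $\varphi_n \colon \Z/n \hookrightarrow \ast_{k \in \N} \Z/k$ be the canonical inclusion of the $n$-th factor, which is an isometry by Lemma \ref{inclfreeprod}. Sending a representative sequence $(x_n)$ with $x_n \in \Z/n$ to $(\varphi_n(x_n))$ defines a map $\iota \colon \Cone_\omega(\Z/n, \|.\|_n) \to \Cone_\omega(\ast_{k \in \N} \Z/k, \|.\|_{\ell_1})$. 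Since each $\varphi_n$ is an isometric homomorphism, admissibility is preserved, the assignment is independent of the chosen representative, and the induced distance computation $\lim_\omega \tfrac{\|\varphi_n(x_n)\varphi_n(y_n)^{-1}\|_{\ell_1}}{n} = \lim_\omega \tfrac{\|x_n y_n^{-1}\|_n}{n}$ shows that $\iota$ is an isometric embedding; in particular its image is a homeomorphic copy of $S^1$.

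Then I would build the retraction from the factor projections $p_n \colon \ast_{k \in \N} \Z/k \to \Z/n$, which are Lipschitz-continuous with constant one as recorded just before Lemma \ref{inclfreeprod}. Applying the $n$-th projection to the $n$-th entry of a representative sequence yields a map $p \colon \Cone_\omega(\ast_{k \in \N} \Z/k, \|.\|_{\ell_1}) \to \Cone_\omega(\Z/n, \|.\|_n)$, and the constant-one Lipschitz bound $\|p_n(g_n)p_n(h_n)^{-1}\|_n \le \|g_n h_n^{-1}\|_{\ell_1}$ guarantees that $p$ sends admissible sequences to admissible ones, is independent of representatives, and is itself $1$-Lipschitz, hence continuous. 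Finally, since $p_n \circ \varphi_n = \mathrm{id}_{\Z/n}$ for every $n$, termwise evaluation gives $p \circ \iota = \mathrm{id}$, so the composite $\iota \circ p$ is a continuous self-map of $\Cone_\omega(\ast_{k \in \N}\Z/k, \|.\|_{\ell_1})$ fixing the subspace $\iota(S^1)$ pointwise; that is, $\iota(S^1) \cong S^1$ is a topological retract, which is the claim.

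The step I expect to require the most care is verifying that $\iota$ and $p$ descend to well-defined maps of asymptotic cones, meaning both that admissible sequences are carried to admissible sequences and that equivalent sequences have equivalent images. The genuinely structural feature, rather than the estimates, is that the $n$-th term of each sequence is treated with the inclusion and projection of the $n$-th factor \emph{specifically}; it is exactly this matching of the factor index with the sequence index that makes the retraction identity $p_n \circ \varphi_n = \mathrm{id}$ hold termwise, while the metric bounds then follow immediately from the isometry property of $\varphi_n$ in Lemma \ref{inclfreeprod} and the constant-one Lipschitz property of $p_n$.
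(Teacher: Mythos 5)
Your proposal is correct and is essentially the paper's own proof: both use the isometric inclusions from Lemma \ref{inclfreeprod} and the $1$-Lipschitz factor projections, matched termwise so that $p_n \circ \iota_n = \mathrm{id}_{\Z/n}$ induces $p \circ \iota = \mathrm{id}$ on the cone level, with Lemma \ref{ultracircle} identifying $\Cone_\omega(\Z/n, \|.\|_n)$ with $S^1$. Your extra care in spelling out admissibility and representative-independence, and in noting that the retract is the subspace $\iota(S^1)$ fixed pointwise by $\iota \circ p$, only makes explicit what the paper leaves implicit.
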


\begin{proof}
By Lemma \ref{inclfreeprod} the inclusions $\iota_n \colon \Z/n \rightarrow \ast_k \Z/k$ are all isometries. The projections $p_n \colon \ast_k \Z/k \rightarrow \Z/n$ are all Lipschitz continuous with constant one. On the level of asymptotic cones this gives rise to the Lipschitz homomorphisms
\begin{align*}
\Cone_\omega(\Z/n, \|.\|_n) \xrightarrow{\iota=[\iota_n]}  \Cone_\omega(\ast_{k} \Z/k, \|.\|_{\ell_1}) \xrightarrow{p=[p_n]} \Cone_\omega(\Z/n, \|.\|_n).  
\end{align*}
By Lemma \ref{ultracircle} identify $\Cone_\omega(\Z/n, \|.\|_n)$ with the unit circle. Since $p_n \circ \iota_n= id_{\Z/n}$ for all $n$ we have $p \circ \iota=id_{S^1}$ and so $\Cone_\omega(\ast_{k \in  \N} \Z/k, \|.\|_{\ell_1})$ retracts onto $S^1$ via $p$.   
\end{proof}

\begin{cor}
Let $\Z/n$ be equipped with the word norm $\|.\|_n$ generated by $1 \in \Z/n$ for all $n$ and let $\ast_{k \in \N} \Z/k$ be equipped with the associated $\ell_1$-norm. Then $\Cone_\omega(\ast_{k \in  \N} \Z/k, \|.\|_{\ell_1})$ has nontrivial fundamental group. In particular, it is not contractible. \qed
\end{cor}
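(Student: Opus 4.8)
The plan is to feed Theorem~\ref{circleretract} directly into the functoriality of the fundamental group, exactly mirroring the product-factor argument used in the corollary to Theorem~\ref{circleproduct}. Write $X = \Cone_\omega(\ast_{k \in \N} \Z/k, \|.\|_{\ell_1})$. Theorem~\ref{circleretract} supplies continuous maps $\iota \colon S^1 \to X$ and $p \colon X \to S^1$ with $p \circ \iota = \mathrm{id}_{S^1}$; these retraction data are the only geometric input I need, since all the hard work of identifying the circle inside the cone is already contained in Theorem~\ref{circleretract} and Lemma~\ref{ultracircle}.

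First I would fix a basepoint $x_0 \in S^1$ and take $\iota(x_0)$ as the basepoint of $X$, so that $\iota$ and $p$ become pointed maps inducing homomorphisms $\iota_* \colon \pi_1(S^1, x_0) \to \pi_1(X, \iota(x_0))$ and $p_* \colon \pi_1(X, \iota(x_0)) \to \pi_1(S^1, x_0)$. Applying the functor $\pi_1$ to the identity $p \circ \iota = \mathrm{id}_{S^1}$ yields $p_* \circ \iota_* = \mathrm{id}_{\pi_1(S^1, x_0)}$, so that $\iota_*$ admits a left inverse and is therefore injective.

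Next I would invoke $\pi_1(S^1) \cong \Z$. Injectivity of $\iota_*$ then exhibits a copy of $\Z$ inside $\pi_1(X)$, so $\pi_1(X)$ is nontrivial. For the final clause, if $X$ were contractible then its fundamental group would be trivial, contradicting the nontriviality just established; hence $X$ is not contractible.

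I do not anticipate a genuine obstacle: the statement is a purely formal consequence of the existence of a topological retraction onto a space with nontrivial $\pi_1$. The only point requiring a word of care is basepoint bookkeeping, namely choosing compatible basepoints so that the retraction identity descends to a splitting on fundamental groups, but this becomes automatic once the basepoint of $X$ is taken to be $\iota(x_0)$.
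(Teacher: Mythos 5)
Your proposal is correct and is essentially the paper's own argument: the corollary is stated with an immediate \qed precisely because Theorem \ref{circleretract} provides the retraction $p \circ \iota = \mathrm{id}_{S^1}$, whence $\iota_*$ is injective on $\pi_1$ and $\pi_1(S^1) \cong \Z$ embeds in the fundamental group of the cone, ruling out contractibility. Your basepoint bookkeeping is a harmless refinement of the same functoriality argument.
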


\section*{Acknowledgements}

I like to thank Andreas Thom and Jakob Schneider for their helpful comments during a research visit at the TU Dresden in January 2020. This work was partly funded by the Leverhulme Trust Research Project Grant RPG-2017-159.

\end{document}